\newtheorem{proposition}{Proposition}[section]
\newtheorem{lemma}[proposition]{Lemma}
\newtheorem{corollary}[proposition]{Corollary}
\newtheorem*{thma}{Theorem A}
\newtheorem*{thmb}{Theorem B}
\newtheorem*{thmc}{Theorem C}
\newtheorem*{thmd}{Theorem D}
\newtheorem*{thme}{Theorem E}
\newtheorem*{thmf}{Theorem F}
\newtheorem*{thmg}{Theorem G}
\newtheorem*{thmh}{Theorem H}
\newtheorem*{thmi}{Theorem I}
\newtheorem*{thmj}{Theorem J}
\newtheorem*{thmk}{Theorem K}
\newtheorem*{thml}{Theorem L}
\newtheorem*{IPCcplthm}{\IPC-Completeness Theorem}
\newtheorem*{WEMcplthm}{\WEM-Completeness Theorem}
\newtheorem*{embedthm}{Embedding Theorem}
\newtheorem*{dembedthm}{Dual Embedding Theorem}
\theoremstyle{definition}
\newtheorem{definition}[proposition]{Definition}
\newtheorem{question}[proposition]{Open Question}
\newtheorem{remark}[proposition]{Remark}
\newif\ifautoproof\autoprooffalse
\def\mathtext#1#2{\ifmmode#1{#2}\else$#1{#2}$\fi}
\def\mathdefzero#1#2#3{\relax\def#1{\relax\mathtext{#3}{#2}}}
\def\mathdefone#1#2#3{\relax\def#1##1{\relax\def\1{{##1}}\mathtext{#3}{#2}}}
\def\mathdeftwo#1#2#3{\relax\def#1##1##2{\relax\def\1{{##1}}
   \def\2{{##2}}\mathtext{#3}{#2}}}
\def\mathdefthree#1#2#3{\relax\def#1##1##2##3{\relax\def\1{{##1}}
   \def\2{{##2}}\def\3{{##3}}\mathtext{#3}{#2}}}
\def\mmdef#1#2#3#4{\ifcase#1\mathdefzero{#2}{#3}{#4}
   \or\mathdefone{#2}{#3}{#4}   \or\mathdeftwo{#2}{#3}{#4}
   \or\mathdefthree{#2}{#3}{#4} \or\mathdeffour{#2}{#3}{#4}\fi}
\def\mdef#1#2[#3]{\mmdef{#1}{#2}{#3}{}}
\def\reldef#1#2[#3]{\mmdef{#1}{#2}{#3}{\mathrel}}
\def\bindef#1#2[#3]{\mmdef{#1}{#2}{#3}{\mathbin}}
\def\opdef#1#2[#3]{\mmdef{#1}{#2}{#3}{\mathop}}
\def\ssfdef#1[#2]{\def#1{{\ssf {#2}}}}
\def\lAnd{\bigwedge}
\def\lOr{\bigvee}
\def\notmodels{\mathrel|\joinrel\not=}
\def\incl{\subseteq}
\setlist[\left\{\mkern2mu{\1}_{\2},\ldots,
     {\1}_{\3}\mkern2mu\right\}]
\def\power{\wp}
\def\poweromm{\wp_\omega^{-}}
\def\iff{\,\leftrightarrow\,}
\def\Iff{\,\Longleftrightarrow\,}
\def\implies{\,\rightarrow\,}
\def\Implies{\,\Longrightarrow\,}
\def\qand{\quad\hbox{and}\quad}
\def\rand{\;\hbox{and}\;}
\def\qqand{\qquad\hbox{and}\qquad}
\def\qor{\quad\hbox{or}\quad}
\def\qIff{\quad\Iff\quad}
\def\rIff{\;\Iff\;}
\def\qImplies{\quad\Implies\quad}
\def\rImplies{\;\Implies\;}
\def\lAnd{\bigwedge}
\def\DDegree#1#2{{\ssbf#1}\ifcat#20
   \ifcase#2\or'\or''\or'''\else^{(#2)}\fi\else
   ^{(#2)}\fi} 
\def\degree#1#2{\ifmmode\DDegree{#1}{#2}\else
   $\DDegree{#1}{#2}$\fi} 
\def\lnot{\neg\>}
\def\eqw{\equiv_{\ssf w}}
\def\eqs{\equiv_{\ssf s}}
\def\eqt{\equiv_T}
\def\zerow{{\bf 0}_{\ssf w}}
\def\zeros{{\bf 0}_{\ssf s}}
\def\zerot{{\bf 0}_T}
\def\onew{{\bf 1}_{\ssf w}}
\def\ones{{\bf 1}_{\ssf s}}
\def\onet{{\bf 1}_T}
\def\inftyw{{\bf \infty}_{\ssf w}}
\def\inftys{{\bf \infty}_{\ssf s}}
\def\lbrac{[}
\def\rbrac{]}
\ssfdef\ext[Ext]
\def\mm{{\ssf m}}
\def\Impliess{\;\Implies\;}
\def\cases#1{\left\{\,\vcenter
   {\m@th\ialign{$##\hfil$&\quad##\hfil\crcr#1\crcr}}\right.}
\def\join{\mathchoice{\mathrel{\rlap{$\vee$}\mskip1.8 mu\vee}}{\mathrel{\rlap{$\vee$}\mskip1.8 mu\vee}}{\mathrel{\rlap{$\scriptstyle\vee$}\mskip1.8 mu\vee}}{\mathrel{\rlap{$\scriptscriptstyle\vee$}\mskip1.8 mu\vee}}}
\def\meet{\mathchoice{\mathrel{\rlap{$\wedge$}\mskip1.8 mu\wedge}}{\mathrel{\rlap{$\wedge$}\mskip1.8 mu\wedge}}{\mathrel{\rlap{$\scriptstyle\wedge$}\mskip1.8 mu\wedge}}{\mathrel{\rlap{$\scriptscriptstyle\wedge$}\mskip1.8 mu\wedge}}}
\def\djoin{\buildrel\lower3pt\hbox{$\scriptscriptstyle\dualmarker$}\over\join}
\def\dmeet{\buildrel\lower3pt\hbox{$\scriptscriptstyle\dualmarker$}\over\meet}
\let\dembeds=\dimbeds
\def\qdembeds{\quad\dembeds\quad}
\def\dlatimpl{\buildrel\lower2pt\hbox{$\scriptscriptstyle\dualmarker$}\over\latimpl}
\def\dleq{\buildrel\lower2pt\hbox{$\scriptscriptstyle\dualmarker$}\over\leq}
\let\dualmarker=\circ
\def\latz{{\bf 0}}
\def\lato{{\bf 1}}
\def\dga{{\bf a}}
\def\dgaw{{\bf a}_{\ssf w}}
\def\dgawstar{{\bf a}_{\ssf w}^*}
\def\dgb{{\bf b}}
\def\dgbwstar{{\bf b}_{\ssf w}^*}
\def\dgd{{\bf d}}
\def\dgp{{\bf p}}
\def\dgq{{\bf q}}
\def\dgr{{\bf r}}
\def\dgs{{\bf s}}
\def\dgu{{\bf u}}
\def\Ifff{\,\Iff\,}
\def\dIff{\;:\Longleftrightarrow\;}
\def\dff#1{{\it#1}}
\newfont{\bfit}{cmbxti10}
\def\dff#1{{\bfit #1}}
\let\ssf=\sf
\def\bframe#1{\begin{frame}\frametitle{#1}}
\def\frdg#1{\left[\mkern-2.5mu\left[\,#1\,
       \right]\mkern-2.5mu\right]}  
\def\dgipc#1{\left[\mkern-2.5mu\left[\,#1\,
       \right]\mkern-2.5mu\right]_\IPC}  
\let\ipcdg=\dgipc
\let\sdg=\frdg     
\def\frdgs{\frdg{{\cal S}}} 
\def\frdgt{\frdg{{\cal T}}} 
\def\frdgu{\frdg{{\cal U}}}
\let\iso=\simeq
\def\subdlatimpl{\,\buildrel\lower2pt\hbox{$\scriptscriptstyle\dualmarker$}\over\sublatimpl\,}
\def\Th{{\ssf Th}}
\def\Thd{{\ssf Th}^\dualmarker}
\def\CPC{{\ssf CPC}}
\def\IPC{{\ssf IPC}}
\def\WEM{{\ssf WEM}}
\def\At{{\ssf At}}
\def\Mod{{\ssf Mod}}
\def\lg{{\ssf lg}}
\let\mathstr=\mathfrak
\def\forces{\mathrel{\raise 7pt
   \hbox{$\mathchar "330C\mkern-3.5mu \mathchar  "330C$}
   \mkern-7.5mu
   {\kern 1.5pt\hbox{$\raise .5 pt\hbox{}
   \over\kern6pt$}}}}
\newif\ifendpf
\def\squarebox#1{\vbox{\hrule\hbox{\vrule height#1\kern#1\vrule}\hrule}}
\def\endpf{\ifendpf\ifmmode\enspace\fi\squarebox{6pt}\fi\global\endpffalse}
\def\noproof{\ \qed}
\def\p#1{{\ssf p}_{#1}}
\def\q#1{{\ssf q}_{#1}}
\def\pp{{\ssf p}}
\def\sepset{{\ssf S}}
\newif\ifautoqed
\def\QED{\ifautoqed\else\ \qed\fi}
\begin{document}

\title{A survey of Mu\v cnik and Medvedev degrees}

\author{Peter G. Hinman}

\date{Draft of \today}

\maketitle 

\begin{abstract}We survey the theory of Mu\v cnik (weak) and Medvedev (strong) degrees of subsets of $\pre\omega\omega$ with particular attention to the degrees of $\Pi^0_1$ subsets of $\pre\omega2$. Later sections present proofs, some more complete than others, of the major results of the subject.
\end{abstract}

\begin{section}{Introduction}
The formal introduction of the notion of Medvedev reducibility and the associated notion of Medvedev degree was in \cite{Med55} in 1955, but the idea had been around already for some time; it may have been first suggested by Kolmogorov as a possible way of modeling the Intuitionistic Propositional Calculus. Similarly, the notion of Mu\v cnik reducibility dates formally from \cite{Mu} but was probably known earlier. Medvedev reducibility was  treated briefly in Hartley Rogers' influential text \cite{Ro}, and both notions were studied by a small number of Soviet mathematicians in the period 1955-1990, but they produced only some 10 articles and the subject seemed firmly in the backwater of logic. 

Attention picked up with Sorbi's Ph.D. thesis and the series of papers emanating from it in the early 1990's. The most important event in revitalizing the subject was Simpson's suggestion in a posting \cite{SiFOM} in the online forum FOM in 1999 that the degrees of $\Pi^0_1$ subsets of $\pre\omega2$ constitute a natural generalization of the Turing degrees of recursively enumerable sets of natural numbers. In the 11 years since  there has been a steady growth in the field to the point that it seems worthwhile to collect together some of the most important results in a unified way for the benefit of a researcher new to the area. 

We structure this survey as follows. Sections 2-6 describe important aspects of the theory with full definitions and proofs of some of the simpler results. Major results are designated Theorem A etc., and in Sections 7-16 we give relatively complete proofs of many of these. For the others we provide outlines of proofs and/or references. 

Mu\v cnik and Medvedev reducibilities are built on Turing reducibility and for the most part we will assume that the reader knows the basics of Recursion (Computability) Theory as contained, for example, in the first six chapters of \cite{Soa} or the first two sections of Chapters IV and VIII of \cite{Hi}. Our notation will generally conform with this standard, but we mention here some of our most important conventions. ${}^\omega\omega$ is the set of all functions $f:\omega\to\omega$ and ${}^\omega k$ is the set of those with all values in \set{0,\ldots,k-1}. ${}^{<\omega}\omega$  (${}^{<\omega}k$) is the set of finite sequences of natural numbers ($<k$), and ${}^m\omega$ (${}^m k$) is the subset of these of length $m$. For $\sigma\in\pre{<\omega}\omega$, $|\sigma|$ and $\lg(\sigma)$ both denote the length of $\sigma$, $\sigma=(\sigma(0),\ldots,\sigma(|\sigma|-1))$, and $\sigma^\frown f\in\pre\omega\omega$ is defined by
\[(\sigma^\frown f)(i)=\cases{\sigma(i),&if $i<|\sigma|$;\cr f(i-|\sigma|),&otherwise.\cr}\]
Similarly, $\sigma^\frown\tau$ is the obvious finite sequence of length $|\sigma|+|\tau|$. For $P\incl\pre\omega\omega$, 
\[\sigma^\frown P:=\setof{\sigma^\frown f}{f\in P}\]
and $\sigma\incl f$ ($\sigma\incl\tau$) express that $\sigma$ is an initial segment of $f$ (of $\tau$).

We assume given an indexing  $\functionof{\set a}{a\in\omega}=\functionof{\Phi_a}{a\in\omega}$ of the partial recursive functionals; if $\Phi=\set a$, then $\Phi(f)$ is the partial function also denoted $\set a^f$ such that  for all $m\in\omega$, $\Phi(f)(m)\simeq\set a^f(m)$. We assume also some precise notion of a computation $\set a^f_s(m)$ being completed in at most $s$ steps and therefore depending on at most the finite initial segment $f\restrict s:=\bigl(f(0),\ldots,f(s-1)\bigr)$ of $f$. $\set a^f_s$ denotes the longest finite sequence $\bigl(\set a^f_s(0),\ldots,\set a^f_s(n-1)\bigr)$ such that all of the indicated values are defined. For $\sigma\in\pre{<\omega}\omega$, $\set a^\sigma:=\set a^f_{|\sigma|}$ for some (any) $f\supseteq\sigma$. If $\Phi=\set a$, then $\Phi(\sigma)$ denotes the finite sequence $\set a^\sigma$. For $P,Q\incl\pre\omega\omega$, $\Phi:Q\to P$ means that  for all $g\in Q$, $\Phi(g)$ is a total function belonging to $P$. In such a case, $\Phi(Q):=\setof{\Phi(g)}{g\in Q}$. With slight imprecision, we also think of \functionof{\set a}{a\in\omega} as an enumeration of the partial recursive \textit{functions} and in particular, \functionof{W_a}{a\in\omega}, with $W_a:={\ssf Domain}(\set a)$ an enumeration of the recursively enumerable (r.e.) sets along with their finite \textit{stage approximations} \functionof{W_{a,s}}{a,s\in\omega}.

Many of the results below involve partial orderings, lattices and Boolean algebras; although these will be familiar to almost all readers, we introduce here some or our conventions and notations. A \dff{partial ordering} is always a structure ${\mathstr P}=(P,\leq)$ such that $\leq$ is a reflexive, transitive and antisymmetric binary relation. A partial ordering $\mathstr L$ is a \dff{lattice} iff each pair $a,b\in L$ has a greatest lower bound or \dff{meet} $a\meet b$:
\[(\forall x\in L)\;[x\leq a\text{ and }x\leq b\qIff x\leq a\meet b],\]
and a least upper bound or \dff{join} $a\join b$:
\[(\forall x\in L)\;[a\leq x\text{ and }b\leq x\qIff  a\join b\leq x.]\]
In this case we may expand the signature and write ${\mathstr L}=(L,\leq,\join,\meet)$, but often we write simply ${\mathstr L}=(L,\join,\meet)$ with the understanding that 
$a\leq b$ is the relation defined by the equivalent conditions
\[a=a\meet b\qIff a\join b=b.\]
$\mathstr L$ is an \dff{upper} (\dff{lower}) \dff{semilattice} iff joins (meets) but not necessarily meets (joins) always exist.

A lattice ${\mathstr L}$ is \dff{distributive} iff it satisfies, for all $a,b,c\in L$,
\[a\join(b\meet c)=(a\join b)\meet (a\join c)\qand a\meet(b\join c)=(a\meet b)\join(a\meet c),\]
and \dff{bounded} iff it has a least element $\latz$ and a greatest element $\lato$. For any finite set $A=\set{a_0,\ldots,a_{k-1}}\incl L$, we write
\[\bigmeet A\text{ for }a_0\meet\cdots \meet a_{k-1}\qand\bigjoin A\text{ for }a_0\join\cdots \join a_{k-1}\]
with the convention that $\bigmeet\emptyset=\lato$ and $\bigjoin\emptyset=\latz$.

A Boolean algebra is a bounded distributive lattice on which there exists a unary operation \dff{complement} such that
\[a\meet\latneg a=\latz\qand a\join\latneg a=\lato.\]
\end{section}

\begin{section}{Global degree structures}
For comparison, we think of Turing reducibility in the following form: for $f,g\in{}^\omega\omega$,
\[f\leqt g\Longleftrightarrow\exists\Phi\;[f=\Phi(g)],\]
where $\Phi$ ranges over the partial recursive functionals. Then

\begin{definition}For $P,Q\incl{}^\omega\omega$,
\begin{align*}
P\leqw Q&\Longleftrightarrow(\forall g\in Q)(\exists f\in P)\;f\leq_Tg\\
&\Longleftrightarrow(\forall g\in Q)\exists\Phi[\Phi(g)\in P],\\
\noalign{and}
P\leqs Q&\Longleftrightarrow\exists\Phi\;[\Phi:Q\to P]\\
&\Longleftrightarrow\exists\Phi(\forall g\in Q)[\Phi(g)\in P].
\end{align*}
\end{definition}

The relation $\leqw$ is known both as \dff{Mu\v cnik} or \dff{weak} reducibility and $\leqs$ is known as \dff{Medvedev} or \dff{strong} reducibility. Recent authors have tended to favor the terms weak and strong because of the unfortunate fact that the two names begin with the same letter, and we shall follow this lead. The two notions are related by the observation that strong reducibility is the \dff{uniform} version of weak reducibility.

The intuition behind these relations is to regard $P\incl{}^\omega\omega$ as a ``problem" and each $f\in P$ as a ``solution" to the problem. Then $P\leqw Q$ iff every solution to $Q$ computes a solution to $P$ and $P\leqs Q$ iff there is a uniform effective method $\Phi$ to compute from any solution to $Q$ a solution to $P$. In each case $Q$ is at least as hard to solve as $P$. 

For example, the empty set is the unsolvable problem, any set $P$ with a recursive element is an effectively solvable problem, and a singleton set \set f is a problem with a unique solution. More interesting examples that will be of interest below are:
\begin{itemize}
\item for any disjoint sets $A,B\incl\omega$, 
\[{\ssf Sep}(A,B):=\setof C{A\incl C\incl\overline B},\]
the problem of separating $A$ and $B$. Here, $\overline B=\omega\setminus B$ and as usual, we identify a subset of $\omega$ with its characteristic function;
\item for a first-order theory $\cal T$ in a G\"odel-numbered first-order language, 
\[{\ssf CpEx}({\cal T}):=\setof{{\cal U}}{{\cal U}\text{ is a complete extension of }{\cal T}},\]
where we identify ${\cal U}$ with \setof{{\ssf gn}(\phi)}{\phi\in {\cal U}};
\item for a graph ${\cal G}=(\omega,E)$,
\[{\ssf Col}^k({\cal G}):=\setof{f\in\pre\omega k}{f\text{ is a $k$-coloring of 
{$\cal G$}}}.\]
\end{itemize}

For $P\incl\pre\omega\omega$, let 
\[P^{\geqt}:=\setof g{(\exists f\in P)\;f\leqt g},\] 
the \dff{upward Turing closure} of $P$.
Then directly from the definitions we have

\begin{lemma}\label{wk=include}
For any $P,Q\incl\pre\omega\omega$,
\[P\supseteq Q\Implies P\leqs Q\Implies P\leqw Q\Iff P^{\geqt}\supseteq Q.\noproof\]
\end{lemma}

Recall that \dff{Turing degrees} are the equivalence classes of $f\in\pre\omega\omega$ under \dff{Turing equivalence}:
\begin{eqnarray*}
f\equiv_Tg&\dIff &f\leqt g\qand g\leqt f;\\
\degt(f)&:=&\setof g{f\equiv_Tg};\\
\degt(f)\leq\degt(g)&\dIff& f\leqt g;\\
\Dgt&:=&\setof{\degt (f)}{f\in\pre\omega\omega}.
\end{eqnarray*}
Simply because $\leqt$ is a preordering (transitive and reflexive), it follows that $\equiv_T$ is an equivalence relation and the ordering induced on $\Dgt$ is a well-defined partial ordering. Since each of $\leqw$ and $\leqs$ is also a preordering, the same considerations apply to
\begin{eqnarray*}
P\equiv_\bullet Q&\dIff& P\leq_\bullet Q\;\qand\; Q\leq_\bullet P\\
\deg_\bullet (P)&:=&\setof Q{P\equiv_\bullet Q}\\
\deg_\bullet (P)\leq_\bullet\deg_\bullet (Q)&\dIff& P\leq_\bullet Q\\
{\mathbb D}_\bullet& :=&\setof{\deg_\bullet (P)}{P\incl\pre\omega\omega}.
\end{eqnarray*}
where $P,Q\incl\pre\omega\omega$ and $\bullet$ may be either {\ssf w} or {\ssf s}. 

$\Dgt$ is an upper semi-lattice with the join operation
\[\degt(f)\join\degt(g):=\degt(f\join g)\]
where
\[(f\join g)(2x+i):=\cases{f(x),&if $i=0$;\cr \noalign{\smallskip}g(x),&if $i=1$.\cr}\]

$\Dgt$ has smallest element ${\bf 0}_T=\degt(\emptyset)$ but has no largest element and is not a lattice.

In contrast we have
\begin{proposition} \Dgw\ and \Dgs\ are bounded distributive lattices.
\end{proposition}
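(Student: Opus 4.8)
The plan is to exhibit explicit set-level representatives for the meet, join, and the two bounds, and then to reduce distributivity to a single explicit reduction. For $P,Q\incl\pre\omega\omega$ I would set
\[P\oplus Q:=(0)^\frown P\cup(1)^\frown Q\qand P\otimes Q:=\setof{f\join g}{f\in P,\ g\in Q},\]
where $f\join g$ is the interleaving defined above, and claim that in both $\Dgw$ and $\Dgs$ the degree $\deg_\bullet(P\oplus Q)$ is the meet of $\deg_\bullet(P)$ and $\deg_\bullet(Q)$, $\deg_\bullet(P\otimes Q)$ is their join, and $\latz:=\deg_\bullet(\pre\omega\omega)$, $\lato:=\deg_\bullet(\emptyset)$ are the least and greatest elements. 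Granting this, the two structures are bounded lattices and only distributivity is left.

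The meet and join claims are routine verifications. The maps $f\mapsto(0)^\frown f$ and $g\mapsto(1)^\frown g$ are recursive functionals carrying $P$ and $Q$ into $P\oplus Q$, so directly from the definitions $P\oplus Q\leqs P$ and $P\oplus Q\leqs Q$, hence (by Lemma~\ref{wk=include}) also the weak versions; thus $\deg_\bullet(P\oplus Q)$ is a common lower bound. Conversely, from reductions of $R$ to $P$ and of $R$ to $Q$ one reads the tag $h(0)$ of a purported solution $h$ of $P\oplus Q$, deletes it, and applies the appropriate reduction to the untagged part — one functional in the strong case, a pointwise argument via $f\leqt(\text{untagged part of }h)\leqt h$ in the weak case — so $R\leq_\bullet P\oplus Q$, and $\deg_\bullet(P\oplus Q)$ is the greatest lower bound. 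Dually, the two coordinate projections $f\join g\mapsto f$, $f\join g\mapsto g$ make $\deg_\bullet(P\otimes Q)$ a common upper bound, and from reductions of $P$ and of $Q$ to $R$ one gets $P\otimes Q\leq_\bullet R$ by interleaving the two solutions computed from a solution of $R$. For the bounds, $\pre\omega\omega\supseteq Q$ yields $\pre\omega\omega\leqs Q$ (hence $\leqw Q$) for every $Q$ by Lemma~\ref{wk=include}, and $Q\leqs\emptyset$ holds vacuously for every $Q$.

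Distributivity is the only step that requires any thought. The plan is to invoke two general lattice facts — the two distributive identities are equivalent, and $(a\meet b)\join(a\meet c)\leq a\meet(b\join c)$ holds in every lattice — so that it suffices to prove, for arbitrary $A,B,C$, the single inequality $A\oplus(B\otimes C)\leq_\bullet(A\oplus B)\otimes(A\oplus C)$; substituting the representatives above, this is precisely the nontrivial half of $a\meet(b\join c)=(a\meet b)\join(a\meet c)$. A solution of $(A\oplus B)\otimes(A\oplus C)$ has the form $u\join v$ with $u$ a solution of $A\oplus B$ and $v$ a solution of $A\oplus C$; I would recover $u,v$ from the oracle and branch on $u(0)$ and $v(0)$. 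If $u(0)=0$, then $u=(0)^\frown a$ with $a\in A$ and one returns $(0)^\frown a$; likewise if $v(0)=0$; and if $u(0)=v(0)=1$, then $u=(1)^\frown g$ with $g\in B$ and $v=(1)^\frown h$ with $h\in C$, and one returns $(1)^\frown(g\join h)$. Each output lies in $A\oplus(B\otimes C)$, the branching is recursive in the oracle, and all four possibilities for $(u(0),v(0))$ are accounted for, so this is a recursive functional witnessing $A\oplus(B\otimes C)\leqs(A\oplus B)\otimes(A\oplus C)$, and a fortiori the weak inequality; both distributive laws then hold in $\Dgw$ and in $\Dgs$.

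The obstacle here is organizational rather than conceptual: one must keep the two tag conventions straight so that the distributivity functional is visibly recursive and its case split visibly exhaustive, and notice at the outset that only one of the two distributive laws, and only its nontrivial direction, need be checked — which is what keeps the argument short. One should also note, though it is immediate, that since $\oplus$ and $\otimes$ realize meet and join on every representative, one is entitled to compute with the fixed sets $A,B,C$ in the distributivity step.
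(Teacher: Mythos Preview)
Your proof is correct and takes essentially the same approach as the paper: the same set-level representatives for meet, join, and the two bounds, with the same style of verification. The only difference is that you spell out the distributivity functional explicitly, whereas the paper simply declares the distributive laws ``entirely straightforward'' and omits the details.
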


\begin{proof} For $P,Q\incl\pre\omega\omega$, set
\[\deg_\bullet(P)\join\deg_\bullet(Q):=\deg_\bullet(P\join Q)
\qand
\deg_\bullet(P)\meet\deg_\bullet(Q):=\deg_\bullet(P\meet Q)\]
where
\[P\join Q:=\setof{f\join g}{f\in P\text{ and }g\in Q}
\qand
P\meet Q:=(0)^\frown P\cup(1)^\frown Q.\]
Proofs that these are least upper bound (\dff{join}) and greatest lower bound (\dff{meet}) operations are all simple computations, so we give only two examples. To see that $P\meet Q$ is the greatest lower bound for $P$ and $Q$, note first that the recursive functionals $f\mapsto(0)^\frown f$ and $g\mapsto(1)^\frown g$ witness that $P\meet Q\leqs P$ and $P\meet Q\leqs Q$. If $R$ is any other $\leqs$-lower bound to $P$ and $Q$ witnessed by recursive functionals $\Phi:P\to R$ and $\Psi:Q\to R$, then if $\Theta((0)^\frown f):=\Phi(f)$ and $\Theta((1)^\frown g):=\Psi(g)$, $\Theta:P\meet Q\to R$, so $R\leqs P\meet Q$. The argument for $\leqw$ is similar.

Both lattices have smallest element
\begin{align*}
{\bf 0}_\bullet
&:=\deg_\bullet(\setof f{f \text{ is recursive}})\\
&=\deg_\bullet(P)\text{ for any $P$ with a recursive element,}
\end{align*}
largest element $\infty_\bullet:=\deg_\bullet(\emptyset)$, and satisfy the distributive laws. Again the proof is entirely straightforward; for example, for any $\dgq\in\Dgs$, $\zeros\leq\dgq$, because if $P\in\zeros$ has a recursive element $f$, then the recursive functional with constant value $f$ maps any $Q\in\dgq$ to $P$. \QED
\end{proof}

In the language of problems, members of $P\join Q$ are functions which encode solutions to both of $P$ and $Q$, while members of  $P\meet Q$ are solutions to one or the other of $P$ and $Q$. Note that for weak reducibility we have also 
$P\meet Q\eqw P\cup Q$, but the proof for $\leqs$ above breaks down without the 0-1 coding.

Recall that $\Dgt$ has cardinality $2^{\aleph_0}$ because there are only countably many partial recursive functionals, so Turing degrees are countable sets. However,

\begin{proposition}
\Dgw\ and \Dgs\ have cardinality $2^{2^{\aleph_0}}$ and indeed contain an antichain of that size.
\end{proposition}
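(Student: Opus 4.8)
The plan is to embed a large power set, with reverse inclusion, into each of $\Dgw$ and $\Dgs$. Fix a set $\setof{f_i}{i\in I}\incl\pre\omega2$ of pairwise Turing-incomparable functions with $|I|=2^{\aleph_0}$; such a family exists by a classical construction — for instance Sacks's perfect set of minimal Turing degrees (any two distinct minimal degrees being incomparable), or, more directly, a perfect tree in $\pre{<\omega}2$ all of whose distinct branches are mutually Turing generic. For $S\incl I$ put $P_S:=\setof{f_i}{i\in S}$, a subproblem of $\setof{f_i}{i\in I}$.

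The one computation that matters is
\[P_S\leqw P_T\quad\Iff\quad P_S\leqs P_T\quad\Iff\quad T\incl S.\]
For the implication $T\incl S\Implies P_S\leqs P_T$: then $P_T\incl P_S$, i.e. $P_S\supseteq P_T$, so $P_S\leqs P_T$ (hence $P_S\leqw P_T$) by Lemma~\ref{wk=include}. Conversely, if $P_S\leqw P_T$ then $P_S^{\geqt}\supseteq P_T$ by Lemma~\ref{wk=include}, so for every $i\in T$ there is $j\in S$ with $f_j\leqt f_i$; pairwise Turing-incomparability forces $j=i$, whence $i\in S$ and $T\incl S$. In particular $P_S\equiv_\bullet P_T$ iff $S=T$, for $\bullet$ either ${\ssf w}$ or ${\ssf s}$.

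Thus $S\mapsto\deg_\bullet(P_S)$ is injective, indeed an order-embedding of $(\power(I),\supseteq)$ into each of $\Dgw$ and $\Dgs$, so any antichain in $(\power(I),\incl)$ yields one of the same size in both lattices. Taking $I=J\times2$ with $|J|=2^{\aleph_0}$ and, for each $h\in\pre J2$, the set $S_h:=\setof{\pair j{h(j)}}{j\in J}$, we get pairwise $\incl$-incomparable sets (if $h\ne h'$, pick $j$ with $h(j)\ne h'(j)$; then $\pair j{h(j)}\in S_h\setminus S_{h'}$ and $\pair j{h'(j)}\in S_{h'}\setminus S_h$), hence an antichain $\setof{\deg_\bullet(P_{S_h})}{h\in\pre J2}$ of size $2^{|J|}=2^{2^{\aleph_0}}$ in both $\Dgw$ and $\Dgs$. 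The reverse cardinality bound is immediate: $\Dgw$ and $\Dgs$ are images of $\power(\pre\omega\omega)$, a set of cardinality $2^{2^{\aleph_0}}$, under $P\mapsto\deg_\bullet(P)$. Everything here is bookkeeping around Lemma~\ref{wk=include}; the only genuinely nontrivial ingredient, and the main (modest) obstacle, is the continuum-sized Turing antichain, which in a survey one simply cites rather than rederiving.
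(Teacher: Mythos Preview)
Your proof is correct and follows essentially the same route as the paper's: a continuum-sized Turing antichain $R$, the observation that for $P,Q\incl R$ one has $P\leq_\bullet Q\Iff Q\incl P$, and then a $2^{2^{\aleph_0}}$-sized antichain in $\power(R)$. The only difference is presentational---you spell out the graph-of-functions antichain in $\power(I)$ and the trivial upper bound, whereas the paper simply cites both as elementary facts.
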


\begin{proof}
This follows from two elementary facts from recursion theory and combinatorial set theory:
\begin{enumerate}
\item $(\exists R\incl\pre\omega\omega)\;{\ssf
Card}(R)=2^{\alef0} \qand f\not=g\in R\Implies f\;\vert_T\;g$;
\item $\forall\kappa\;(\exists {\cal
X}\incl\power(\kappa))\; {\ssf Card}({\cal X})=2^\kappa\qand(\forall X,Y\in{\cal X})\;X\;\vert_{\incl}\;Y$.
\end{enumerate}
Then by (i) $P,Q\incl R\Implies [P\leq_\bullet Q\Ifff Q\incl P]$, so by (ii) there exists ${\cal X}\incl\power(R)$ with ${\ssf
Card}({\cal X})=2^{2^{\alef0}}$
such that $P,Q\in{\cal X}\Impliess P\;\vert_\subseteq\;Q\Implies P\vert_\bullet Q$.
\QED
\end{proof}

There are several simple relationships among these (semi-)lattices.

\begin{proposition}\mbox{}\label{embedings}
\begin{enumerate}
\item[\textup{(i)}] There is an upper semi-lattice embedding of \Dgt\  into each of \Dgw\ and \Dgs\ that respects the least element;
\item[\textup{(ii)}]  there is an upper semi-lattice embedding of \Dgw\ into \Dgs\ that respects both least and greatest element;
\item[\textup{(iii)}] \cite[Remark 3.9]{Si1} \Dgw\ is isomorphic as a bounded lattice to the class of upward Turing-closed subsets of \pre\omega\omega\ in the following precise sense:
\[\bigl(\Dgw,\,\leqw,\,\join,\,\meet,\,\zerow,\,\inftyw\bigr)\iso\bigl(\power(\pre\omega\omega)^{\geqt},\,\supseteq,\,\cap,\,\cup,\,\pre\omega\omega,\,\emptyset\bigr).\]
\end{enumerate}
\end{proposition}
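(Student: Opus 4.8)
\medskip
The plan is to write down one explicit map in each case and to verify, all via Lemma~\ref{wk=include}, that it is a semilattice (resp.\ lattice) embedding. The single fact that does all the work is that on \dff{upward Turing-closed} sets the three relations $\leqw$, $\leqs$ and $\supseteq$ all coincide: if $X,Y$ are upward Turing-closed, then by Lemma~\ref{wk=include} we have $X\supseteq Y\Implies X\leqs Y\Implies X\leqw Y\iff X^{\geqt}\supseteq Y$, and $X^{\geqt}=X$ since $X$ is closed, so the chain closes up.

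For (i) I would send $\degt(f)$ to $\degw(\set f)$, respectively to $\degs(\set f)$. Since $\set f\leqw\set g$, $\set f\leqs\set g$ and $f\leqt g$ are pairwise equivalent, this is well defined on $\eqt$-classes, injective, and both order-preserving and order-reflecting; since $\set f\join\set g=\set{f\join g}$ it preserves the join; and for recursive $f$ one has $\degw(\set f)=\zerow$ (resp.\ $\degs(\set f)=\zeros$), so the least element is respected. The same formula serves for both targets precisely because the two reducibilities treat singletons alike.

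For (iii) — and then (ii) — I would use the upward-closure map $\degw(P)\mapsto P^{\geqt}$. By Lemma~\ref{wk=include}, $P\leqw Q\iff P^{\geqt}\supseteq Q$, and since $P^{\geqt}$ is upward closed this holds iff $P^{\geqt}\supseteq Q^{\geqt}$; hence the map is well defined on $\eqw$-classes and is an order isomorphism of $(\Dgw,\leqw)$ onto $\bigl(\power(\pre\omega\omega)^{\geqt},\supseteq\bigr)$, with surjectivity clear since every upward-closed $X$ equals $X^{\geqt}$. A one-line computation gives $(P\join Q)^{\geqt}=P^{\geqt}\cap Q^{\geqt}$ and $(P\meet Q)^{\geqt}=P^{\geqt}\cup Q^{\geqt}$, while $\cap$ and $\cup$ are exactly the join and meet of the poset $(\power(\pre\omega\omega)^{\geqt},\supseteq)$; moreover $\zerow\mapsto\pre\omega\omega$ and $\inftyw\mapsto\emptyset$. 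An order isomorphism between lattices automatically respects meets, joins and bounds, so this settles (iii). For (ii) I would compose with the inclusion of the upward-Turing-closed sets into $\Dgs$, i.e.\ take $\degw(P)\mapsto\degs(P^{\geqt})$. The coincidence of $\leqs$ with $\supseteq$ on closed sets makes this an order-embedding of $\Dgw$ into $\Dgs$, and it preserves joins since $P^{\geqt}\join Q^{\geqt}\eqs P^{\geqt}\cap Q^{\geqt}=(P\join Q)^{\geqt}$: the functional $g\mapsto g\join g$ witnesses $P^{\geqt}\join Q^{\geqt}\leqs P^{\geqt}\cap Q^{\geqt}$, and $P^{\geqt}\join Q^{\geqt}\subseteq P^{\geqt}\cap Q^{\geqt}$ gives the reverse $\leqs$ through Lemma~\ref{wk=include}; finally $\zerow\mapsto\degs(\pre\omega\omega)=\zeros$ and $\inftyw\mapsto\degs(\emptyset)=\inftys$.

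I do not expect any genuine obstacle; everything is routine once Lemma~\ref{wk=include} is in hand. The one place to stay alert is the direction-bookkeeping in (iii): $\leqw$ corresponds to \emph{reverse} inclusion, so $\zerow$ (the least Mu\v cnik degree) pairs with $\pre\omega\omega$ (the $\supseteq$-least upward-closed set) and the lattice join $\join$ pairs with $\cap$. A minor point worth noting is that meet in $\Dgs$ is defined via the $0$--$1$ coding, so the map of (ii) need not preserve meets — but that is not required of an upper semi-lattice embedding, and in (iii) one works inside the inclusion lattice where meet is literally $\cup$.
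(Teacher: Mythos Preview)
Your proposal is correct and is essentially the same argument the paper gives: the same maps $\degt(f)\mapsto\deg_\bullet(\set f)$ and $\degw(P)\mapsto P^{\geqt}$ (composed with $\degs$ for (ii)), with Lemma~\ref{wk=include} doing all the work, and the same observation that meet is generally not preserved in (ii). Your treatment is if anything slightly more explicit than the paper's, e.g.\ in spelling out why $P^{\geqt}\join Q^{\geqt}\eqs P^{\geqt}\cap Q^{\geqt}$.
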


\begin{proof}
For $f\in\pre\omega\omega$ and $\dga=\degt(f)$, set $\dga_\bullet:=\deg_\bullet(\set f)$. It is easily checked that the mapping $\dga\mapsto\dga_\bullet$ is  a well-defined mapping of \Dgt\ into ${\mathbb D}_\bullet$ such that
$$\dga\leq\dgb\Iff\dga_\bullet\leq\dgb_\bullet,\quad(\dga\join\dgb)_\bullet=\dga_\bullet\join\dgb_\bullet\qand({\bf 0}_T)_\bullet={\bf 0}_\bullet.$$

For the second part, by Lemma \ref{wk=include}, for any $P$, $Q$,
\[P\leqw Q\Iff P^{\geqt}\supseteq Q^{\geqt}\Iff P^{\geqt}\leqs Q^{\geqt}.\]
Hence the mapping $P\mapsto P^{\geqt}$ induces a well-defined mapping $\dgp\mapsto\dgp^{\ssf s}$ of \Dgw\ into \Dgs\ such that $\dgp\leq\dgq\Iff\dgp^{\ssf s}\leq\dgq^{\ssf s}$. Easily
\[(\zerow)^{\ssf s}=\degs(\pre\omega\omega)=\zeros\qand
(\inftyw)^{\ssf s}=\degs(\emptyset)=\inftys.\]
Furthermore, for any $P$ and $Q$,
\[(P\join Q)^{\geqt}=P^{\geqt}\cap Q^{\geqt}\eqs P^{\geqt}\join Q^{\geqt},\]
so $(\dgp\join\dgq)^{\ssf s}=\dgp^{\ssf s}\join\dgq^{\ssf s}$. Note that this is not a lattice embedding because generally
\[(P\meet Q)^{\geqt}=P^{\geqt}\cup Q^{\geqt}\not\eqs P^{\geqt}\meet Q^{\geqt},\]
so meet is not respected.

For the third part, again by Lemma \ref{wk=include}, the mapping $P\mapsto P^{\geqt}$ induces a well-defined mapping $\dgp\mapsto \dgp^{\geqt}$ of \Dgw\ into $\power(\pre\omega\omega)^{\geqt}$. It is then straightforward to verify that this is the claimed isomorphism.
\QED
\end{proof}

\begin{remark}\label{genmeet}
By the isomorphism of part (iii), \Dgw\ is actually a complete lattice: given a family ${\bf P}\incl\Dgw$, choose ${\cal P}\incl\power(\pre\omega\omega)$ so ${\bf P}=\setof{\degw(P)}{P\in{\cal P}}$. Then
\begin{align*}
\bigmeet{\bf P}&:=\degw\bigl(\bigcup\setof{P^{\geqt}}{P\in{\cal P}}\bigr)
=\degw\bigl(\bigcup\setof P{P\in{\cal P}}\bigr)\quad\text{and}\\
\bigjoin{\bf P}&:=\degw\bigl(\bigcap\setof{P^{\geqt}}{P\in{\cal P}}\bigr)
\end{align*}
are easily seen to be respectively the greatest lower bound and least upper bound of {\bf P}. For countable families, the binary meet operation has the natural generalization
\[\bigmeet_{m\in\omega}P_m:=\bigcup_{m\in\omega}(m)^\frown P_m\]
which has the same weak degree as above. The strong degree is a lower bound for \setof{\degs(P_m)}{m\in\omega} but generally not a greatest lower bound. Nevertheless, this operation will be useful below.
\end{remark}

Of course, there is a huge literature on the general structure of the upper semi-lattice \Dgt; as noted, the lattices \Dgw\ and \Dgs\ have received less attention, but there is a growing body of information. An early contribution is \cite{Dy}; excellent more recent examples are \cite{Sor2} and \cite{Te2}. Most of these results are beyond the scope of this article and we mention here only a few examples as a sample to give the flavor. 

For $f\in\pre\omega\omega$, set 
\[\set f^+_{\ssf w}:=\setof g{f<_Tg}.\]
Easily $\set f\leqw\set f^+_{\ssf w}$, and $\degw(\set f^+_{\ssf w})$ is an immediate successor in the ordering \Dgw\ to $\degw(\set f)$:

\begin{proposition} For any $f\in\pre\omega\omega$ and $P\incl\pre\omega\omega$, $\set f\lew P\Implies\set f^+_{\ssf w}\leqw P$.
\end{proposition}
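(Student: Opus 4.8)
The plan is to unwind the definition of $\set f\lew P$ into two pointwise conditions on the members of $P$ and observe that together they force the set inclusion $P\incl\set f^+_{\ssf w}$; the conclusion is then immediate from Lemma~\ref{wk=include}. The key observation is that because the left-hand side is a singleton, both halves of the strict inequality $\set f\lew P$ reduce to statements about individual Turing degrees.

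First I would spell out $\set f\leqw P$. Since $\set f$ has the single element $f$, the definition of $\leqw$ says precisely that $(\forall g\in P)\;f\leqt g$. Next I would spell out the strictness, i.e.\ $P\notleqw\set f$: unwinding $\leqw$ again (now with the singleton on the right), $P\leqw\set f$ would mean $(\exists h\in P)\;h\leqt f$, so its negation says $(\forall g\in P)\;g\notleqt f$. Combining the two, for every $g\in P$ we have $f\leqt g$ and $g\notleqt f$, hence $f\let g$, which is exactly the statement $g\in\set f^+_{\ssf w}$. Therefore $P\incl\set f^+_{\ssf w}$.

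Finally, by Lemma~\ref{wk=include}, $\set f^+_{\ssf w}\supseteq P$ gives $\set f^+_{\ssf w}\leqs P$ and hence $\set f^+_{\ssf w}\leqw P$, as required.

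I do not expect a genuine obstacle here: the argument is a direct definition-chase. The only point requiring a little care is reading off the correct pointwise form of $P\notleqw\set f$ (it is a $\forall$-statement about elements of $P$, not a statement about a single witness), and noticing that this is exactly the ingredient, beyond $\set f\leqw P$, needed to upgrade $f\leqt g$ to $f\let g$ for every $g\in P$. It is perhaps worth remarking in passing that $\set f\leqw\set f^+_{\ssf w}$ holds trivially (every $g$ with $f\let g$ computes $f$), so the hypothesis $\set f\lew P$ is not vacuous, but this is not needed for the proof itself.
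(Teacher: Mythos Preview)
Your proposal is correct and follows essentially the same route as the paper: from $\set f\leqw P$ conclude $P\incl\setof g{f\leqt g}$, from $P\not\leqw\set f$ upgrade this to $P\incl\set f^+_{\ssf w}$, and then invoke Lemma~\ref{wk=include}. The paper's proof is more terse but the logical content is identical.
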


\begin{proof}
From $\set f\leqw P$ we have $P\incl\setof g{f\leqt g}$. Since also $P\not\leqw\set f$, in fact $P\incl\set f^+_{\ssf w}$, so in particular $\set f^+_{\ssf w}\leqw P$.
\QED
\end{proof}

Matters are slightly more complicated for \Dgs, since it is clearly unreasonable to expect even that $\set f\leqs\set f^+_{\ssf w}$ --- no single reduction procedure can compute $f$ from all $g\in\set f^+_{\ssf w}$. A typical strategy in such cases is to induce the needed uniformity by indexing:

\begin{definition} For $f\in\pre\omega\omega$, 
\[\set f^+:=\setof{(a)^\frown g}{g\in\pre\omega\omega\qand f=\set a^g\qand g\not\leqt f}.\]
\end{definition}

\begin{proposition} For any $f\in\pre\omega\omega$, $\degs(\set f^+)$ is the immediate successor of $\degs(\set f)$ --- explicitly, $\degs(\set f)\leq\degs(\set f)^+$ and for any $P\incl\pre\omega\omega$, $\set f\les P\Implies\set f^+\leqs P$.
\end{proposition}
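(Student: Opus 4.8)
The plan is to establish the two asserted facts --- $\set f\leqs\set f^+$ and the successor property ``$\set f\les P\Implies\set f^+\leqs P$'' --- together with the complementary facts $\set f^+\neq\emptyset$ and $\set f^+\not\leqs\set f$, so that $\degs(\set f^+)$ is a \emph{strict} successor of $\degs(\set f)$. Nonemptiness is immediate: choose any $g$ with $f<_Tg$ (for instance the Turing jump $g=f'$), fix an index $a$ with $\set a^g=f$, and then $(a)^\frown g\in\set f^+$.

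For $\set f\leqs\set f^+$ I would use the ``universal'' recursive functional $\Theta_0$ which, on an oracle $h$, reads $a:=h(0)$ and then outputs $\set a$ applied to the shifted oracle $(h(1),h(2),\dots)$. On any $h=(a)^\frown g\in\set f^+$ this is total with value $\set a^g=f\in\set f$, so $\Theta_0$ witnesses $\set f\leqs\set f^+$. For the reverse nonreducibility, if $\Psi$ witnessed $\set f^+\leqs\set f$ then $\Psi(f)=(a)^\frown g$ for some $a,g$ with $g\not\leqt f$; but $g$ is recursive in $\Psi(f)$, hence in $f$, a contradiction. Thus $\degs(\set f)<\degs(\set f^+)$.

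The heart of the argument is the successor clause, and the key step --- the only place the \emph{strictness} of $\set f\les P$ enters --- is the observation that $\set f\les P$ forces $g\not\leqt f$ for \emph{every} $g\in P$. Granting this, fix $\Phi$ witnessing $\set f\leqs P$ and an index $a$ with $\set a=\Phi$; the recursive functional $\Theta(g):=(a)^\frown g$ is total, and for $g\in P$ we have $\set a^g=\Phi(g)=f$ and $g\not\leqt f$, so $\Theta(g)\in\set f^+$, whence $\Theta:P\to\set f^+$ and $\set f^+\leqs P$. To prove the observation, let $g\in P$: from $\Phi(g)=f$ we get $f\leqt g$, so if in addition $g\leqt f$ then $g\equiv_Tf$; choosing an index $c$ with $\set c^f=g$ then gives $\set c:\set f\to P$, i.e.\ $P\leqs\set f$, contradicting $\set f\les P$. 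This is essentially the strong-reducibility analogue of the preceding proposition (with $\set f^+$ in the role of $\set f^+_{\ssf w}$), the prepended index in the definition of $\set f^+$ being exactly the device that restores the uniformity otherwise lost.

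It remains to package these into the ``immediate successor'' statement. We already have $\degs(\set f)<\degs(\set f^+)$; and if some strong degree $\dgd=\degs(P)$ satisfied $\degs(\set f)<\dgd<\degs(\set f^+)$, then $\set f\les P$ would yield $\set f^+\leqs P$, i.e.\ $\degs(\set f^+)\leq\dgd$, contradicting $\dgd<\degs(\set f^+)$. Hence no degree lies strictly between, and $\degs(\set f^+)$ is the immediate successor of $\degs(\set f)$. I expect the only subtle point to be the recognition that strictness of $\les$ (rather than merely $\leqs$) is precisely what pins $P$ inside $\setof g{f\leqt g\rand g\not\leqt f}$ --- everything else is a routine construction of recursive functionals.
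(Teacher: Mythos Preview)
Your proof is correct and follows essentially the same approach as the paper's: the reduction $\set f\leqs\set f^+$ via the universal functional $(a)^\frown g\mapsto\set a^g$, and the successor clause via $g\mapsto(a)^\frown g$ where $a$ indexes a witness $\Phi:P\to\set f$, with the key observation that $P\not\leqs\set f$ forces $g\not\leqt f$ for all $g\in P$. You supply more detail than the paper --- nonemptiness of $\set f^+$, the strict inequality $\set f^+\not\leqs\set f$, and an explicit justification of the key observation --- but the structure is identical.
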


\begin{proof} Clearly there exists a partial recursive functional $\Phi$ such that for all $g$, 
\[\Phi((a)^\frown g)=\set a^g\]
and thus $\Phi:\set f^+\to\set f$. If $\set f\leqs P$, then for some $\Psi$, $\Psi:P\to\set f$ --- that is, if $a$ is an index for $\Psi$, for all $g\in P$, $\set a^g=f$. If also $P\not\leqs\set f$, then for each $g\in P$, $g\not\leqt f$. Hence if $\set f\les P$, the recursive functional $g\mapsto(a)^\frown g$ witnesses that $\set f^+\leqs P$.
\QED
\end{proof}

It is worth noting that $\set f^+\eqw\set f^+_{\ssf w}$, although $\set f^+\not\eqs\set f^+_{\ssf w}$, so $\set f^+$ serves the same role in \Dgw\ as $\set f^+_{\ssf w}$. 

\begin{corollary} Neither \Dgw\ nor \Dgs\ is densely ordered. \noproof
\end{corollary}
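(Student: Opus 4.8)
The plan is to read the Corollary off the two propositions immediately preceding it, which already exhibit, in each of $\Dgw$ and $\Dgs$, a covering pair — a degree together with an immediate successor. Recall that a partial ordering is densely ordered iff whenever $x<y$ there is $z$ with $x<z<y$; so to refute density it is enough to produce one strict inequality $x<y$ for which no degree lies strictly between. First I would fix any $f\in\pre\omega\omega$ — for concreteness a recursive $f$, so that $\degw(\set f)=\zerow$ and $\degs(\set f)=\zeros$, although this choice plays no real role.

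Next I must check that the inequalities involved are \emph{strict}. That $\set f\leqw\set f^+_{\ssf w}$ and (via the functional $(a)^\frown g\mapsto\set a^g$) $\set f\leqs\set f^+$ was already noted in the text. For strictness: $\set f^+_{\ssf w}$ and $\set f^+$ are nonempty — the Turing jump $f'$ witnesses this, since $f<_Tf'$ and, fixing an index $a$ with $\set a^{f'}=f$, $(a)^\frown f'\in\set f^+$ — and $\set f^+_{\ssf w}\not\leqw\set f$ because no $g$ that $f$ computes can satisfy $f<_Tg$; since $\leqs$ refines $\leqw$ and $\set f^+\eqw\set f^+_{\ssf w}$ (also recorded above), likewise $\set f^+\not\leqs\set f$. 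Hence $\set f\lew\set f^+_{\ssf w}$ in $\Dgw$ and $\set f\les\set f^+$ in $\Dgs$.

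Now suppose toward a contradiction that some $\dgp\in\Dgw$ satisfies $\degw(\set f)<\dgp<\degw(\set f^+_{\ssf w})$, and pick $P\incl\pre\omega\omega$ with $\dgp=\degw(P)$. Then $\set f\lew P$, so by the proposition that $\set f\lew P\Implies\set f^+_{\ssf w}\leqw P$ we get $\degw(\set f^+_{\ssf w})\leq\dgp$, contradicting $\dgp<\degw(\set f^+_{\ssf w})$. Thus no degree lies strictly between $\degw(\set f)$ and its proper successor $\degw(\set f^+_{\ssf w})$, and $\Dgw$ is not densely ordered. The argument for $\Dgs$ is identical with $\set f^+$ in place of $\set f^+_{\ssf w}$, invoking the proposition $\set f\les P\Implies\set f^+\leqs P$.

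There is essentially no obstacle: the two propositions do all the work, and the only point requiring any care is confirming the strict inequalities $\set f\lew\set f^+_{\ssf w}$ and $\set f\les\set f^+$ (i.e., the nonemptiness and genuine Turing-nontriviality of the two ``$+$'' operations), since without strictness the putative covering pair would collapse and the contradiction step would have no hypothesis to feed the propositions.
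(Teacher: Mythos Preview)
Your proof is correct and is exactly what the paper intends: the corollary is marked without proof because the two preceding propositions already furnish, for each of $\Dgw$ and $\Dgs$, a degree together with an immediate successor, and you have simply (and carefully) unpacked this. Your verification of the strict inequalities $\set f\lew\set f^+_{\ssf w}$ and $\set f\les\set f^+$ via nonemptiness (witnessed by the jump) is a nice touch that the paper leaves implicit.
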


Of course, there are also many counterexamples to density, known as \dff{minimal covers}, in \Dgt, but they are considerably harder to analyze. In fact, a simple extension of these ideas allows us in \Dgw, and with more effort in \Dgs, to completely characterize intervals $(P,Q)_\bullet:=\setof R{P<_\bullet R<_\bullet Q}$ that are empty.

\begin{proposition}[\cite{Dy}]
For any $P<_\bullet Q$, 
\[(P,Q)_\bullet=\emptyset\Iff(\exists f\in P)\bigl[P\equiv_\bullet Q\meet\set f\qand Q\meet\set f^+\equiv_\bullet Q\bigr].\]
\end{proposition}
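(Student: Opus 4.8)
The plan is to prove the two implications separately, working each time entirely inside the distributive lattice $\Dgw$ (resp.\ $\Dgs$) and using, for every $f$, that $\deg_\bullet(\set f^+)$ is the immediate successor of $\deg_\bullet(\set f)$ and, more precisely, that $\set f<_\bullet X$ implies $\set f^+\leq_\bullet X$: for $\bullet={\ssf s}$ these are the preceding Proposition, and for $\bullet={\ssf w}$ one uses $\set f^+\eqw\set f^+_{\ssf w}$ together with the analogous statements for $\set f^+_{\ssf w}$. Since in a lattice $a\meet b\equiv a$ just means $a\leq b$, the second clause on the right-hand side says exactly $Q\leq_\bullet\set f^+$.

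For $(\Leftarrow)$ I would argue by contradiction: let $f\in P$ witness the right-hand side and suppose $P<_\bullet R<_\bullet Q$. From $R\leq_\bullet Q\leq_\bullet\set f^+$ and $\set f\leq_\bullet\set f^+$ we get $\set f\leq_\bullet R\join\set f\leq_\bullet\set f^+$, so by immediacy $R\join\set f\equiv_\bullet\set f$ or $R\join\set f\equiv_\bullet\set f^+$. Meeting with $Q$, distributivity together with $R\leq_\bullet Q$, $P\leq_\bullet R$ and $P\equiv_\bullet Q\meet\set f$ gives $Q\meet(R\join\set f)\equiv_\bullet(Q\meet R)\join(Q\meet\set f)\equiv_\bullet R\join P\equiv_\bullet R$, so the two cases force $R\equiv_\bullet Q\meet\set f\equiv_\bullet P$ and $R\equiv_\bullet Q\meet\set f^+\equiv_\bullet Q$ respectively, each contradicting $P<_\bullet R<_\bullet Q$.

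For $(\Rightarrow)$, assume $(P,Q)_\bullet=\emptyset$. For every $f\in P$, $P\leq_\bullet Q\meet\set f\leq_\bullet Q$ (the lower bound because $P\leq_\bullet Q$ and, since $f\in P$, $P\leq_\bullet\set f$), so emptiness of the open interval forces $Q\meet\set f\equiv_\bullet P$ or $Q\meet\set f\equiv_\bullet Q$, the latter being $Q\leq_\bullet\set f$. It then suffices to (1) find $f\in P$ with $Q\meet\set f\equiv_\bullet P$ and (2) check that such an $f$ automatically has $Q\meet\set f^+\equiv_\bullet Q$. Step (2) is lattice-theoretic: $Q\meet\set f^+$ also lies between $P$ and $Q$ (using $P\leq_\bullet\set f\leq_\bullet\set f^+$), so it equals $P$ or $Q$; if it were $\equiv_\bullet P$ then, since $Q\not\leq_\bullet\set f$ (as $Q\meet\set f\equiv_\bullet P<_\bullet Q$) gives $\set f<_\bullet\set f\join Q$ and hence $\set f^+\leq_\bullet\set f\join Q$, one would get $\set f^+\equiv_\bullet\set f^+\meet(\set f\join Q)\equiv_\bullet(\set f^+\meet\set f)\join(\set f^+\meet Q)\equiv_\bullet\set f\join(\set f\meet Q)\equiv_\bullet\set f$ by distributivity and absorption (using $\set f\leq_\bullet\set f^+$ and $\set f^+\meet Q\equiv_\bullet P\equiv_\bullet\set f\meet Q$), contradicting that $\set f^+$ is a proper successor of $\set f$. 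So (2) holds, and it remains only to carry out (1).

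Step (1) is routine for $\bullet={\ssf w}$: if it failed, every $f\in P$ would satisfy $Q\leqw\set f$, i.e.\ would compute an element of $Q$, so $Q\leqw P$, contradicting $P\lew Q$. For $\bullet={\ssf s}$, however, this is the main obstacle, since non-uniformity defeats the argument: failure of (1) yields only $Q\leqs\set f$ for every $f\in P$ --- i.e.\ $P\eqw Q$ although $P\les Q$ --- and one must still derive a contradiction with $(P,Q)_{\ssf s}=\emptyset$. I expect this to require constructing an intermediate problem $P\les R\les Q$ by hand, exploiting the non-uniformity displayed by the partition $P=\bigcup_a P_a$ with $P_a=\setof{f\in P}{\set a^f\in Q}$ --- on each $P_a$ the single functional $\set a$ uniformly reduces $Q$ --- and that construction is the step I would expect to be the hardest.
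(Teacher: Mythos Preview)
Your weak-degree argument is correct and complete, and your diagnosis of the strong case is accurate: the paper itself proves only $\bullet={\ssf w}$ and defers $\bullet={\ssf s}$ to \cite{Te2}, so you have matched its coverage.

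Your route, however, is genuinely different from the paper's. The paper argues directly from the definition of $\leqw$: for $R$ with $Q\meet\set f\leqw R\leqw Q$ it splits on whether some $h\in R$ satisfies $h\leqt f$, obtaining the dichotomy $R\leqw Q\meet\set f$ or $Q\meet\set f^+\leqw R$, and then reads off a decomposition
\[(P,Q)_{\ssf w}=\setof R{P\lew R\leqw Q\meet\set f\qor Q\meet\set f^+\leqw R\lew Q}\]
from which both implications fall out. You instead work abstractly in the distributive lattice, invoking only the immediate-successor property of $\set f^+$ over $\set f$ together with distributivity and absorption. This buys you something real: your proof of $(\Leftarrow)$ and your Step~(2) go through verbatim for $\bullet={\ssf s}$, whereas the paper's element-by-element dichotomy is specific to weak reducibility. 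You have therefore isolated the strong-case obstruction cleanly to Step~(1) --- producing $f\in P$ with $Q\meet\set f\equiv_{\ssf s}P$ from $(P,Q)_{\ssf s}=\emptyset$ --- which is exactly where the substantive construction in \cite{Te2} lies.
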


\begin{proof}
We give the proof for weak degrees; the version for strong degrees is considerably more complicated and may be found in \cite[Theorem 2.5]{Te2}. Easily, if $P\leqw Q$, then for any $f\in P$,
\[P\leqw Q\meet\set f\leqw Q\meet\set f^+\leq Q,\]
and if $Q\not\leqw\set f$,
\[P\leqw Q\meet\set f\lew Q\meet\set f^+\leq Q.\]
Furthermore, for any $R$ such that $Q\meet\set f\leqw R\leqw Q$, either
\[(\exists h\in R)\;h\leqt f\quad\text{so}\quad R\leqw Q\meet\set f,\]
or
\[(\forall h\in R)\;[Q\leqw\set h\qor f<_Th]\quad\text{so}\quad Q\meet\set f^+\leq R.\]
Hence, for any $f\in P$,
\[(P,Q)_{\ssf w}=\setof R{P\lew R\leqw Q\meet\set f\qor Q\meet\set f^+\leqw R\lew Q}.\]
Now the implication $(\Larrow)$ of the statement is immediate and $(\Rarrow)$ follows because if $P\lew Q$, there exists $f\in P$ such that $Q\not\leqw\set f$.
\QED
\end{proof}

\end{section}

\begin{section}{Local degree structures}
Early in the development of the theory of Turing degrees attention was focused on the degrees of (characteristic functions of) special sets that seemed of more interest than arbitrary sets. By far the most intensively studied of these are the degrees of recursively enumerable (r.e.) sets:
\[\Dgpt :=\setof{\degt (A)}{A\incl\omega\text{ is recursively enumerable}}.\]
R.e. sets arise naturally in many contexts, most notably as the sets of G\"odel numbers of theorems of recursively axiomatizable theories in a first-order language.  \Dgpt\ has lattice properties similar to those of \Dgt\ except that it has a largest element

\begin{proposition} 
$\Dgpt$ is a countable bounded upper semi-lattice.
\end{proposition}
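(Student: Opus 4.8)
The plan is to verify the three assertions --- countability, the existence of binary joins, and boundedness --- in turn, each resting only on the standard recursion theory assumed as background. For countability, the map $a\mapsto\degt(W_a)$ is by definition a surjection of $\omega$ onto $\Dgpt$, which is moreover nonempty since it contains $\degt(\emptyset)$; hence $\Dgpt$ is countable. For the semi-lattice structure, since $\Dgt$ is already an upper semi-lattice under $\join$, it suffices to show that $\Dgpt$ is closed under $\join$, i.e.\ that $A\join B$ is r.e.\ whenever $A$ and $B$ are: one enumerates $A\join B$ by putting $2x$ into it as soon as $x$ appears in $A$ and $2x+1$ as soon as $x$ appears in $B$, and an index for $A\join B$ is obtained recursively from indices for $A$ and $B$ by the $s$-$m$-$n$ theorem. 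Then $\degt(A)\join\degt(B)=\degt(A\join B)\in\Dgpt$, and since a least upper bound computed in the ambient poset $\Dgt$ that happens to lie in the sub-poset $\Dgpt$ is still a least upper bound there, $\Dgpt$ inherits the join operation.

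For boundedness, the least element is dealt with exactly as $\zeros$ and $\zerow$ were above: $\emptyset$ is r.e., $\degt(\emptyset)=\zerot$, and $\zerot$ is least in all of $\Dgt$, hence in the sub-poset $\Dgpt$. The one step with any content is producing a \emph{largest} element, for which I would invoke the classical existence of an $m$-complete r.e.\ set, say $K:=\setof{a}{a\in W_a}$. It is r.e., and for an arbitrary r.e.\ set $A=W_e$ the $s$-$m$-$n$ theorem supplies a recursive $h$ with $W_{h(x)}=\omega$ if $x\in A$ and $W_{h(x)}=\emptyset$ otherwise, whence $x\in A\iff h(x)\in W_{h(x)}\iff h(x)\in K$; so $A\le_m K$ and in particular $A\leqt K$. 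Thus $\degt(K)$ is an upper bound in $\Dgpt$ for every member of $\Dgpt$, i.e.\ the required largest element $\onet$.

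I do not expect any genuine obstacle here: the entire argument reduces to two elementary facts, that the class of r.e.\ sets is closed under the pairing operation $\join$ and contains an $m$-complete member, both immediate from the enumeration and $s$-$m$-$n$ theorems. The only point worth spelling out carefully is the general lattice-theoretic remark used above --- that a least upper bound survives passage to a sub-poset as long as it remains inside it --- which is precisely what upgrades ``$\Dgpt$ is closed under $\join$'' to ``$\Dgpt$ is an upper semi-lattice.''
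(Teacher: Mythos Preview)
Your proof is correct and follows essentially the same approach as the paper. The paper's proof is terser: it simply notes that the join and $\zerot$ are inherited from $\Dgt$ and identifies the top element as $\onet=\zerot'=\degt(\setof{a}{\set a(a)\downarrow})$, the jump of $\zerot$, without spelling out the $m$-completeness argument or the countability; your set $K=\setof{a}{a\in W_a}$ is exactly this set (since $W_a=\operatorname{Domain}(\set a)$), so the two arguments coincide.
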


\begin{proof} The join operation and ${\bf 0}_T$ are the same as for \Dgt;  the largest element of \Dgpt\ is ${\bf 1}_T= {\bf 0}_T'=\degt(\setof{a\in\omega}{\set a(a)\downarrow})$, the \dff{jump} of $\zerot$.
\QED
\end{proof}

In a posting to the online discussion group FOM in 1999 \cite{SiFOM}, Stephen Simpson suggested that the Mu\v cnik degrees of \pzo\ subsets of \pre\omega2\ might provide an interesting alternative to the r.e.~degrees. In many papers since then, Simpson and other authors have developed this analogy. This development is the central, although not exclusive, focus of the present survey. 

We start with a quick review of the notion of \pzo\ subsets of \pre\omega\omega\ and \pre\omega2; Section 7 has further background. In topological terms they are the sets that are effectively closed. This can be made precise in several equivalent ways, but the one that is most useful here is the following. A \dff{tree} is a subset $T$ of \pre{<\omega}\omega, the set of finite sequences of natural numbers, that is closed under subsequence. A \dff{path} through a tree $T$ is a function $f$ such that all initial segments of $f$ belong to $T$. $[T]$ denotes the set of all paths through $T$.

\begin{definition} $P\incl\pre\omega\omega$ is a \pzo\ set iff $P=[T]$ for some recursive tree $T\incl\pre{<\omega}\omega$. 
\end{definition}

In accord with common usage we sometimes call a \pzo\ subset of \pre\omega 2 a  \pzo\ \dff{class}. Another useful characterization of the \pzo\ sets is as sets definable with only universal number quantifiers over a recursive matrix (see, for example, \cite[Definition III.1.2]{RTH}). \cite{Ce} and \cite{CeRe} are extensive surveys of \pzo\ sets.

There are many naturally arising examples of \pzo\ sets; for further examples and significance see \cite{CeRe}:
\begin{proposition}\label{pzoexamples}
The following are \pzo\ sets.
\begin{itemize}
\item[\textup{(i)}]$\set f$ for any recursive $f\in\pre\omega\omega$ (and many others);
\item[\textup{(ii)}] for disjoint r.e.~$A,B\incl\omega$, ${\ssf Sep}(A,B):=\setof C{A\incl C\incl\overline B}$;
\item[\textup{(iii)}] for an r.e.~graph $\cal G$, ${\ssf Col}^k({\cal G}):=\setof{f\in\pre\omega k}{f\text{ is a $k$-coloring of $\cal G$}}$;
\item[\textup{(iv)}] for a recursively axiomatizable first-order theory $\cal T$, 
\[{\ssf CpEx}({\cal T}):=\setof{{\cal U}}{{\cal U} \text{ is a complete extension of $\cal T$}};\]
\item[\textup{(v)}]${\ssf DNR}:=\setof{f\in\pre\omega\omega}{\forall a[f(a)\not=\set a(a)]}$ and $\dnr k:={\ssf DNR}\cap\pre\omega k$; members of {\ssf DNR} are called \dff{diagonally non-recursive}. \noproof
\end{itemize}
\end{proposition}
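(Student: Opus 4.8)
The plan is to handle all five items by one template. Recall (as remarked above) that $P\incl\pre\omega\omega$ is $\pzo$ exactly when $P=[T]$ for a recursive tree $T$, equivalently when $f\in P$ can be defined by universal number quantifiers over a recursive matrix in $f$. Two closure facts will be convenient: $\pzo$ classes are closed under finite intersection (intersect the recursive trees), and $\pre\omega k=[\pre{<\omega}k]$ is itself $\pzo$, so that if $P$ is $\pzo$ then so is $P\cap\pre\omega k$. The one wrinkle in items (ii)--(iv) is that part of the defining data ($A$, $B$, the edge set $E$, the axioms and theorems of $\cal T$) is only r.e., not recursive; this is absorbed in the standard way by evaluating each clause of the tree condition at stage $|\sigma|$ --- for instance replacing ``$n\in A$'' by ``$n$ enters $A$ within $|\sigma|$ steps'' --- so each finite approximation stays decidable while a path commits to the r.e.\ facts only in the limit, and monotonicity of the stage approximations is exactly what makes the resulting $T$ closed under subsequence.

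With this template the first four concrete cases are immediate. For (i) take $T=\setof{\sigma\in\pre{<\omega}\omega}{\sigma\incl f}$, recursive since $f$ is, with $[T]=\set f$. For (ii), inside $\pre{<\omega}2$, declare $\sigma\in T$ iff for all $n<|\sigma|$ we have $n\in A_{|\sigma|}\to\sigma(n)=1$ and $n\in B_{|\sigma|}\to\sigma(n)=0$; then $f\in[T]$ iff $n\in A\to f(n)=1$ and $n\in B\to f(n)=0$ hold for all $n$, i.e.\ $f\in{\ssf Sep}(A,B)$. For (iii), inside $\pre{<\omega}k$, declare $\sigma\in T$ iff $\sigma(x)\ne\sigma(y)$ whenever $x,y<|\sigma|$ and $(x,y)\in E_{|\sigma|}$; the paths are exactly the total $k$-colorings of $\cal G$ (if there are none, $T$ simply has no path). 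For (v), inside $\pre{<\omega}\omega$, and for $\dnr k$ further intersected with $\pre{<\omega}k$, declare $\sigma\in T$ iff $\sigma(a)\ne\set a(a)_s$ whenever $a,s<|\sigma|$ and $\set a(a)_s\downarrow$; then $f\in[T]$ iff for every $a$ the value $\set a(a)$, if it is ever defined, differs from $f(a)$ --- that is, $f\in{\ssf DNR}$.

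Case (iv) is the only one requiring a little care, and it is where I would spend the effort. Identify a complete extension ${\cal U}\supseteq{\cal T}$ with its characteristic function on G\"odel numbers, and fix a recursive proof calculus for first-order logic together with a recursive coding of derivations from which the (finite) list of premises and the conclusion of a derivation are recursively recoverable. Then ``${\cal U}$ is a complete extension of ${\cal T}$'' unfolds into a finite conjunction of conditions, each again expressible by a universal quantifier over a recursive matrix in ${\cal U}$: (a) every $n$ with ${\cal U}(n)=1$ is the G\"odel number of a sentence (a recursive side condition); (b) ${\cal U}$ contains every axiom of ${\cal T}$, phrased through the r.e.\ enumeration of the axiom set as in (ii)--(iii); (c) ${\cal U}$ is deductively closed --- for every code $d$ of a valid derivation, if every premise of $d$ has ${\cal U}$-value $1$ then so does its conclusion; and (d) for every sentence $\phi$, exactly one of ${\ssf gn}(\phi)$, ${\ssf gn}(\lnot\phi)$ lies in ${\cal U}$. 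Conditions (a), (c), (d) together force ${\cal U}$ to be a complete, consistent, deductively closed set of sentences and (b) makes it an extension of ${\cal T}$; conversely, any complete extension of ${\cal T}$ satisfies all four. Intersecting the four recursive trees yields a recursive tree with path set ${\ssf CpEx}({\cal T})$.

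I do not anticipate a genuine obstacle in any of the five cases; each is a short ``by inspection'' verification that a natural condition sits inside $\pzo$. The one place where carelessness could bite is (iv): the proof calculus and the codings of sentences and of derivations must be chosen so that each of (a)--(d) is visibly a universal quantifier over a decidable matrix depending on only finitely much of ${\cal U}$ --- in particular, deductive closure must be cast as a quantifier over derivation codes, not over the (undecidable) provability predicate of ${\cal U}$.
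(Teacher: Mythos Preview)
Your proof is correct; the paper itself omits any argument for this proposition (it is marked with a bare \qed), treating the examples as standard background. Your systematic template --- define a recursive tree via stage approximations to absorb the r.e.\ data, then verify closure under subsequence and identify the paths --- is exactly the canonical verification, and your handling of (iv) via the fourfold decomposition into sentencehood, axiom containment, deductive closure over derivation codes, and completeness is the right way to make the $\pzo$ form explicit.
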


For reasons that will be explained in Section 7,  the basic definition is restricted to \pzo\ subsets of \pre\omega2:

\begin{definition}\label{defpzodg}
\begin{align*}
\Dgps &:=\setof{\degs (P)}{P\incl\pre\omega2\text{ is a nonempty \pzo\ class}}\\
\Dgpw &:=\setof{\degw (P)}{P\incl\pre\omega2\text{ is a nonempty \pzo\ class}}
\end{align*}
\end{definition}

\begin{proposition} 
\Dgps\ and \Dgpw\ are countable bounded distributive lattices.
\end{proposition}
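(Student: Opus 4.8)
The plan is to verify each property --- countable, bounded, distributive lattice --- reusing as much of the global theory as possible. The key point throughout is that the lattice operations inherited from $\Dgs$ and $\Dgw$ actually stay inside the class of (degrees of) nonempty $\pzo$ classes in $\pre\omega2$, so that $\Dgps$ and $\Dgpw$ are \emph{sublattices} of $\Dgs$ and $\Dgw$; distributivity is then automatic from the distributivity of the ambient lattices (Proposition stating $\Dgw,\Dgs$ are bounded distributive lattices).

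First I would treat \emph{countability}: there are only countably many recursive trees $T\incl\pre{<\omega}2$, hence only countably many $\pzo$ classes in $\pre\omega2$, hence at most countably many degrees in each of $\Dgps$, $\Dgpw$. Next, \emph{closure under join}: given nonempty $\pzo$ classes $P,Q\incl\pre\omega2$ we must exhibit a nonempty $\pzo$ class in $\pre\omega2$ with the same strong (resp.\ weak) degree as $P\join Q$. The obstacle is that the global join $P\join Q=\setof{f\join g}{f\in P, g\in Q}$ lives in $\pre\omega2$ (interleaving two $0$--$1$ functions gives a $0$--$1$ function), and it is $\pzo$: if $P=[S]$, $Q=[T]$ with $S,T$ recursive, then $P\join Q=[U]$ where $U$ consists of the sequences whose even-indexed subsequence lies in $S$ and odd-indexed subsequence lies in $T$ (with an obvious parity/length bookkeeping), and $U$ is recursive; nonemptiness is clear. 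So $\deg_\bullet(P)\join\deg_\bullet(Q)=\deg_\bullet(P\join Q)\in\Dgp_\bullet$. For \emph{closure under meet}, the global meet is $P\meet Q=(0)^\frown P\cup(1)^\frown Q$, which is a subset of $\pre\omega2$ (prepending a bit to a $0$--$1$ function), and it is $\pzo$: its tree is $(0)^\frown S\cup(1)^\frown T$ together with the empty sequence, which is recursive, and it is nonempty since $P,Q$ are. Hence $\deg_\bullet(P)\meet\deg_\bullet(Q)\in\Dgp_\bullet$ as well. Thus $\Dgp_\bullet$ is closed under the ambient $\join$ and $\meet$, so it is a sublattice, and in particular a distributive lattice.

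For \emph{boundedness} I would identify the endpoints. The least element of $\Dgp_\bullet$ is $\deg_\bullet(P)$ for any $\pzo$ class $P\incl\pre\omega2$ with a recursive member --- e.g.\ $P=\set{\mathbf 0}$ where $\mathbf 0$ is the constant $0$ function, which is $\pzo$ by Proposition~\ref{pzoexamples}(i) --- and this coincides with the global $\zero_\bullet$; any nonempty $\pzo$ class $Q\incl\pre\omega2$ satisfies $\zero_\bullet\leq_\bullet\deg_\bullet(Q)$ by the argument already given in the proof that $\Dgs,\Dgw$ are bounded lattices. The largest element requires the one genuinely non-formal input, namely that there \emph{is} a largest $\pzo$ class in $\pre\omega2$ up to $\leq_\bullet$: by a basis-type / compactness argument (this is the analogue of $\pre\omega2$ being compact and the existence of $\pzo$ classes of maximal degree, e.g.\ the class ${\ssf Sep}(A,B)$ for a suitable pair of effectively inseparable r.e.\ sets $A,B$, which by Proposition~\ref{pzoexamples}(ii) is a $\pzo$ class and is known to compute every $\pzo$ class of a $0$--$1$-valued function). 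I would cite the relevant fact from Section~7 / the literature rather than prove it here, since it is exactly the content that makes the ``local'' structure interesting. Granting that there is such a top element $\mathbf 1_\bullet\in\Dgp_\bullet$, boundedness follows.

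The main obstacle is thus not the lattice-theoretic bookkeeping --- closure under $\join$ and $\meet$ and inheritance of distributivity are routine tree manipulations --- but the existence of a greatest element of $\Dgps$ (and $\Dgpw$), i.e.\ the fact that among nonempty $\pzo$ classes in $\pre\omega2$ there is one of maximal strong (hence weak) degree. This is where compactness of $\pre\omega2$ is essential and is the reason Definition~\ref{defpzodg} restricts to $\pre\omega2$ rather than $\pre\omega\omega$; I would defer the details to the cited development in Section~7.
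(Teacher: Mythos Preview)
Your proposal is correct and follows essentially the same approach as the paper: both argue that the meet, join, and least element are inherited from the ambient lattices $\Dgs$, $\Dgw$ (you spell out the closure of nonempty $\pzo$ classes under $\join$ and $\meet$ in more detail than the paper does), and both defer the existence of a greatest element to the separate result that ${\bf 1}_\bullet=\deg_\bullet(\dnr2)$ is maximal, which is exactly the content of Theorem~A.
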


\begin{proof}
The meet and join operations and ${\bf 0}_\bullet$ are the same as for ${\mathbb D}_\bullet$. However, establishing the existence of a largest element is much more complicated and forms the content of our first main result.
\QED
\end{proof}

\begin{thma}[{\cite[Theorem 3.20]{Si2}}]
The largest element of ${\mathbb P}_\bullet$ is
\[{\bf 1}_\bullet:=\deg_\bullet(\dnr2)
=\deg_\bullet({\ssf CpEx}({\cal T}))\]
for ${\cal T}={}$Peano Arithmetic or any standard first-order theory of arithmetic or sets.
\end{thma}

As noted in the introduction, proofs of main theorems are postponed to the latter sections of the paper.

Although as discussed in Section 2, none of the structures \Dgt, \Dgw\ or \Dgs\ is densely ordered, one of the landmark results of the theory of r.e.~degrees was Sacks' Density Theorem \cite[Theorem VIII.4.1]{Soa} that \Dgpt\ is a dense ordering. Other authors extended this to show that in any nontrivial interval $[\dga,\dgb]$ every countable partial order embeds preserving existing joins and if \dga\ is low, then every countable partial order embeds preserving existing joins and meets \cite[Exercise VIII.4.10]{Soa}. For \Dgw\ and \Dgs, we have the following results.

\begin{thmb}[{\cite[Theorem 14]{CeHi} and \cite[Theorem 1.1]{Co}}] \Dgps\ is densely ordered; in fact, every finite distributive lattice embeds in any nontrivial interval $[\dgp,\dgq]$.
\end{thmb}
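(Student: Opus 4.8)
The plan is to prove density first, by an explicit finite-injury construction of a recursive tree strictly between two given $\Pi^0_1$ classes, and then to extend that construction so as to realize Birkhoff's representation of an arbitrary finite distributive lattice $L$. Throughout, fix nonempty $\Pi^0_1$ classes $P,Q\incl\pre\omega2$ with $\dgp:=\degs(P)\les\degs(Q)=:\dgq$. Replacing $P$ by $P\meet Q$, which has degree $\dgp$ since $\dgp\les\dgq$, we may assume that the recursive functional $\Theta\colon g\mapsto(1)^\frown g$ witnesses $P\leqs Q$, the new $P$ containing $(1)^\frown Q$.

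For density we build a recursive tree $T\incl\pre{<\omega}2$ with $R:=[T]$ nonempty, together with fixed recursive functionals $\Psi$ witnessing $R\leqs Q$ and $\Lambda$ witnessing $P\leqs R$. The paths of $T$ will be codes $\pair g x$ for a path $g\in Q$ together with an auxiliary ``advice'' sequence $x$; we set $\Psi(g):=\pair g{x_g}$ for a canonical advice $x_g$ read computably off $g$ and the construction, and $\Lambda(\pair g x):=\Theta(g)$, so that $\dgp\leqs\degs(R)\leqs\dgq$ holds outright. The work is to thin $T$ so as to meet, for every index $e$, the two requirements
\[\exists\,h\in R\;[\,\Phi_e(h)\notin Q\,]\qand\exists\,f\in P\;[\,\Phi_e(f)\notin R\,],\]
which yield respectively $Q\not\leqs R$ (so $\degs(R)\les\dgq$) and $R\not\leqs P$ (so $\dgp\les\degs(R)$). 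Each requirement is assigned a region of $T$ on which, watching the relevant partial computation, one prunes $T$ so as to defeat $\Phi_e$ while keeping the appropriate class nonempty; the case in which $\Phi_e$ tries to copy the advice back out, and so recover a path of $Q$, is forestalled by choosing $\Psi$ via the Recursion Theorem. Standard priority bookkeeping keeps $T$ recursive and $R$ nonempty.

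For an arbitrary finite distributive lattice $L$, let $J$ be its (finite) poset of join-irreducible elements; by Birkhoff's representation theorem $L$ is isomorphic to the lattice ${\cal O}(J)$ of down-sets of $J$ ordered by inclusion. We build nonempty $\Pi^0_1$ classes $R_D\incl\pre\omega2$, one for each $D\in{\cal O}(J)$, with $\degs(R_D)$ strictly between $\dgp$ and $\dgq$ for every $D$, with $R_D\leqs R_{D'}$ exactly when $D\incl D'$, with $R_{D\cup D'}\eqs R_D\join R_{D'}$, and --- the delicate point --- with $R_{D\cap D'}\eqs R_D\meet R_{D'}$; then $D\mapsto\degs(R_D)$ is a lattice embedding of ${\cal O}(J)\iso L$ into the open interval $(\dgp,\dgq)$, and the case $|J|=1$ recovers density. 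The meet equivalences cannot be arranged set-theoretically: because $\meet$ in $\Dgps$ is the tagged disjoint union $(0)^\frown(\cdot)\cup(1)^\frown(\cdot)$ rather than ordinary union, a naive assignment such as $D\mapsto\degs\bigl(P\join\bigjoin_{j\in D}S_j\bigr)$ for classes $S_j$ realizing $J$ already fails to respect meets. Instead each $R_D$ carries a ``sector'' structure $\bigcup_\sigma(\sigma)^\frown(\cdots)$, indexed by the down-sets, arranged so that $(0)^\frown R_D\cup(1)^\frown R_{D'}$ computably reorganizes into $R_{D\cap D'}$ and conversely; the positive relations $R_D\leqs Q$, $P\leqs R_D$, and $R_D\leqs R_{D'}$ for $D\incl D'$ are built into the shapes of the trees, while their strictness and the absence of any reducibility not forced by $D\incl D'$ are secured by the same finite-injury diagonalizations as in the density construction, now indexed over pairs $(D,D')$ together with functional indices.

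The main obstacle is the interlocking, within a single construction, of the positive coding constraints --- each tree must carry enough of $Q$ for $R_D\leqs Q$ and enough of $P$, via $\Theta$, for $P\leqs R_D$, which pins down most of the tree --- with the negative requirements, which demand freedom to prune; reconciling these while keeping every tree recursive, so that the $R_D$ genuinely are $\Pi^0_1$, and nonempty is the technical heart already present in the density case. For the embedding there is the further difficulty noted above: since $\meet$ is not union, preservation of meets by $D\mapsto\degs(R_D)$ is not formal --- two classes strictly above a common lower bound can have meet equal to that bound only when it splits into matching sectors --- so the sector structure of the $R_D$ must be engineered from the outset to make every $R_D\meet R_{D'}$ compute correctly, compatibly with all the priority requirements at once; the bookkeeping grows with $|J|$.
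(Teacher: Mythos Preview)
The paper does not give a proof of this theorem: immediately after stating Theorems B and C it says ``Both of these results have fairly difficult proofs which will not be included here; the reader is referred to the references,'' namely \cite{CeHi} for density and \cite{Co} for the lattice embedding. So there is no in-paper argument to compare your proposal against.

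Your plan is broadly aligned with those references. The density sketch --- carry a copy of $Q$ inside $R$ together with advice bits so that the positive reductions $P\leqs R\leqs Q$ come for free, then run a priority argument thinning the tree to defeat each $\Phi_e$ in both directions --- is the Cenzer--Hinman architecture. For the embedding, organizing the construction via Birkhoff's representation by down-sets of the join-irreducibles and flagging meets as the nontrivial point matches the shape of Cole's argument, which in fact embeds the free distributive lattice $FD(\omega)$ densely and obtains arbitrary finite distributive lattices as a corollary.

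That said, what you have written is a plan rather than a proof. The passages you yourself label ``the technical heart'' and ``the bookkeeping'' are exactly what occupy the bulk of \cite{CeHi} and \cite{Co}; in particular, the one-line invocation of the Recursion Theorem to ``forestall'' the opponent recovering the $Q$-coordinate, and the undefined ``sector structure'' that is to make $R_D\meet R_{D'}\eqs R_{D\cap D'}$ come out right, are precisely where the difficulty lies, and nothing in the proposal indicates how those mechanisms actually operate. As a summary of the strategy it is accurate; as a proof it is not yet one --- which is consistent with the paper's own decision to cite rather than prove.
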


\begin{thmc}[{\cite[Theorem 4.9]{BiSi1}}] \Dgpw\ is downward-densely ordered; in fact, every countable distributive lattice embeds in any nontrivial initial interval $[{\bf 0},\dgq]$, .
\end{thmc}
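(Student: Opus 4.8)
The proof separates into a lattice-theoretic reduction and a recursion-theoretic construction. \emph{Reduction.} It is a classical fact that the free distributive lattice $U$ on countably many generators $\functionof{x_n}{n\in\omega}$ is universal for countable distributive lattices — every countable distributive lattice embeds into it as a sublattice (alternatively one may use the Fra\"iss\'e limit of the class of finite distributive lattices, or the countable atomless Boolean algebra viewed as a lattice) — so it suffices to embed $U$ into $[{\bf 0},\dgq]$. Fix a nonempty $\pzo$ class $Q\incl\pre\omega2$ with $\degw(Q)=\dgq$; since $\dgq>{\bf 0}$, $Q$ has no recursive member. Using the explicit join $P\join P'=\setof{f\join f'}{f\in P,\ f'\in P'}$ and meet $P\meet P'\eqw P\cup P'$ on $\pzo$ classes together with their finite iterates, one checks that the goal is equivalent to constructing nonempty $\pzo$ classes $\functionof{R_n}{n\in\omega}$, with $R_n\leqw Q$ for every $n$, whose weak degrees satisfy in $\Dgpw$ exactly the relations that the generators $x_n$ satisfy in $U$; as both lattices are distributive, this reduces to the single \emph{independence condition}
\[\text{for all disjoint finite }D,E\incl\omega\text{ with }D\neq\emptyset:\qquad\bigmeet_{j\in D}\degw(R_j)\ \not\leqw\ \bigjoin_{j\in E}\degw(R_j),\]
that is, there are $g_j\in R_j$ ($j\in E$) such that the single function joining them computes no member of $\bigcup_{j\in D}R_j$. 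The instance $E=\emptyset$ makes each $R_n$ nontrivial, the instances $|D|=1$ make the generator map order-reflecting, and distributivity upgrades these to faithfulness on every monotone inequality, yielding the embedding.

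\emph{Construction.} I would build the recursive trees $\tree n$ (with $R_n=[\tree n]$) simultaneously by a priority construction, maintaining the invariant $\tree n\supseteq\tree Q$ at every stage. This single invariant gives $R_n\supseteq Q$, hence $R_n\leqw Q$ by Lemma~\ref{wk=include} (enlarging a class never raises its weak degree), so the construction is free to \emph{adjoin} branches to the $\tree n$ in order to meet the countably many independence requirements, one for each triple $(D,E,e)$. To satisfy the requirement for $(D,E,e)$ one wants branches $g_j$ ($j\in E$) whose join $\set e$ fails to map into $\bigcup_{j\in D}R_j$; such a branch cannot be adjoined as a plain (recursive) path without making $R_j$ trivial, so one instead puts on a fresh cone of $\tree j$, for each $j\in E$, a shifted copy of an auxiliary $\pzo$ class with no recursive member (a copy of $Q$ itself, or of one of a fixed, suitably independent family of $\pzo$ classes), and then argues that inside those copies the required branches already exist. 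Placing the copies on fresh cones also prevents the newly adjoined branches from computing members of $Q$, which is what stops the degrees $\degw(R_j)$ from collapsing to $\dgq$. To distribute infinitely many requirements over the generators one organizes the auxiliary copies into a branching family, using the countable meet $\bigmeet_m P_m=\bigcup_m(m)^\frown P_m$ of Remark~\ref{genmeet}.

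\emph{The obstacle.} The technical heart is the extension lemma: that from any finite stage of the construction a prescribed requirement can be met — equivalently, that the auxiliary copies do contain branches $(g_j)_{j\in E}$ whose join $\set e$ does not map into $\bigcup_{j\in D}R_j$ — together with the priority bookkeeping showing that meeting later requirements does not destroy earlier commitments. This is exactly where $\dgq>{\bf 0}$ is used: were $\set e$ to map every admissible join of branches from the auxiliary copies into $\bigcup_{j\in D}R_j$, a forcing/genericity argument — exploiting that $\bigcup_{j\in D}R_j$ is a proper closed (hence nowhere dense) subset of $\pre\omega2$ with no recursive member, and that the auxiliary copies were taken independent of $Q$ and of one another — would yield a contradiction; making this argument precise, and coordinating it with the bookkeeping so that all requirements cohere, is the bulk of the work. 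A smoother organization replaces $Q$ from the outset by a Muchnik-equivalent separating class ${\ssf Sep}(A,B)$ and carries out each diagonalization by enumerating appropriate numbers into $A$ and $B$, which gives explicit control over where computations can be steered.
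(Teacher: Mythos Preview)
The paper does not contain a proof of Theorem~C: immediately after stating Theorems~B and~C it says explicitly ``Both of these results have fairly difficult proofs which will not be included here; the reader is referred to the references,'' and no later section returns to it. So there is no paper proof to compare your proposal against.

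As a strategy sketch your outline is broadly in the spirit of the Binns--Simpson argument: the reduction to embedding the free distributive lattice on countably many generators is the standard first move, and the reformulation as an independence condition on a family $\functionof{R_n}{n\in\omega}$ of \pzo\ classes below $Q$ is correct. The idea of maintaining $\tree n\supseteq\tree Q$ to secure $R_n\leqw Q$ for free, and then adjoining auxiliary \pzo\ material on fresh cones to meet diagonalization requirements, is also the right shape.

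That said, what you have is not a proof but a proof plan, and you say so yourself: the ``extension lemma'' you flag as the technical heart --- that from any finite stage a prescribed independence requirement can be met without injuring earlier commitments --- is precisely where all the work lies, and you have not carried it out. The loose appeal to ``a forcing/genericity argument'' and the remark that replacing $Q$ by a separating class ``gives explicit control'' are gestures at the right tools, but the actual combinatorics (which in \cite{BiSi1} runs to several pages of priority machinery) is absent. So this is a reasonable orientation toward the proof, not a substitute for it; to complete it you would need to actually state and prove the extension lemma and verify the priority bookkeeping.
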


Both of these results have fairly difficult proofs which will not be included here; the reader is referred to the references. One of the major open questions in the area is

\begin{question} Is \Dgpw\ densely ordered?
\end{question}

Recall from Proposition \ref{embedings} that \Dgt\ embeds in \Dgw\ as a semi-lattice via the mapping 
\[\dga=\degt(A)\mapsto\degw(\set A)=:\dgaw,\]
so it is natural to ask if a similar mapping will provide an embedding of \Dgpt\ into \Dgpw. If $A$ is r.e., $\set A$ is $\Pi^0_2$ but not generally \pzo\ so it does not follow that $\dgaw\in\Dgpw$. It turns out that this problem has a solution that is simple to state, although some work to prove. 

\begin{definition} For any r.e.~set $A$, with $\dga=\degt(A)$, $\dgawstar:=\onew\meet\degw(\set A)$.
\end{definition}

\begin{thmd}[{\cite[Theorem 5.5]{Si3}}]
The mapping $\dga\mapsto\dgawstar$ is a semi-lattice embedding of \Dgpt\ into \Dgpw\ that respects the least and greatest elements --- that is, for all $\dga,\dgb\in\Dgpt$,
\begin{itemize}
\item [\textup{(i)}]$\dgawstar\in\Dgpw$;
\item [\textup{(ii)}]$\dga\leq\dgb\Iff\dgawstar\leq\dgbwstar$;
\item [\textup{(iii)}]$({\bf 0}_T)_{\ssf w}^*=\zerow$\qand $({\bf 1}_T)_{\ssf w}^*=\onew$;
\item [\textup{(iv)}]$(\dga\join\dgb)_{\ssf w}^*=\dgawstar\join\dgbwstar$.
\end{itemize}
\end{thmd}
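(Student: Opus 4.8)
The plan is to treat part~(i), the assertion $\dgawstar\in\Dgpw$, as the one substantial point and to deduce (ii)--(iv) from it by soft arguments. First, $\dgawstar$ depends only on $\dga$: by Lemma~\ref{wk=include} the underlying upward Turing-closed set of $\degw(\set A)$ is $(\set A)^{\geqt}=\setof g{g\geqt A}$, which depends only on $\degt(A)$; and since $\onew=\degw(\dnr2)$ and $P\meet Q\eqw P\cup Q$ for weak degrees, $\dgawstar=\degw(\dnr2\cup\set A)$, so its underlying set is $(\dnr2)^{\geqt}\cup(\set A)^{\geqt}$, namely the set of $g$ that compute a member of $\dnr2$ or satisfy $g\geqt A$. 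Thus (i) amounts to producing a nonempty \pzo\ class $Q_A\incl\pre\omega2$ with $Q_A\eqw\dnr2\cup\set A$, and (ii)--(iv) can be read off from these upward-closed sets.

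For (i), fix a recursive enumeration $A=\bigcup_sA_s$. The task is a recursive tree $T_A\incl\pre{<\omega}2$ with $T_{\dnr2}\incl T_A$ (so that $\dnr2\incl[T_A]$, whence every member of $\dnr2$ computes a path) and $[T_A]\not=\emptyset$, such that (a) every path through $T_A$ computes a member of $\dnr2$ or computes $A$, and (b) $A$ computes a path. A path that stays inside $T_{\dnr2}$ is itself a binary diagonally nonrecursive function, so (a) holds trivially for it; the design problem is to force a path that leaves $T_{\dnr2}$ --- i.e.\ that fails to diagonalize at some coordinate --- to display thereafter enough of the enumeration of $A$ to compute $A$, while arranging that $A$, following its own enumeration past that coordinate, does produce such a path. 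I expect this construction to be the main obstacle: the interleaving must keep $T_A$ recursive and $[T_A]$ nonempty while making the non-diagonalizing paths genuinely code $A$, which forces one to reconcile the monotone, merely r.e.\ (not recursive) character of the enumeration of $A$ with the downward-closed structure of a recursive tree. The construction is carried out in \cite[Theorem~5.5]{Si3}.

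Granting (i), part~(ii) is combinatorial. By Lemma~\ref{wk=include}, $\dgawstar\leq\dgbwstar$ holds iff $(\set B)^{\geqt}\incl(\dnr2)^{\geqt}\cup(\set A)^{\geqt}$, i.e.\ iff every $g\geqt B$ either computes a member of $\dnr2$ or satisfies $g\geqt A$. If $\dga\leq\dgb$, that is $A\leqt B$, this holds. Conversely, assume the condition and apply it with $g=B$: either $B\geqt A$, so $\dga\leq\dgb$; or $B$ computes a member of $\dnr2$, and since $\dnr2\incl{\ssf DNR}$ this is a $B$-computable diagonally nonrecursive function, so $\emptyset'\leqt B$ by Arslanov's completeness criterion (an r.e.\ set that computes a diagonally nonrecursive function is Turing complete), whence $A\leqt\emptyset'\leqt B$ because $A$ is r.e., and again $\dga\leq\dgb$. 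Therefore $\dga\leq\dgb\Iff\dgawstar\leq\dgbwstar$; in particular $\dga\not=\dgb$ gives $\dga\not\leq\dgb$ or $\dgb\not\leq\dga$, hence $\dgawstar\not=\dgbwstar$, so the map is injective, and by (i) its values lie in \Dgpw.

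For (iii), $({\bf 0}_T)_{\ssf w}^*=\onew\meet\degw(\set\emptyset)=\onew\meet\zerow=\zerow$ since $\set\emptyset$ has a recursive member and $\zerow$ is least, while $({\bf 1}_T)_{\ssf w}^*=\onew\meet\degw(\set{\emptyset'})=\onew$ because $\emptyset'$ computes a binary diagonally nonrecursive function --- e.g.\ the one sending $e$ to $1$ if $\set e(e)\downarrow=0$ and to $0$ otherwise --- so $(\set{\emptyset'})^{\geqt}\incl(\dnr2)^{\geqt}$, that is, $\onew\leq\degw(\set{\emptyset'})$. For (iv), $\dga\join\dgb=\degt(A\oplus B)$ with $A\oplus B$ recursively enumerable, and distributivity of \Dgw\ gives
\[\dgawstar\join\dgbwstar=\bigl(\onew\meet\degw(\set A)\bigr)\join\bigl(\onew\meet\degw(\set B)\bigr)=\onew\meet\bigl(\degw(\set A)\join\degw(\set B)\bigr)=\onew\meet\degw(\set A\join\set B);\]
since $\set A\join\set B=\set{\chi_A\join\chi_B}$ and $\chi_A\join\chi_B\eqt\chi_{A\oplus B}$, this equals $\onew\meet\degw(\set{A\oplus B})=(\dga\join\dgb)_{\ssf w}^*$. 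Assembling (i)--(iv), $\dga\mapsto\dgawstar$ is an injective join-semilattice homomorphism of \Dgpt\ into \Dgpw\ that preserves and reflects the order and sends $\zerow$ to $\zerow$ and $\onew$ to $\onew$ --- the asserted embedding.
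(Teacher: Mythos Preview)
Your treatment of (ii)--(iv) matches the paper's: the forward direction of (ii) and all of (iii), (iv) are soft, and the backward direction of (ii) is reduced to Arslanov's completeness criterion exactly as in Section~9. The difference is in (i). You outline a direct, bespoke construction of a recursive tree $T_A\supseteq T_{\dnr2}$ in which any path that fails to diagonalize is forced thereafter to code the enumeration of $A$; you correctly identify this as the hard step but do not carry it out, deferring to \cite{Si3}. The paper takes a different and more general route: it proves Lemma~\ref{existspzo}, which says that for \emph{any} $\Sigma^0_3$ set $S\incl\pre\omega\omega$ and any nonempty \pzo\ class $R\incl\pre\omega2$ there is a \pzo\ class $S^*\incl\pre\omega2$ with $S^*\eqw R\meet S$. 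Since $\set A\in\Pi^0_2\incl\Sigma^0_3$ for any $A\leqt\zerot'$, taking $R\in\onew$ gives (i) immediately, and in fact for all $\Delta^0_2$ degrees, not just r.e.\ ones. The payoff of the paper's approach is reusability: Lemma~\ref{existspzo} is invoked again in the proofs of Theorems~G, H, and~K (to produce \pzo\ representatives of $\dnr2\meet\overline Q$, $Q\meet\set f$, and $\onew\meet\dgr_2$), whereas your ad~hoc tree would handle only the singleton-of-an-r.e.-set case.
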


\begin{question} Is there a similar embedding of \Dgpt\ into \Dgps?
\end{question}

We shall see below that this technique leads to several other results, but the immediate one here is that it suggests regarding \Dgpw\ as a natural extension of \Dgpt. We shall investigate several properties of \Dgpw\ below including ones that may in some sense mark it as a ``nicer" structure than \Dgpt, but only further development will establish the proper relationship between these two structures.
\end{section}

\begin{section}{Implicative Lattices}
One of the original motivations for the study of the Medvedev, and later Mu\v cnik lattices was the hope that they would provide interpretations for intuitionistic or other propositional logics. For our purposes it will be convenient to consider propositional logics with a set {\ssf PS} of \dff{propositional sentences} built in the usual inductive way from a denumerable set \At\ of atomic propositional sentences using the connectives `and' $\land$, `or' $\lor$, `not' $\lnot$ and `implies' $\implies$. A propositional logic is a subset ${\ssf VS}\incl{\ssf PS}$ of \dff{valid sentences} described in some interesting mathematical way, usually via a deductive system or semantical considerations. For example, the \dff{classical  propositional calculus} \CPC\ is the set of tautologies. 

To interpret a logic in a lattice $\mathstr L=(L,\,\leq,\,\meet,\,\join,\,\latz,\,\lato)$, we aim to define a family of mappings $v:{\ssf PS}\to L$ which relate the connectives in natural ways to operations on the lattice and serve to distinguish exactly the valid sentences of the logic. Historically this has been done in the following steps: 
\begin{enumerate}
\item distinguish a unary operation $\latneg$ and a binary operation $\latimpl$ on $\mathstr L$;
\item call $v$ an $\mathstr L$-\dff{valuation} iff for all $\phi,\psi\in{\ssf PS}$
\begin{align*}
v(\phi\land\psi)&=v(\phi)\meet v(\psi),  &v(\phi\lor\psi)&=v(\phi)\join v(\psi),\\
v(\lnot\phi)&=\latneg v(\phi) \qquad\qqand &v(\phi\implies\psi)&=v(\phi)\latimpl v(\psi).
\end{align*}
\item declare a sentence $\phi$ $\mathstr L$-\dff{valid} --- in symbols $\modelof{{\mathstr L}}\phi$ --- iff $v(\phi)=\lato$ for all $\mathstr L$-valuations $v$ and set 
\[\Th({\mathstr L}):=\setof\phi{\modelof{{\mathstr L}}\phi},\]
the \dff{theory of} $\mathstr L$.
\end{enumerate}

The easiest and best-known case is when $\mathstr L$ is a Boolean algebra; $\latneg$ exists by the definition (see Introduction) and $\latimpl$ is defined by $a\latimpl b:=\latneg a\join b$. Then the $\mathstr L$-valid sentences are exactly the tautologies --- that is, $\Th({\mathstr L})=\CPC$.
However, in the present context we have

\begin{proposition} \label{notBA}
\Dgw, \Dgpw, \Dgs, and \Dgps\ are not Boolean algebras.
\end{proposition}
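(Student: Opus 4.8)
The plan is to prove a little more than is asked: in each of \Dgw, \Dgs, \Dgpw, and \Dgps\ the only elements possessing a complement are the bottom $\latz$ and the top $\lato$. Since a Boolean algebra must be complemented, and since all four structures are already known to be bounded distributive lattices, it then suffices to observe that each has at least three elements in order to conclude that none is a Boolean algebra.

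The fact that complemented elements are trivial rests on one elementary remark: for either reducibility $\leq_\bullet$, if $\deg_\bullet(R)=\latz={\bf 0}_\bullet$ then $R$ has a recursive member, because ${\bf 0}_\bullet=\deg_\bullet(\set h)$ for any recursive $h$, and the reduction $R\leq_\bullet\set h$ then produces one (as $\Phi(h)$ for a witnessing functional in the strong case, or as some $f\in R$ computed by the recursive $h$ in the weak case). Now suppose $\deg_\bullet(P)$ and $\deg_\bullet(Q)$ are complementary. Recalling that the meet in each of the four lattices is computed by $P\meet Q=(0)^\frown P\cup(1)^\frown Q$, the hypothesis $\deg_\bullet(P)\meet\deg_\bullet(Q)=\latz$ supplies a recursive point of $(0)^\frown P\cup(1)^\frown Q$; according as it begins with $0$ or with $1$, either $P$ or $Q$ has a recursive member, so $\deg_\bullet(P)=\latz$ or $\deg_\bullet(Q)=\latz$. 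The other hypothesis $\deg_\bullet(P)\join\deg_\bullet(Q)=\lato$ then pins the remaining degree to $\lato$, so $\set{\deg_\bullet(P),\deg_\bullet(Q)}=\set{\latz,\lato}$. This is the heart of the proof, and it is short exactly because the $0$--$1$ coding built into $\meet$ converts any recursive solution of $P\meet Q$ into a recursive solution of $P$ or of $Q$, so a complementary pair cannot consist of two genuinely non-recursive problems.

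It remains to check that each lattice has more than two elements. For \Dgw\ and \Dgs\ this is immediate (indeed they have cardinality $2^{2^{\aleph_0}}$ by an earlier proposition): $\latz$, the top $\lato=\infty_\bullet=\deg_\bullet(\emptyset)$, and $\deg_\bullet(\set f)$ for any non-recursive $f$ are pairwise distinct, the last because $\set f$ has no recursive member and because $\emptyset\not\leq_\bullet\set f$. For \Dgpw\ and \Dgps\ one cannot use a singleton, since $\set f\incl\pre\omega2$ is a \pzo\ class only when $f$ is recursive; here I would instead invoke the results already on the table. First, $\latz={\bf 0}_\bullet\neq\lato$ in these lattices, because $\lato=\deg_\bullet(\dnr2)$ and $\dnr2$ has no recursive member (if $f\in\dnr2$ were recursive, an index $a$ for $f$ would give $f(a)\neq\set a(a)=f(a)$). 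Hence the interval $[{\bf 0}_\bullet,\lato]$ is nontrivial, and by Theorem~C (for \Dgpw) and Theorem~B (for \Dgps) it embeds a three-element chain; alternatively, for \Dgpw\ the embedding of the nontrivial structure \Dgpt\ furnished by Theorem~D already produces a third element.

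The only place where something nontrivial is needed --- and hence the main obstacle --- is this last step for the two local lattices. I do not see an elementary witness to their nontriviality: a singleton $\set f$ is a \pzo\ class only when $f$ is recursive, and the separation problem of the standard recursively inseparable r.e.\ pair turns out to have the top degree $\lato$ rather than an intermediate one. So establishing that \Dgpw\ and \Dgps\ have more than two elements genuinely leans on the harder density/embedding results (Theorems B, C, or D), whose proofs are carried out later in the paper.
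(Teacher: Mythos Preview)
Your proof is correct and follows essentially the same route as the paper: both arguments hinge on the meet-irreducibility of $\latz={\bf 0}_\bullet$ (a recursive element of $P\meet Q$ yields one of $P$ or $Q$), from which it follows that no element strictly between $\latz$ and $\lato$ can have a complement. The paper's proof simply writes ``for ${\bf 0}_\bullet<\dgp<{\bf 1}_\bullet$'' and leaves the existence of such a $\dgp$ implicit, whereas you are more scrupulous in noting that for the local lattices \Dgpw\ and \Dgps\ this existence is itself a nontrivial fact requiring Theorems B, C, D (or K, L); that observation is well taken, though at this point in the exposition those theorems have already been stated and may be cited.
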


\begin{proof}
For $\dgp,\dgq\in{\mathbb D}_\bullet$, 
\[\dgp\meet\dgq={\bf 0}_\bullet\Iff\dgp={\bf 0}_\bullet\qor\dgq={\bf 0}_\bullet\] 
(${\bf 0}_\bullet$ is \dff{meet-irreducible})
since $P\meet Q$ has a recursive element iff one of $P$, $Q$ has a recursive element. Hence, for ${\bf 0}_\bullet<\dgp<{\bf \infty}_\bullet$,
\[\dgp\meet\dgq={\bf 0}_\bullet\Implies\dgq={\bf 0}_\bullet\Implies\dgp\join\dgq=\dgp\not={\bf \infty}_\bullet\]
and thus \dgp\ has no complement. Since by the same argument ${\bf 0}_\bullet$ is still meet-irreducible in \Dgpw\ and \Dgps, we have also in these lattices that for ${\bf 0}_\bullet<\dgp<{\bf 1}_\bullet$,
\[\dgp\meet\dgq={\bf 0}_\bullet\Implies\dgq={\bf 0}_\bullet\Implies\dgp\join\dgq=\dgp\not={\bf 1}_\bullet.\]
For future reference note that in \Dgw\ and \Dgs, we have also
\[\dgp\join\dgq={\bf \infty}_\bullet\Iff\dgp={\bf \infty}_\bullet\qor\dgq={\bf \infty}_\bullet\]
(${\bf \infty}_\bullet$ is \dff{join-irreducible})
since $P\join Q=\emptyset$ iff one of $P$, $Q=\emptyset$.
Hence, for ${\bf 0}_\bullet<\dgp<{\bf \infty}_\bullet$, we could also argue
\[\dgp\join\dgq={\bf \infty}_\bullet\Implies\dgq={\bf \infty}_\bullet\Implies\dgp\meet\dgq=\dgp\not={\bf 0}_\bullet.\qedhere\]
\end{proof}

Some of the virtues of Boolean algebras are shared by the following class of structures.

\begin{definition} A bounded distributive lattice $\mathstr L$ is an \dff{implicative lattice} (\dff{Heyting algebra}) iff for all $a,b\in L$ there exists $e\in L$ such that 
\[(\forall x\in L)\;[a\meet x\leq b\qIff x\leq e].\]
If such $e$ exists, it is denoted by $a\latimpl b$; 
$a\latimpl b$ is the largest element $x$ such that $a\meet x\leq b$. $\latimpl$ is called an \dff{implication operator}. In any implicative lattice we set $\latneg a:=a\latimpl\latz$.
\end{definition}

It is easy to see that in a Boolean algebra, this notation is consistent with that above: $\latneg a\join b$ is the largest element $x$ such that $a\meet x\leq b$ and $\latneg a\join\latz=\latneg a$. Note that although by definition in any implicative lattice $a\meet\latneg a=\latz$ it is not generally true that $a\join\latneg a=\lato$. $\latneg$ is sometimes called a \dff{pseudo-complement}.

The standard example of an implicative lattice that is not a Boolean algebra is the lattice of open sets of a topological space $\code{T,{\cal O}}$. For $A,B\in{\cal O}$ (open sets),
\[A\leq B\Iff A\incl B,\quad A\join B:=A\cup B,\qand A\meet B:=A\cap B.\]
$\cal O$ is not in general a Boolean algebra, since the set complement of an open set is not generally open, but is easily seen to be an implicative lattice with
\[A\latimpl B:={\ssf Interior}\;\bigl((T\setminus A)\cup B\bigr).\]

\begin{proposition}\label{DgwIsImpl}
\Dgw\ is an implicative lattice.
\end{proposition}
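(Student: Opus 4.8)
The plan is to exploit the identification of \Dgw\ with the lattice of upward Turing-closed subsets of \pre\omega\omega\ provided by Proposition~\ref{embedings}(iii). Since being an implicative lattice is a property preserved under lattice isomorphism, and since we already know \Dgw\ (hence its isomorphic copy) is a bounded distributive lattice, it suffices to exhibit a relative pseudocomplement in $\bigl(\power(\pre\omega\omega)^{\geqt},\,\supseteq,\,\cap,\,\cup,\,\pre\omega\omega,\,\emptyset\bigr)$. The candidate I would propose is, for upward-closed $A,B$,
\[A\latimpl B:=(B\setminus A)^{\geqt},\]
the upward Turing closure of the set-theoretic difference.

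The first thing to pin down is the translation dictionary, which is also the only place the argument can really go wrong: the map $\degw(P)\mapsto P^{\geqt}$ is \emph{order-reversing} for ordinary inclusion, so the lattice order $\leq$ of \Dgw\ corresponds to $\supseteq$, the meet $\meet$ corresponds to $\cup$, and the join $\join$ corresponds to $\cap$ on upward-closed sets. With this "inside-out" dictionary, the defining property $[\,a\meet x\leq b\iff x\leq e\,]$ of $e=a\latimpl b$ becomes the assertion that, for every upward-closed $X$,
\[A\cup X\supseteq B\quad\Iff\quad X\supseteq(B\setminus A)^{\geqt}.\]
This is then a two-line computation: $A\cup X\supseteq B$ iff $X\supseteq B\setminus A$, and since $X$ is upward closed this holds iff $X\supseteq(B\setminus A)^{\geqt}$. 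Uniqueness of the relative pseudocomplement is automatic. I expect no genuine obstacle here; the entire content is the bookkeeping of the dictionary, which is why the proof can be kept short. (If one prefers to avoid the isomorphism, the same two lines verify directly from Lemma~\ref{wk=include} that $\dgp\meet\dgr\leq\dgq\iff\dgr\leq\degw(Q^{\geqt}\setminus P^{\geqt})$, once one notes that $P^{\geqt}$ is the canonical representative of $\degw(P)$, so the operation is well defined on degrees.)

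Translated back, this gives the concrete description: for $\dgp=\degw(P)$ and $\dgq=\degw(Q)$,
\[\dgp\latimpl\dgq=\degw\bigl(Q^{\geqt}\setminus P^{\geqt}\bigr)=\degw\bigl(\setof g{(\exists f\in Q)\,f\leqt g\qand(\forall f'\in P)\,f'\not\leqt g}\bigr),\]
where re-closing upward is unnecessary since $P\eqw P^{\geqt}$ for any $P$. I would finish by checking the boundary cases as a sanity test: $\dgp\leq\dgq$ forces $Q^{\geqt}\setminus P^{\geqt}=\emptyset$, so $\dgp\latimpl\dgq=\inftyw=\lato$, as required in any implicative lattice; and $\latneg\dgp=\dgp\latimpl\zerow=\degw(\pre\omega\omega\setminus P^{\geqt})$ collapses to $\zerow$ for every $\dgp\neq\zerow$ (when $\dgp\neq\zerow$ no recursive function lies in $P^{\geqt}$, so $\pre\omega\omega\setminus P^{\geqt}$ already contains a recursive function), recovering the known fact that the pseudo-complement in these lattices is essentially two-valued.
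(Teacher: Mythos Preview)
Your proof is correct and arrives at the same implication operator as the paper, but via a slightly different route. The paper works directly on degrees: it sets $P\latimpl Q:=\setof{g\in Q}{(\forall f\in P)\,f\not\leqt g}$, first checks by hand that this respects $\eqw$, and then verifies the adjunction $P\meet X\leqw Q\Iff X\leqw P\latimpl Q$ through a short chain of equivalences. You instead pass through the isomorphism of Proposition~\ref{embedings}(iii) to the lattice of upward-closed sets, where the relative pseudocomplement is pure set theory and well-definedness is automatic because you are working with canonical representatives. Your formula $Q^{\geqt}\setminus P^{\geqt}$ is the paper's set with $Q$ replaced by $Q^{\geqt}$; the two are visibly weakly equivalent (any $g\in Q^{\geqt}\setminus P^{\geqt}$ computes an $f\in Q$ which, since $f\leqt g\notin P^{\geqt}$, itself lies in the paper's set). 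Your approach buys economy once the isomorphism is available; the paper's direct argument buys self-containment, needing no prior structural result.
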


\begin{proof}
For any $P,Q\incl\pre\omega\omega$, set
\begin{align*}
P\latimpl Q&:=\setof{g\in Q}{P\not\leqw\set g}\\
&\phantom{:}=\setof{g\in Q}{(\forall f\in P)\;f\not\leq_Tg}.
\end{align*}
First we show that this is well-defined on weak degrees --- that is,
\[[P\eqw P'\qand Q\eqw Q']\Implies (P\latimpl Q)\eqw (P'\latimpl Q').\]
Assume $P\eqw P'$ and $Q\eqw Q'$ and fix $g'\in (P'\latimpl Q')$. Then $P\not\leqw\set{g'}$ (since $P'\leqw P$), and there exists $g\in Q$ with $g\leqt g'$ (since $Q\leqw Q'$). Then $P\not\leqw\set g$ so $g\in(P\latimpl Q)$, and we conclude that $(P\latimpl Q)\leqw(P'\latimpl Q')$; the converse holds by symmetry.

That $P\latimpl Q$ has the required degree follows from the following equivalences:
\begin{align*}
P\meet X\leqw Q&\Iff(\forall g\in Q)\,[P\cup X\leqw\set g]\\
&\Iff(\forall g\in Q)\,[P\not\leqw\set g\Implies X\leqw\set g]\\
&\Iff(\forall g\in Q)\,[g\in(P\latimpl Q)\Implies X\leqw\set g]\\
&\Iff X\leqw P\latimpl Q.\qedhere
\end{align*}
\end{proof}

Interestingly, however, with considerably more effort we have

\begin{thme}[{\cite[Theorem 5.4]{Sor0}}]
\Dgs\ is not implicative.
\end{thme}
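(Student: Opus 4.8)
The plan is to exhibit explicit $P,Q\incl\pre\omega\omega$ for which $\setof X{P\meet X\leqs Q}$ (equivalently, $\setof x{\dgp\meet x\leqs\dgq}$ in $\Dgs$) has no $\leqs$-largest element; since $\dgp\latimpl\dgq$, were it to exist, would be such a largest degree, this shows $\Dgs$ is not implicative. The idea is to make $Q$ an infinitely splittable problem and $P$ a lower bound for each ``piece'' of $Q$ that is not uniformly so. Concretely, fix a family $\functionof{g_n}{n\in\omega}$ of non-recursive, pairwise Turing-incomparable functions (e.g.\ $\omega$ many members of the size-$2^{\aleph_0}$ antichain noted in Section 2). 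For $S\incl\omega$ put $Q_S:=\setof{(n)^\frown g_n}{n\in S}$, so that $Q_\omega=\bigmeet_{n}\set{g_n}$ in the sense of Remark \ref{genmeet} and, for finite $F$, $Q_F$ has strong degree $\bigmeet_{n\in F}\set{g_n}$. Finally build $P$: for each $n$ the set $\setof{g_n\join r}{r\text{ recursive}}$ is an infinite set of non-recursive functions $\leqt g_n$ each computing $g_n$, so we may choose $f_n$ in it with $f_n\not=\Phi_e((n)^\frown g_n)$ for every $e<n$ (only finitely many values excluded); put $P:=\setof{f_n}{n\in\omega}$ and $Q:=Q_\omega$.

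Two facts about $P$ drive the argument. First, $P\leqs\set{g_n}$ for every $n$ via the functional $g_n\mapsto f_n$, hence $P\leqs Q_F$ for every finite $F$ by hard-wiring these finitely many reductions. Second, $P\not\leqs Q_S$ for every infinite $S$: a witnessing functional $\Phi$ would satisfy $\Phi((n)^\frown g_n)\in P$ for all $n\in S$, and since $\Phi((n)^\frown g_n)\leqt g_n$ while $f_j\geqt g_j\not\leqt g_n$ whenever $j\not=n$, this forces $\Phi((n)^\frown g_n)=f_n$ for all $n\in S$; but then the choice of the $f_n$ forces $n\leq e$ for an index $e$ of $\Phi$, so $S$ is finite, a contradiction. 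In particular $P\not\leqs Q$ and $P>\latz$.

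Now the $X$ with $P\meet X\leqs Q$ can be read off. Given $\Phi$ witnessing $P\meet X=(0)^\frown P\cup(1)^\frown X\leqs Q$, put $N_1:=\setof n{\Phi((n)^\frown g_n)(0)=1}$ and $N_0:=\omega\setminus N_1$; stripping the leading bit then exhibits recursive functionals showing $P\leqs Q_{N_0}$ and $X\leqs Q_{N_1}$, so by the second fact $N_0$ is finite and $N_1$ cofinite. Conversely, for \emph{any} cofinite $N_1$ (with $N_0:=\omega\setminus N_1$), the set $Q_{N_1}$ already satisfies $P\meet Q_{N_1}\leqs Q$: send $(n)^\frown g_n$ to $(1)^\frown(n)^\frown g_n$ when $n\in N_1$, and to $(0)^\frown f_n$ (computing $f_n$ from $g_n$) for the finitely many $n\in N_0$. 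So let any $X$ in $\setof X{P\meet X\leqs Q}$ be given, witnessed as above with $N_1$ cofinite and $X\leqs Q_{N_1}$; fix $m\in N_1$ and put $X':=Q_{N_1\setminus\set m}$. By the converse, $X'$ also lies in $\setof X{P\meet X\leqs Q}$. But $X'\not\leqs X$: otherwise $Q_{N_1\setminus\set m}\leqs Q_{N_1}$, and applying a witnessing functional to $(m)^\frown g_m$ would return some $(n')^\frown g_{n'}$ with $n'\in N_1\setminus\set m$, hence $g_{n'}\leqt g_m$ with $n'\not=m$ --- contradicting pairwise Turing-incomparability. Thus $\setof X{P\meet X\leqs Q}$ has no $\leqs$-largest member, and $\dgp\latimpl\dgq$ does not exist.

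The one step carrying genuine recursion-theoretic content --- and the main thing to get right --- is the second fact: that no single recursive functional lands inside $P$ on infinitely many of the inputs $(n)^\frown g_n$, although each $g_n$ separately computes a member of $P$. This is exactly where the uniformity demanded by $\leqs$ bites; by contrast, in Proposition \ref{DgwIsImpl} the weak implication $\setof{g\in Q}{P\not\leqw\set g}$ is allowed to be non-uniform over infinitely many $g\in Q$, which is precisely why $\Dgw$ is implicative but $\Dgs$ is not.
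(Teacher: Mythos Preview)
Your proof is correct and follows essentially the same architecture as the paper's: take $Q=\bigmeet_n\set{g_n}$ for a Turing antichain $\functionof{g_n}{n}$, arrange $P$ so that each $(n)^\frown g_n$ computes a member of $P$ but no single functional does this for infinitely many $n$, then split any witness $\Phi:Q\to P\meet X$ by first bit and remove one element from the $X$-side to defeat maximality.

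The one genuine difference is in the choice of $P$. The paper takes $P=\set f$ a \emph{singleton} and builds the $g_n$ above $f$ by a Kleene--Post argument, additionally ensuring $\set n^{(n)^\frown g_n}\not=f$; you instead take any off-the-shelf antichain $\functionof{g_n}{n}$ and let $P=\setof{f_n}{n}$ with $f_n\equiv_T g_n$ diagonalized against the finitely many $\Phi_e$ with $e<n$. Your route trades the Kleene--Post lemma for a non-singleton $P$, which is arguably more elementary; the paper's route yields the slightly sharper statement that already $\degs(\set f)\latimpl\dgq$ fails for every non-recursive $f$. The endgame is the same modulo a cosmetic point: the paper produces $Y$ with $X\les Y$, while you produce $X'$ with $X'\not\leqs X$ --- either suffices to deny a largest element.
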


In the next section we shall consider just what $\Th(\Dgw)$ is, but here we address further questions of implicativity of lattices. We have the following recent results.

\begin{thmf}[{\cite[Theorem 3.2]{Te1}}] \Dgps\ is not implicative.
\end{thmf}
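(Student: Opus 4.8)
The plan is first to pin down what $\mathbf p\latimpl\mathbf q$ would have to be, exploiting the compactness of $\pre\omega2$, and then to build a pair for which that element cannot exist. Fix nonempty \pzo\ classes $P,Q\incl\pre\omega2$ and an arbitrary $X\incl\pre\omega2$, and suppose a recursive functional $\Psi$ witnesses $P\meet X\leqs Q$, i.e.\ $\Psi\colon Q\to(0)^\frown P\cup(1)^\frown X$. Since $Q\incl\pre\omega2$ is compact and the first value $\Psi(g)(0)$ is determined by a finite initial segment of $g$, there is a clopen $C$ with $Q\cap C=\setof{g\in Q}{\Psi(g)(0)=0}$ and $Q\cap\compl C=\setof{g\in Q}{\Psi(g)(0)=1}$, and then $\Psi$ restricts to give $P\leqs Q\cap C$ and $X\leqs Q\cap\compl C$ (reading a reduction to $\emptyset$, the top degree, as vacuously valid). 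Conversely, any clopen $C$ with $P\leqs Q\cap C$ and $X\leqs Q\cap\compl C$ yields $P\meet X\leqs(Q\cap C)\meet(Q\cap\compl C)\leqs Q$. Hence
\[\mathbf p\meet\mathbf x\leqs\mathbf q\qiff(\exists\text{ clopen }C)\,[\,P\leqs Q\cap C\ \text{ and }\ X\leqs Q\cap\compl C\,].\]
Thus the ideal $\setof{\mathbf x}{\mathbf p\meet\mathbf x\leqs\mathbf q}$ is the union of the principal ideals below the ``residual degrees'' $\degs(Q\cap\compl C)$ over clopen $C$ with $P\leqs Q\cap C$, and this family is directed: for valid $C_1,C_2$ the set $C_1\cup C_2$ is again valid (on $Q\cap C_1$ apply the witness for $C_1$, on the decidable remainder $Q\cap(C_2\setminus C_1)$ the witness for $C_2$), with $\degs(Q\cap\compl{C_i})\leqs\degs(Q\cap\compl{(C_1\cup C_2)})$. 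Therefore $\mathbf p\latimpl\mathbf q$ exists iff
\[{\cal D}(P,Q):=\setof{\degs(Q\cap\compl C)}{C\text{ clopen},\ Q\cap\compl C\neq\emptyset,\ P\leqs Q\cap C}\]
has a greatest element (note $\degs(Q)\in{\cal D}(P,Q)$, taking $C=\emptyset$). So it suffices to construct nonempty \pzo\ classes $P,Q\incl\pre\omega2$ with $P\not\leqs Q$ for which ${\cal D}(P,Q)$ has no maximum; since the family is directed, this is the same as a strictly $\leqs$-increasing $\omega$-chain of valid residual degrees cofinal in ${\cal D}(P,Q)$.

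The construction realizing such a pair is where the work lies. One builds $P$ and $Q$ together so that: $Q$ carries a recursively indexed family of pairwise disjoint relatively clopen ``blocks'' $B_0,B_1,\dots$ of strictly increasing Medvedev degree, with each $B_n$ occurring as a residual $Q\cap\compl{C_n}$ of a valid $C_n$ (so $\setof{\degs(B_n)}{n\in\omega}\incl{\cal D}(P,Q)$ is strictly increasing); no valid residual is $\leqs$-above all of the $\degs(B_n)$ (so ${\cal D}(P,Q)$ has no maximum); and $P\not\leqs Q$. The compactness of $\pre\omega2$ is the source of the central difficulty: infinitely many disjoint blocks must accumulate onto a nonempty closed subclass $L\incl Q$, and if $L$ were recursive (in particular if the $B_n$ converged to a single, hence computable, path) then any residual that isolated a tail of blocks would contain that recursive point, collapse to $\zeros$, and ruin everything. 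So $L$ — and each $B_n$ — must be chosen non-recursive and homogeneous enough in Medvedev degree that every residual which meets $L$ has degree bounded by $\degs(L)$, and every residual built from finitely many (clopen pieces of) blocks has degree bounded by the largest block degree occurring in it, with all of these staying strictly below a tail of the sequence $\degs(B_n)$. Then no valid residual can dominate the chain, so ${\cal D}(P,Q)$ has no greatest element.

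The main obstacle, then, is to carry out this construction: one must diagonalize against every recursive functional to keep $P\not\leqs Q$ while simultaneously arranging, for each $n$, that the co-residual $Q\cap C_n$ genuinely computes $P$. The delicate point is that a functional reducing $Q\cap C_n$ to $P$ has to handle all of $L$ together with the other blocks, and when its input descends into the finite stem of a block it has already committed part of the $P$-member it was producing along $L$; so each block must be engineered — using the current finite approximation of the $L\to P$ reduction — so as to permit that partial output to be extended to a member of $P$, yet be hard enough that no single functional manages this along every block at once. Balancing these positive requirements against the negative requirements, while preserving the homogeneity of $L$ and of the $B_n$ and the increase of their degrees, is done by a priority argument; this is exactly the ingredient not needed for the analogous result about the global Medvedev lattice (Theorem E), where one is not confined to \pzo\ subclasses of the compact space $\pre\omega2$ and the limit-class phenomenon does not occur.
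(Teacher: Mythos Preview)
Your clopen-splitting analysis is correct: by compactness of $\pre\omega2$, $P\meet X\leqs Q$ holds iff some clopen $C$ gives $P\leqs Q\cap C$ and $X\leqs Q\cap\compl C$, so (when $P\not\leqs Q$) the implication $\mathbf p\latimpl\mathbf q$ exists in $\Dgps$ iff the directed family $\mathcal D(P,Q)$ of residual degrees has a maximum. But from that point on you only describe a construction---blocks $B_n$ of increasing degree accumulating on a non-recursive limit class $L$ whose Medvedev behaviour must be controlled, positive requirements $P\leqs Q\cap C_n$ to be interleaved with diagonalization against $P\leqs Q$---and defer its execution to an unspecified priority argument. That is the gap: the specification is plausible, but no $P,Q$ are actually produced, and the ``delicate point'' you isolate (extending partial $L\to P$ outputs consistently into every block) is precisely the part that needs a proof.

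The paper's route is considerably simpler and sidesteps the limit-class issue entirely. Mirroring the proof of Theorem~E, it needs only \pzo\ classes $P$ and $\langle R_m\rangle_{m\in\omega}$ satisfying (i)~$P\leqs R_m$, (ii)~$\bigmeet_{n\ne m}R_n\not\leqs R_m$, and (iii)~$\set m^{(m)^\frown g}\notin P$ for all $g\in R_m$; then $Q:=\bigmeet_m R_m$ works exactly as in the unrestricted case. The incomparability~(ii) is not built from scratch but imported from a theorem of Jockusch--Soare giving a uniform sequence of pairwise Medvedev-incomparable separating classes $S_{m+1}$; one sets $R_m:=\sigma_m{}^\frown S_{m+1}$ and $P:=\bigmeet_n S_{\alpha(n)}$, and a short direct construction (each $\sigma_m$ and each value $\alpha(n)$ is revised at most once) secures~(iii) while keeping $\alpha$ surjective for~(i). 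Given $X$ with $P\meet X\leqs Q$ via $\Phi$, one excises the block $(\bar m)^\frown R_{\bar m}$ that $\Phi$ routes into $X$ to form $Y$; property~(i) reroutes that block to $P$, and~(ii) gives $Y\not\leqs X$. Because the blocks are indexed by an unbounded first coordinate rather than packed into $\pre\omega2$, no accumulation class appears and no genuine priority argument is needed.
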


\begin{thmg}[{\cite[Theorem 2]{Hig}}]  \Dgpw\ is not implicative.
\end{thmg}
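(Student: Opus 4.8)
The plan is to refute implicativity directly, producing nonempty $\pzo$ classes $P,Q\incl\pre\omega2$ for which no relative pseudo-complement exists in $\Dgpw$. The first step is bookkeeping. Writing $P^{\geqt}$ for the upward Turing closure as in Section~2, and using Lemma~\ref{wk=include} together with the identity $(P\meet X)^{\geqt}=P^{\geqt}\cup X^{\geqt}$ from the proof of Proposition~\ref{embedings}, one gets, for every nonempty $\pzo$ class $X$,
\[P\meet X\leqw Q\qIff Q^{\geqt}\setminus P^{\geqt}\incl X^{\geqt}.\]
Since $\leqw$ and $\join$ behave well on $\pzo$ classes, the set
\[\cals:=\setof{\degw(X)}{X\text{ a nonempty }\pzo\text{ class}\rand P\meet X\leqw Q}\]
is a nonempty ideal of $\Dgpw$: it is downward closed, and $X_0,X_1\in\cals$ forces $X_0\join X_1\in\cals$ since $(X_0\join X_1)^{\geqt}=X_0^{\geqt}\cap X_1^{\geqt}$. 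A relative pseudo-complement $P\latimpl Q$ in $\Dgpw$ is exactly a largest element of $\cals$, so it suffices to choose $P,Q$ making $\cals$ non-principal. Note that the analogous set in the ambient lattice $\Dgw$ always has a top, $\degw(Q\setminus P^{\geqt})$, by Proposition~\ref{DgwIsImpl} (where $P\latimpl Q=\setof{g\in Q}{g\notin P^{\geqt}}$); the content of the theorem is that $\cals$ itself need not have a largest element.

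Second, I would fix the ``numerator'' by taking $P=\dnr2$, so that by Theorem~A we have $\degw(P)=\onew$ and $P^{\geqt}$ is exactly the set of PA degrees (those computing a member of every nonempty $\pzo$ class); then $Q^{\geqt}\setminus P^{\geqt}$, the set of non-PA degrees computing a member of $Q$, is nonempty precisely when $\degw(Q)\lew\onew$. The task reduces to building a $\pzo$ class $Q$ with $\zerow\lew\degw(Q)\lew\onew$ such that, for every index $e$, the $e$-th $\pzo$ class $E_e$ fails the pseudo-complement property: either $E_e$ is too large, i.e.\ $P\meet E_e$ is not weakly below $Q$ (some non-PA $g$ computing a member of $Q$ computes no member of $E_e$), or there is a nonempty $\pzo$ class $X_e$ with $Q^{\geqt}\setminus P^{\geqt}\incl X_e^{\geqt}$ but $E_e^{\geqt}\not\incl X_e^{\geqt}$ ($E_e$ too small). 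I would carry this out by a priority construction of finite-injury type building $Q$ (as $[T]$ for a recursive tree $T$) together with the auxiliary classes $X_e$, reserving for the $e$-th requirement a column of Cantor space inside $Q$ on which one plants, on one side, the data coding a reduction $P\meet X_e\leqw Q$, and on the other, a branch whose behaviour defeats $E_e$; alternatively one keeps $P$ variable and builds the pair $(P,Q)$ by a finer argument, trading the explicit grip on the PA degrees for more freedom.

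The main obstacle is the tension between the global and local demands on $Q$: the classes $X_e$ must jointly \emph{cover} $Q^{\geqt}\setminus P^{\geqt}$, so that each $X_e$ truly lies in $\cals$, while no single $\pzo$ class covers it optimally, and simultaneously $Q$, $T$, and every $X_e$ must remain recursive with no recursive path, so that the weak degrees stay strictly between $\zerow$ and $\onew$. Controlling, inside the construction, which members of $Q$ happen to have PA degree --- equivalently, keeping $Q^{\geqt}\setminus P^{\geqt}$ from collapsing once the diagonalizations are committed --- is the delicate point, and is where basis and avoidance theorems for $\pzo$ classes (members of $Q$ of non-PA, hyperimmune-free, or sufficiently generic degree) must be brought to bear. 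Granting that bookkeeping, verifying that no $E_e$ serves as $P\latimpl Q$, hence that $\cals$ is non-principal and $\Dgpw$ is not implicative, is routine.
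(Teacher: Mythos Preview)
Your primary approach has a fatal flaw: with $P=\dnr2$ you have $\degw(P)=\onew$, the top of $\Dgpw$, and in any bounded lattice $\lato\latimpl q=q$ always exists. Concretely, for any nonempty $\pzo$ class $X\incl\pre\omega2$ one has $X\leqw\dnr2$ by Theorem~A, hence $P\meet X\eqw X$, so your ideal $\cals$ is simply $\{\degw(X):X\leqw Q\}=[\zerow,\degw(Q)]$, which is principal with top $\degw(Q)$. (Your own equivalence $P\meet X\leqw Q\Iff Q^{\geqt}\setminus P^{\geqt}\incl X^{\geqt}$ shows this too: if $g\in Q\cap P^{\geqt}$ then $g$ has PA degree and therefore computes a member of \emph{every} nonempty $\pzo$ class $X$, so $Q\setminus P^{\geqt}\incl X^{\geqt}$ already forces $Q\incl X^{\geqt}$.) Thus no priority construction on $Q$ can rescue this choice of $P$; the ``alternatively keep $P$ variable'' remark is the only viable direction, but you have not indicated what $P$ should be or how the requirements would be met.

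The paper's argument is quite different and involves no priority construction. It fixes $Q$ to be a Jockusch--Soare antichain class (any two distinct members Turing-incomparable) and takes $P\eqw\dnr2\meet\overline Q$, where $\overline Q$ is the countable $\Sigma^0_3$ set of leftmost branches $\{{\ssf LMB}(Q\cap R_a):a\in\omega\}$; Lemma~\ref{existspzo} puts $P$ in $\Dgpw$. Given $X$ with $P\meet X\leqw Q$, one finds (via Proposition~\ref{subpzostrong}) a $\pzo$ subclass $R\incl Q$ with $P\meet X\leqs R$, argues using the countability of $\overline Q$ and Lemma~\ref{nocountableleqw} that in fact $X\leqs R$, sets $\bar g:={\ssf LMB}(R)\in\overline Q$, and takes $Y\eqw Q\setminus\{\bar g\}$ (again a $\pzo$ degree via Lemma~\ref{existspzo}). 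The antichain property of $Q$ gives $Y\not\leqw X$, while $\bar g\in\overline Q$ gives $P\meet Y\leqw Q$. The crucial point is that $\degw(P)<\onew$, achieved by mixing in the countable set $\overline Q$ --- exactly the ingredient your fixed choice $P=\dnr2$ discards.
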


The scope of our inquiry is greatly expanded by the notion of duality:

\begin{definition}
The \dff{dual} of a lattice ${\mathstr L}=\code{L,\,\leq,\,\join,\,\meet}$ is the structure 
\[{\mathstr L}^\dualmarker:=\code{L,\,\dleq,\,\djoin,\,\dmeet},\]
where
\[\djoin{}:={}\meet,\qquad\dmeet{}:={}\join\qqand\dleq{}:={}\geq.\]
\end{definition}

\begin{proposition}\label{basicdual}
\begin{enumerate}
\item[\textup{(i)}] The dual ${\mathstr L}^\dualmarker$ of a (distributive) (bounded) lattice ${\mathstr L}$ is a (distributive) (bounded) lattice;
\item[\textup{(ii)}] if ${\mathstr L}$ is a Boolean algebra then ${\mathstr L}^\dualmarker$ is again a Boolean algebra; in fact,
$\mathstr L$ and ${\mathstr L}^\dualmarker$ are isomorphic via the mapping $a\mapsto\latneg a$;
\item[\textup{(iii)}]in general ${\mathstr L}$ and ${\mathstr L}^\dualmarker$ are not isomorphic. \hfill\noproof
\end{enumerate}
\end{proposition}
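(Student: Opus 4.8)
The plan is to handle the three parts in order; parts (i) and (ii) are just the observation that the relevant lattice axioms are self-dual, and part (iii) needs only one small counterexample.

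For (i) I would invoke the ``duality principle'' for lattices. The converse $\geq$ of a reflexive, transitive and antisymmetric relation is again reflexive, transitive and antisymmetric, so ${\mathstr L}^\dualmarker$ is a partial ordering. The condition defining $a\join b$ as the least upper bound of $\set{a,b}$ with respect to $\leq$ is, verbatim, the condition defining $a\join b$ as the greatest lower bound of $\set{a,b}$ with respect to $\dleq$; hence $\join$ is a meet operation for ${\mathstr L}^\dualmarker$ and, symmetrically, $\meet$ is a join operation, so ${\mathstr L}^\dualmarker=\code{L,\dleq,\djoin,\dmeet}$ is a lattice. If $\mathstr L$ is bounded, then $\latz$ is $\dleq$-greatest and $\lato$ is $\dleq$-least, so ${\mathstr L}^\dualmarker$ is bounded with $\latz^\dualmarker=\lato$ and $\lato^\dualmarker=\latz$. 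Finally, the two distributive laws are carried onto each other by interchanging $\join$ and $\meet$, so distributivity of $\mathstr L$ passes to ${\mathstr L}^\dualmarker$. None of this is an obstacle.

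For (ii), by part (i) we already know ${\mathstr L}^\dualmarker$ is a bounded distributive lattice, so it only remains to exhibit a complement operation. I claim the complement $\latneg$ of $\mathstr L$ itself serves: since $\dmeet=\join$, $\djoin=\meet$, $\latz^\dualmarker=\lato$ and $\lato^\dualmarker=\latz$, the identities $a\meet\latneg a=\latz$ and $a\join\latneg a=\lato$ valid in $\mathstr L$ become, read in ${\mathstr L}^\dualmarker$, the identities $a\djoin\latneg a=\lato^\dualmarker$ and $a\dmeet\latneg a=\latz^\dualmarker$, which are precisely the complement laws for ${\mathstr L}^\dualmarker$. For the isomorphism I would use the standard facts that in a Boolean algebra $\latneg$ is an involution ($\latneg\latneg a=a$, from uniqueness of complements in a distributive lattice) and is order-reversing ($a\leq b\Iff\latneg b\leq\latneg a$). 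Then $\varphi(a):=\latneg a$ is a bijection of $L$ onto itself satisfying $a\leq b\Iff\varphi(a)\geq\varphi(b)\Iff\varphi(a)\dleq\varphi(b)$, that is, an order isomorphism of $\mathstr L$ onto ${\mathstr L}^\dualmarker$; equivalently one checks De Morgan, $\latneg(a\join b)=\latneg a\meet\latneg b=\latneg a\djoin\latneg b$ and dually, together with $\latneg\latz=\lato=\latz^\dualmarker$, so $\varphi$ is a Boolean-algebra isomorphism.

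For (iii) it is enough to exhibit a single bounded distributive lattice that is not isomorphic to its dual. I would take the five-element lattice $\mathstr L$ on $\set{\latz,a,b,c,\lato}$ with $\latz<a<b<\lato$ and $\latz<a<c<\lato$ and $b,c$ incomparable (so $b\meet c=a$ and $b\join c=\lato$). One checks directly that this is a lattice; it is distributive because it contains no sublattice isomorphic to $M_3$ (it has no antichain of three elements) and none isomorphic to $N_5$ (an $N_5$-sublattice of a five-element lattice is the whole lattice, and would require an element incomparable to two others forming a chain, but in $\mathstr L$ the only comparable pairs are $\set{a,b}$ and $\set{a,c}$, each containing $a$, while $a$ is comparable to every element). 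Now $\mathstr L$ has exactly one atom, $a$, and two coatoms, $b$ and $c$, whereas ${\mathstr L}^\dualmarker$ has two atoms and one coatom; since a lattice isomorphism preserves atoms and coatoms, $\mathstr L\not\iso{\mathstr L}^\dualmarker$. (An infinite alternative: $\powerom\cup\set\omega$, the lattice of finite subsets of $\omega$ together with $\omega$, has atoms but no coatoms, while its dual has coatoms but no atoms.) There is no genuine obstacle anywhere; the most one verifies by hand is the distributivity of the lattice in (iii), and the properties of Boolean complementation used in (ii) are entirely standard.
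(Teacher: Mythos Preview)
Your proof is correct. The paper provides no proof of this proposition at all --- it is marked \textit{noproof} and treated as elementary folklore --- so there is nothing to compare against; your verification of all three parts is sound, with the five-element lattice in (iii) being a perfectly good witness to non-self-duality.
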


\begin{definition}
A bounded distributive lattice $\mathstr L$ is \dff{dual-implicative}\hfill\break
(a \dff{Brouwer algebra}) iff its dual ${\mathstr L}^\dualmarker$ is an implicative lattice. 
Unfolding the definition: $\mathstr L$ is dual-implicative iff for all $a,b\in L$ there exists $e\in L$ such that
\[(\forall x\in L)\;[b\leq a\join x\qIff e\leq x]\]
If such $e$ exists it is denoted by $a\dlatimpl b$; 
$a\dlatimpl b$ is the smallest element $x$ such that $b\leq a\join x$. When $\mathstr L$ is dual-implicative we set $\dlatneg a:=a\dlatimpl\lato$ and write $\Thd({\mathstr L})$ for $\Th({\mathstr L}^\dualmarker)$.
\end{definition}

Note that in a Boolean algebra, $a\dlatimpl b=\latneg a\meet b=b-a$
(\dff{relative complement}) and $\dlatneg a=\latneg a$.

\begin{proposition}[\cite{Med55}]
Both \Dgw\ and \Dgs\ are dual-implicative.
\end{proposition}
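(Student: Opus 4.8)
The plan is to exhibit, for weak and strong degrees, an explicit operation playing the role of $\dlatimpl$, i.e.\ for $P,Q\incl\pre\omega\omega$ a set $P\dlatimpl Q$ whose $\bullet$-degree is the smallest $x$ with $\deg_\bullet(Q)\leq\deg_\bullet(P)\djoin x$, equivalently $Q\leq_\bullet P\meet(P\dlatimpl Q)$ in the original lattice. Since $P\meet R=(0)^\frown P\cup(1)^\frown R$, unravelling $Q\leq_\bullet P\meet R$ shows that $R$ must solve exactly those instances of $Q$ that $P$ fails to solve. So the natural candidate is
\[P\dlatimpl Q:=\setof{(0)^\frown g}{g\in Q}\;\cup\;\setof{(1)^\frown g}{g\in Q\rand P\not\leq_\bullet\set g}\]
for the weak case (and an indexed variant for the strong case, attaching to each $g\in Q$ an index $a$ with $\set a^g$ solving the relevant instance, mirroring the $\set f^+$ construction used earlier in the paper). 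The first coordinate lets any solver of $P\meet(P\dlatimpl Q)$ that happens to land on the $P$-side read off a $Q$-solution directly; the second coordinate supplies $Q$-solutions precisely where $P$ is unhelpful.

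First I would check that this is well-defined on $\bullet$-degrees: if $P\eqw P'$ and $Q\eqw Q'$ then $P\dlatimpl Q\eqw P'\dlatimpl Q'$, by the same symmetric chase used in the proof of Proposition~\ref{DgwIsImpl} (replace $g'$ by a $\leq_T$-smaller $g$ on the $Q$-side, and use $P'\leqw P$ to transfer the failure clause $P\not\leqw\set g$). Second, I would verify $Q\leq_\bullet P\meet(P\dlatimpl Q)$: given $h\in P\meet(P\dlatimpl Q)$, if $h=(0)^\frown f$ with $f\in P$ then $h=(0)^\frown(0)^\frown g$ is impossible unless we are careful with the coding, so it is cleanest to note that a solution to $P\meet(P\dlatimpl Q)$ is either a solution to $P$ or a solution to $P\dlatimpl Q$; in the first case, combined with \emph{any} member of $P\dlatimpl Q$ one is on familiar ground, but more directly: every $g$ appearing in $P\dlatimpl Q$ on either side is a member of $Q$, and from a solution to $P$ together with the structure one recovers a $Q$-solution on the $(0)$-branch, while on the $(1)$-branch the failure clause guarantees the solution already computes a member of $Q$. (The honest bookkeeping here is routine but must be done; it is the analogue of the meet computation in Proposition~\ref{DgwIsImpl}.) Third, and this is the crux, I would show minimality: if $Q\leq_\bullet P\meet R$ then $P\dlatimpl Q\leq_\bullet R$. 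From $Q\leq_\bullet P\meet R$ one extracts, for each $g\in R$ with $P\not\leq_\bullet\set g$, a $\leq_T$-computation (uniform, in the strong case) of a $Q$-solution from $g$; this exactly maps the $(1)$-side of $P\dlatimpl Q$ into the solutions computable from $R$, and the $(0)$-side is handled because a solution to $P\meet R$ restricted to its $P$-component yields a $Q$-solution, but we need $P\dlatimpl Q\leq_\bullet R$, so on the $(0)$-branch we observe every $(0)^\frown g$ with $g\in Q$ is reducible to $P$, hence to $P\meet R$, hence\dots—again a short chase, using that $P\meet R\leq_\bullet R$.

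The main obstacle I anticipate is the \emph{strong} case, where uniformity forbids the simple ``$P\not\leq_\bullet\set g$'' clause — there is no single functional witnessing $\set g\not\geq_\bullet P$ uniformly in $g$, exactly the phenomenon flagged in the paper before Definition~\ref{defpzodg}'s vicinity (the $\set f^+$ discussion). The fix is the same indexing trick used there: define the $(1)$-branch of $P\dlatimpl Q$ as $\setof{(1)^\frown(a)^\frown g}{g\in Q,\ \set a^g\in Q,\ \set a^g\rand g\text{ are the relevant data},\ P\not\leqs\set g}$, so that the witnessing functional is built into the element and minimality becomes uniform. Verifying that the indexed set still has the correct strong degree, and in particular that $Q\leqs P\meet(P\dlatimpl Q)$ holds with a single functional, is where the care is needed; everything else is the bounded-distributive-lattice routine already rehearsed for $\Dgw$ being implicative. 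For reference one may also cite \cite{Med55} and \cite{Sor0}, which contain these computations.
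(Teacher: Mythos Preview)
Your proposal rests on a mis-dualization. Unraveling the definition, $a\dlatimpl b$ is the \emph{smallest} $x$ in the \emph{original} order such that $b\leq a\join x$, where $\join$ is the \emph{original} join of $\Dgs$ or $\Dgw$ --- that is, the operation represented by $P\join X=\setof{f\oplus h}{f\in P,\ h\in X}$. You have instead written the condition as $Q\leq_\bullet P\meet(P\dlatimpl Q)$ and built everything around the meet $(0)^\frown P\cup(1)^\frown R$. That is the wrong operation, and the rest of the argument inherits the error. A quick sanity check shows your candidate set always has weak degree exactly $\degw(Q)$ (it contains a copy of $Q$ on the $(0)$-branch and every element computes a member of $Q$), whereas the true $P\dlatimpl Q$ must collapse to $\zerow$ whenever $Q\leqw P$; so your formula cannot be right.

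The paper's construction reflects the correct condition $Q\leq_\bullet P\join X$ directly: for the weak case
\[P\dlatimpl Q:=\setof{h}{(\forall f\in P)(\exists g\in Q)\ g\leqt f\oplus h},\]
i.e.\ exactly those $h$ which, paired with \emph{any} $f\in P$, compute a solution to $Q$. The verification that $Q\leqw P\join X\Iff(P\dlatimpl Q)\leqw X$ is then two short lines. For the strong case one needs uniformity in the choice of the reduction, and here your instinct about indexing is correct: the paper takes
\[P\dlatimpl Q:=\setof{(a)^\frown h}{(\forall f\in P)\ \set a^{f\oplus h}\in Q},\]
attaching to $h$ an index $a$ that works uniformly over $P$. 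Once you have the right target inequality $Q\leq_\bullet P\join X$, both constructions and their verifications are short and natural; the difficulty you anticipated on the $(1)$-branch disappears because there is no case split on whether $P$ ``helps'' --- every element of $P\dlatimpl Q$ is by definition one that helps when joined with all of $P$.
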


\begin{proof}
For \Dgw\ we set 
\[P\dlatimpl Q:=\setof h{(\forall f\in P)(\exists g\in Q)\;g\leq f\oplus h}\]
and first check that this is defined on weak degrees --- in fact,
\[[P\eqw P'\qand Q\eqw Q']\Implies (P\dlatimpl Q)= (P'\dlatimpl Q').\]
Assume the hypothesis and fix $h'\in(P'\dlatimpl Q')$. For any $f\in P$, there exists $f'\in P'$ with $f'\leqt f$ (since $P'\leqw P$). Choose $g'\in Q'$ such that $g'\leqt f'\oplus h'$ (since $h'\in(P'\latimpl Q')$) and $g\leqt g'$ with $g\in Q$ (since $Q\leqw Q'$).  Then
$g\leqt f\oplus h'$ and we conclude that $h'\in (P\dlatimpl Q)$. By symmetry, $(P\dlatimpl Q)= (P'\dlatimpl Q')$.

Now we claim that for any $X$,
\[Q\leqw P\join X\Iff(P\dlatimpl Q)\leqw X.\]
$(\Longrightarrow)$: if $(\forall f\in P)(\forall h\in X)(\exists g\in Q)\;g\leq_Tf\oplus h$, then $X\incl(P\dlatimpl Q)$ so $(P\dlatimpl Q)\leqw X$.

\smallskip
\noindent$(\Longleftarrow)$: if $(\forall k\in X)(\exists h\in (P\dlatimpl Q))\,h\leq_T k$,
then for any $f\in P$ and $k\in X$ there is $g\in Q$ with $g\leq_Tf\oplus h\leq_T f\oplus k$. Thus $Q\leqw P\join X$.

For \Dgs\ we take an indexed version of the same set:
\[P\dlatimpl Q:=\setof{(a)^\frown h}{(\forall f\in P)\;\set a^{f\oplus h}\in Q}\]
Here we need to establish that this is defined on strong degrees (by an indexed version of the above argument) and that
\[Q\leqs P\join X\qIff(P\dlatimpl Q)\leqs X.\]
$(\Longrightarrow)$: Suppose $\set a:P\join X\to Q$. Then
$(\forall f\in P)(\forall h\in X)\;\set a^{f\oplus h}\in Q$, so
$(\forall h\in X)\;(a)^\frown h\in(P\dlatimpl Q)$ and thus
$h\mapsto (a)^\frown h$ witnesses that $(P\dlatimpl Q)\leqs X$.

\smallskip
\noindent$(\Longleftarrow)$: Suppose $k\mapsto(a_k)^\frown h_k$ witnesses that $(P\dlatimpl Q)\leqs X$. Then $f\oplus k\mapsto \set{a_k}^{f\oplus h_k}$ witnesses that $Q\leqs P\join X$.
\QED
\end{proof}

For the local versions we know only

\begin{thmh}[{\cite[Theorem 1]{Si4}}]
\Dgpw\ is not dual-implicative
\end{thmh}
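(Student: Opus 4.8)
The plan is to exhibit two $\Pi^0_1$ classes $P,Q\incl\pre\omega2$ such that the family $\setof{X}{Q\leqw P\join X}$ of "solutions" has no $\leqw$-least element among the $\Pi^0_1$ classes, even though it is nonempty. By the characterization of dual-implicativity, $\Dgpw$ has $\dgp\dlatimpl\dgq$ precisely when this family has a least degree that is again the degree of a nonempty $\Pi^0_1$ class; so it suffices to destroy the existence of such a least element. The natural strategy, following the pattern already used in this section for the negative implicativity results, is to take $Q=\dnr2$ (so $\degw(Q)=\onew$, the top of $\Dgpw$ by Theorem A) and to choose $P$ cleverly so that $Q\leqw P\join X$ forces $X$ to be "almost as hard as $\onew$" in a way no single $\Pi^0_1$ class can optimally witness. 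A convenient choice is $P=\set f$ for a suitable $f$; but since $\set f$ need not be a $\Pi^0_1$ class, one instead works with $\onew\meet\set f$-style constructions, i.e. with $P$ a $\Pi^0_1$ class all of whose members compute some fixed incomplete but sufficiently generic oracle.

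The key steps, in order, are as follows. First, reduce the problem to a combinatorial statement: $\Dgpw$ is dual-implicative iff for every pair of nonempty $\Pi^0_1$ classes $P,Q\incl\pre\omega2$ there is a nonempty $\Pi^0_1$ class $E\incl\pre\omega2$ with $\degw(E)$ the $\leqw$-least degree of a $\Pi^0_1$ class $X$ satisfying $Q\leqw P\join X$; negate this. Second, fix $Q=\dnr2$. For the counterexample it is enough to produce a single nonempty $\Pi^0_1$ class $P$ such that the set $\mathcal{C}(P):=\setof{\degw(X)}{X\text{ a nonempty }\Pi^0_1\text{ class},\ \dnr2\leqw P\join X}$ has no least element. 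Third, arrange $P$ so that membership of $X$ in $\mathcal C(P)$ is equivalent to $X$ computing, for every $f\in P$, a diagonally non-recursive function relative to $f\oplus(\cdot)$ in a way that can be "spread out": concretely one wants a sequence of $\Pi^0_1$ classes $X_0,X_1,\dots$ each in $\mathcal C(P)$ with $\degw(X_{n+1})\lew\degw(X_n)$ and with no $\Pi^0_1$ lower bound still in $\mathcal C(P)$. This last point is where one invokes the machinery behind Theorem C (downward density of $\Dgpw$, every countable distributive lattice embedding into an initial interval) together with the failure, for $\Pi^0_1$ classes, of the generalized-meet construction of Remark~\ref{genmeet} to land inside $\Dgpw$: the weak degree $\bigmeet_n\degw(X_n)=\degw(\bigcup_n (n)^\frown X_n)$ exists in $\Dgw$ but its natural representative is not a $\Pi^0_1$ subset of $\pre\omega2$, and one shows no $\Pi^0_1$ class realizes both "$\leq$ all $X_n$" and "$\in\mathcal C(P)$."

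The main obstacle is the third step — constructing $P$ and the descending chain $(X_n)$ and proving there is no $\Pi^0_1$ infimum inside $\mathcal C(P)$. This requires a genuine priority or forcing construction: one must simultaneously (a) keep each $X_n$ a $\Pi^0_1$ class, (b) maintain $\dnr2\leqw P\join X_n$ uniformly enough that the condition is preserved along the chain, and (c) diagonalize against every candidate $\Pi^0_1$ class $E=[T_e]$ for being a lower bound in $\mathcal C(P)$, either by forcing $E\not\leqw X_n$ for some $n$ or by forcing $\dnr2\not\leqw P\join E$. The delicate tension is between (b) and (c): making the chain cofinal enough to have no $\Pi^0_1$ infimum tends to push the degrees up toward $\onew$, where $\dnr2\leqw P\join X$ becomes easy and hard to violate. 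I expect the resolution to parallel \cite[Theorem 1]{Si4}: pick $P$ so that $P\join X$ computes a DNR function iff $X$ already does essentially all the work, reducing the whole question to the known fact that the $\Pi^0_1$ classes of DNR-type functions do not have the requisite lattice-theoretic closure. I would structure the write-up to quote the Si4 construction for the hard core and supply the reduction in steps one and two in full detail.
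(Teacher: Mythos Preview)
Your proposal is not so much a proof as a plan, and the plan aims at a much harder target than is needed. The paper's argument does not construct a descending chain $(X_n)$ and then diagonalize against every candidate $\Pi^0_1$ infimum $E$; instead it proves the stronger and simpler statement that for \emph{every} $\Pi^0_1$ class $X$ with $Q\leqw P\join X$ there is a $\Pi^0_1$ class $Y\lew X$ with $Q\leqw P\join Y$. No priority argument, no chain, no diagonalization against all $[T_e]$ is required.

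The key idea you are missing is a cupping argument. Fix $Q$ arbitrary, choose (via a forcing-with-$\Pi^0_1$-classes construction) a $\Delta^0_2$ function $f$ of incomplete degree with $Q\not\leqw\set f$, and take $P$ a $\Pi^0_1$ class with $P\eqw Q\meet\set f$ (Lemma~\ref{existspzo}). Given any $X$ with $Q\leqw P\join X$, one shows $X\not\leqw\set f$ and then, by a second forcing construction (Lemma~\ref{notleqwg}(ii)), produces a $\Delta^0_2$ function $g$ with $X\not\leqw\set g$ and $f\oplus g\equiv_T\emptyset'$. Now $Y\eqw X\meet\set g$ is $\Pi^0_1$ (again Lemma~\ref{existspzo}), strictly below $X$, and a short distributivity calculation using $\set f\join\set g\geqw\dnr2\geqw Q$ gives $P\join Y\geqw Q$. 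Your step~(c) --- ruling out every $\Pi^0_1$ lower bound --- is where your outline admits it has no concrete argument; the paper avoids that step entirely by producing $Y$ directly from $X$. The expectation that the Si4 construction ``parallels'' your descending-chain approach is mistaken: Si4 is exactly the argument just described.
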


\begin{question}
Is \Dgps\ dual-implicative?
\end{question}

The following table summarizes the current state of knowledge:
\begin{center}
\begin{tabular}{ l | c  c }
    & Implicative & Dual-implicative \\[5pt]
    \hline\\
  \Dgs& No & Yes \\[10pt]
  \Dgw& Yes & Yes \\[10pt]
  \Dgps&No&{??}\\[10pt]
  \Dgpw&No&No\\
\end{tabular}
\end{center}

Several other recent papers study other aspects of these lattices. \cite{LSS} considers (among others)
\begin{align*}
{\mathbb D}_{\ssf s}^{\ssf cl}
&:=\setof{\degs(P)}{P\incl\pre\omega\omega\text{ and }P\text{ is closed}}\\
{\mathbb D}_{\ssf s}^{\ssf de}
&:=\setof{\degs(P)}{P\incl\pre\omega\omega\text{ and }P\text{ is dense in }\pre\omega\omega}\\
{\mathbb D}_{\ssf s}^{\ssf di}
&:=\setof{\degs(P)}{P\incl\pre\omega\omega\text{ and }P\text{ is discrete}}
\end{align*}
Each of these forms a sublattice of \Dgs. ${\mathbb D}_{\ssf s}^{\ssf cl}$ is shown not to be dual-implicative, but this question is left open for the other structures. Many ideals and filters of \Dgs\ are also considered as are the corresponding sets of degrees of subsets of \pre\omega2.

\cite{LNS} establishes that the first-order theories of both \Dgs\ and \Dgw\ are as complicated as possible in that they are recursively isomorphic to the third-order theory of the natural numbers, or equivalently the second-order theory of the real numbers. \cite{BCRW} introduces sublattices ${\mathbb P}_{\ssf s}^K$ and ${\mathbb P}_{\ssf w}^K$ (the K-{\it trivial} degrees) of \Dgps\ and \Dgpw\ and shows that they are unbounded and that ${\mathbb P}_{\ssf s}^K$ is densely ordered.
\end{section}

\begin{section}{Lattice Logics}
For any implicative lattice $\mathstr L$, $\Th({\mathstr L})$ is a set of propositional sentences that we call the \dff{theory of} $\mathstr L$. To justify this terminology we have directly from the definitions
 
 \begin{proposition}\label{closedMP}
 For any implicative lattice $\mathstr L$, 
 for all $a,b\in L$, $a\leq b\Iff a\latimpl b=\lato$;
hence $\Th({\mathstr L})$ is closed under \dff{modus ponens}; in fact, for all propositional sentences $\phi$ and  $\mathstr L$-valuations $v$,
 \[\text{if\quad }v(\phi)=\lato \qand v(\phi)\latimpl v(\psi)=\lato,
\quad\text{then}\quad v(\psi)=\lato.\noproof\]
\end{proposition}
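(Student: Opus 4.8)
The plan is to prove the three assertions in sequence; each is a direct unwinding of the definition of $a\latimpl b$ as the largest $x\in L$ with $a\meet x\leq b$.

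First I would establish $a\leq b$ if and only if $a\latimpl b=\lato$. For the forward implication, assuming $a\leq b$, every $x\in L$ satisfies $a\meet x\leq a\leq b$; since $a\latimpl b$ is the greatest element with this property and $\lato$ qualifies, $a\latimpl b=\lato$. Conversely, if $a\latimpl b=\lato$ then $\lato$ itself satisfies $a\meet\lato\leq b$, and $a\meet\lato=a$, so $a\leq b$.

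Next I would prove the ``in fact'' clause, which is the heart of the modus ponens statement. Fix propositional sentences $\phi,\psi$ and an $\mathstr L$-valuation $v$, and suppose $v(\phi)=\lato$ and $v(\phi)\latimpl v(\psi)=\lato$. Applying the equivalence just proved with $a=v(\phi)$ and $b=v(\psi)$, the second hypothesis gives $v(\phi)\leq v(\psi)$, so $\lato=v(\phi)\leq v(\psi)$, whence $v(\psi)=\lato$. Closure of $\Th(\mathstr L)$ under modus ponens then follows by quantifying over valuations: if $\modelof{{\mathstr L}}\phi$ and $\modelof{{\mathstr L}}(\phi\implies\psi)$, then for each $\mathstr L$-valuation $v$ we have $v(\phi)=\lato$ and, by the defining clause $v(\phi\implies\psi)=v(\phi)\latimpl v(\psi)$, also $v(\phi)\latimpl v(\psi)=\lato$; the computation just given yields $v(\psi)=\lato$ for every such $v$, i.e. $\modelof{{\mathstr L}}\psi$.

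I do not anticipate any real obstacle here: the whole argument consists of three short applications of the universal property of $\latimpl$. The only point deserving care is bookkeeping of the quantifier over valuations --- the lattice-theoretic equivalence $a\leq b\iff a\latimpl b=\lato$ lives in $L$ and must be invoked once per valuation to yield the logical closure property.
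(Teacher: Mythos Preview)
Your argument is correct. The paper gives no proof of this proposition at all---it is stated as immediate from the definitions and marked with a bare \qed---so your routine unwinding of the universal property of $\latimpl$ is exactly the verification the paper omits.
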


We first note that $\Th({\mathstr L})$ will generally not coincide with \dff{classical propositional calculus}, the set of tautologies denoted by \CPC: 

\begin{proposition}
For any implicative lattice $\mathstr L$ and any propositional sentence $\phi$,  $\lnot(\phi\land\lnot\phi)\in\Th({\mathstr L})$, but generally $(\phi\lor\lnot\phi)\notin\Th({\mathstr L})$
\end{proposition}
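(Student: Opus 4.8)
The plan is to prove the two assertions separately, the first by a direct lattice calculation valid in any implicative lattice, the second by exhibiting a single concrete implicative lattice in which the law of excluded middle fails for some valuation.

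For the positive part, I would argue semantically. Fix an $\mathstr L$-valuation $v$ and write $a:=v(\phi)$, so that $v(\lnot\phi)=\latneg a=a\latimpl\latz$ and hence $v(\phi\land\lnot\phi)=a\meet\latneg a$. By the very definition of $\latneg a$ as the largest $x$ with $a\meet x\leq\latz$, we get $a\meet\latneg a=\latz$, so $v(\phi\land\lnot\phi)=\latz$. Now $v(\lnot(\phi\land\lnot\phi))=\latneg\latz=\latz\latimpl\latz$, and the largest $x$ with $\latz\meet x=\latz\leq\latz$ is $\lato$; hence $v(\lnot(\phi\land\lnot\phi))=\lato$. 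Since $v$ was arbitrary, $\lnot(\phi\land\lnot\phi)\in\Th({\mathstr L})$.

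For the negative part, I would recall the standard example from the preceding discussion: the lattice $\cal O$ of open subsets of a topological space, with $A\latimpl B={\ssf Interior}((T\setminus A)\cup B)$ and hence $\latneg A={\ssf Interior}(T\setminus A)$. Take $T=\reals$ (or any space with a dense open set whose complement is nonempty and has empty interior — e.g.\ $T=\reals$, $A=\reals\setminus\{0\}$), and define a valuation $v$ with $v(p)=A$ for one chosen atomic sentence $p$ and $v$ arbitrary elsewhere; such a valuation exists since a valuation is freely determined by its values on atomic sentences. Then $\latneg A={\ssf Interior}(\{0\})=\emptyset$, so $v(p\lor\lnot p)=A\cup\emptyset=A=\reals\setminus\{0\}\neq\reals=\lato$. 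Thus $(p\lor\lnot p)\notin\Th({\cal O})$, establishing that excluded middle is not in general valid. Taking $\phi=p$ gives the displayed failure; one may remark that the quantifier ``generally'' is to be read as ``for some implicative lattice and some $\phi$,'' since of course in a Boolean algebra $\phi\lor\lnot\phi$ always holds.

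The only point requiring a little care — and the one I would flag as the main (minor) obstacle — is the justification that a valuation may be prescribed arbitrarily on atomic sentences and then extended: this is immediate from the inductive clauses defining an $\mathstr L$-valuation, since {\ssf PS} is freely generated from \At\ by the connectives, so any map $\At\to L$ extends uniquely. Beyond that, everything is a routine computation with the definitions of $\latimpl$, $\latneg$, and ${\ssf Interior}$ in $\cal O$, together with the elementary topological facts that $\{0\}$ has empty interior in $\reals$ and that $\reals\setminus\{0\}$ is open. No appeal to the degree-lattice structure is needed; this proposition is purely about implicative lattices in the abstract.
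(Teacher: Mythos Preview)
Your proof is correct and, for the positive clause, essentially identical to the paper's: both compute $v(\phi\land\lnot\phi)=a\meet\latneg a=\latz$ and then $\latneg\latz=\lato$. For the negative clause you actually do more than the paper, which merely restates the claim as ``generally $v(\phi)\join\latneg v(\phi)\not=\lato$'' without exhibiting a witness; your concrete open-sets example with $A=\reals\setminus\{0\}$ (drawn from the same standard example already introduced in the text) supplies what the paper leaves implicit.
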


\begin{proof}
Since for any $a\in L$, $a\meet\latneg a=\latz$, for any $\mathstr L$-valuation $v$,
\[v(\phi\land\lnot\phi)=\latz\quad\text{so}\quad v(\lnot(\phi\land\lnot\phi))=\latneg\latz=\lato\]
However, generally $v(\phi\lor\lnot\phi)=v(\phi)\join \latneg v(\phi)\not=\lato$.
\QED
\end{proof}

To begin to characterize $\Th({\mathstr L})$ for various lattices, we need to remind the reader of some facts of propositional logic.  

\begin{definition} 
\begin{itemize}
\item[\textup{(i)}] \dff{Intuitionistic Propositional Calculus} \IPC\ is the set of propositional sentences generated by {\it modus ponens} from the following axiom schemas:
\begin{itemize}
\item[]$\phi\implies(\psi\implies\phi)$
\item[]$(\phi\implies\psi)\implies\bigl[\bigl(\phi\implies(\psi\implies\theta)\bigr)\implies(\phi\implies\theta)\bigr]$
\item[]$(\phi\implies\theta)\implies\bigl[(\psi\implies\theta)\implies\bigl((\phi\lor\psi)\implies\theta\bigr)\bigr]$
\item[]$\phi\implies(\psi\implies\phi\land\psi)$
\item[]$\phi\implies(\phi\lor\psi)\qquad\qquad\qquad(\phi\land\psi)\implies\phi$
\item[]$\psi\implies(\phi\lor\psi)\qquad\qquad\qquad(\phi\land\psi)\implies\psi$
\item[]$(\phi\implies\psi)\implies\bigl[(\phi\implies\lnot\psi)\implies\lnot\phi\bigr]$
\item[]$\lnot\phi\implies(\phi\implies\psi)$.
\end{itemize}
\item[\textup{(ii)}] \WEM, the logic of the \dff{weak excluded middle}, is the set of sentences generated by these schemas together with the schema $\lnot\phi\lor\lnot\lnot\phi$.
\end{itemize}
\end{definition}

The following is well-known; for general information on intuitionistic logic and its extensions, we refer the reader to \cite{SEP}

\begin{lemma}\label{IPCvsCPC}
\begin{itemize}
\item[\textup{(i)}] $\IPC\subset\WEM\subset\CPC$;
\item[\textup{(ii)}] \CPC\ is  the set of sentences generated by \IPC\ together with the schema $\lnot\lnot\phi\implies\phi$. \noproof
\end{itemize}
\end{lemma}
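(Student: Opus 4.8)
The plan is to handle the three displayed inclusions of part~(i) and the two inclusions of part~(ii) one at a time, reducing each either to a finite truth-table check, to soundness of $\IPC$ against implicative lattices (the easy half of the lattice semantics set up above), or to one standard derivation inside $\IPC$.

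For part~(i) the inclusion $\IPC\subseteq\WEM$ is immediate from the definitions, since $\WEM$ is generated by a strict superset of the schemas generating $\IPC$. For $\WEM\subseteq\CPC$ I would check that every $\IPC$-axiom and every instance of $\lnot\phi\lor\lnot\lnot\phi$ is a classical tautology --- a routine truth-table verification --- and note that \emph{modus ponens} preserves tautologies, so an induction on derivations gives the inclusion. For strictness I would exhibit implicative lattices separating the logics, invoking soundness of $\IPC$ (hence of $\WEM$) with respect to implicative lattices; that soundness is itself just the verification that each $\IPC$-axiom takes value $\lato$ under every valuation, with Proposition~\ref{closedMP} handling \emph{modus ponens}. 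To see $\IPC\subsetneq\WEM$, let $\mathstr L$ be the lattice of open subsets of $\mathbb R$ and put $v(p)=(0,\infty)$ for an atom $p$; then $\latneg v(p)=(-\infty,0)$ and $\latneg\latneg v(p)=(0,\infty)$, so $v(\lnot p\lor\lnot\lnot p)=\mathbb R\setminus\set0\ne\lato$, whence $\lnot p\lor\lnot\lnot p\notin\IPC$. To see $\WEM\subsetneq\CPC$, let $\mathstr L$ be the three-element chain $\latz<a<\lato$, a Heyting algebra with $\latneg a=\latz$ and $\latneg\latneg a=\lato$; a direct check shows every instance of $\lnot\phi\lor\lnot\lnot\phi$ takes value $\lato$, so $\mathstr L$ validates $\WEM$, while with $v(p)=a$ we get $v(\lnot\lnot p\implies p)=\lato\latimpl a=a\ne\lato$, so $\lnot\lnot p\implies p\notin\WEM$.

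For part~(ii), write $X$ for the logic generated by $\IPC$ together with the schema $\lnot\lnot\phi\implies\phi$. The inclusion $X\subseteq\CPC$ is immediate from part~(i) and the fact that $\lnot\lnot\phi\implies\phi$ is a tautology. For $\CPC\subseteq X$ I would first derive $\lnot\lnot(\phi\lor\lnot\phi)$ in $\IPC$ --- a short standard manipulation of the axioms --- and then instantiate the double-negation schema at $\phi\lor\lnot\phi$ to conclude that $X$ proves every instance of excluded middle. It then remains to show that $\IPC$ enriched with excluded middle proves all tautologies, and here I would argue algebraically: the Lindenbaum algebra of $X$ is a bounded distributive lattice on which $\lnot$ acts as a genuine complement ($a\meet\lnot a=\latz$ already holds over $\IPC$ via the axiom $\lnot\phi\implies(\phi\implies\psi)$, and $a\join\lnot a=\lato$ is precisely excluded middle), hence a Boolean algebra; since every tautology is valid in every Boolean algebra, in particular in this one, every tautology is a theorem of $X$, i.e.\ $\CPC\subseteq X$.

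The only step that is not a mechanical verification is $\CPC\subseteq X$, and within it the genuinely non-trivial ingredient is the classical soundness fact that every tautology is valid in every Boolean algebra (equivalently, a direct proof-theoretic normal-form argument that $\IPC$ plus excluded middle derives every tautology); the $\IPC$-derivation of $\lnot\lnot(\phi\lor\lnot\phi)$ and the two separating lattices are short and standard. Since the lemma is classical, each of these points could alternatively simply be cited, e.g.\ to \cite{SEP}.
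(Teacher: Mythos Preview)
The paper does not actually prove this lemma: it is stated without proof (the \verb|\noproof| marker) as a well-known fact, with a pointer to \cite{SEP} for background. Your sketch is correct and goes well beyond what the paper offers. The separating models you choose---the open-set lattice on the reals for $\IPC\subsetneq\WEM$ and the three-element chain for $\WEM\subsetneq\CPC$---are the standard ones and the computations are right; the Lindenbaum-algebra argument for part~(ii) is also standard, and you correctly flag that the one non-mechanical ingredient (validity of tautologies in every Boolean algebra, equivalently completeness of $\IPC+{}$excluded middle for $\CPC$) is exactly the place where a citation is appropriate. One minor point of hygiene: the soundness of $\IPC$ for implicative lattices that you invoke is the content of Proposition~\ref{IPCinclThL}, which appears \emph{after} this lemma in the paper, so if you were writing this up in place you would want to phrase it, as you partly do, as a direct axiom-by-axiom verification rather than a forward reference.
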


\begin{proposition}\label{IPCinclThL}
For any implicative lattice $\mathstr L$, 
\begin{itemize}
\item[\textup{(i)}] $\Th(\mathstr L)$ is an \dff{intermediate logic} --- that is, $\IPC\incl\Th(\mathstr L)\incl\CPC$;
\item[\textup{(ii)}] $\Th(\mathstr L)=\CPC$ iff $\mathstr L$ is a Boolean algebra.
\end{itemize}
\end{proposition}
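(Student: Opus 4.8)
The plan is to verify both parts directly from the definitions, using the soundness of the three defining steps of an $\mathstr L$-valuation. For part (i), the inclusion $\Th(\mathstr L)\incl\CPC$ is the easy direction: given any Boolean algebra (indeed the two-element algebra $\{\latz,\lato\}$ suffices) one observes that every $\{\latz,\lato\}$-valuation is just an ordinary truth assignment, and by Proposition \ref{closedMP} and the fact that in a Boolean algebra $\latimpl$ agrees with classical implication, an $\mathstr L$-valid sentence evaluates to $\lato=\truth$ under every truth assignment, hence is a tautology. For the inclusion $\IPC\incl\Th(\mathstr L)$, since $\Th(\mathstr L)$ is closed under modus ponens (Proposition \ref{closedMP}), it suffices to check that each of the eight (schema) axioms of \IPC\ listed in the Definition is $\mathstr L$-valid --- i.e., that for every $\mathstr L$-valuation $v$ the value of the axiom is $\lato$. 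By Proposition \ref{closedMP} again, $v(\alpha\implies\beta)=\lato$ is equivalent to $v(\alpha)\leq v(\beta)$, so each axiom reduces to a lattice inequality; writing $a,b,c$ for the values $v(\phi),v(\psi),v(\theta)$ and using $\latneg a=a\latimpl\latz$, these inequalities are:
\[a\meet b\leq a;\qquad a\meet b\leq b;\qquad a\leq a\join b;\qquad b\leq a\join b;\]
\[a\meet(a\latimpl b)\leq b;\qquad a\meet(a\latimpl b)\meet\bigl(a\latimpl(b\latimpl c)\bigr)\leq c;\]
\[(a\join b)\meet(a\latimpl c)\meet(b\latimpl c)\leq c;\qquad a\meet\bigl(a\latimpl b\bigr)\meet\bigl(a\latimpl\latneg b\bigr)\leq\latz;\qquad \latneg a\meet a\leq b.\]
Each of these is a routine consequence of the adjunction $a\meet x\leq b\iff x\leq a\latimpl b$ defining $\latimpl$, of distributivity, and of $a\meet\latneg a=\latz$; for instance the fifth is immediate from the adjunction with $x:=a\latimpl b$, the seventh follows by distributing $(a\join b)\meet(\cdot)$ and applying the fifth in each disjunct, and the last uses $a\meet\latneg a=\latz\leq b$.

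For part (ii), the direction $(\Leftarrow)$ is exactly the classical-algebra computation already invoked above: if $\mathstr L$ is a Boolean algebra then $a\latimpl b=\latneg a\join b$, so by the standard completeness/soundness of Boolean-valued semantics for \CPC\ we get $\Th(\mathstr L)=\CPC$ (alternatively, observe $\IPC$ together with the valid schema $\lnot\lnot\phi\implies\phi$ generates $\CPC$ by Lemma \ref{IPCvsCPC}(ii), and in a Boolean algebra $\latneg\latneg a=a$, so that schema is $\mathstr L$-valid, giving $\CPC\incl\Th(\mathstr L)$; the reverse inclusion is part (i)). The direction $(\Rightarrow)$ is the one place where a genuine argument is needed: assuming $\Th(\mathstr L)=\CPC$ I must show $\mathstr L$ is a Boolean algebra, i.e. that every $a\in L$ has a complement. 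Since $\lnot\lnot\phi\implies\phi$ is a tautology it lies in $\Th(\mathstr L)$, so for every valuation $\latneg\latneg v(\phi)\leq v(\phi)$; because the valuation of an atomic sentence can be prescribed arbitrarily (atomic sentences are "free generators"), this gives $\latneg\latneg a\leq a$ for all $a\in L$, and the reverse inequality $a\leq\latneg\latneg a$ holds in any implicative lattice. Hence $\latneg\latneg a=a$ for all $a$. It then follows by the standard lattice argument that $\latneg a$ is a complement of $a$: we already have $a\meet\latneg a=\latz$, and for the join, note $\latneg(a\join\latneg a)=\latneg a\meet\latneg\latneg a=\latneg a\meet a=\latz$ (using the de Morgan-type identity $\latneg(x\join y)=\latneg x\meet\latneg y$, valid in implicative lattices), so $a\join\latneg a=\latneg\latneg(a\join\latneg a)=\latneg\latz=\lato$. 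Thus $\mathstr L$ is a bounded distributive lattice in which every element is complemented, i.e. a Boolean algebra.

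The main obstacle is the $(\Rightarrow)$ direction of (ii), and within it the clean articulation of the point that atomic propositional sentences behave as free generators, so that a lattice inequality forced to hold "for all valuations" really does hold for all tuples of elements of $L$; once that is granted, the derivation $\latneg\latneg a=a$ and the complementation argument are purely lattice-theoretic and short. Everything in part (i) is a finite checklist of one-line lattice inequalities, with no surprises; I would present two or three representative cases and leave the rest to the reader.
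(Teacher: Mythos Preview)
Your proposal is correct and follows essentially the same route as the paper: verify the \IPC\ axioms as lattice inequalities via the adjunction defining $\latimpl$, obtain $\Th(\mathstr L)\incl\CPC$ from the $\{\latz,\lato\}$-valued valuations, and derive (ii) from Lemma~\ref{IPCvsCPC}(ii) together with the Heyting-algebra fact that $\latneg\latneg a=a$ for all $a$ forces Booleanness (the paper compresses this last step to ``immediate'', while you spell it out). One trivial slip: the paper lists ten axiom schemas, not eight, though your nine inequalities (with $a\meet(a\latimpl b)\leq b$ serving as a lemma) do cover them all once one notes that axioms 1 and 6 yield the same inequality and axiom 4 is trivial.
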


\begin{proof}
That $\IPC\incl\Th(\mathstr L)$ is straightforward to check from the axioms and Proposition \ref{closedMP}; we give three examples.

It is immediate that $a\latimpl b=\lato\Iff a\leq b$, so
\[\modelof{{\mathstr L}}\phi\implies\psi\qIff \forall v[v(\phi)\leq v(\psi)].\]
Since $v(\phi)\leq v(\phi)\join v(\psi),\quad \modelof{{\mathstr L}}\phi\implies (\phi\lor\psi)$. Next, since
\begin{align*}
v(\psi\implies\phi\land\psi)
&=v(\psi)\latimpl(v(\phi)\meet v(\psi))\\
&=\hbox{largest }x\;[v(\psi)\meet x\leq v(\phi)\meet v(\psi)]
\end{align*}
and $v(\phi)$ is such an $x$, we have
$v(\phi)\leq v(\psi\implies\phi\land\psi)$
\quad so\quad 
$v(\phi\implies(\psi\implies\phi\land\psi))=\lato$.

Finally, since $a\latimpl(b\latimpl c)=\lato\Iff a\leq(b\latimpl c)\Iff a\meet b\leq c$
\begin{align*}
\modelof{{\mathstr L}}(\phi\implies\psi)&\implies\bigl[(\phi\implies\lnot\psi)\implies\lnot\phi\bigr]\\
&\Iff\forall v[v(\phi\implies\psi)\meet v(\phi\implies\lnot\psi)\leq{\latneg v(\phi)}]\\
\noalign{\medskip}
&\Iff\forall v[v(\phi)\meet v(\phi\implies\psi)\meet v(\phi\implies\lnot\psi)=\latz].
\end{align*}
But $v(\phi)\meet v(\phi\implies\psi)\leq v(\psi)\qand $
$v(\phi)\meet v(\phi\implies\lnot\psi)\leq{\latneg v(\psi)}$
so
\[v(\phi)\meet v(\phi\implies\psi)\meet v(\phi\implies\lnot\psi)\leq v(\psi)\meet\latneg v(\psi)=\latz.\]
The second inclusion of (i) follows from the observation that among the $\Th(\mathstr L)$-valuations are ones that take on only values \latz\ and \lato; easily a sentence $\phi$ is a tautology iff it is assigned value \lato\ by all such valuations. Part (ii) is immediate from (i) and (ii) of the preceding lemma.
\QED
\end{proof}

From the table at the end of the preceding section we have at least three structures of Medvedev or Mu\v cnik degrees that are implicative lattices and therefore have well-defined propositional theories: 
\[\Thd(\Dgs), \quad\Th(\Dgw)\qand\Thd(\Dgw).\]
By the preceding proposition, Proposition \ref{notBA} and the observation that a lattice is a Boolean algebra iff its dual is one (Proposition \ref{basicdual} (ii)), we know that these theories are proper subsets of \CPC. It is also easy to see that they are proper supersets of \IPC. 

\begin{proposition} In each of $\Dgs ^\dualmarker$, \Dgw, and $\Dgw^\dualmarker$, $\latz$ is meet-irreducible.
\end{proposition}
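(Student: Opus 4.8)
The plan is to unwind, for each of the three structures, exactly which concrete element $\latz$ names and which concrete operation $\meet$ names, and then to read the assertion off from two facts already established inside the proof of Proposition~\ref{notBA}: that $\zeros$ and $\zerow$ are meet-irreducible in $\Dgs$ and $\Dgw$, and that $\inftys$ and $\inftyw$ are join-irreducible in $\Dgs$ and $\Dgw$.

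First I would treat $\Dgw$ directly. Here $\latz=\zerow$ and $\meet$ is the meet of $\Dgw$, so the claim is precisely $\dgp\meet\dgq=\zerow\Implies\dgp=\zerow\ror\dgq=\zerow$. Recall that $\degw(R)=\zerow$ iff $R$ has a recursive element (one direction since a recursive function Turing-computes only recursive functions, the other since every set Turing-computes some recursive function), and that $P\meet Q=(0)^\frown P\cup(1)^\frown Q$ has a recursive element iff one of $P$, $Q$ does. These two observations give the claim; this is exactly the computation labelled ``${\bf 0}_\bullet$ is meet-irreducible'' in the proof of Proposition~\ref{notBA}.

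Next I would handle the two duals at once. In $\Dgw^\dualmarker$ the bottom $\latz$ is by definition the top $\lato=\inftyw=\degw(\emptyset)$ of $\Dgw$, and $\meet$ in $\Dgw^\dualmarker$ is $\join$ in $\Dgw$; so meet-irreducibility of $\latz$ in $\Dgw^\dualmarker$ is literally join-irreducibility of $\inftyw$ in $\Dgw$. That in turn follows from $\degw(R)=\inftyw\Iff R=\emptyset$ (since $\emptyset\leqw R$ forces $R=\emptyset$, while $R\leqw\emptyset$ is vacuous) together with the fact that $P\join Q=\setof{f\join g}{f\in P\rand g\in Q}$ is empty iff $P=\emptyset$ or $Q=\emptyset$ --- again the ``${\bf \infty}_\bullet$ is join-irreducible'' computation in the proof of Proposition~\ref{notBA}. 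The case of $\Dgs^\dualmarker$ is the identical argument with $\degs$ in place of $\degw$ throughout: $\latz$ is $\inftys=\degs(\emptyset)$, $\meet$ in $\Dgs^\dualmarker$ is $\join$ in $\Dgs$, and both $\degs(R)=\inftys\Iff R=\emptyset$ and the join computation go through verbatim.

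I expect no genuine obstacle. The only point demanding a little care is the bookkeeping --- keeping straight, after passing to a dual lattice, which element $\latz$ denotes and which operation $\meet$ denotes --- together with the remark that the two underlying combinatorial facts (``$R$ has a recursive element'' and ``$R=\emptyset$'') are invariants of both $\eqw$ and $\eqs$, so the strong-degree cases need nothing beyond what the weak-degree cases and Proposition~\ref{notBA} already supply.
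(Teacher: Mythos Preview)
Your proposal is correct and takes essentially the same approach as the paper: both reduce the claim for $\Dgw$ to the meet-irreducibility of $\zerow$ already noted in the proof of Proposition~\ref{notBA}, and both handle $\Dgs^\dualmarker$ and $\Dgw^\dualmarker$ by observing that meet-irreducibility of $\latz$ in the dual is precisely join-irreducibility of $\infty_\bullet$ in the original, also recorded there. You simply spell out the bookkeeping more explicitly than the paper does.
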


\begin{proof} In the proof of Proposition \ref{notBA} we noted that $\latz$ is meet-irreducible in \Dgw. That the same is true in $\Dgs^\dualmarker$ and $\Dgw^\dualmarker$ follows from the fact, also noted in that proof, that in \Dgs\ and \Dgw, $\lato={\bf \infty}$ is join-irreducible.
\QED
\end{proof}

\begin{proposition}\label{WEMinclThL}
For any implicative lattice $\mathstr L$, if $\latz$ is meet-irreducible, then $\WEM\incl\Th(\mathstr L)$. 
\end{proposition}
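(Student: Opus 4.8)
The plan is to reduce the statement to a single algebraic identity and then bring in the hypothesis. By definition $\WEM$ is generated by \textit{modus ponens} from the $\IPC$ axiom schemas together with the schema $\lnot\phi\lor\lnot\lnot\phi$. Since $\Th(\mathstr L)$ already contains $\IPC$ (Proposition \ref{IPCinclThL}) and is closed under \textit{modus ponens} (Proposition \ref{closedMP}), it suffices to show that every instance of $\lnot\phi\lor\lnot\lnot\phi$ is $\mathstr L$-valid. Unwinding the definition of an $\mathstr L$-valuation, and noting that $v(\phi)$ sweeps out all of $L$ as $v$ ranges over valuations and $\phi$ over atomic sentences, this amounts to proving that in $\mathstr L$
\[(\forall a\in L)\quad\latneg a\join\latneg\latneg a=\lato.\]

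First I would record the two standard facts about the pseudo-complement that are needed. From the fact that $\latneg\latneg a=\latneg a\latimpl\latz$ is the largest $x$ with $\latneg a\meet x\leq\latz$, we get $\latneg a\meet\latneg\latneg a=\latz$. Also $\latneg\latz=\lato$, since $\latz\meet x\leq\latz$ for every $x$. Finally the triple-negation law $\latneg\latneg\latneg a=\latneg a$ holds in any implicative lattice: from $a\meet\latneg a=\latz$ we get $a\leq\latneg\latneg a$; since $\latneg$ is order-reversing this gives $\latneg\latneg\latneg a\leq\latneg a$, while applying $a\leq\latneg\latneg a$ with $\latneg a$ in place of $a$ gives $\latneg a\leq\latneg\latneg\latneg a$.

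Now the hypothesis enters. Since $\latneg a\meet\latneg\latneg a=\latz$ and $\latz$ is meet-irreducible, either $\latneg a=\latz$ or $\latneg\latneg a=\latz$. In the first case $\latneg\latneg a=\latneg\latz=\lato$, so $\latneg a\join\latneg\latneg a=\lato$. In the second case the triple-negation law gives $\latneg a=\latneg\latneg\latneg a=\latneg(\latneg\latneg a)=\latneg\latz=\lato$, so again $\latneg a\join\latneg\latneg a=\lato$. This establishes the displayed identity, hence $\modelof{{\mathstr L}}\lnot\phi\lor\lnot\lnot\phi$ for every $\phi$; combined with the reduction of the first paragraph, $\WEM\incl\Th(\mathstr L)$.

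There is no serious obstacle: the entire content is the observation that meet-irreducibility of $\latz$ is exactly what is needed to upgrade the always-available identity $\latneg a\meet\latneg\latneg a=\latz$ to its join-version $\latneg a\join\latneg\latneg a=\lato$, with the triple-negation law handling one of the two cases. The only point requiring a little care is the bookkeeping of the first paragraph --- that $\mathstr L$-validity is preserved under \textit{modus ponens}, so that checking the single new schema really does suffice, and that the range of $v$ on atomic sentences is all of $L$.
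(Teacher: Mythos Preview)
Your proof is correct and follows the same overall strategy as the paper: reduce to the identity $\latneg a\join\latneg\latneg a=\lato$ and invoke meet-irreducibility of $\latz$. The organization differs slightly. The paper applies meet-irreducibility one step earlier, directly to the defining condition $a\meet x=\latz$, and thereby computes $\latneg a$ explicitly as a two-valued function:
\[\latneg a=\cases{\lato,&if $a=\latz$;\cr\latz,&if $a\not=\latz$;\cr}\qquad
\latneg\latneg a=\cases{\latz,&if $a=\latz$;\cr\lato,&if $a\not=\latz$.\cr}\]
From this the identity is immediate and no appeal to the triple-negation law is needed. Your route instead applies meet-irreducibility to the already-available identity $\latneg a\meet\latneg\latneg a=\latz$, which then forces you to bring in $\latneg\latneg\latneg a=\latneg a$ to close one of the two cases. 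Both are fine; the paper's version is a touch more economical and yields the stronger intermediate statement that $\latneg$ collapses to a two-valued map, which is a useful fact in its own right.
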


\begin{proof}
Under the hypothesis, for any $a\in L$,
\begin{align*}
\latneg a=\hbox{largest }x[a\meet x=\latz]
&=\cases{\lato,&if $a=\latz$;\cr\latz,&if $a\not=\latz$;\cr}\\
\noalign{\smallskip}
\latneg \latneg a
&=\cases{\latz,&if $a=\latz$;\cr\lato,&if $a\not=\latz$.\cr}
\end{align*}
Hence, for any $a\in{\mathstr L}$,
\[\latneg a=\lato\qor\latneg \latneg a=\lato\quad\hbox{so}\quad\latneg a\join\latneg \latneg a=\lato\]
and for any sentence $\phi$ and $\mathstr L$-valuation $v$, $v(\lnot\phi\lor\lnot\lnot\phi)=\lato$.
\QED
\end{proof}

\begin{corollary}\label{latzimpliesWEM}
Each of $\Thd(\Dgs)$, $\Th(\Dgw)$, and $\Thd(\Dgw)$ includes \WEM. \noproof
\end{corollary}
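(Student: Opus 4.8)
The plan is to read this off directly from the two propositions immediately preceding it, so that no new work is required --- this is why the statement is marked \noproof. First I would observe that all three theories named are of the form $\Th(\mathstr L)$ for an implicative lattice $\mathstr L$: by Proposition~\ref{DgwIsImpl} we may take $\mathstr L = \Dgw$ for $\Th(\Dgw)$, and since \Dgw\ and \Dgs\ are dual-implicative (the proposition of Medvedev above), the duals $\Dgw^\dualmarker$ and $\Dgs^\dualmarker$ are implicative lattices for which, by the definition of $\Thd$, we have $\Th(\Dgw^\dualmarker) = \Thd(\Dgw)$ and $\Th(\Dgs^\dualmarker) = \Thd(\Dgs)$. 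So it suffices to verify the hypothesis of Proposition~\ref{WEMinclThL} --- that $\latz$ is meet-irreducible --- in each of the three implicative lattices $\Dgw$, $\Dgw^\dualmarker$, $\Dgs^\dualmarker$.

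Second, that hypothesis is exactly the content of the proposition stated just before the corollary, which asserts that $\latz$ is meet-irreducible in each of $\Dgs^\dualmarker$, \Dgw, and $\Dgw^\dualmarker$. Applying Proposition~\ref{WEMinclThL} to each of these lattices then yields $\WEM \incl \Th(\Dgw)$, $\WEM \incl \Th(\Dgw^\dualmarker) = \Thd(\Dgw)$, and $\WEM \incl \Th(\Dgs^\dualmarker) = \Thd(\Dgs)$, which is precisely the corollary.

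There is essentially no obstacle here; the one point requiring a moment's care is the duality bookkeeping --- one must confirm that when the logic in question is $\Thd(\mathstr L) = \Th(\mathstr L^\dualmarker)$, the lattice relevant to Proposition~\ref{WEMinclThL} is $\mathstr L^\dualmarker$ rather than $\mathstr L$, so that what is needed is meet-irreducibility of $\latz$ in $\mathstr L^\dualmarker$, equivalently join-irreducibility of $\lato$ in $\mathstr L$. Both \Dgw\ and \Dgs\ have a join-irreducible greatest element, as noted in the proof of Proposition~\ref{notBA}, so this is fine --- and indeed this is exactly the route by which the preceding proposition is proved.
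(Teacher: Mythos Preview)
Your proposal is correct and is exactly the argument the paper intends: the corollary is marked \noproof\ precisely because it follows at once by combining the immediately preceding proposition (meet-irreducibility of $\latz$ in each of $\Dgs^\dualmarker$, $\Dgw$, $\Dgw^\dualmarker$) with Proposition~\ref{WEMinclThL}. Your extra remark on the duality bookkeeping is a helpful gloss but does not depart from the paper's route.
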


In fact,

\begin{thmi}[\cite{Med55,Jan,Sor1,Sor3,SorTe1}]
$\Th(\Dgw)=\WEM=\Thd(\Dgw)=\Thd(\Dgs)$.
\end{thmi}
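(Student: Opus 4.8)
The plan is to reduce the stated chain of equalities to the finite model property of $\WEM$ together with three ``implicative embedding'' theorems. By Corollary~\ref{latzimpliesWEM} each of $\Th(\Dgw)$, $\Thd(\Dgw)$ and $\Thd(\Dgs)$ already contains $\WEM$, so what remains are the three reverse inclusions; in each the task is, given $\phi\notin\WEM$, to produce a valuation of the relevant lattice --- $\Dgw$, $\Dgw^\dualmarker$, or $\Dgs^\dualmarker$ --- assigning $\phi$ a value $\ne\lato$. Now $\WEM$ has the finite model property (cf.\ \cite{SEP}): if $\phi\notin\WEM$ there is a \emph{finite} implicative lattice $H$ with $\WEM\incl\Th(H)$ and an $H$-valuation $v$ with $v(\phi)\ne\lato$ --- concretely, $\phi$ fails at the root of a finite rooted Kripke frame $F$ with a greatest element, and one takes $H$ to be the lattice of up-sets of $F$ (the presence of a greatest element being exactly what makes this algebra satisfy $\WEM$). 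So it suffices to prove, with $\mathstr L$ standing successively for $\Dgw$, $\Dgw^\dualmarker$ and $\Dgs^\dualmarker$, an embedding theorem of the form: \emph{every finite implicative lattice $H$ with $\WEM\incl\Th(H)$ embeds into $\mathstr L$ as an implicative lattice}, i.e.\ by an injection that is a homomorphism for $\meet$, $\join$, $\latz$, $\lato$ and $\latimpl$. Granting this, for $\phi\notin\WEM$ choose $H$, $v$ as above and an implicative embedding $e\colon H\embeds\mathstr L$; then $e\circ v$ is an $\mathstr L$-valuation --- the valuation clauses transfer because $e$ commutes with $\meet$, $\join$, $\latimpl$ and the bounds, hence with $\latneg$ --- and $(e\circ v)(\phi)=e(v(\phi))$ differs from $\lato$ since $v(\phi)\ne\lato$ and $e$ is injective with $e(\lato)=\lato$, so $\phi\notin\Th(\mathstr L)$. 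The side hypothesis $\WEM\incl\Th(H)$ is moreover \emph{necessary} for an implicative embedding $H\embeds\mathstr L$, precisely because $\Th(\mathstr L)\supseteq\WEM$ by Corollary~\ref{latzimpliesWEM}; thus the embedding theorems assert exactly that this obvious obstruction is the only one, and combining them with Corollary~\ref{latzimpliesWEM} gives $\Th(\Dgw)=\WEM=\Thd(\Dgw)=\Thd(\Dgs)$.

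For the embeddings I would present the finite algebra $H$ as the lattice of up-sets of its dual frame $F$ --- a finite poset with a greatest element --- and build, for each up-set $U\incl F$, a mass problem $M_U\incl\pre\omega\omega$. The raw material is a finite family $\functionof{R_p}{p\in F}$ of mutually sufficiently generic reals (which exists, $F$ being finite). Roughly, a solution to $M_U$ hands over the reals $R_p$ for the points $p$ lying \emph{outside} $U$ and, beyond that, codes enough information to ``navigate'' the subframe carried by $U$; in the case of $\Dgs^\dualmarker$ the indexing device used for $\set f^+$ above is grafted on so that all the reductions involved become uniform in an index. For $\Dgw$ and $\Dgw^\dualmarker$ it is convenient to work through the identification of $\Dgw$ with the lattice of upward Turing-closed subsets of $\pre\omega\omega$ of Proposition~\ref{embedings}(iii), which recasts the whole problem as one about inclusions among sets. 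The candidate embeddings are $U\mapsto\degw(M_U)$ --- arranged one way into $\Dgw$ and dually into $\Dgw^\dualmarker$ --- and its uniform analogue into $\Dgs^\dualmarker$; that these maps are injective and homomorphic for $\latz$, $\lato$, $\meet$ and $\join$ is a routine calculation using only the independence of the $R_p$, and I would dispose of it quickly.

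The substantial part, and the main obstacle, is to verify that the embedding is homomorphic for implication --- in the two dual pictures, for the dual-implication $\dlatimpl$ --- i.e.\ $e(a\latimpl b)=e(a)\latimpl e(b)$. The inclusion $e(a\latimpl b)\leq e(a)\latimpl e(b)$ is formal from $e$ being a bounded lattice homomorphism; the other inclusion amounts to showing that every $x\in\mathstr L$ with $e(a)\meet x\leq e(b)$ already satisfies $x\leq e(a\latimpl b)$, and proving this means taking a reduction witnessing $e(a)\meet x\leq e(b)$, feeding it into the combinatorial structure of $F$, and using the genericity of the $R_p$ to extract from it information that locates $x$ inside the up-set computing $a\latimpl b$. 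This is exactly where the greatest-element property of $F$ enters in an essential way: a construction over a frame lacking it would necessarily falsify $\lnot\phi\lor\lnot\lnot\phi$, contradicting $\WEM\incl\Th(\mathstr L)$, so any correct argument has to break precisely when that property fails. For $\Dgs^\dualmarker$ there is, in addition, the largely clerical cost of witnessing every reduction by an explicit index throughout, in the style of the earlier propositions on immediate successors in $\Dgs$. With the three embedding theorems established along these lines, Theorem~I follows.
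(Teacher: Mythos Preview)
Your reduction of the theorem to three embedding results --- combined with the finite-model property of $\WEM$ and Corollary~\ref{latzimpliesWEM} --- is correct and matches the paper's strategy exactly; your identification of the relevant finite algebras as up-set lattices of rooted frames with a greatest element is the Kripke-semantic reformulation of the paper's algebraic condition ``$\latz$- and $\lato$-irreducible'' (this equivalence, together with the sufficiency of such finite algebras for refuting non-theorems of $\WEM$, is the content of the \WEM-Completeness Theorem proved just before). The paper, like you, observes that the hard direction is the embedding and that within it the implication clause is where the substance lies.

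Where your proposal and the paper diverge is in the construction of the embedding, and here your sketch is too thin to count as a proof. Your idea of building $M_U$ from ``mutually sufficiently generic reals $R_p$'' indexed by frame points, with a solution to $M_U$ handing over the $R_p$ for $p\notin U$ plus unspecified ``navigation data'', does not say what ``sufficiently generic'' means, what the navigation data is, or --- crucially --- how genericity forces the implication clause. The paper (which treats only $\Dgs^\dualmarker$ in detail, deferring $\Dgw$ and $\Dgw^\dualmarker$ to \cite{SorTe1}) takes a quite different and more structured route: it introduces the free distributive lattice $\free(\calp)$ on a partial order $\calp$, shows that every finite $\latz$- and $\lato$-irreducible dual-implicative lattice $\l$ satisfies $\l\dembeds\free(\calp)^{\ssf 1}_{\ssf 0}$ for a suitable finite $\calp$, then embeds $\free(\calp)^{\ssf 1}_{\ssf 0}$ into $\free(\Dgt)^{\ssf 1}_{\ssf 0}$ (using that every finite poset embeds in $\Dgt$), and finally embeds $\free(\Dgt)^{\ssf 1}_{\ssf 0}$ into $\Dgs$ via the map sending a Turing degree $\dga$ to the strong degree of $[\dga]:=\setof{h}{\degt(h)\not\leq\dga}$. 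The implication clause is handled not by genericity but by an explicit irreducibility argument exploiting the homogeneity of the sets $[\dga]$. This free-lattice machinery is the concrete content your proposal is missing; your generic-reals idea might be developable into an alternative proof, but as written it is a plan rather than an argument.
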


This result was seen by some as a disappointment in view of the early hopes that some of these degree structures would serve as models for \IPC. This motivated the study of theories of sublattices described as follows.

\begin{definition}\label{segments}
For any lattice \l\ and any $d,e\in L$,
\[\l[d,e]:=\bigl(L[d,e],\leq,\join,\meet\bigr)\]
where
\[L[d,e]:=\setof{a\in L}{d\leq a\leq e}\]
and $\leq$, $\join$ and $\meet$ are the obvious restrictions to $L[d,e]$.
\end{definition}

\begin{lemma}
For any lattice \l\ and $d,e\in L$, $\l[d,e]$ is a lattice, and if $d<e$ and \l\ is (dual-) implicative, so is $\l[d,e]$ with the (dual-) implication
\[a\latimpl_eb:=(a\latimpl b)\meet e\qqand
a\dlatimpl_db:=d\join(a\dlatimpl b). \noproof\]

\end{lemma}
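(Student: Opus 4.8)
The plan is to establish the three assertions in order: that $\l[d,e]$ is a sublattice of $\l$, then (using that an implicative or dual-implicative lattice is in particular bounded and distributive) that $\l[d,e]$ inherits boundedness and distributivity, and finally that the two displayed formulas furnish the implication, resp.\ dual-implication, of $\l[d,e]$.

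First I would check that $L[d,e]$ is closed under the meet and join of $\l$: if $a,b\in L[d,e]$ then $d\leq a\meet b$ (as $d$ lies below both $a$ and $b$) and $a\meet b\leq a\leq e$, so $a\meet b\in L[d,e]$, and dually $a\join b\in L[d,e]$. Since any lower (upper) bound of $\{a,b\}$ lying in $L[d,e]$ is a fortiori such a bound in $L$, these remain the meet and join in $\l[d,e]$; hence $\l[d,e]$ is a sublattice, in particular a lattice. If $\l$ is moreover bounded and distributive — as it is whenever it is implicative or dual-implicative — then $\l[d,e]$ inherits distributivity, while $d$ and $e$ are evidently its least and greatest elements, so $\l[d,e]$ is again a bounded distributive lattice, for which the notions of (dual-)implication make sense.

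Next, suppose $\l$ is implicative and set $e_0:=(a\latimpl b)\meet e$ for $a,b\in L[d,e]$. The one point needing verification is that $e_0\in L[d,e]$: clearly $e_0\leq e$, while $d\leq b$ (since $b\in L[d,e]$) gives $a\meet d\leq d\leq b$, hence $d\leq a\latimpl b$, and combined with $d\leq e$ this yields $d\leq e_0$. Now for any $x\in L[d,e]$ we have $x\leq e$ automatically, so by the defining property of $\latimpl$ in $\l$,
\[a\meet x\leq b\Iff x\leq a\latimpl b\Iff x\leq(a\latimpl b)\meet e=e_0,\]
which says precisely that $e_0=a\latimpl_eb$ is the implication of $\l[d,e]$. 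The dual-implicative case is the order-dual of this argument applied to $\l^\dualmarker$, whose relevant interval is again $L[d,e]$ but with $\dleq$-greatest element $d$ and with $\dmeet{}={}\join$, yielding $a\dlatimpl_db=d\join(a\dlatimpl b)$. One may also argue directly: for $a,b\in L[d,e]$, taking $x=e$ gives $b\leq a\join e=e$, so $a\dlatimpl b\leq e$ and hence $d\join(a\dlatimpl b)\in L[d,e]$; then for $x\in L[d,e]$, since $d\leq x$ automatically, $b\leq a\join x\Iff a\dlatimpl b\leq x\Iff d\join(a\dlatimpl b)\leq x$.

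There is essentially no hard step here; the only thing requiring a moment's care is confirming that the candidate elements $(a\latimpl b)\meet e$ and $d\join(a\dlatimpl b)$ actually lie in $L[d,e]$, which is exactly where the facts $d\leq b$ (resp.\ $a\leq e$) and $d\leq e$ get used. The hypothesis $d<e$ plays no genuine role — for $d=e$ the interval is the one-element lattice and every claim is trivial — so it is kept only as a convenience.
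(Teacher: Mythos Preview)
Your proof is correct. The paper itself omits the proof entirely (the lemma ends with a bare \qed), so there is nothing to compare against; your argument supplies exactly the routine verification the author left to the reader, including the only non-automatic point, that $(a\latimpl b)\meet e$ and $d\join(a\dlatimpl b)$ land in $L[d,e]$.
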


Many authors contributed to the following theorem; references, further results and related open questions can be found in the cited work. (An error in this article will be corrected in an addendum to appear).

\begin{thmj}[\cite{SorTe}] 
There exist strong degrees \dgr, \dgs, {{\bf t}} and $\functionof{\dgu_n}{n\in\omega}$ such that\begin{enumerate}
\item[\textup{(i)}] $\Thd(\Dgs[\zeros,\dgr])=\IPC$;
\item[\textup{(ii)}] $\Thd(\Dgs[\zeros,\dgs])=\WEM$;
\item[\textup{(iii)}] $\Thd(\Dgs[\zeros,{{\bf t}}])=\CPC$;
\item[\textup{(iv)}] for all $m\not=n\in\omega$, 
$\Thd(\Dgs[\zeros,\dgu_m])\not=\Thd(\Dgs[\zeros,\dgu_n])$.
\end{enumerate}
\end{thmj}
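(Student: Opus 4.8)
The plan is to reduce each clause to a statement about which \emph{Brouwer algebras} arise, up to isomorphism of bounded lattices, as principal ideals $\Dgs[\zeros,\dgd]$ of the Medvedev lattice. The point of this reduction is that $\Thd$ of such an ideal is an invariant of its bounded-lattice structure alone: by the Lemma on intervals, if $\zeros<\dgd$ then $\Dgs[\zeros,\dgd]$ is dual-implicative, so $(\Dgs[\zeros,\dgd])^\dualmarker$ is a Heyting algebra; and since the implication of a Heyting algebra (equivalently, the dual implication of a Brouwer algebra) is already determined by the underlying order, any isomorphism of bounded lattices between $\Dgs[\zeros,\dgd]$ and a Brouwer algebra $\mathstr B$ lifts to an isomorphism of co-Heyting algebras, whence $\Thd(\Dgs[\zeros,\dgd])=\Th(\mathstr B^\dualmarker)$. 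So it suffices, for each target logic, to realize an appropriate Brouwer algebra as a principal ideal of $\Dgs$. Clause (iii) then requires no new work: pick $f$ recursive; by the Proposition identifying the immediate successor of $\degs(\set f)$, the degree ${\bf t}:=\degs(\set f^+)$ is the immediate successor of $\degs(\set f)=\zeros$ (in particular ${\bf t}\ne\zeros$, since $\set f^+$ is nonempty yet has no recursive member), so $\Dgs[\zeros,{\bf t}]$ is the two-element lattice, i.e.\ the two-element Boolean algebra, whose dual is again Boolean (Proposition \ref{basicdual}(ii)); hence $\Thd(\Dgs[\zeros,{\bf t}])=\CPC$ by Proposition \ref{IPCinclThL}(ii) applied to the dual.

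The engine behind the remaining clauses is a single construction: from a presentation of a suitable countable Brouwer algebra $\mathstr B$ one builds, recursively and uniformly, a mass problem $A_{\mathstr B}\incl\pre\omega\omega$ together with a family of ``columns'' $\setof{A_b}{b\in\mathstr B}$ with $A_b\leqs A_{\mathstr B}$, arranged along a labelled tree so that the order $b\le b'$ of $\mathstr B$ is mirrored by a uniform reduction of the $b$-column to the $b'$-column, and $\meet,\join$ of $\mathstr B$ by the meet and join operations of $\Dgs$ on the columns. Two things must be checked. The positive direction: $b\mapsto\degs(A_b)$ is a bounded-lattice embedding of $\mathstr B$ into $\Dgs[\zeros,\degs(A_{\mathstr B})]$; this is largely built into the design of the tree. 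And \emph{exactness}: every mass problem $B$ with $B\leqs A_{\mathstr B}$ has $\degs(B)=\degs(A_b)$ for some $b$, so that $\Dgs[\zeros,\degs(A_{\mathstr B})]$ contains \emph{no degrees beyond the columns}. Exactness is established by analyzing an arbitrary reduction $\Phi\colon A_{\mathstr B}\to B$: the uniformity inherent in $\leqs$ --- a single $\Phi$ must succeed simultaneously on every branch of the tree --- forces $\Phi$ to act coherently on whole subtrees, so that $\Phi(A_{\mathstr B})$ is strongly equivalent to a single column. Granting both, $\Dgs[\zeros,\degs(A_{\mathstr B})]\iso\mathstr B$ as bounded lattices, and hence $\Thd(\Dgs[\zeros,\degs(A_{\mathstr B})])=\Th(\mathstr B^\dualmarker)$.

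It remains to feed the engine the right algebras. For (i) one needs a countable Brouwer algebra $\mathstr B_{\IPC}$, within the scope of the construction, whose dual is a Heyting algebra with theory exactly $\IPC$; producing such a $\mathstr B_{\IPC}$ --- a carefully chosen presentation essentially dual to the Lindenbaum--Tarski algebra of $\IPC$ --- is the heart of Skvortsova's argument, and the resulting degree serves as $\dgr$. For (ii) one wants $\mathstr B_{\WEM}$ whose dual has theory exactly $\WEM$; here there is extra leverage, for by Proposition \ref{WEMinclThL} the inclusion $\WEM\incl\Th(\mathstr B_{\WEM}^\dualmarker)$ holds automatically as soon as $\latz$ of $\mathstr B_{\WEM}^\dualmarker$ is meet-irreducible (equivalently, $\lato$ of $\mathstr B_{\WEM}$ is join-irreducible), so one only has to realize a Brouwer algebra with join-irreducible top that is not Boolean (so $\Thd\ne\CPC$ by Proposition \ref{IPCinclThL}(ii)) and carries no validity beyond $\WEM$ --- e.g.\ one dual to the Lindenbaum--Tarski algebra of $\WEM$ --- giving $\dgs$. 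For (iv) one applies the engine, for each $n\in\omega$, to the finite chain $\mathstr G_{n+2}$ of length $n+2$, which is among the easiest inputs since its labelled tree is essentially finite, obtaining $\dgu_n$ with $\Dgs[\zeros,\dgu_n]\iso\mathstr G_{n+2}$; a finite chain is self-dual as a lattice, so $\mathstr G_{n+2}^\dualmarker\iso\mathstr G_{n+2}$ and $\Thd(\Dgs[\zeros,\dgu_n])=\Th(\mathstr G_{n+2})$, the G\"odel--Dummett logic of the $(n+2)$-element chain. These logics are pairwise distinct --- the formula asserting that a linear order takes at most $k$ values is valid in $\mathstr G_k$ but refuted in $\mathstr G_{k+1}$ --- so $\Thd(\Dgs[\zeros,\dgu_m])\ne\Thd(\Dgs[\zeros,\dgu_n])$ for $m\ne n$, establishing (iv) with $\functionof{\dgu_n}{n\in\omega}$.

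The main obstacle is entirely in the exactness clause of the engine: one must pin down the principal ideal below $\degs(A_{\mathstr B})$ so that it consists of \emph{exactly} the intended column degrees, while at the same time laying out the columns coherently enough across the tree that $\meet$, $\join$ \emph{and} the dual implication of $\mathstr B$ are all mirrored without slippage. The uniformity of strong reducibility cuts both ways here: it rigidifies the reductions out of $A_{\mathstr B}$, which is what makes exactness attainable, but it also constrains how the columns may be distributed over the tree, which is the delicate part of the design. (It is precisely this balance that degrades in the $\pzo$ setting, accounting for the difficulty of Theorems F and G and for the open status of the corresponding question for $\Dgps$.) By contrast, the logical inputs --- $\IPC$ and $\WEM$ being the theories of their Lindenbaum--Tarski algebras, the chain logics being pairwise distinct, and $\Thd$ being an invariant of the underlying bounded lattice --- are all classical and contribute little to the difficulty.
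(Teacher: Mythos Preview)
The paper does not prove Theorem~J: it is stated with a citation to \cite{SorTe} and the remark that ``many authors contributed'' to it, but there is no ``Proof of Theorem~J'' section among Sections~7--16. So there is no in-paper argument to compare your sketch against.

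On its own merits: your treatment of (iii) via the immediate successor of $\zeros$ is correct and standard. For the remaining parts, your ``engine'' is framed as producing, for a prescribed countable Brouwer algebra $\mathstr B$, an \emph{isomorphism} $\Dgs[\zeros,\degs(A_{\mathstr B})]\iso\mathstr B$ (your ``exactness'' clause). That would certainly suffice, but it is stronger than what is needed and, at least for (i), stronger than what the cited work establishes. The route actually taken --- and the one this survey itself follows for the analogous Theorem~I in Section~15 --- is an \emph{embedding} argument: one exhibits a single degree $\dgr$ such that every finite \lato-irreducible implicative lattice $\mathstr L$ (dropping the \latz-irreducibility hypothesis used for Theorem~I) satisfies $\mathstr L\embeds(\Dgs[\zeros,\dgr])^\dualmarker$, and then invokes the \IPC-Completeness Theorem of Section~14 to get $\Thd(\Dgs[\zeros,\dgr])\incl\IPC$. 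Dropping \latz-irreducibility is precisely what forces $\dgr$ to be chosen join-reducible (so that $\latz$ of the dual interval is meet-reducible, blocking Proposition~\ref{WEMinclThL}); no control over the full isomorphism type of the interval is required, and your exactness clause is neither proved nor used. Part (ii) is analogous, with the \WEM-Completeness Theorem in place of the \IPC\ one. Your chain proposal for (iv) is clean and would work, but note that realizing a prescribed finite lattice \emph{exactly} as an initial segment of $\Dgs$ is itself a nontrivial result, not supplied by anything in this survey; the survey only gives the two-element case.
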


Sorbi and Terwijn have recently announced in \cite{SorTe1} analogous results for \Dgw\ and $\Dgw^\dualmarker$:

\begin{proposition}
There exist weak degrees \dgr\ and \dgs\ such that
\[\Thd(\Dgw[\zerow,\dgr])=\IPC=\Th(\Dgw[\dgs,\infty_{\ssf w}]).\ \noproof\]
\end{proposition}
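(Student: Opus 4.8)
I would prove the two equalities by parallel ``embed a free Heyting algebra'' arguments, using that by Proposition~\ref{DgwIsImpl} the lattice $\Dgw$ is implicative and, by \cite{Med55}, also dual-implicative, with the explicit operations exhibited there. The easy halves are immediate: by the lemma following Definition~\ref{segments}, whenever $\zerow<\dgr$ the segment $\Dgw[\zerow,\dgr]$ is again dual-implicative, and --- since $\zerow$ is least, while $a\dlatimpl b\leq\dgr$ as soon as $a,b\leq\dgr$ (take $x=\dgr$ in the defining condition) --- its Brouwer operations are simply the restrictions of those of $\Dgw$; hence $(\Dgw[\zerow,\dgr])^\dualmarker$ is implicative and Proposition~\ref{IPCinclThL}(i) gives $\IPC\incl\Thd(\Dgw[\zerow,\dgr])$. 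Symmetrically, whenever $\dgs<\inftyw$ the segment $\Dgw[\dgs,\inftyw]$ is implicative with operations restricted from $\Dgw$ (using that $\inftyw$ is greatest and that $\dgs\leq a\latimpl b$ whenever $\dgs\leq a,b$), so Proposition~\ref{IPCinclThL}(i) gives $\IPC\incl\Th(\Dgw[\dgs,\inftyw])$. Thus only the reverse inclusions need work, and for these one must choose $\dgr$ and $\dgs$ with care.

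By the completeness of $\IPC$ for Heyting algebras --- equivalently, $\phi\in\IPC$ iff $\phi$ takes the top value under the canonical valuation of the free Heyting algebra $\free_\omega$ on countably many generators --- it suffices to produce $\dgr$ and $\dgs$ such that $\free_\omega$ embeds as a bounded Heyting algebra into $(\Dgw[\zerow,\dgr])^\dualmarker$, with top $\zerow$ and bottom $\dgr$, and into $\Dgw[\dgs,\inftyw]$, with top $\inftyw$ and bottom $\dgs$. Indeed, given $\phi\notin\IPC$ the canonical valuation refutes $\phi$ already in $\free_\omega$, so composing it with such an embedding gives a valuation of the segment under which $\phi$ misses the designated top value, witnessing $\phi\notin\Thd(\Dgw[\zerow,\dgr])$ in the first case and $\phi\notin\Th(\Dgw[\dgs,\inftyw])$ in the second. (Working with $\free_\omega$ is convenient but inessential, as each nontheorem mentions only finitely many atoms.)

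Concretely, through the isomorphism $\Dgw\iso\power(\pre\omega\omega)^{\geqt}$ of Proposition~\ref{embedings}(iii) --- under which joins and meets become intersections and unions and, by Remark~\ref{genmeet}, the lattice is complete --- the first embedding amounts to producing a recursion-theoretically ``independent'' family of weak degrees $\functionof{\dgp_n}{n\in\omega}$, all below a suitable $\dgr<\inftyw$, such that the sub-dual-Heyting-algebra of $\Dgw[\zerow,\dgr]$ generated by the $\dgp_n$ (which automatically contains the bounds $\zerow$ and $\dgr$) is free on them; the second requires, symmetrically, an independent family $\functionof{\dgq_n}{n\in\omega}$, all above a suitable $\dgs>\zerow$, generating inside $\Dgw[\dgs,\inftyw]$ a free bounded Heyting algebra with bounds $\dgs$ and $\inftyw$. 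In either case ``independence'' means precisely that no Heyting-algebra identity beyond the $\IPC$-provable ones holds among the generators; $\dgr$ and $\dgs$ are then determined as the endpoints making these free subalgebras bounded as required, realized by mass problems built together with the $\dgp_n$ (resp.\ $\dgq_n$) by bookkeeping.

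The real content, and the main obstacle, is the construction of these independent families and the verification of freeness. This is where the recursion theory enters: the reals witnessing membership in the mass problems are built by a forcing or priority construction so that, for every $\IPC$-nontheorem $\phi$ and every finite Kripke countermodel for $\phi$, exactly the Turing incomparabilities and failures of uniformity demanded by that countermodel hold among the corresponding degrees; in particular the construction must make $\dgr$ join-reducible in $\Dgw[\zerow,\dgr]$ and $\dgs$ meet-reducible in $\Dgw[\dgs,\inftyw]$, since by Proposition~\ref{WEMinclThL} otherwise $\WEM$ --- which by Lemma~\ref{IPCvsCPC}(i) strictly contains $\IPC$ --- would already lie in the respective theory. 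The natural route is to transcribe to $\Dgw$, through the $\power(\pre\omega\omega)^{\geqt}$ picture, the initial-segment constructions for $\Dgs$ underlying Theorem~J; I would expect the weak setting to be somewhat more tractable, since on $\Dgw$ both $\latimpl$ and $\dlatimpl$ have the transparent closed forms of Proposition~\ref{DgwIsImpl} and \cite{Med55}, and since in $\power(\pre\omega\omega)^{\geqt}$ the lattice operations are mere unions and intersections, so that freeness reduces to a combinatorial statement about the mutual Turing position of the chosen families.
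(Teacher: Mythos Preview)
The paper does not prove this proposition: it is stated with \noproof\ and attributed to the announcement \cite{SorTe1}. There is thus no proof in the paper to compare against.

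Your treatment of the easy inclusions $\IPC\incl\Thd(\Dgw[\zerow,\dgr])$ and $\IPC\incl\Th(\Dgw[\dgs,\inftyw])$ is correct, including the verification that the segment operations coincide with the restrictions of the global ones when the relevant endpoint is $\zerow$ or $\inftyw$. You also correctly identify the necessary reducibility conditions on $\dgr$ and $\dgs$ (join-reducibility of $\dgr$, meet-reducibility of $\dgs$) needed to block $\WEM$, and the reduction of the hard direction to embedding a free Heyting algebra is a sound strategy in principle.

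The gap is that the substantive content --- the construction of the independent families and the verification of freeness --- is not actually given. Writing ``by a forcing or priority construction'' and ``transcribe to $\Dgw$ \ldots\ the initial-segment constructions for $\Dgs$ underlying Theorem~J'' is a plan, not a proof. The constructions behind Theorem~J are for the \emph{Medvedev} lattice, where reductions are uniform; in $\Dgw$ the requirements one must meet are non-uniform in character, and it is not evident that the Skvortsova-type machinery transfers without genuine modification. Your expectation that the weak setting is ``more tractable'' via the $\power(\pre\omega\omega)^{\geqt}$ picture is plausible but unsupported: that description simplifies the \emph{computation} of $\latimpl$ and $\dlatimpl$ on given problems, but the difficulty was never there --- it lies in \emph{constructing} mass problems whose degrees generate a free subalgebra with the prescribed bounds, and that construction remains entirely to be carried out.
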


\begin{question} What are $\Thd(\Dgs[\zeros,\ones])$ and $\Thd(\Dgw[\zerow,\onew])$?
\end{question}
\end{section}

\begin{section}{Special Degrees}
In July and August 1999 there was a somewhat contentious debate on the FOM discussion group (see \cite{SiFOM} or more generally the FOM archives at \cite{FOM}) concerning, among other topics, the assertion that the theory of the recursively enumerable degrees \Dgpt\ has a history rather different from that of many, if not most mathematical theories. In the beginning only two r.e.~degrees, \latz\ and \lato\  were known, and many years elapsed between the formulation of the basic definitions and the result that showed that the theory was non-trivial --- the Friedberg-Mu\v cnik Theorem establishing that other r.e.~degrees exist. 

Stephen Simpson, Harvey Friedman and others noted that most theories are motivated by the existence of a plethora of examples with the abstract theory serving to organize and explain the examples; Group Theory is a prime example of this process. Whatever the merits of this distinction, it is an odd feature of the theory of r.e.~degrees that even as a mature theory, there are still few examples of r.e.~degrees that arise naturally in contexts beyond the theory itself. Simpson in \cite{SiFOM} expressed what at that time was merely a hope that the theory of (now called) weak or Mu\v cnik degrees of \pzo\ classes --- \Dgpw\ in the notation here --- might prove to be a richer theory in this regard. 

In the years since, several people, especially Simpson, have developed this idea. We have already covered above several of these results including that there is a semi-lattice embedding of \Dgpt\ into \Dgpw\ via the mapping $\dga\mapsto\dgawstar$ (Theorem D). As we noted at the end of Section 3 this suggests that we regard \Dgpw\ as an extension of \Dgpt. In this section we mention some examples fulfilling Simpson's hopes for the existence of ``natural" members of \Dgpw\ strictly between $\zerow$ and $\onew$.

Of course, from the perspective of this survey, it would also be natural to ask if \Dgps\ has some of the same properties. Indeed, \Dgps\ is known to be a dense ordering (Theorem B), one of the signature properties of \Dgpt. The density of \Dgpw\ is a major open question as is the existence of an embedding of \Dgpt\ into \Dgps, so a full understanding of the relationships must await further research. Indeed, by Remark \ref{truthtable} below, it may be more reasonable to expect an embedding of the r.e.~truth-table degrees into \Dgps.

In the following definition, a $\Pi^0_1[A]$ class is the set of paths through an $A$-recursive tree and $\zerot^{(n)}$ is the Turing degree of the $n$-th iterated jump of the empty (or any recursive) set.

In Proposition \ref{pzoexamples} we introduced the \pzo\ sets
\[{\ssf DNR}:=\setof{f\in\pre\omega\omega}{\forall a[f(a)\not=\set a(a)]}
 \qand \dnr k:={\ssf DNR}\cap\pre\omega k.\]
Let 
\[\dgd:=\degw({\ssf DNR}), \quad {\bf d}_{k,{\ssf w}}:=\degw(\dnr k),\qand 
 	{\bf d}_{k,{\ssf s}}:=\degs(\dnr k).\]

\begin{definition}
\begin{enumerate}
\item[\textup{(i)}] $\mu$ denotes the usual probability measure on \pre\omega2 defined by
\[\mu(\setof f{f(0)=i_0,\ldots,f(n-1)=i_{n-1}})=2^{-n};\]
\item[\textup{(ii)}] for $A\incl\omega$, $P\incl\pre\omega2$ is \dff{$A$-full} iff $P=\bigcup_{n\in\omega}P_n$ for some $A$-recursive sequence of $\Pi^0_1[A]$ classes $P_0\incl P_1\incl\ldots$ such that for all $n$, $\mu(P_n)\geq 1-2^{-n}$;
\item[\textup{(iii)}] ${\ssf R}^A:=\bigcap\setof P{P\hbox{ is $A$-full}}$, the set of \dff{$A$-random} reals;
\item[\textup{(iv)}] $\random n:={\ssf R}^{\zerot^{(n-1)}}$;
\item[\textup{(v)}] ${\bf r}_n:=\degw(\random n);\qquad {\bf r}_n^*:=\onew\meet{\bf r}_n$.
\end{enumerate}
\end{definition}

For much more information on algorithmic randomness see \cite{DoHi} or for a briefer summary \cite{DoRe}.

\begin{thmk}[{\cite[Theorems 4.3 and 5.6]{Si3}} and {\cite[Theorem 8.10]{Si1}}]
\mbox{}
\begin{enumerate}
\item[\textup{(i)}] \dgd, ${\bf r}_1$ and ${\bf r}_2^*\in\Dgpw$;
\item[\textup{(ii)}] $\zerow<\dgd<{\bf r}_1<{\bf r}_2^*<\onew$;
\item[\textup{(iii)}] these degrees are incomparable with all $\dgawstar$ for $\dga\in\Dgt\not=\zerot,\onet$;
\item[\textup{(iv)}] $\dgr_1$ is the largest element of \Dgpw\ that contains a \pzo\ set of positive measure.
\end{enumerate}
\end{thmk}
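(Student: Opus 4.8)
The plan is to treat clause~(i) first, then the chain~(ii), then the incomparabilities~(iii), with clause~(iv) dropping out of the work done for~(i); the workhorse is Ku\v cera's theorem that every Martin--L\"of random real computes a member of every \pzo\ class of positive measure. Clause~(i) asks for three memberships. For ${\bf r}_1$, let $U$ be the first component of a universal Martin--L\"of test --- an effectively open subset of $\pre\omega2$ with $\mu(U)\leq1/2$ containing all non-random reals --- and put $P:=\pre\omega2\setminus U$, a \pzo\ class with $\mu(P)\geq1/2$: every member of $P$ is ML-random, so $\random1\leqw P$, and every ML-random computes a member of $P$ by Ku\v cera's theorem, so $P\leqw\random1$; hence ${\bf r}_1=\degw(P)\in\Dgpw$. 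For $\dgd$, one invokes the (nontrivial) theorem of Jockusch that ${\ssf DNR}$ has the same Mu\v cnik degree as $\dnr h$ for a suitable recursive order $h$; block-coding the values $<h(n)$ in binary then realises $\dnr h$ as a \pzo\ subclass of $\pre\omega2$ of the same degree, so $\dgd\in\Dgpw$. For ${\bf r}_2^*=\onew\meet{\bf r}_2$ one cannot merely appeal to closure of $\Dgpw$ under $\meet$, since ${\bf r}_2\notin\Dgpw$ (no nonempty \pzo\ class consists of $2$-randoms: its leftmost path is $\leqt\emptyset'$, while no $\Delta^0_2$ real is $2$-random); instead one uses Simpson's jump-absorption construction \cite{Si3} --- $\onew$ absorbs one Turing jump, so $\onew\meet\degw(\random2)$ is realised by an explicit \pzo\ class combining a $\dnr2$-coding with a $\emptyset'$-effective exhaustion of a positive-measure class of $2$-randoms (a refinement of the argument for $\dgawstar\in\Dgpw$ in Theorem~D(i)). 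Fix henceforth \pzo\ classes $P_{\bf d},P_1,P_2$ realising $\dgd,{\bf r}_1,{\bf r}_2^*$, and, for an r.e.\ set $A$ with $\dga=\degt(A)$, a \pzo\ class $Q_A$ realising $\dgawstar=\onew\meet\degw(\set A)$; since $\onew=\degw(\dnr2)$ (Theorem~A), the meet formula yields: $X$ computes a member of $Q_A$ iff $X$ has PA degree or $A\leqt X$, and $X$ computes a member of $P_2$ iff $X$ has PA degree or $X$ computes a $2$-random.

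For clause~(ii): $\zerow<\dgd$ since no recursive function is DNR. We have $\dgd\leqw{\bf r}_1$ because every ML-random computes a DNR function (Ku\v cera), and strictly so because there is a DNR function whose degree computes no ML-random real (e.g.\ one of minimal Turing degree, an ML-random never having minimal degree; or by the known separation of ${\ssf DNR}$ from ${\ssf WWKL}$). We have ${\bf r}_1\leqw{\bf r}_2^*$ because every PA degree computes an ML-random (it computes a member of $P$), so ${\bf r}_1\leqw\onew$, and every $2$-random is ML-random, so ${\bf r}_1\leqw{\bf r}_2$, whence ${\bf r}_1\leqw\onew\meet{\bf r}_2={\bf r}_2^*$; it is strict because a low ML-random $Z$ exists (low basis theorem applied to $P$), and $Z$ computes no $2$-random (being $\Delta^0_2$) and is not of PA degree (Stephan's theorem: a PA ML-random computes $\emptyset'$), so $Z$ computes no member of $P_2$ and ${\bf r}_2^*\not\leqw{\bf r}_1$. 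Finally ${\bf r}_2^*<\onew$, since $\onew\leqw{\bf r}_2^*$ would force $\onew\leqw{\bf r}_2$, i.e.\ that every $2$-random has PA degree, contradicting that the PA degrees are null while the $2$-randoms are conull. Clause~(iv) now follows: ${\bf r}_1$ contains the positive-measure \pzo\ class $P$, and if $\dgq=\degw(P')\in\Dgpw$ with $P'$ a \pzo\ class of positive measure, then every ML-random computes a member of $P'$ (Ku\v cera), so $P'\leqw\random1$ and $\dgq\leqw{\bf r}_1$.

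For clause~(iii), fix an r.e.\ set $A$ with $\degt(A)\ne\zerot,\onet$, so $A$ is non-recursive and incomplete. Since $\dgd\leqw{\bf r}_1\leqw{\bf r}_2^*$, incomparability of each of $\dgd,{\bf r}_1,{\bf r}_2^*$ with $\dgawstar$ reduces to the two facts $\dgd\not\leqw\dgawstar$ and $\dgawstar\not\leqw{\bf r}_2^*$. For the first: $A$ computes some member $Y\leqt A$ of $Q_A$, and by Arslanov's completeness criterion the incomplete r.e.\ set $A$ computes no DNR function, hence neither does $Y$, so not every member of $Q_A$ computes a DNR function. For the second: the upper cone $\setof X{A\leqt X}$ is null (Sacks), so there is a $2$-random real $Z$ that does not compute $A$; such $Z$ is not of PA degree, since a PA ML-random computes $\emptyset'$ and hence $A$; so $Z$ computes a member of $P_2$ (being $2$-random) while nothing $Z$ computes lies in $Q_A$. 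The excluded degrees are genuinely comparable with all three --- $\dgawstar=\zerow<\dgd$ when $\degt(A)=\zerot$, and $\dgawstar=\onew>{\bf r}_2^*$ when $\degt(A)=\onet$ --- so the restriction is necessary.

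The main obstacle is clause~(i), and within it the two membership assertions: $\dgd\in\Dgpw$ rests on the delicate identification of the Mu\v cnik degree of unbounded ${\ssf DNR}$ with that of a bounded $\dnr h$, and ${\bf r}_2^*\in\Dgpw$ on Simpson's jump-absorption machinery for $\onew\meet{\bf r}_2$. Once these are in place, together with Ku\v cera's theorem and the standard facts about PA degrees and $2$-randomness (Arslanov's criterion, Stephan's theorem, nullity of the PA degrees), clauses~(ii)--(iv) are the brief bookkeeping with realising \pzo\ classes sketched above.
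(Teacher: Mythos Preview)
Your argument has a genuine gap in clause~(i) for $\dgd\in\Dgpw$. The claimed ``theorem of Jockusch'' that ${\ssf DNR}\eqw\dnr h$ for some recursive order $h$ is not Jockusch's result: his theorem (the paper's Theorem~L(ii)) concerns constant bounds $k\geq2$ and yields $\degw(\dnr k)=\onew$, not $\dgd$. For non-constant recursive orders $h$ the equivalence in fact \emph{fails}: by the bushy-tree forcing of Khan--Miller (building on the Greenberg--Miller hierarchy), for every computable order $h$ there is a DNR function that computes no $h$-bounded DNR function, so $\dnr h\not\leqw{\ssf DNR}$ and $\dgd<\degw(\dnr h)$ strictly. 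Your route therefore cannot produce a \pzo\ class of weak degree $\dgd$.

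The paper's approach to clause~(i) is both uniform and much simpler: it uses Lemma~\ref{existspzo} (the $\Sigma^0_3$ absorption lemma, already proved for Theorem~D) three times. Since ${\ssf DNR}$, $\random1$, and $\random2$ are all $\Sigma^0_3$, that lemma gives \pzo\ classes realizing $\onew\meet\dgd$, $\onew\meet\dgr_1$, and $\onew\meet\dgr_2=\dgr_2^*$; the first two meets collapse because $\dgd\leq\onew$ (trivially $\dnr2\incl{\ssf DNR}$) and $\dgr_1\leq\onew$ (since $\random1$ contains a nonempty \pzo\ subclass). This is exactly the ``refinement of the argument for $\dgawstar\in\Dgpw$'' you correctly invoke for $\dgr_2^*$ --- you just need to realize it handles $\dgd$ and $\dgr_1$ too, with no appeal to the DNR hierarchy.

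The remainder of your proposal is correct but diverges from the paper in a couple of places worth noting. For $\dgr_1<\dgr_2^*$ you use a low ML-random together with Stephan's theorem; the paper instead takes an almost-recursive (hyperimmune-free) element of $\random1$ and shows directly (Lemma~\ref{almostrecprops}) that no such real is $2$-random, avoiding Stephan. For $\dgr_2^*<\onew$ you argue by measure (PA degrees null, $2$-randoms conull); the paper derives it from Lemma~\ref{posmeas}, the self-contained combinatorial fact that no separating class is $\leqw$ a positive-measure \pzo\ class. For clause~(iv) your Ku\v cera argument and the paper's Lemma~16.10 are essentially the same construction. Finally, the paper's Section~16 does not treat clause~(iii) at all, so your Arslanov/Stephan/Sacks sketch there is additional content rather than a comparison point.
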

 
\begin{thml}[{\cite[Theorems 5 and 6]{Jo1}} and {\cite[Corollary 2.11]{CeHi2}}]
 For all $2\leq\ell<k$, 
 \begin{enumerate}
\item[\textup{(i)}] ${\bf d}_{k,{\ssf w}}\in\Dgpw$ and ${\bf d}_{k,{\ssf s}}\in\Dgps$;
\item[\textup{(ii)}] ${\bf d}_{k,{\ssf w}}={\bf d}_{\ell,{\ssf w}}=\onew$;
\item[\textup{(iii)}] ${\bf d}_{k,{\ssf s}}<{\bf d}_{\ell,{\ssf s}}\leq{\bf d}_{2,{\ssf s}}=\ones$.
 \end{enumerate}
 \end{thml}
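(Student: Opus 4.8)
The plan is to obtain the ``$\leq$'' half of the statement from set inclusions together with Theorem~A, to quote a classical recursion-theoretic fact for the one weak-degree coincidence, and to concentrate on the strict strong-degree inequality, which is the only part with real content. For (i): coding each value $<k$ by a block of $\lceil\log_2 k\rceil$ bits makes $\pre\omega k$ recursively homeomorphic, with recursive inverse, to a \pzo\ subclass of \pre\omega2; hence $\dnr k\eqs\setof{\hat f}{f\in\dnr k}$, a \pzo\ class that is nonempty (for instance $e\mapsto1-\set e(e)$ when $\set e(e)\in\set{0,1}$ and $e\mapsto0$ otherwise belongs to $\dnr2\incl\dnr k$). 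Thus ${\bf d}_{k,{\ssf w}}\in\Dgpw$ and ${\bf d}_{k,{\ssf s}}\in\Dgps$. From $2\leq\ell<k$ we get the inclusions $\dnr2\incl\dnr\ell\incl\dnr{k-1}\incl\dnr k$, so Lemma~\ref{wk=include} yields $\dnr k\leqs\dnr\ell\leqs\dnr2$; since Theorem~A identifies $\degs(\dnr2)=\ones$ and $\degw(\dnr2)=\onew$, this gives ${\bf d}_{k,{\ssf s}}\leq{\bf d}_{\ell,{\ssf s}}\leq{\bf d}_{2,{\ssf s}}=\ones$ and ${\bf d}_{k,{\ssf w}}\leq{\bf d}_{\ell,{\ssf w}}\leq\onew$.

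It remains, for (ii), to prove $\onew\leq{\bf d}_{k,{\ssf w}}$, equivalently $\dnr2\leqw\dnr k$: every $k$-valued diagonally non-recursive function computes a $2$-valued one. I would cite Jockusch's proof of this classical fact: given $g\in\dnr k$ one builds $f\in\dnr2$ recursive in $g$ by iterating a ``bound-halving'' step, in which the recursion theorem reserves fresh indices whose self-computations are pinned to exactly the values a diagonally non-recursive oracle must avoid, so that consulting $g$ at those indices cuts the range of admissible values roughly in half; after $O(\log k)$ iterations the range is $\set{0,1}$. Combined with the first paragraph this gives ${\bf d}_{k,{\ssf w}}={\bf d}_{\ell,{\ssf w}}=\onew$.

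For the strict inequality ${\bf d}_{k,{\ssf s}}<{\bf d}_{\ell,{\ssf s}}$ it suffices to prove $\dnr{k-1}\not\leqs\dnr k$ for every $k\geq3$, since a reduction $\dnr\ell\leqs\dnr k$ with $\ell<k-1$, composed with the inclusion reduction $\dnr{k-1}\leqs\dnr\ell$, would give $\dnr{k-1}\leqs\dnr k$. Suppose then that a partial recursive functional $\Phi$ satisfied $\Phi:\dnr k\to\dnr{k-1}$, and let $T$ be the tree of $k$-valued strings that are initial segments of a member of $\dnr k$: it is co-r.e., $[T]=\dnr k$, and at each node of $T$ at least $k-1\geq2$ successors survive in $T$. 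By the recursion theorem fix an index $a$ so that $\set a(a)$ performs the search: look for a string $\sigma$ with $|\sigma|>a$, $\sigma(a)=k-1$, $\Phi(\sigma)$ defined at $a$ with $\Phi(\sigma)(a)\leq k-2$, not yet observed to leave $T$, and output $\Phi(\sigma)(a)$ for the first such $\sigma$ found. A case split shows the search converges: whatever value $\set a(a)$ may take is at most $k-2$, so in any case $\set a(a)\not=k-1$ and some $g\in\dnr k$ has $g(a)=k-1$; as $\Phi(g)\in\dnr{k-1}$ is total and $(k-1)$-bounded, a long enough initial segment of $g$ is an eligible candidate, forcing the search to stop, at some $\sigma^*$ with $v:=\Phi(\sigma^*)(a)=\set a(a)\leq k-2$. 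If $\sigma^*$ genuinely lies on $T$ we may extend it to a $g\in\dnr k$, and then $\Phi(g)(a)=\Phi(\sigma^*)(a)=v=\set a(a)$, contradicting $\Phi(g)\in{\ssf DNR}$.

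The obstacle — and the step I expect to be hardest — is that the recursive search cannot certify that $\sigma^*$ genuinely lies on $T$: after $\sigma^*$ has been output, some computation $\set c^{\sigma^*}(c)$ with $c\not=a$ may still converge to $\sigma^*(c)$, so that no member of $\dnr k$ extends $\sigma^*$ and the contradiction evaporates. I would overcome this by invoking the recursion theorem \emph{with parameters}, reserving a trap index for every node of $T$ at a fixed level $N$ and extracting the value $\Phi$ is forced to take there from a recursive modulus of convergence for $\Phi(\cdot)$ on $[T]$ (recursive because ``$m$ works at level $N$'' is a $\Sigma^0_1$ predicate of $m$), then arguing by a pigeonhole over the finitely many nodes involved that some trap is sprung on a node that really survives on $T$. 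Granting this, no such $\Phi$ exists, so ${\bf d}_{k,{\ssf s}}\neq{\bf d}_{k-1,{\ssf s}}$; combined with the inequalities of the first two paragraphs, part (iii) follows.
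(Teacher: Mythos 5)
Your treatment of everything except the strict inequality in (iii) is in order, and it matches the survey's own level of detail: the paper gives no proof of Theorem L, deferring entirely to \cite{Jo1} and \cite{CeHi2}. Part (i), the chain ${\bf d}_{k,{\ssf s}}\leq{\bf d}_{\ell,{\ssf s}}\leq{\bf d}_{2,{\ssf s}}$ obtained from $\dnr2\incl\dnr\ell\incl\dnr k$ and Lemma~\ref{wk=include}, the identification ${\bf d}_{2,\bullet}={\bf 1}_\bullet$ from Theorem A, the citation of Jockusch for $\dnr2\leqw\dnr k$, and the reduction of (iii) to the single statement $\dnr{k-1}\not\leqs\dnr k$ are all correct.

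That last statement is where the gap is, and it is exactly where you predict; the proposed repair does not close it but merely relocates it. Suppose $\tau$ is a node of level $N$ that genuinely survives on $T$. The value you store in $\set{a_\tau}(a_\tau)$ must be read off, at some finite stage, from some recursively chosen extension $\tau'$ of $\tau$ of length equal to the modulus; nothing guarantees that $\tau'$ survives, nor --- what is really needed --- that the number $\Phi(\tau')(a_\tau)$ is attained as $\Phi(g)(a_\tau)$ for some genuine $g\in\dnr k$ extending $\tau$. The set of attained values $\setof{v\leq k-2}{(\exists g\in\dnr k\cap[\tau])\;\Phi(g)(a_\tau)=v}$ is nonempty but only co-r.e., and one cannot in general recursively name a member of a nonempty co-r.e.\ subset of a $(k-1)$-element set: any committed guess can be deleted from the approximation afterwards, each deletion being paid for by a convergence $\set c(c)\downarrow$ at some fresh position $c$, of which there is an unlimited supply. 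So your pigeonhole produces a surviving node, not a sprung trap, and "the value $\Phi$ is forced to take there" is not a well-defined single value in the first place. What actually makes the theorem true is quantitative: among the $k-1$ possible outputs a plurality value is attained on a fraction at least $1/(k-1)$ of the relevant strings, whereas forbidding one of the $k$ input values at a reserved coordinate costs only a fraction $1/k$, and $1/(k-1)>1/k$. The proofs in \cite{Jo1} and \cite{CeHi2} convert this margin into a counting argument over many reserved indices (in the spirit of the counting in Lemma~\ref{posmeas}) that forces some trap value to be realized on a surviving path. That idea is absent from your sketch, so as written the strict inequality ${\bf d}_{k,{\ssf s}}<{\bf d}_{\ell,{\ssf s}}$ is not proved; you should either supply that counting argument or, as the paper does, cite it.
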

 
 See also \cite{CeHi2} for more examples of strong degrees.
 \end{section}

\begin{section}{\pzo\ Sets and Classes}
In preparation for the proofs to follow, we collect here some general information on \pzo\ sets. This is a large topic with an extensive literature, and we shall discuss here only those aspects of the theory required in the sequel. Excellent general references are \cite{Ce} and \cite{CeRe}. 

In Definition \ref{defpzodg} we focussed attention on \pzo\ classes --- that is, \pzo\ subsets of $\pre\omega2$:
\begin{align*}
{\mathbb P}_\bullet&:=\setof{\deg_\bullet(P)}{P\incl\pre\omega2\text{ is a nonempty \pzo\ class}}.\\
\noalign{\noindent\text{Other natural possibilities that come to mind are}}
{\mathbb P}^k_\bullet&:=\setof{\deg_\bullet(P)}{P\incl\pre\omega k\text{ is a nonempty \pzo\ set}}\text{\quad and}\\
{\mathbb P}^\omega_\bullet&:=\setof{\deg_\bullet(P)}{P\incl\pre\omega\omega\text{ is a nonempty \pzo\ set}},\\
\noalign{\noindent\text{but it turns out that the key alternative is}\smallskip}
{\mathbb P}^{\ssf bd}_\bullet&:=\setof{\deg_\bullet(P)}{P\incl\pre\omega\omega\text{ is a nonempty recursively bounded \pzo\ set}}.
\end{align*}
Here we use the following notions:

\begin{definition}\label{recbd}
For and $f,g\in\pre\omega\omega$, $\sigma\in\pre{<\omega}\omega$ and $P\incl\pre\omega\omega$,
\begin{itemize}
\item[\textup{(i)}]$f$ is \dff{ bounded by} $g$ iff $\forall m[f(m)\leq g(m)]$;
\[\pre\omega g:=\setof f{f\text{ is bounded by }g};\]
\item[\textup{(ii)}]$\sigma$ is \dff{bounded by} $g$ iff $(\forall m<|\sigma|)[\sigma(m)\leq g(m)]$;
\begin{align*}
\pre{<\omega} g&:=\setof {\sigma\in\pre{<\omega}\omega}{\sigma\text{ is bounded by }g};\\
\pre{m} g&:=\setof {\sigma\in\pre{m}\omega}{\sigma\text{ is bounded by }g};
\end{align*}
\item[\textup{(iii)}]$f$ is \dff{recursively bounded} iff $f\in\pre\omega g$ for some recursive function $g$;
\item[\textup{(iv)}]$P$ is \dff{recursively bounded} iff $P\incl\pre\omega g$ for some recursive function $g$.
\end{itemize}
\end{definition}

Obviously, ${\mathbb P}_\bullet\incl{\mathbb P}^k_\bullet\incl{\mathbb P}^{k+1}_\bullet\incl{\mathbb P}^{\ssf bd}_\bullet\incl{\mathbb P}^{\omega}_\bullet$. Any \pzo\ set $P\incl\pre\omega\omega$ maps naturally onto a set $P^*\incl\pre\omega2$ via the recursive functional  $f\mapsto f^*$, where
\begin{align*}
f^*(\code{m,n})&:=\cases{1,&if $f(m)=n$;\cr0,&otherwise;}\\
P^*&:=\setof{f^*}{f\in P};
\end{align*}
in such a way that $P\equiv_\bullet P^*$. However, in general $P^*$ may not be \pzo. The following two lemmas clarify when this is the case and have other interesting consequences.

\begin{lemma}\label{Konig}
For any $h\in\pre\omega\omega$ and any tree $U\incl\pre{<\omega}h$,
\begin{align*}
(\exists f\in\pre\omega h)\forall m\,[f\restrict m\in U]
&\qIff\forall m(\exists\sigma\in\pre mh)\;\sigma\in U;\tag{i}\\
(\forall f\in\pre\omega h)\exists m\,[f\restrict m\notin U]
&\qIff\exists m(\forall\sigma\in\pre mh)\;\sigma\notin U.\tag{ii}
\end{align*}
\end{lemma}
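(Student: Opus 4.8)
The plan is to prove part (i) directly and then deduce (ii) from it by negating both sides of the biconditional. Indeed, $\lnot(\exists f\in\pre\omega h)\,\forall m\,[f\restrict m\in U]$ is the same assertion as $(\forall f\in\pre\omega h)\,\exists m\,[f\restrict m\notin U]$, and $\lnot\,\forall m\,(\exists\sigma\in\pre mh)\,\sigma\in U$ is the same as $\exists m\,(\forall\sigma\in\pre mh)\,\sigma\notin U$; so once (i) is established, (ii) is immediate. The forward direction of (i) is trivial: if $f\in\pre\omega h$ satisfies $f\restrict m\in U$ for all $m$, then for each $m$ the sequence $\sigma:=f\restrict m$ itself belongs to $\pre mh\cap U$.

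The substance is the backward direction of (i), which is König's Lemma applied to the finitely branching tree $U$. The crucial finiteness is that, since $U\incl\pre{<\omega}h$, any $\sigma\in U$ with $|\sigma|=n$ has all of its immediate successors in $U$ among the sequences $\sigma^\frown(i)$ with $i\leq h(n)$ --- at most $h(n)+1$ of them. Call $\sigma\in U$ \emph{extendible} if for every $m\geq|\sigma|$ there is $\tau\in U$ with $|\tau|=m$ and $\sigma\incl\tau$. The hypothesis of (i) says precisely that the empty sequence is extendible (and the case $m=0$ also records $U\neq\emptyset$). The key claim is that every extendible $\sigma$ has an extendible immediate successor: otherwise, for each $i\leq h(|\sigma|)$ one could fix $m_i>|\sigma|$ such that no member of $U$ of length $m_i$ extends $\sigma^\frown(i)$; then, putting $M:=\max_{i\leq h(|\sigma|)}m_i$, extendibility of $\sigma$ yields some $\tau\in U$ with $|\tau|=M$ and $\sigma\incl\tau$, and, writing $j:=\tau(|\sigma|)$, we have $j\leq h(|\sigma|)$ while $\tau\restrict m_j$ is a member of $U$ of length $m_j$ (closure under subsequence) extending $\sigma^\frown(j)$, contradicting the choice of $m_j$. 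Granting the claim, I would define $f$ by recursion: $f\restrict 0$ is the (extendible) empty sequence, and at each stage $f(n)$ is the least $i\leq h(n)$ for which $(f\restrict n)^\frown(i)$ is extendible. Then every $f\restrict n$ is extendible, hence in $U$, and $f\in\pre\omega h$, so $f$ is the desired path.

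The step needing the most care is the "no extendible successor" argument: one must keep track that among the $i\leq h(|\sigma|)$ some $\sigma^\frown(i)$ may fail to lie in $U$ at all (this is harmless, since then nothing extending it lies in $U$ either), and one must be sure the branching value $\tau(|\sigma|)$ of the witnessing $\tau$ is genuinely one of the finitely many candidates $i\leq h(n)$ --- which is exactly the point at which $U\incl\pre{<\omega}h$ is used, to keep the maximum $M$ finite. If a slicker argument is preferred, the same conclusion follows by compactness of $\pre\omega h=\prod_n\set{0,\ldots,h(n)}$: the sets $\setof{f\in\pre\omega h}{f\restrict n\in U}$ are closed, their finite intersections collapse (by the tree property) to the one with largest index, and each such set is nonempty because a witnessing $\sigma\in\pre nh\cap U$ extends --- say, by zeros --- to a point of $\pre\omega h$; the finite intersection property then produces a point of $\bigcap_n$, i.e. a path through $U$ bounded by $h$. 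I would present the elementary König's Lemma version to keep the section self-contained.
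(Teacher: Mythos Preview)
Your proof is correct and follows essentially the same approach as the paper: both reduce to (i), note the forward direction is trivial, and for the backward direction define a path by always choosing the least immediate successor that is ``extendible'' (the paper calls $\sigma$ \emph{infinitely extendable} when $\setof{\tau\in U}{\sigma\incl\tau}$ is infinite, which is equivalent to your notion for trees bounded by $h$). The paper's justification of the successor step is the one-line pigeonhole observation that finitely many immediate successors must share infinitely many extensions among them, whereas you spell out the contrapositive via the maximum $M$; these are the same argument.
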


\begin{proof}This result is generally known as the K\"onig Infinity Lemma and is an old and familiar fact; however, for completeness we give here a proof. It is also an expression of the topological compactness of the space \pre\omega h together with the fact that \pzo\ sets are closed sets. We prove (i); (ii) follows immediately. The implication (\Rarrow) is clear; assume the right-hand side. Call a sequence $\sigma$ \dff{infinitely extendable} iff \setof{\tau\in U}{\sigma\incl\tau} is infinite. The assumption is exactly that $U$ is infinite --- that is, that $\emptyset$ is infinitely extendable. Furthermore, since
\[\sigma\incl\tau\qIff\sigma=\tau\qor\bigl(\exists n<h(|\sigma|)\bigr)\;\sigma^\frown(n)\incl\tau,\]
if $\sigma$ is infinitely extendable, so is $\sigma^\frown(n)$ for some $n<h(|\sigma|)$. Hence there is a unique function $f$ such that for all $m$,
\[f(m)=\text{least }n<h(m)\;[(f\restrict m)^\frown(n)\text{ is infinitely extendable}].\]
Clearly $f$ witnesses the left-hand side.
\QED
\end{proof}

\begin{proposition}\label{compactness}
For any recursive function $h$, any \pzo\ class $P\incl\pre\omega h$ and any partial recursive functional $\Phi:P\to\pre\omega\omega$,
\begin{enumerate}
\item $\Phi(P)\in\pzo$;
\item $\Phi(P)$ is recursively bounded;
\item there exists a total recursive functional $\bar\Phi:\pre\omega\omega\to\pre\omega\omega$ extending $\Phi\restrict P$.
\end{enumerate}
\end{proposition}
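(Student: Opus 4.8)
The plan is to funnel all three parts through one combinatorial fact extracted from the König Infinity Lemma (Lemma~\ref{Konig}). Fix a recursive tree $T\incl\pre{<\omega}h$ with $P=[T]$ (we may intersect the given tree with $\pre{<\omega}h$ since $P\incl\pre\omega h$, without changing the paths), and fix an index $a$ with $\Phi=\set a$. The key claim is that
\[m\mapsto s_m:=\text{least }s\text{ such that }(\forall\sigma\in\pre sh\cap T)\;|\set a^\sigma|\geq m\]
is a total recursive function, which we may also take nondecreasing by passing to running maxima. Recursiveness of the underlying search is clear: for a candidate $s$ one enumerates the finite set $\pre sh\cap T$ (this is where $h$ recursive is used) and computes each $\set a^\sigma=\set a^f_{|\sigma|}$, which terminates. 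That the search always succeeds is exactly compactness: the set $U_m:=\setof{\sigma\in\pre{<\omega}h\cap T}{|\set a^\sigma|<m}$ is a recursive tree (closed under initial segments by monotonicity of the stage approximations), and it has no path, since a path would be some $f\in P$ with $\set a^f(m-1)$ never converging, contradicting $\Phi:P\to\pre\omega\omega$; so by Lemma~\ref{Konig}(ii) there is an $s$ with $\pre sh\cap U_m=\emptyset$, which witnesses the search.

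Parts (2) and (3) now fall out. For (2), the function $g(m):=\max\setof{\set a^\sigma(m)}{\sigma\in\pre{s_{m+1}}h\cap T}$ is recursive (a maximum of finitely many convergent values), and for $f\in P$ we have $f\restrict s_{m+1}\in T$, hence $\Phi(f)(m)=\set a^{f\restrict s_{m+1}}(m)\leq g(m)$, so $\Phi(P)\incl\pre\omega g$. For (3), set $\bar\Phi(f)(m):=\set a^{f\restrict s_{m+1}}(m)$ if $f\restrict s_{m+1}\in T$ and $\bar\Phi(f)(m):=0$ otherwise. This is total recursive, it depends only on $f\restrict s_{m+1}$, so $\bar\Phi$ carries a recursive modulus $u(m):=s_{m+1}$ with $\bar\Phi(f)(m)$ determined by $f\restrict u(m)$ (and $u$ may be taken strictly increasing), and for $f\in P$ we have $f\restrict s_{m+1}\in T$ with $\set a^{f\restrict s_{m+1}}(m)=\set a^f(m)$, so $\bar\Phi$ extends $\Phi\restrict P$; in particular $\Phi(P)=\bar\Phi(P)$.

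For (1) I would exhibit a recursive tree $S$ with $[S]=\bar\Phi(P)$. Put $\tau$ into $S$ iff $\tau=\emptyset$ or there is $\sigma\in\pre{u(|\tau|-1)}h\cap T$ whose output determines $\tau$, i.e. $\bar\Phi(f)\restrict|\tau|=\tau$ for every (equivalently, some) $f\supseteq\sigma$; this is well defined by the modulus $u$, recursive by finiteness of $\pre{u(|\tau|-1)}h\cap T$, and a tree because truncating a witness $\sigma$ to length $u(|\tau'|-1)$ again witnesses $\tau'\in S$ whenever $\tau'\incl\tau$. The inclusion $\bar\Phi(P)\incl[S]$ is easy, using $\sigma:=f\restrict u(\ell-1)$ as a witness for $\bar\Phi(f)\restrict\ell$. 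The reverse inclusion is the main obstacle, and the point to be careful about is that a computation may ``overshoot'': it can output coordinates beyond the initial segment pinned down by the use bound at $\sigma$'s length, so one cannot naively collect the $\sigma$ whose whole output is compatible with a given $\tilde f\in[S]$. The fix is to constrain only the finalized coordinates: for $\tilde f\in[S]$ put
\[T^*:=\setof{\sigma\in\pre{<\omega}h\cap T}{(\forall i)\;[u(i)\leq|\sigma|\Implies\bar\Phi(\sigma)(i)=\tilde f(i)]}.\]
One checks $T^*$ is a recursive tree, and it is infinite because each length-$u(\ell-1)$ witness $\sigma$ for $\tilde f\restrict\ell\in S$ lies in $T^*$ (for $u(i)\leq u(\ell-1)$ forces $i<\ell$, where agreement already holds). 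By Lemma~\ref{Konig}, $T^*$ has a path $f$, which lies in $[T]=P$ and satisfies $\bar\Phi(f)(i)=\tilde f(i)$ for all $i$ (take $\sigma:=f\restrict u(i)\in T^*$). Hence $\tilde f=\bar\Phi(f)=\Phi(f)\in\Phi(P)$, so $[S]=\Phi(P)$ and $\Phi(P)\in\pzo$. All of this beyond the last inclusion is bookkeeping around the modulus $s_m$; the genuinely delicate step is the use of compactness a second time, on $T^*$, to recover an actual preimage $f\in P$.
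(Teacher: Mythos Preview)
Your proof is correct. For parts (ii) and (iii) it is essentially identical to the paper's: your $s_{m+1}$ is exactly the paper's $m_n$ (with $n=m$), obtained by the same K\"onig argument, and your definitions of the bound $g$ and the total extension $\bar\Phi$ coincide with the paper's.

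The genuine difference is in part (i). You first build the modulus $u$, use it to write down an explicit recursive tree $S$ for $\bar\Phi(P)$, and then need a \emph{second} compactness argument on the auxiliary tree $T^*$ to produce a preimage in $P$ for an arbitrary path through $S$. The paper bypasses all of this: it observes directly that
\[
g\in\Phi(P)\qIff\forall m\,(\exists\sigma\in\pre mh)\,[\sigma\in T\text{ and }\Phi(\sigma)\incl g],
\]
the equivalence holding by one application of Lemma~\ref{Konig} to the tree $\setof{\sigma\in T}{\Phi(\sigma)\incl g}$. Since $(\exists\sigma\in\pre mh)$ is a bounded quantifier (here recursiveness of $h$ enters), this is already a $\Pi^0_1$ description of $\Phi(P)$, and no modulus or second compactness step is needed. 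Your route has the virtue of making the tree $S$ and the truth-table character of $\bar\Phi$ fully explicit, but the paper's route is considerably shorter and shows that (i) is actually independent of the machinery for (ii)--(iii). One small point: your argument that each witness $\sigma$ for $\tilde f\restrict\ell$ lies in $T^*$ needs $u$ strictly increasing (so that $u(i)\leq u(\ell-1)$ forces $i<\ell$); you note this in passing, but it is worth making the adjustment explicit rather than just ``nondecreasing by running maxima.''
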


\begin{proof}
Fix a recursive tree $T$ such that $P=[T]$. For (i), by the preceding lemma,
\begin{align*}
g\in\Phi(P)
&\Iff (\exists f\in\pre\omega h)\,[f\in P\qand\Phi(f)=g]\\
&\Iff  (\exists f\in\pre\omega h)\forall m[f\restrict m\in T\qand\Phi(f\restrict m)\incl g]\\
&\Iff\forall m(\exists\sigma\in\pre mh)\,[\sigma\in T\qand\Phi(\sigma)\incl g].
\end{align*}
The quantifier $(\exists\sigma\in\pre mh)$ is a bounded quantifier --- formally there is a recursively (because $h$ is recursive) calculable upper bound for the codes of all finite sequences $\sigma$ of length $m$ with all $\sigma(i)<h(i)$ --- and therefore does not increase the complexity of the expression. Hence this gives a characteriztion of $\Phi(P)$ with only one universal quantifier and thus shows that $\Phi(P)\in\pzo$.
For (ii), note that since $\Phi(f)$ is a total function for all $f\in P$,
\[\forall n(\forall f\in\pre\omega h)\exists m\;\bigl[f\restrict m\notin T\qor\Phi(f\restrict m)(n)\downarrow\bigr],\]
so by (ii) of the preceding lemma,
\[\forall n\exists m(\forall\sigma\in\pre mh)\,[\sigma\in T\Implies\Phi(\sigma)(n)\downarrow].\]
For each $n$, let $m_n$ denote the least such $m$. Then for all $f\in P$,
\[\Phi(f)(n)\leq\max\setof{\Phi(\sigma)(n)}{\sigma\in\pre{(m_n)}2\cap T}.\]
For (iii), set for all $f\in\pre\omega\omega$,
\[\bar\Phi(f)(n):=\cases{\Phi(f\restrict m_n)(n),&if $f\restrict m_n\in T$;\cr
0,&otherwise. \qedhere\cr}\]
\end{proof}

\begin{remark}\label{truthtable}
$\bar\Phi$ is called a \dff{truth-table functional} because each value \hfill\break 
$\bar\Phi(f)(n)$ can be recursively calculated from an initial segment $f\restrict m_n$ of $f$ of recursively computable length not depending on $f$. Hence for \pzo\ classes $P,Q\incl\pre\omega2$, when $P\leqs Q$, each element $g$ of $Q$ actually truth-table (not merely Turing) computes an element $f$ of $P$ (see \cite[Exercise V.2.12]{Soa} or \cite[Exercises 8.1.37-41]{Hi}) written $f\leq_{\ssf tt}g$.
\end{remark}

\begin{corollary}
For all $k\geq2$, ${\mathbb P}_\bullet={\mathbb P}^k_\bullet={\mathbb P}^{\ssf bd}_\bullet.$
\end{corollary}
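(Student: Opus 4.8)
The plan is to reduce the whole statement to the single inclusion ${\mathbb P}^{\ssf bd}_\bullet\incl{\mathbb P}_\bullet$. The chain of inclusions noted just above the corollary gives ${\mathbb P}_\bullet\incl{\mathbb P}^k_\bullet\incl{\mathbb P}^{\ssf bd}_\bullet$ for every $k\geq2$ (and ${\mathbb P}_\bullet={\mathbb P}^2_\bullet$ already holds by the very definition of a $\pzo$ class), so once ${\mathbb P}^{\ssf bd}_\bullet\incl{\mathbb P}_\bullet$ is established, all three structures are squeezed together. Thus the only content is: every nonempty recursively bounded $\pzo$ subset of $\pre\omega\omega$ has the same $\bullet$-degree as some nonempty $\pzo$ class.

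To prove this, I would fix $\deg_\bullet(P)\in{\mathbb P}^{\ssf bd}_\bullet$, say $P\incl\pre\omega h$ is nonempty and $\pzo$ with $h$ recursive, and take as witness $P^*=\setof{f^*}{f\in P}$, where $f\mapsto f^*$ is the standard recursive coding functional defined just before Lemma \ref{Konig}. Since $f\mapsto f^*$ is a total recursive functional, it is in particular a partial recursive functional $\Phi$ with $\Phi:P\to\pre\omega\omega$, so Proposition \ref{compactness}(i) applies directly and gives $P^*=\Phi(P)\in\pzo$. Because $P\neq\emptyset$ and $P^*\incl\pre\omega2$, $P^*$ is a nonempty $\pzo$ class, hence $\deg_\bullet(P^*)\in{\mathbb P}_\bullet$.

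It then remains only to note that $P\eqs P^*$, and a fortiori $P\eqw P^*$ by Lemma \ref{wk=include}: the functional $f\mapsto f^*$ witnesses $P^*\leqs P$, and the functional sending $g$ to the function $m\mapsto$ (the unique $n$ with $g(\code{m,n})=1$), which is correct on every $g\in P^*$, witnesses $P\leqs P^*$. Therefore $\deg_\bullet(P)=\deg_\bullet(P^*)\in{\mathbb P}_\bullet$, which is what was needed.

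I do not expect a real obstacle here: the decisive point — that recursive boundedness is exactly what keeps the $*$-image inside $\pzo$ — is already isolated as Proposition \ref{compactness}(i), and the rest is the routine observation that the coding functional and its inverse are recursive. If one instead wanted a self-contained argument, the one step deserving care would be precisely the reproduction of that part of Proposition \ref{compactness}: fixing a recursive tree $T$ with $P=[T]$, rewriting ``$g\in P^*$'' as a $\forall m$-statement over a recursive matrix, and using the K\"onig Infinity Lemma (Lemma \ref{Konig}) to replace the quantifier over infinite paths by the \emph{bounded} existential $(\exists\sigma\in\pre mh)$, which does not raise the arithmetical complexity because $h$ is recursive.
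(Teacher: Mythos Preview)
Your proposal is correct and follows essentially the same approach as the paper: reduce to ${\mathbb P}^{\ssf bd}_\bullet\incl{\mathbb P}_\bullet$ via the chain of inclusions, then use Proposition~\ref{compactness}(i) to show that the image $P^*$ of a recursively bounded \pzo\ set under the recursive coding $f\mapsto f^*$ is again \pzo, with $P\equiv_\bullet P^*$ already noted in the discussion preceding the corollary. Your version is slightly more explicit about the inverse functional witnessing $P\leqs P^*$, but the argument is the same.
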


\begin{proof}
In the discussion above, the map $f\mapsto f^*$ is recursive, so for any recursively bounded \pzo\ set $P$, the image $P^*$ of $P$ is also \pzo. Since by definition $P^*\incl\pre\omega 2$, this establishes that ${\mathbb P}^{\ssf bd}_\bullet\incl{\mathbb P}_\bullet$.
\QED
\end{proof}

We may thus focus on ${\mathbb P}_\bullet$; we shall see below that in several ways these classes are ``better behaved" than ${\mathbb P}^\omega_\bullet$, which to date have been relatively little studied. 

Note for comparison with (ii) of the preceding proposition that

\begin{proposition}\label{inversepzo}
For any \pzo\ sets $P,Q\incl\pre\omega\omega$ and any partial recursive functional $\Phi:P\to\pre\omega\omega$, $P\cap\Phi^{-1}(Q)\in\pzo$.
\end{proposition}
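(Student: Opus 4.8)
For any $\pzo$ sets $P,Q\incl\pre\omega\omega$ and any partial recursive functional $\Phi:P\to\pre\omega\omega$, $P\cap\Phi^{-1}(Q)\in\pzo$.

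The plan is to express membership in $P\cap\Phi^{-1}(Q)$ by a single universal number quantifier over a recursive matrix, using the two given tree presentations together with the stage-approximation apparatus for $\Phi$. Fix recursive trees $S,T\incl\pre{<\omega}\omega$ with $P=[S]$ and $Q=[T]$. The hypothesis $\Phi:P\to\pre\omega\omega$ guarantees that for every $f\in P$, $\Phi(f)$ is total, and moreover (by definition of the stage approximations) for $f\in P$ we have $\Phi(f)(n)\downarrow$ iff there is some $s$ with $\set a^{f\restrict s}$ of length $>n$, where $a$ is an index for $\Phi$. First I would observe that for $f\in P$,
\[
f\in\Phi^{-1}(Q)\qiff\Phi(f)\in[T]\qiff\forall n\,[\,\Phi(f)\restrict n\in T\,].
\]
So the target set is
\[
P\cap\Phi^{-1}(Q)=\bigsetof{f\in\pre\omega\omega}{\forall m\,[\,f\restrict m\in S\,]\;\rand\;\forall n\,[\,\Phi(f)\restrict n\in T\,]}.
\]

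Next I would make the second conjunct purely arithmetical over a recursive matrix. For $f\in P$ we do not yet know $\Phi(f)(n)$ from a fixed-length initial segment (this is exactly the phenomenon that fails for unbounded $\pzo$ sets, cf.\ Proposition~\ref{compactness}(ii) which needed recursive boundedness); but we do not need uniformity in the length — we only need a $\pzo$ \emph{definition}. The key step is to write, for arbitrary $f\in\pre\omega\omega$,
\[
\Phi(f)\restrict n\in T\qiff\forall s\,\bigl[\,\set a^{f\restrict s}\text{ has length}\geq n\;\Implies\;\set a^{f\restrict s}\restrict n\in T\,\bigr],
\]
which holds vacuously when $\Phi(f)$ never reaches length $n$. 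The matrix on the right is recursive in $f\restrict s$ (and in $n$, $s$), since $\set a^{f\restrict s}$ and membership in the recursive tree $T$ are both computable. Combining, $P\cap\Phi^{-1}(Q)$ is the set of $f$ such that
\[
\forall m\,\forall n\,\forall s\,\Bigl[\,f\restrict m\in S\;\rand\;\bigl(\lg(\set a^{f\restrict s})\geq n\Implies\set a^{f\restrict s}\restrict n\in T\bigr)\,\Bigr],
\]
a single block of universal number quantifiers over a recursive matrix depending only on a finite initial segment of $f$; equivalently, $P\cap\Phi^{-1}(Q)=[U]$ where $U$ is the recursive tree of all $\tau$ all of whose initial segments $\sigma$ satisfy: $\sigma\in S$, and for every $s\leq|\sigma|$, if $\lg(\set a^\sigma)\geq|\sigma|$ then $\set a^\sigma\restrict|\sigma|$ is consistent with $T$. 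One must be slightly careful packaging this as a tree so that $[U]$ comes out exactly right — take $U$ to consist of $\sigma\in S$ such that for all $s\leq|\sigma|$, $\set a^{\sigma\restrict s}$ is an initial segment of some string in $T$ of length $\leq|\sigma|$ — and then check $[U]=P\cap\Phi^{-1}(Q)$ directly from the two displays above.

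The main obstacle, such as it is, is the bookkeeping in the last step: verifying that the recursive tree $U$ one writes down has the property $[U]=P\cap\Phi^{-1}(Q)$ rather than something slightly larger or smaller. The inclusion $P\cap\Phi^{-1}(Q)\subseteq[U]$ is immediate once we have the two equivalences above. For the reverse inclusion, given $f\in[U]$ we get $f\in[S]=P$ immediately, hence $\Phi(f)$ is total; then the tree conditions force every finite initial segment of $\Phi(f)$ into $T$, so $\Phi(f)\in[T]=Q$, i.e.\ $f\in\Phi^{-1}(Q)$. The only genuine subtlety is that the "vacuous truth" clause (when $\Phi(f)$ has not yet produced enough values at stage $s$) must not accidentally admit $f\notin P$; this is precisely why the conjunct $f\restrict m\in S$ is kept in the matrix. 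No compactness or König-lemma argument is needed here — unlike Proposition~\ref{compactness}, recursive boundedness plays no role, because we are only asserting that the class is $\pzo$, not bounding it.
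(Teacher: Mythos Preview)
Your proof is correct and takes essentially the same approach as the paper: both express $\Phi(f)\in Q$ via stage approximations of $\Phi$ applied to finite initial segments of $f$, yielding a universal-quantifier description over a recursive matrix. The paper's version is tidier: noting that the stage approximations $\Phi(f\restrict m)$ are monotone initial segments of $\Phi(f)$ and that the tree $U$ for $Q$ is closed under initial segments, it writes the set directly as $\{f\in P:\forall m\;\Phi(f\restrict m)\in U\}$, avoiding your redundant double quantification over $n$ and $s$ and the somewhat tangled explicit tree description at the end.
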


\begin{proof}
Let $U$ be a recursive tree such that $Q=[U]$. Then
\begin{align*}
P\cap\Phi^{-1}(Q)&=\setof{f\in P}{\Phi(f)\in Q}\\
&=\setof{f\in P}{\forall n\;\Phi(f)\restrict n\in U}\\
&=\setof{f\in P}{\forall m\;\Phi(f\restrict m)\in U}
\end{align*}
which gives the result.
\QED
\end{proof}

A \pzo\ set $P\incl\pre\omega\omega$ is by definition of the form $P=[T]$ for some recursive tree $T\incl\pre{<\omega}\omega$, but $T$ is generally not uniquely determined, since when $\sigma$ is such that $P$ has no element $f\supseteq\sigma$, the inclusion or omission from $T$ of $\tau\supseteq\sigma$ will have no effect on $[T]$. Furthermore, by a standard quantifier calculation, if $T$ non-recursive but is \pzo\ (co-r.e) as a set of (codes for) finite sequences, $[T]$ is still a \pzo\ set.  This leads to a natural effective enumeration \functionof{R_a}{a\in\omega} of all \pzo\ classes based on the standard enumeration of the r.e.\ sets:
\[T_a:=\setof{\sigma\in\pre{<\omega}{\omega}}{(\forall\tau\incl\sigma)\;\tau\notin W_a}\qand R_a:=[T_a].\]
It is also useful on occasion to refine this enumeration by setting
\[T_{a,s}:=\setof{\sigma\in\pre{<\omega}{\omega}}{(\forall\tau\incl\sigma)\;\tau\notin W_{a,s}}\qand R_{a,s}:=[T_{a,s}],\]
so 
\begin{align*}
\pre{<\omega}2&=T_{a,0}\supseteq\cdots\supseteq T_{a,s}\supseteq T_{a,s+1}\supseteq\cdots,\qquad &T_a&=\bigcap_{s\in\omega}T_{a,s},\\
\pre{\omega}2&=R_{a,0}\supseteq\cdots\supseteq R_{a,s}\supseteq R_{a,s+1}\supseteq\cdots
\qqand &R_a&=\bigcap_{s\in\omega}R_{a,s}.
\end{align*}
Topologically, each $R_{a,s}$ is a clopen set.
There is also a canonical tree associated with each \pzo\ set $P\incl\pre\omega\omega$:
\[T_P:=\setof\sigma{(\exists f\in P)\;\sigma\incl f}.\]
Clearly, $P=[T_P]$ and $T_P$ is distinguished by the property of having no ``leaves" or ``dead ends" --- $\sigma\in T_P$ such that no extension $\sigma^\frown(i)\in T_P$.  $T_P$ is not generally recursive or even \pzo, but we have

\begin{proposition}\label{TPispzo}
For any $P\incl\pre\omega2$ (or any \pre\omega h for a recursive $h$), $T_P$ is \pzo.
\end{proposition}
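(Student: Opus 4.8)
The plan is to read $T_P$ off from a recursive tree for $P$ with the help of the König Infinity Lemma (Lemma~\ref{Konig}). Since $P$ is a \pzo\ set contained in $\pre\omega2$ --- or, in the generalized version, in $\pre\omega h$ for a recursive $h$ --- fix a recursive tree $T\incl\pre{<\omega}h$ with $P=[T]$ (take $h\equiv2$ in the first case). I would then prove the characterization
\[\sigma\in T_P\qIff\sigma\in T\rand(\forall m\geq|\sigma|)(\exists\tau\in\pre mh)\,[\sigma\incl\tau\rand\tau\in T],\]
and then conclude by inspecting quantifiers.

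The implication from left to right is immediate from the definition of $T_P$: if $\sigma\incl f$ for some $f\in P=[T]$, then $\sigma=f\restrict|\sigma|\in T$, and for each $m\geq|\sigma|$ the sequence $f\restrict m$ lies in $\pre mh\cap T$ and extends $\sigma$. For the converse, assume the right-hand side and apply Lemma~\ref{Konig}(i) to the tree $U:=\setof\rho{\sigma^\frown\rho\in T}$, which is a subtree of $\pre{<\omega}{h'}$ for the recursive function $h'(n):=h(|\sigma|+n)$. Rewriting $m$ as $|\sigma|+n$, the right-hand side says exactly that $U$ contains a sequence of every length $n$; so the lemma yields $g\in\pre\omega{h'}$ with $g\restrict n\in U$ for every $n$, and then $f:=\sigma^\frown g$ has all of its initial segments in $T$ (using $\sigma\in T$ for those of length $\leq|\sigma|$). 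Hence $f\in[T]=P$ and $\sigma\incl f$, so $\sigma\in T_P$.

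Given the characterization, the complexity count is routine: ``$\sigma\in T$'', ``$\tau\in T$'' and ``$\sigma\incl\tau$'' are recursive, and since $h$ is recursive the quantifier ``$\exists\tau\in\pre mh$'' is genuinely bounded (exactly as observed in the proof of Proposition~\ref{compactness}), so the matrix after $(\forall m\geq|\sigma|)$ is recursive uniformly in $\sigma$ and $m$; thus the right-hand side defines a \pzo\ subset of $\pre{<\omega}\omega$ and $T_P$ is \pzo. The only genuine content is the converse implication, where the one thing to get right is that extendibility within $T$ at every finite level upgrades to extendibility to an infinite path --- this is precisely the use of compactness of $\pre\omega h$, and it is also why the boundedness hypothesis on the ambient space cannot be dropped (for a general \pzo\ $P\incl\pre\omega\omega$ the set $T_P$ can be much more complicated than \pzo).
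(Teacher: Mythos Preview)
Your proof is correct and follows essentially the same route as the paper's: both fix a recursive tree $T$ with $P=[T]$, use K\"onig's Lemma to trade the existential function quantifier in the definition of $T_P$ for a universal number quantifier over a bounded existential, and read off the \pzo\ form. Your version is somewhat more explicit --- you separate out the tail tree $U=\setof\rho{\sigma^\frown\rho\in T}$ to which K\"onig is applied and include the (redundant, since it follows from the $m=|\sigma|$ instance of the universal clause) conjunct $\sigma\in T$ --- but the argument is the same.
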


\begin{proof}
Using the Lemma \ref{Konig}, if $P=[T]$ with $T$ recursive, we have
\begin{align*}
T_P&=\setof\sigma{(\exists f\in P)\;\sigma\incl f}\\
&=\setof\sigma{(\exists f\in\pre\omega 2)(\forall m\geq n)\;\sigma\incl f\restrict m\in T}\\
&=\setof\sigma{\forall m(\exists \tau\in\pre m2)[m\geq|\sigma|\Implies\sigma\incl\tau\in T]},
\end{align*}
which establishes that $T_P$ is \pzo.
\QED
\end{proof}

An element $f\in P\incl\pre\omega\omega$ is called \dff{isolated} iff for some $\sigma\incl f$ there is no $g\not=f$ such that $\sigma\incl g\in P$. A set with no isolated elements is called \dff{perfect}, and a simple standard argument shows that and perfect set has the cardinality of the continuum and is thus uncountable. 

\begin{proposition}
Any isolated member of a \pzo\ class $P\incl\pre\omega2$ is recursive.
\end{proposition}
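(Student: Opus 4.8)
The plan is to realize the isolated point $f$ as the unique element of a \pzo\ subclass of $P$ and then read off $f$ by the effective form of König's Lemma already used in the proof of Lemma~\ref{Konig}.

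First I would fix $\sigma\incl f$ witnessing that $f$ is isolated, so that $f$ is the only $g\in P$ with $\sigma\incl g$, and fix a recursive tree $T\incl\pre{<\omega}2$ with $P=[T]$. Put
\[T':=\setof{\tau\in T}{\tau\incl\sigma\text{ or }\sigma\incl\tau},\]
which is a recursive tree with $[T']=\setof{g\in P}{\sigma\incl g}=\set f$; write $P':=[T']$. Uniformly in $n\in\omega$ and $j\in\set{0,1}$, put $S_{n,j}:=\setof{\tau\in T'}{|\tau|\leq n\text{ or }\tau(n)=j}$; each $S_{n,j}$ is a recursive tree and $[S_{n,j}]=\setof{g\in P'}{g(n)=j}$.

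The one ingredient needed is that, for a recursive tree $S\incl\pre{<\omega}2$, the condition ``$[S]=\emptyset$'' is $\Sigma^0_1$, uniformly in an index for $S$: by part~(ii) of Lemma~\ref{Konig} (taking $h\equiv 2$), $[S]=\emptyset$ iff $\exists m\,(\forall\tau\in\pre m2)\;\tau\notin S$, whose matrix is a bounded quantifier over a recursive predicate. Since $P'=\set f$ is nonempty and $f(n)\in\set{0,1}$, for each $n$ exactly one $j$ satisfies $[S_{n,j}]=\emptyset$, namely $j=1-f(n)$. Hence
\[f(n)=i\quad\Iff\quad[S_{n,1-i}]=\emptyset,\]
and the right-hand side is $\Sigma^0_1$ uniformly in $n,i$, so the graph of $f$ is recursively enumerable. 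As $f:\omega\to\set{0,1}$ is total, $f$ is recursive.

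I do not expect a genuine obstacle. The only point requiring care is the uniform $\Sigma^0_1$-ness of ``$[S]=\emptyset$'' for recursive subtrees of $\pre{<\omega}2$ --- which is exactly the compactness of $\pre\omega2$ packaged in Lemma~\ref{Konig} --- together with the routine bookkeeping that $P'$ and its clopen sections $[S_{n,j}]$ are presented by explicit recursive trees. It is precisely the boundedness of $\pre{<\omega}2$ that drives the argument, which is why the analogous statement fails for general \pzo\ subsets of $\pre\omega\omega$.
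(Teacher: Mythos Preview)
Your proof is correct and follows essentially the same approach as the paper: both restrict to the cone above the isolating node $\sigma$ to obtain a \pzo\ singleton, and both extract $f$ by showing that the relevant branching information is $\Sigma^0_1$ via compactness of $\pre\omega2$. The only cosmetic difference is that the paper routes through the canonical tree $T_P$ and Proposition~\ref{TPispzo} (so that ``$\upsilon\notin T_P$'' is $\Sigma^0_1$), whereas you work directly with recursive subtrees $S_{n,j}$ and invoke Lemma~\ref{Konig} to get ``$[S_{n,j}]=\emptyset$'' as a $\Sigma^0_1$ condition; these are the same compactness fact in two packagings.
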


\begin{proof}
Suppose that $f$ is the unique function such that $\sigma\incl f\in P$. Then for any $\tau$ such that $\sigma\incl\tau\in\pre\omega2$,
\[\tau\incl f\Iff\tau\in T_P\Iff(\forall \upsilon\in\pre{|\tau|}2)\;[\tau\not=\upsilon\Implies\upsilon\notin T_P].\]
It follows from the preceding proposition that \setof\tau{\tau\incl f} is r.e.~and hence that $f$ is recursive.
\QED
\end{proof}

\begin{corollary}\label{nonzeroperfect}
Any \pzo\ class $P\incl\pre\omega2$ with no recursive element is uncountable. \noproof
\end{corollary}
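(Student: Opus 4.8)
The plan is to obtain this as an immediate packaging of the two preceding propositions with the classical fact about perfect sets. First I would observe that if $P\incl\pre\omega2$ is a nonempty \pzo\ class with no recursive element, then $P$ has no isolated element at all: by the proposition just proved, any isolated member of a \pzo\ subclass of $\pre\omega2$ is recursive, so an isolated member of our $P$ would be a recursive member, contradicting the hypothesis. Hence $P$ is perfect in the sense defined immediately above the statement.

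Next I would invoke the ``simple standard argument'' already referred to in the text: a nonempty perfect set has cardinality $2^{\aleph_0}$ and is in particular uncountable. Concretely, one builds pairwise incompatible finite conditions $\sigma_t$ indexed by $t\in\pre{<\omega}2$, where $\sigma_\emptyset\incl f$ for some $f\in P$ and, given $\sigma_t\in T_P$, one uses the absence of isolated points to find two incomparable extensions $\sigma_{t^\frown(0)},\sigma_{t^\frown(1)}\in T_P$; since $P$ is \pzo\ and hence closed, each of the $2^{\aleph_0}$ branches of this binary tree determines a distinct element of $P$. This yields $\ssf Card(P)=2^{\aleph_0}$, which gives the corollary.

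There is no genuine obstacle here — the content is entirely in the preceding proposition — so the only point worth flagging is the implicit nonemptiness hypothesis: the empty class vacuously has no recursive element yet is countable, so the statement is to be read (as throughout, for \pzo\ classes in this survey) for nonempty $P$. Accordingly I would state the corollary with that convention in force and regard the argument above as complete.
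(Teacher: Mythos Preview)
Your proposal is correct and matches the paper's approach exactly: the paper gives no proof (it is marked \noproof), treating the corollary as immediate from the preceding proposition (isolated members are recursive) together with the already-stated standard fact that nonempty perfect sets have cardinality $2^{\aleph_0}$. Your remark about the implicit nonemptiness convention is a fair clarification.
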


In particular, this implies that the only functions $f\in\pre\omega2$ such that the singleton \set f is \pzo\ are the recursive functions. This differs dramatically from the situation for \pre\omega\omega; for example,  \cite[Corollary IV.2.22]{RTH} establishes that every hyperarithmetical ($\Delta^1_1$) set of natural numbers is recursive in a function $f\in\pre\omega\omega$ such that  \set f is \pzo. 

Even \pzo\ classes ${}\incl\pre\omega2$ with no recursive member have a relatively simple element.

\begin{definition}
A function $g\in\pre\omega\omega$ is \dff{almost recursive} iff every $f\leq_Tg$ is recursively bounded.
\end{definition}

\begin{lemma} For any sequence \functionof{P_a}{a\in\omega} of\ \pzo\ classes such that for all $a$, $P_{a+1}\incl P_a\incl\pre\omega2$, if for all $a$, $P_a\not=\emptyset$, then also $\bigcap_{a\in\omega}P_a\not=\emptyset$.
\end{lemma}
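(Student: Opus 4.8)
The plan is to reduce this to the K\"onig Infinity Lemma (Lemma~\ref{Konig}) via a standard compactness argument, using that each $P_a$ is a closed subset of the compact space $\pre\omega2$. For each $m\in\omega$ put
\[N_m:=\setof{\sigma\in\pre m2}{(\forall a)(\exists f\in P_a)\;\sigma\incl f}\]
and let $U:=\bigcup_{m\in\omega}N_m$. First I would check that $U$ is a subtree of $\pre{<\omega}2$: if $\sigma\in N_m$ and $\tau\incl\sigma$ then, for each $a$, a witness $f\in P_a$ for $\sigma$ also witnesses $\tau$, so $\tau\in N_{|\tau|}$.

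Next I would identify $[U]$ with $\bigcap_{a\in\omega}P_a$. The inclusion $\bigcap_aP_a\incl[U]$ is immediate: for $f\in\bigcap_aP_a$, take $f$ itself as the witness at every level and for every $a$. For the converse, if $f\in[U]$ then for every $m$ and every $a$ there is $g\in P_a$ with $f\restrict m\incl g$, i.e.\ $f\restrict m\in T_{P_a}$; fixing $a$ and a recursive tree $T$ with $P_a=[T]$, we have $T_{P_a}\incl T$, so all initial segments of $f$ lie in $T$ and hence $f\in[T]=P_a$. As $a$ was arbitrary, $f\in\bigcap_aP_a$.

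The crux is to show $N_m\neq\emptyset$ for every $m$, and this is the only place the hypotheses $P_a\neq\emptyset$ and $P_{a+1}\incl P_a$ enter. Writing $N_m=\bigcap_a\bigl(\pre m2\cap T_{P_a}\bigr)$, each $\pre m2\cap T_{P_a}$ is nonempty (any $f\in P_a$ contributes $f\restrict m$), and since $P_{a+1}\incl P_a$ gives $T_{P_{a+1}}\incl T_{P_a}$ these form a decreasing sequence of subsets of the finite set $\pre m2$; such a sequence is eventually constant, with nonempty eventual value equal to $N_m$. Then Lemma~\ref{Konig}(i), applied with $h$ the constant function $2$ to the tree $U$ — for which $\forall m(\exists\sigma\in\pre m2)\,\sigma\in U$ holds by the previous sentence — produces $f\in\pre\omega2$ with $f\restrict m\in U$ for all $m$, i.e.\ $f\in[U]=\bigcap_aP_a$, as required.

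No step here is genuinely hard: this is precisely the finite-intersection-property formulation of compactness for a decreasing chain of nonempty closed subsets of $\pre\omega2$. The points meriting a little care, and hence the mildest obstacles, are the identification $[U]=\bigcap_aP_a$ — which rests on the canonical tree $T_{P_a}$ being contained in every tree defining $P_a$ — and the elementary observation that a decreasing sequence of nonempty finite sets stabilizes.
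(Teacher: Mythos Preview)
Your proof is correct and is essentially the same compactness argument as the paper's: both reduce to K\"onig's Lemma by observing that at each level $m$ the intersection of the (nested) trees with $\pre m2$ is a decreasing sequence of nonempty finite sets, hence nonempty. The only cosmetic difference is that the paper assumes without loss of generality that the defining recursive trees $U_a$ are nested and works directly with $\bigcap_a U_a$, while you use the canonical trees $T_{P_a}$ (which are automatically nested since $P_{a+1}\incl P_a$) and spell out the identification $[U]=\bigcap_a P_a$ explicitly.
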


\begin{proof}
Fix recursive trees $U_a$ such that $P_a=[U_a]$; we may assume that also $U_{a+1}\incl U_a$. Then using Lemma \ref{Konig},
\begin{align*}
\forall a\;P_a\not=\emptyset&\Implies\forall a\;(\exists f\in\pre\omega2)\;\forall m\;[f\restrict m\in U_a]\\
&\Implies\forall a\;\forall m\;(\exists\sigma\in\pre m2)\;[\sigma\in U_a]\\
&\Implies\forall m\;(\exists \sigma\in \pre m2)\;\forall^\infty a\;[\sigma\in U_a]\\
&\Implies\forall m\;(\exists \sigma\in \pre m2)\;\forall a\;[\sigma\in U_a]\\
&\Implies(\exists f\in\pre\omega2)\;\forall a\;[f\restrict m\in U_a]\Implies\bigcap_{a\in\omega}P_a\not=\emptyset.
\end{align*}
The third implication uses the fact that ${}^m2$ is finite.
\QED
\end{proof}

\begin{proposition}\label{pzobases}
For any \pzo\ class $P\incl\pre\omega2$,
\begin{enumerate}
\item there exists $f\in P$ such that $f\leqt\zerot'$ --- equivalently, $f\in\Delta^0_2$;
\item there exists $g\in P$ such that $g$ is almost recursive.
\end{enumerate}
\end{proposition}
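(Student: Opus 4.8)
The plan is to treat the two parts separately, each by a classical basis-theorem argument; we tacitly assume $P\ne\emptyset$ (otherwise neither assertion makes sense) and fix a recursive tree $T\incl\pre{<\omega}2$ with $P=[T]$.

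For (i) I would take $f$ to be the \emph{leftmost path} of $P$. By Lemma~\ref{Konig} (equivalently, by Proposition~\ref{TPispzo}), for any $\sigma\in\pre{<\omega}2$ the statement ``$\sigma$ extends to some element of $P$'', i.e.\ $\sigma\in T_P$, is $\Pi^0_1$ and hence decidable from the oracle $\zerot'$. Define $f$ by recursion: given $f\restrict n$ (which, inductively, extends to some element of $P$), set $f(n)=0$ if $(f\restrict n)^\frown(0)$ extends to an element of $P$ and $f(n)=1$ otherwise; in the latter case $(f\restrict n)^\frown(1)$ extends to an element of $P$ precisely because $f\restrict n$ does. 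Then $f(n)$ is computed from $f\restrict n$ by one query to the $\Pi^0_1$ predicate above, so $f\leqt\zerot'$ by induction, whence $f\in\Delta^0_2$ by the Shoenfield Limit Lemma; and since every $f\restrict n$ lies in $T_P$ and $P=[T_P]$ is closed, $f\in P$.

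For (ii), which is essentially the hyperimmune-free basis theorem (Jockusch--Soare) and is the main work, I would build a descending chain of nonempty \pzo\ classes $P=P_0\supseteq P_1\supseteq\cdots$ whose intersection is a single function $g$, handling one functional per odd stage and ``pruning'' one coordinate per even stage. At stage $2e+1$, applied to $Q:=P_{2e}$ and $\Phi_e$, the key dichotomy is: if $Q_n:=\setof{h\in Q}{\Phi_e(h)(n)\uparrow}$ is nonempty for some $n$ --- note $Q_n$ is \pzo, since ``$\Phi_e(h)(n)\downarrow$'' is $\Sigma^0_1$ in $h$ and $Q$ is \pzo\ --- put $P_{2e+1}:=Q_n$, so that $\Phi_e$ is partial on every element of $P_{2e+1}$; otherwise $\Phi_e$ is total on all of $Q$, i.e.\ $\Phi_e\colon Q\to\pre\omega\omega$, and we put $P_{2e+1}:=Q$, so that $\Phi_e(P_{2e+1})$ is recursively bounded by Proposition~\ref{compactness}(ii). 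At stage $2e+2$ we put $P_{2e+2}:=\setof{h\in P_{2e+1}}{h(e)=i}$ for some $i\in\set{0,1}$ keeping the class nonempty. By the preceding lemma $\bigcap_eP_e\ne\emptyset$; by the pruning it is a singleton $\set g$, and $g\in P$. If $\Phi_e(g)$ is total then, as $g\in P_{2e+1}$, the second alternative must have held at stage $2e+1$, so $\Phi_e(g)\in\Phi_e(P_{2e+1})\incl\pre\omega h_e$ for some recursive $h_e$; since every $f\leqt g$ has the form $\Phi_e(g)$ for some $e$, this shows $g$ is almost recursive.

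I expect the crux to be the dichotomy in (ii): confirming that each $Q_n$ is a genuine \pzo\ class (so that the nested-intersection lemma applies) and that in the complementary case $\Phi_e$ is \emph{total} on all of $Q$, so that Proposition~\ref{compactness}(ii) yields a single recursive bound valid simultaneously for $\Phi_e(h)$ over all $h\in Q$. The remaining points --- nonemptiness of the pruned classes, that the intersection is a singleton, that the construction need not be effective (only existence is claimed), and that almost-recursiveness of $g$ follows --- are routine once the dichotomy is in hand.
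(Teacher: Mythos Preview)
Your proof is correct and follows essentially the same approach as the paper: the leftmost branch for (i), and for (ii) a descending chain of nonempty \pzo\ subclasses handling one functional at a time via the same dichotomy, invoking the nested-intersection lemma and Proposition~\ref{compactness}(ii). The only difference is cosmetic: you interleave pruning stages to force $\bigcap_e P_e$ to be a singleton, whereas the paper simply takes any $g$ in the (nonempty) intersection; the pruning is harmless but unnecessary, since every element of the intersection is already almost recursive.
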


\begin{proof}
For (i) we take $f={\ssf LMB}(P)$,  the \dff{left-most branch} of $T_P$:
\[{\ssf LMB}(P)(m):=\text{least }n\;[({\ssf LMB}(P)\restrict m)^\frown(n)\in T_P].\]
Clearly there is a unique such function, ${\ssf LMB}(P)\in P$ and since $T_P$ is \pzo, ${\ssf LMB}(P)\leqt\zerot'$.

For (ii), set $P_0:=P$. Given $P_a$, if
\[\exists m\;(\exists f\in P_a)\;\set a^f(m)\uparrow,\]
fix the least such $\overline m$ and set $P_{a+1}:=\bigsetof{f\in P_a}{\set a^f(\overline m)\uparrow}$. Otherwise, set $P_{a+1}:=P_a$. Each $P_a$ is \pzo\ and by the preceding lemma, $\overline P:=\bigcap_{a\in\omega}P_a\not=\emptyset$. For any $g\in\overline P$ and $f\leqt g$, say $f=\set a^g$, $\set a^g$ is total on $P_a$, so by Proposition \ref{compactness}, $\set a(P_a)$ and in particular $\set a^g$ is recursively bounded. 
\QED
\end{proof}

\begin{remark}Part (i) of the proposition is known as the Kreisel Basis Theorem (\cite{Kr}) improved in \cite[Theorem 1]{Sh} and \cite[Theorem 2.1]{JoSoa} to the fact that $f$ may be chosen so that $f'\eqt\zerot'$, the Low Basis Theorem. Part (ii) is \cite[Theorem 2.4]{JoSoa} and is known as the Hyperimmune-free Basis Theorem.
\end{remark}

\begin{lemma}[{\cite[Theorem 4.18]{Si1}}]
For any almost recursive function $\bar g$ and any $f\leqt \bar g$, there exists a total recursive functional $\Phi$ such that $\Phi(\bar g)=f$. Hence $f\leq_{\ssf tt}g$.
\end{lemma}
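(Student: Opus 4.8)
The plan is to convert the given Turing reduction into a truth-table reduction by applying the almost-recursiveness hypothesis not to $\bar g$ itself but to the \emph{modulus of convergence} of the relevant computation. First I would fix an index $a$ with $f=\set a^{\bar g}$ and define $M\in\pre\omega\omega$ by letting $M(n)$ be the least $s$ with $\set a^{\bar g\restrict s}(n)\downarrow$. Since $f=\set a^{\bar g}$ is total, this search always succeeds, so $M$ is a total function; and $M\leqt\bar g$, since $M(n)$ can be found with oracle $\bar g$ by running $\set a^{\bar g\restrict s}(n)$ for $s=0,1,2,\dots$ in turn. As $\bar g$ is almost recursive, $M$ is therefore recursively bounded: fix a recursive $\mu$ with $\mu(n)\geq M(n)$ for all $n$.

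Next I would define the functional: for $g'\in\pre\omega\omega$ and $n\in\omega$ put
\[\Phi(g')(n):=\cases{\set a^{g'\restrict\mu(n)}(n),&if this value is defined;\cr 0,&otherwise.\cr}\]
This $\Phi$ is a total recursive functional, since on input $n$ with oracle $g'$ one simply computes $\mu(n)$, queries $g'$ at $0,\dots,\mu(n)-1$, runs $\set a$ for $\mu(n)$ steps on the resulting finite sequence, and outputs its $n$th value if that has converged and $0$ otherwise --- a procedure that always halts.

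Finally I would verify $\Phi(\bar g)=f$. For each $n$ we have $\set a^{\bar g\restrict M(n)}(n)\downarrow=f(n)$ by the choice of $M$, and since $\mu(n)\geq M(n)$, monotonicity of the stage-approximations yields $\set a^{\bar g\restrict\mu(n)}(n)\downarrow=f(n)$ as well; hence $\Phi(\bar g)(n)=f(n)$. Moreover $\Phi(g')(n)$ depends only on $g'\restrict\mu(n)$, with $\mu$ recursive and independent of $g'$, so $\Phi$ is a truth-table functional in the sense of Remark \ref{truthtable}, and therefore $f\leq_{\ssf tt}\bar g$.

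There is no genuine obstacle in this argument; the one point worth isolating --- and the real content of the lemma --- is that almost-recursiveness is applied to the convergence modulus $M$, which lies Turing below $\bar g$, rather than to $\bar g$ directly. (We do not even need that $\bar g$ is itself recursively bounded, although that too follows since $\bar g\leqt\bar g$.)
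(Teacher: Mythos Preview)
Your proof is correct and follows essentially the same approach as the paper: define the convergence modulus (your $M$, the paper's $f^*$), observe it is $\leqt\bar g$ and hence recursively bounded by some recursive function (your $\mu$, the paper's $h$), and use that bound to build the truth-table functional with the default value $0$. The paper's write-up is terser but the argument is identical.
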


\begin{proof}
If $f=\set a^{\bar g}$, set $f^*(m):=\text{least k }[\set a^{\bar g\restrict k}(m)\downarrow]$. Since $f^*\leqt \bar g$, there exists a recursive function $h$ that bounds $f^*$ and thus
\[\Phi(g):=\cases{\set a^{g\restrict h(m)}(m),&if defined;\cr0,&otherwise;\cr}\]
is a total recursive functional such that $\Phi(\bar g)=f$. The last assertion follows from Remark \ref{truthtable}.
\QED
\end{proof}

\begin{proposition}[{\cite[Lemma 6.9]{Si1}}]\label{subpzostrong}
For any \pzo\ classes $P,Q\incl\pre\omega 2$,
\[P\leqw Q\Implies(\exists R\incl Q)\;[\emptyset\not=R\in\pzo\qand P\leqs R].\]
\end{proposition}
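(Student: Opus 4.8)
The plan is to exploit the Hyperimmune-free Basis Theorem (Proposition~\ref{pzobases}(ii)) to reduce the given weak reduction to a strong one on a suitable subclass. Suppose $P\leqw Q$ with $P,Q\incl\pre\omega2$ nonempty \pzo\ classes. By Proposition~\ref{pzobases}(ii), fix an almost recursive $\bar g\in Q$. Since $P\leqw Q$, there is a partial recursive functional $\Phi$ with $\Phi(\bar g)\in P$; say $\Phi=\set a$. Now use compactness to trim $Q$ so that this index works uniformly: we want a nonempty \pzo\ subclass $R\incl Q$ on which $\set a$ is total and maps into $P$.

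First I would isolate the set on which $\set a$ behaves well. Let
\[R:=\setof{g\in Q}{\set a^g\text{ is total}\rand\set a^g\in P}=Q\cap\set a^{-1}(P),\]
interpreting $\set a^{-1}(P)$ appropriately (the set of $g$ for which $\set a^g$ is total and lies in $P$). The point is that $\bar g\in R$, so $R\neq\emptyset$. To see $R\in\pzo$: since $\bar g$ is almost recursive and $\set a^{\bar g}\in P$ is total, the preceding lemma (the tt-functional lemma, \cite[Theorem 4.18]{Si1}) gives a \emph{total} recursive functional $\Psi$ with $\Psi(\bar g)=\set a^{\bar g}$. But I don't yet know $\Psi$ agrees with $\set a$ off $\bar g$; rather, the cleaner route is: $Q$ is recursively bounded (it is $\incl\pre\omega2$), so by Proposition~\ref{compactness}(ii)--(iii) applied to the subclass of $Q$ on which $\set a$ is total, $\set a$ restricted there extends to a \emph{total} recursive functional $\bar\Phi$ on all of $\pre\omega\omega$. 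Concretely, I would first show that the class $Q':=\setof{g\in Q}{\set a^g\text{ is total}}$ is \pzo: it equals $\bigcap_n\setof{g\in Q}{\set a^g(n)\downarrow}$, and while each such set is merely open-in-$Q$, the \emph{complement} "$\set a^g(n)\uparrow$" is \pzo-in-$Q$, so $Q'$ is \pzo; and $\bar g\in Q'$ so $Q'\neq\emptyset$. Now $\set a\restrict Q'$ is a partial recursive functional total on the recursively bounded \pzo\ class $Q'$, so Proposition~\ref{compactness} gives a total recursive $\bar\Phi$ with $\bar\Phi\restrict Q'=\set a\restrict Q'$. Then set $R:=Q'\cap\bar\Phi^{-1}(P)=\setof{g\in Q'}{\bar\Phi(g)\in P}$, which is \pzo\ by Proposition~\ref{inversepzo}, is contained in $Q$, contains $\bar g$ (since $\bar\Phi(\bar g)=\set a^{\bar g}\in P$), hence is nonempty, and satisfies $\bar\Phi:R\to P$, witnessing $P\leqs R$.

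The main obstacle is the first step: verifying that $Q'$ (equivalently the part of $Q$ where the fixed index is total) is genuinely \pzo\ and nonempty. Nonemptiness is exactly where the Hyperimmune-free Basis Theorem is essential — a generic $g\in Q$ need not make any \emph{single} index total, so without producing the almost recursive witness $\bar g$ and invoking that $\set a^{\bar g}=\Phi(\bar g)$ is total, the subclass could collapse to the empty set. That $Q'\in\pzo$ follows from the standard quantifier computation: "$\set a^g$ is not total" is $\exists n\,\set a^g(n)\uparrow$, and on the recursively bounded class $Q$ the predicate $\set a^g(n)\uparrow$ is co-r.e.\ (a $\Pi^0_1$ condition on $g$, using recursive boundedness as in the proof of Proposition~\ref{compactness}), so $Q'$ is an intersection of a \pzo\ set with a $\Pi^0_1$ condition, hence \pzo. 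Everything else is bookkeeping with Propositions~\ref{compactness} and~\ref{inversepzo}.
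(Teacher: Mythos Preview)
Your overall strategy is right --- use the Hyperimmune-free Basis Theorem to find an almost recursive $\bar g\in Q$, produce a total recursive functional sending $\bar g$ into $P$, and pull back --- but your ``cleaner route'' has a genuine gap. The class
\[Q':=\setof{g\in Q}{\set a^g\text{ is total}}\]
is \emph{not} in general $\Pi^0_1$; it is $\Pi^0_2$. Your justification conflates directions: you observe that $\set a^g(n)\uparrow$ is $\Pi^0_1$, but $Q'$ is defined by the condition $\forall n\,\set a^g(n)\downarrow$, which is $\forall\exists$ and hence $\Pi^0_2$. Recursive boundedness of $Q$ does not help: it bounds the \emph{values} of $g$, not the stage $s$ at which $\set a^g_s(n)$ converges. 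For a concrete counterexample take $Q=\pre\omega2$ and define $\set a^g(n)\downarrow\iff(\exists m\geq n)\,g(m)=1$; then $Q'$ is exactly the set of $g$ with infinitely many $1$'s, a $\Pi^0_2$-complete class. Since $Q'$ need not be $\Pi^0_1$, Proposition~\ref{compactness} does not apply to it, and the construction of $\bar\Phi$ and $R$ collapses.

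The fix is exactly the idea you dismissed. Your worry that the total functional $\Psi$ from the tt-lemma ``might not agree with $\set a$ off $\bar g$'' is irrelevant: you do not need agreement anywhere except at $\bar g$ itself. Having fixed almost recursive $\bar g\in Q$ and some $f\in P$ with $f\leqt\bar g$, the preceding lemma gives a total recursive $\Phi$ with $\Phi(\bar g)=f$. Now simply set $R:=Q\cap\Phi^{-1}(P)$. This is $\Pi^0_1$ by Proposition~\ref{inversepzo} (with $\Phi$ total, no totality side-condition is needed), it contains $\bar g$ since $\Phi(\bar g)=f\in P$, hence is nonempty, and $\Phi\restrict R:R\to P$ witnesses $P\leqs R$. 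This is precisely the paper's proof.
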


\begin{proof}
Given $P\leqw Q$, by Proposition \ref{pzobases} fix an almost recursive $\bar g\in Q$ and some $f\in P$ such that $f\leqt\bar g$. By the preceding lemma, there exists a total recursive function $\Phi$ such that $\Phi(\bar g)=f$ and by Proposition \ref{inversepzo} it suffices to set $R:=\Phi^{-1}(P)\cap Q$.
\QED
\end{proof}
\end{section}

\begin{section}{Proof of Theorem A}
In this section we present a proof of the existence of a top element for each of \Dgps\ and \Dgpw:

\begin{thma}[{\cite[Theorem 3.20]{Si2}}]
The largest element of ${\mathbb P}_\bullet$ is
\[{\bf 1}_\bullet:=\deg_\bullet(\dnr2)
=\deg_\bullet({\ssf CpEx}({\cal T}))\]
for ${\cal T}={}$Peano Arithmetic or any standard first-order theory of arithmetic or sets.
\end{thma}

We begin with an alternative representation for \pzo\ classes in terms of {\em propositional} logic. In the propositional language described in Section 4 we fix an enumeration $\p0$, $\p1$, \dots\ of the atomic sentences and extend this to a G\"odel numbering of the set {\ssf PS} of propositional sentences. Consider a set $X\incl\omega$ as a classical truth-assignment $X: {\ssf PS}\to\set{{\ssf truth},{\ssf falsity}}$ defined by setting
\[X(\p n)={\ssf truth}\Iff n\in X\]
and extending in the usual way to all propositional sentences.
A classical \dff{propositional theory} $\cal T$ is a set of sentences closed under tautological consequence, and the set of \dff{models} of a theory is
\[{\Mod}({\cal T}):=\setof X{(\forall \phi\in{\cal T})X(\phi)={\ssf truth}}.\]
As usual, call a theory $\cal T$ \dff{recursively enumerable} or r.e.~iff the set of G\"odel numbers of elements of $\cal T$ is an r.e.~set. It is a standard result of propositional logic that for any r.e.~set $\Gamma$ of sentences, the set
\[\Th(\Gamma):=\setof\phi{\phi\text{ is a tautological consequence of }\Gamma}\]
is an r.e.~theory. Hence from the standard enumeration $W_0, W_1,\,\ldots$ of all r.e.~sets of numbers we can derive an effective enumeration ${\cal T}_0,\,{\cal T}_1,\,\ldots$ of all r.e.~theories. It will also be convenient to set for $\sigma\in{}^{<\omega}2$,
\[\q\sigma:=\lAnd_{i<{\ssf lg}(\sigma)}\p i^{\sigma(i)},\]
where $\p i^1$ is $\p i$, $\p i^0$ is $\lnot\p i$ and $\q\emptyset:=\p0\lor\lnot\p0$.

\begin{lemma}\label{pzorep}
$P\incl\pre\omega2$ is a \pzo\ class iff $P={\Mod}({\cal T})$ for some r.e.~propositional theory $\cal T$.
\end{lemma}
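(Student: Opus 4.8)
The plan is to prove the biconditional in Lemma \ref{pzorep} by passing back and forth between recursive trees $T\incl\pre{<\omega}2$ and r.e.\ propositional theories, using the sentences $\q\sigma$ as the ``names'' of the basic clopen sets $\setof{f}{\sigma\incl f}$. The key observation is that a truth-assignment $X\incl\omega$, viewed as an element of $\pre\omega2$ via $X(\p n)={\ssf truth}\iff n\in X$, satisfies $\q\sigma$ exactly when $\sigma\incl X$ (as characteristic functions), so excluding a node $\sigma$ from a tree corresponds to adding the axiom $\lnot\q\sigma$ to a theory.

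For the forward direction, suppose $P=[T]$ for a recursive tree $T\incl\pre{<\omega}2$. Let $\Gamma:=\setof{\lnot\q\sigma}{\sigma\in\pre{<\omega}2\smallsetminus T}$. Since $T$ is recursive, $\Gamma$ is a recursive (hence r.e.) set of sentences, so ${\cal T}:=\Th(\Gamma)$ is an r.e.\ theory by the quoted standard fact. I would then check ${\Mod}({\cal T})={\Mod}(\Gamma)=P$: for $X\in\pre\omega2$, $X\in{\Mod}(\Gamma)$ iff for every $\sigma\notin T$ we have $\sigma\not\incl X$, i.e.\ every initial segment of $X$ lies in $T$ (here one uses that $T$ is closed under subsequence, so the initial segments of $X$ form a chain and $X\restrict m\in T$ for all $m$ iff no $\sigma\notin T$ is an initial segment of $X$), which says exactly $X\in[T]=P$.

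For the converse, suppose $P={\Mod}({\cal T})$ for an r.e.\ theory $\cal T$. Define
\[T:=\setof{\sigma\in\pre{<\omega}2}{\q\sigma\text{ is consistent with }{\cal T}},\]
equivalently, the set of $\sigma$ such that ${\cal T}\not\vdash\lnot\q\sigma$. This $T$ is closed under subsequence because if $\tau\incl\sigma$ then $\q\sigma$ tautologically implies $\q\tau$, so inconsistency of $\q\tau$ with $\cal T$ forces inconsistency of $\q\sigma$. Moreover $T$ is \pzo\ as a set of codes: ``${\cal T}\vdash\lnot\q\sigma$'' is an r.e.\ condition since $\cal T$ is r.e.\ and provability is r.e., so its complement $T$ is co-r.e.; by the remark in Section 7 that a co-r.e.\ tree still yields a \pzo\ class (``by a standard quantifier calculation''), $[T]$ is a \pzo\ class. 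Finally I would verify $[T]=P$: for $X\in\pre\omega2$, $X\in[T]$ iff every $X\restrict m\in T$ iff no finite initial segment of $X$ contradicts $\cal T$; by compactness of propositional logic (a finite contradiction would appear at some finite stage) this holds iff $X$ is consistent with $\cal T$ as a complete truth-assignment, i.e.\ iff $X\models{\cal T}$, i.e.\ $X\in{\Mod}({\cal T})=P$.

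The main obstacle, and the one point deserving care, is the converse direction's claim that $X\restrict m\in T$ for all $m$ is equivalent to $X\models{\cal T}$. One implication is immediate (if $X\models{\cal T}$ then each $\q{X\restrict m}$ holds in $X$, so it is consistent with $\cal T$). The other requires a genuine compactness argument: if $X\not\models{\cal T}$, some $\phi\in{\cal T}$ fails at $X$, and since $\phi$ involves only finitely many atoms $\p0,\dots,\p{k-1}$, the partial assignment $X\restrict k$ already refutes $\phi$, hence $\q{X\restrict k}$ together with $\cal T$ is inconsistent and $X\restrict k\notin T$. I would present this explicitly rather than waving at ``compactness,'' since it is the heart of why the tree construction recovers exactly the models. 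Everything else is bookkeeping about the $\q\sigma$ and the r.e./co-r.e.\ nature of the relevant sets.
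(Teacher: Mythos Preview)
Your proof is correct but takes a different route from the paper in both halves. For the direction that $\Mod({\cal T})$ is $\pzo$, the paper gives a one-line argument by writing down the $\pzo$ definition directly: $X\in\Mod({\cal T})\Iff\forall\phi\,[\phi\notin{\cal T}\text{ or }X(\phi)={\ssf truth}]$; you instead build the co-r.e.\ tree of $\sigma$ with $\q\sigma$ consistent with $\cal T$ and invoke the Section~7 remark on co-r.e.\ trees, which is correct but longer. For the direction that a $\pzo$ class is some $\Mod({\cal T})$, the paper takes the maximal choice ${\cal T}:=\setof\phi{P\incl\Mod(\phi)}$ and shows $\Mod({\cal T})\incl P$ by arguing that otherwise some $\lnot\q{X\restrict n}\in{\cal T}$ would be falsified by $X$; your minimal axiomatization $\Gamma=\setof{\lnot\q\sigma}{\sigma\notin T}$ is more concrete and makes the r.e.\ nature of the theory immediate, a point the paper leaves to the reader. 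Both arguments rest on the same observation that $X\models\q\sigma$ iff $\sigma\incl X$; the difference is packaging---the paper's proof is shorter, while yours is more explicit about effectiveness and about the compactness step you rightly flag as the crux.
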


\begin{proof} 
Each $\Mod({\cal T})$ is a \pzo\ class since
\[X\in{\Mod}({\cal T})\Iff \forall \phi[\phi\not\in{\cal T}\qor X(\phi)={\ssf truth}].\]
Suppose that $P$ is a \pzo\ class and set 
\[{\cal T}:=\setof\phi{P\incl\Mod(\phi)}.\] 
Easily $\cal T$ is a theory and $P\incl\Mod({\cal T})$ by
definition. Conversely, if $X\in\Mod({\cal T})$, to show that
$X\in P$ it suffices to show that for all $n$, there is
some $Y\in P$ such that $\sigma_n:=X\restrict n=Y\restrict
n$. If for some $n$ this fails, then no $Y\in P$ is a
model of $\q{\sigma_n}$, so every $Y\in P$ is a model of
$\lnot\q{\sigma_n}$, and hence $\lnot\q{\sigma_n}\in\cal
T$. But this is impossible, since clearly
$X(\lnot\q{\sigma_n})={\ssf falsity}$. 
\QED
\end{proof}

\begin{definition} 
A propositional theory $\cal U$ is \dff{effectively incompletable} iff $\cal U$ is consistent and there exists a recursive mapping $a\mapsto\theta_a$ (formally it is the function $a\mapsto\text{ the G\"odel number of }\theta_a$ that is recursive) of $\omega\to{\ssf PS}$ such that for all $a$,
\[{\cal U}\incl{\cal T}_a\hbox{ consistent}\Implies\hbox{ both }{\cal T}_a\cup\set{\theta_a}\hbox{ and }{\cal T}_a\cup\set{\lnot\theta_a}\hbox{ are consistent}\]
\end{definition}

This is of course an effective propositional version of the property that the First Incompleteness Theorem establishes for sufficiently strong first-order theories.

\begin{lemma}
For any effectively incompletable r.e.~propositional theory \;$\cal U$ and any r.e.~propositional theory $\cal T$ there exists a recursive mapping
$(\phi,\psi,\chi)\mapsto\theta_{\phi,\psi,\chi}$
such that if both ${\cal T}\cup\set\phi$ and ${\cal
U}\cup\set\psi$ are consistent, then
\begin{enumerate}
\item[\textup{(i)}]${\cal T}\cup\set{\phi,\chi}\hbox{ is consistent}\Iff
{\cal U}\cup\set{\psi,\theta_{\phi,\psi,\chi}}\hbox{ is consistent}$;
\item[\textup{(ii)}]${\cal T}\cup\set{\phi,\lnot\chi}\hbox{ is consistent}\Iff {\cal
U}\cup\set{\psi,\lnot\theta_{\phi,\psi,\chi}}\hbox{ is consistent}.$
\end{enumerate}
\end{lemma}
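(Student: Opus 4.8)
The approach is a Rosser-style self-referential construction: I would use the effective incompletability of $\cal U$ to produce a sentence $\theta_{\phi,\psi,\chi}$ whose provability/refutability over ${\cal U}\cup\set\psi$ tracks that of $\chi$ over ${\cal T}\cup\set\phi$, exactly as one shows that a disjoint pair of r.e.\ sets reduces to the (refutable, provable) pair of a creative theory. Fix recursive indices for the r.e.\ theories $\cal U$ and $\cal T$ and let $a\mapsto\theta_a$ be the recursive map witnessing effective incompletability of $\cal U$. Since ``$\chi\in\Th({\cal T}\cup\set\phi)$'' and ``$\lnot\chi\in\Th({\cal T}\cup\set\phi)$'' are r.e.\ conditions, uniformly in the parameters (proof search in an r.e.\ theory), there is a recursive function assigning to each index $a$ an index for the r.e.\ set
\[W(a):=\bigl({\cal U}\cup\set\psi\bigr)\;\cup\;\setof{\lnot\theta_a}{\chi\in\Th({\cal T}\cup\set\phi)}\;\cup\;\setof{\theta_a}{\lnot\chi\in\Th({\cal T}\cup\set\phi)}.\]
By the recursion theorem (with parameters $\phi,\psi,\chi$), choose $a=a(\phi,\psi,\chi)$, recursively in the parameters, so that ${\cal T}_a$ is the theory axiomatized by $W(a)$, and put $\theta_{\phi,\psi,\chi}:=\theta_a$. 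Then $(\phi,\psi,\chi)\mapsto\theta_{\phi,\psi,\chi}$ is recursive, since $a$ and then $\theta_a$ are obtained recursively.

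For the verification, assume ${\cal T}\cup\set\phi$ and ${\cal U}\cup\set\psi$ are consistent and split into cases on which of $\chi,\lnot\chi$ lies in $\Th({\cal T}\cup\set\phi)$; by consistency of ${\cal T}\cup\set\phi$, at most one does. If neither does, then $W(a)$ is just ${\cal U}\cup\set\psi$, so ${\cal T}_a$ is consistent and contains $\cal U$, and effective incompletability applied to index $a$ makes both ${\cal U}\cup\set{\psi,\theta_a}$ and ${\cal U}\cup\set{\psi,\lnot\theta_a}$ consistent, while on the other side both ${\cal T}\cup\set{\phi,\chi}$ and ${\cal T}\cup\set{\phi,\lnot\chi}$ are consistent; so (i) and (ii) each read ``consistent $\Iff$ consistent''. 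If $\chi\in\Th({\cal T}\cup\set\phi)$ (so $\lnot\chi\notin\Th({\cal T}\cup\set\phi)$, ${\cal T}\cup\set{\phi,\chi}$ consistent, ${\cal T}\cup\set{\phi,\lnot\chi}$ inconsistent), then $W(a)={\cal U}\cup\set{\psi,\lnot\theta_a}$, so ${\cal T}_a\cup\set{\theta_a}$ contains both $\theta_a$ and $\lnot\theta_a$ and is inconsistent; by the contrapositive of effective incompletability at index $a$, ${\cal U}\subseteq{\cal T}_a$ is inconsistent, i.e.\ ${\cal U}\cup\set{\psi,\lnot\theta_a}$ is inconsistent, hence ${\cal U}\cup\set\psi$ proves $\theta_a$, and with consistency of ${\cal U}\cup\set\psi$ this gives ${\cal U}\cup\set{\psi,\theta_a}$ consistent and ${\cal U}\cup\set{\psi,\lnot\theta_a}$ not — matching the $\cal T$-side and yielding (i) and (ii). The case $\lnot\chi\in\Th({\cal T}\cup\set\phi)$ is symmetric, with $\theta_a$ and $\lnot\theta_a$ interchanged.

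The point that needs care is the self-reference: ${\cal T}_a$ is defined using $\theta_a$, which the incompletability mapping produces only from an index for ${\cal T}_a$ itself, so the construction must be closed up with the recursion theorem, carrying the parameters $\phi,\psi,\chi$ through so the final $\theta_{\phi,\psi,\chi}$ depends on them recursively. One must also orient the coding correctly — add $\lnot\theta_a$ when $\chi$ becomes provable and $\theta_a$ when $\lnot\chi$ does — so that it is the \emph{contrapositive} of effective incompletability that forces the needed inconsistencies; with the opposite orientation the argument collapses. The remaining ingredients (r.e.-ness of the provability conditions, recursiveness of the final map, and the casework above) are routine.
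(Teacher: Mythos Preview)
Your proof is correct and is essentially the same as the paper's: both use the Recursion Theorem to produce a self-referential index $a$ so that ${\cal T}_a$ is $\Th({\cal U}\cup\set\psi)$ augmented by $\theta_a$ or $\lnot\theta_a$ according as ${\cal T}\cup\set{\phi,\chi}$ or ${\cal T}\cup\set{\phi,\lnot\chi}$ is inconsistent, set $\theta_{\phi,\psi,\chi}:=\theta_a$, and then run exactly your three-case verification via the contrapositive of effective incompletability. The only cosmetic difference is that the paper presents the definition of ${\cal T}_{\bar a}$ as an explicit three-way case split rather than as a union over r.e.\ conditions, but the content and the orientation of the coding are identical.
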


\begin{proof}
With $a\mapsto\theta_a$ witnessing the effective
incompletability of
$\cal U$, by the Recursion Theorem there exists an index
$\bar a$ effectively computable from
$(\phi,\psi,\chi)$ such that
\[\openup2\jot{\cal T}_{\bar a}=
\cases{{\ssf Th}\bigl({\cal U}\cup\set{\psi,\theta_{\bar
a}}\bigr), &if ${\cal T}\cup\set{\phi,\chi}$ is
inconsistent;\cr {\ssf Th}\bigl({\cal
U}\cup\set{\psi,\lnot\theta_{\bar a}}\bigr), &if ${\cal
T}\cup\set{\phi,\lnot\chi}$ is inconsistent;\cr {\ssf
Th}\bigl({\cal U}\cup\set{\psi}\bigr), &otherwise.}\]
Note that the assumed consistency of ${\cal
T}\cup\set\phi$ ensures that exactly one of
these cases holds. 

Let $\theta_{\phi,\psi,\chi}:=\theta_{\bar a}$. For (i)(\Larrow), if
${\cal T}\cup\set{\phi,\chi}$ is inconsistent, we have
that ${\cal U}\incl{\cal T}_{\bar a}$, but ${\cal T}_{\bar
a}\cup\set{\lnot\theta_{\bar a}}$ is inconsistent. It
follows that ${\cal T}_{\bar a}$ is inconsistent as required.
The second case gives similarly (ii)(\Larrow).

In the third case, if both if ${\cal
T}\cup\set{\phi,\chi}$ and 
${\cal T}\cup\set{\phi,\lnot\chi}$ are consistent, we have
${\cal T}_{\bar a}={\ssf Th}({\cal U}\cup\set\psi)$, which
is consistent by hypothesis, so 
\[{\cal U}\cup\set{\psi,\theta_{\bar a}}\incl{\cal
T}_{\bar a}\cup\set{\theta_{\bar a}}
\qand
{\cal U}\cup\set{\psi,\lnot\theta_{\bar a}}\incl{\cal
T}_{\bar a}\cup\set{\lnot\theta_{\bar a}}\]
are both consistent by the properties of $\theta_{\bar a}$ which gives (i)(\Rarrow) and (ii)(\Rarrow).
\QED
\end{proof}

\begin{lemma}
For any effectively incompletable r.e.~ propositional theory $\cal U$, 
${\ssf deg}_\bullet\bigl({\ssf Mod}({\cal U})\bigr)={\bf 1}_\bullet$. In fact, for any consistent r.e.~propositional  theory $\cal T$,
\begin{enumerate}
\item[\textup{(i)}] there exists a recursive surjection ${\ssf Mod}({\cal U})\to{\ssf Mod}({\cal T})$;
\item[\textup{(ii)}] if also $\cal T$ is recursively incompletable, then this is a recursive isomorphism.
\end{enumerate}
\end{lemma}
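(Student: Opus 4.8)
The plan is to reduce everything to a single construction powered by the preceding lemma's gadget. Fix the effectively incompletable theory $\cal U$. For an arbitrary \emph{consistent} r.e.~propositional theory $\cal T$ I will produce a recursive functional $\Phi=\Phi_{\cal T}$ mapping $\Mod({\cal U})$ onto $\Mod({\cal T})$. Granting this, Lemma~\ref{pzorep} writes every nonempty \pzo\ class $P\incl\pre\omega2$ as $\Mod({\cal T})$ for some consistent r.e.~$\cal T$, whence $P\leqs\Mod({\cal U})$; and $\Mod({\cal U})$ is itself a nonempty \pzo\ class, so $\deg_\bullet(\Mod({\cal U}))$ is the top of ${\mathbb P}_\bullet$, i.e.\ equals ${\bf 1}_\bullet$. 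I will use repeatedly the observation, immediate from propositional compactness (the argument is as in the proof of Lemma~\ref{pzorep}), that for $Y\in\pre\omega2$ one has $Y\in\Mod({\cal T})$ iff ${\cal T}\cup\set{\q{Y\restrict n}}$ is consistent for every $n$.

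For the construction, fix the recursive map $(\phi,\psi,\chi)\mapsto\theta_{\phi,\psi,\chi}$ of the preceding lemma for the pair $({\cal T},{\cal U})$ and run the following stages on an input $X\in\pre\omega2$. After stage $n$ there is a string $\sigma_n\in\pre n2$, the first $n$ bits of $\Phi(X)$, and a sentence $\psi_n$ chosen so that $X(\psi_n)={\ssf truth}$; start with $\sigma_0=\emptyset$, $\psi_0=\q\emptyset$. At stage $n$, compute $\theta_n:=\theta_{\q{\sigma_n},\psi_n,\p n}$ (only finitely many atoms occur in $\theta_n$, so its truth value under $X$ is decided by a finite initial segment of $X$); if $X(\theta_n)={\ssf truth}$ set $\sigma_{n+1}:=\sigma_n^\frown(1)$, $\psi_{n+1}:=\psi_n\land\theta_n$, and otherwise $\sigma_{n+1}:=\sigma_n^\frown(0)$, $\psi_{n+1}:=\psi_n\land\lnot\theta_n$. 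By the choice of sign $X(\psi_{n+1})={\ssf truth}$, so for $X\in\Mod({\cal U})$ the set ${\cal U}\cup\set{\psi_{n+1}}$ has the model $X$ and hence is consistent; since also ${\cal U}\cup\set{\psi_n}$ and ${\cal T}\cup\set{\q{\sigma_n}}$ are consistent by induction, equivalence (i) or (ii) of the preceding lemma applies and yields that ${\cal T}\cup\set{\q{\sigma_{n+1}}}$ is consistent (recall $\q{\sigma_n^\frown(1)}=\q{\sigma_n}\land\p n$ and $\q{\sigma_n^\frown(0)}=\q{\sigma_n}\land\lnot\p n$). Each output bit is read from a finite piece of $X$, so $\Phi$ is a total recursive functional; and for $X\in\Mod({\cal U})$ every ${\cal T}\cup\set{\q{\Phi(X)\restrict n}}$ is consistent, i.e.\ $\Phi(X)\in\Mod({\cal T})$. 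Thus $\Phi$ witnesses $\Mod({\cal T})\leqs\Mod({\cal U})$.

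Surjectivity is the other half of (i). Given $Y\in\Mod({\cal T})$, the data $\sigma_n:=Y\restrict n$, $\theta_n$, $\psi_n$ of the construction are determined by $Y$ alone (the bit appended at stage $n$ is $Y(n)$), and since $Y\in\Mod({\cal T})$ each ${\cal T}\cup\set{\q{\sigma_{n+1}}}$ is consistent; equivalences (i)/(ii) then give, inductively, that ${\cal U}\cup\set{\psi_n}$ is consistent for every $n$. The $\psi_n$ form a decreasing chain of conditions over $\cal U$, so by compactness there is $X\in\Mod({\cal U})$ with $X(\psi_n)={\ssf truth}$ for all $n$; this $X$ has $X(\theta_n)={\ssf truth}$ precisely when $Y(n)=1$, so running the construction on this $X$ regenerates $Y$ bit for bit: $\Phi(X)=Y$. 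Together with the reduction above and Lemma~\ref{pzorep} this establishes both part (i) and the equality $\deg_\bullet(\Mod({\cal U}))={\bf 1}_\bullet$.

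For part (ii), when $\cal T$ is also effectively incompletable I would symmetrize the construction into a back-and-forth between $\Mod({\cal U})$ and $\Mod({\cal T})$: build simultaneously increasing strings $\sigma_n$ on the $\cal U$-side and $\tau_n$ on the $\cal T$-side, together with the linked auxiliary sentences, maintaining that ${\cal U}\cup\set{\q{\sigma_n}}$ and ${\cal T}\cup\set{\q{\tau_n}}$ are consistent and matched through the two gadgets (one for each theory); at even stages extend the $\cal U$-side by one bit and let $\cal U$'s gadget force a new bit on the $\cal T$-side, at odd stages do the reverse with $\cal T$'s gadget, and always extend whichever of $\sigma_n,\tau_n$ is currently shorter, so that $\bigcup_n\sigma_n$ and $\bigcup_n\tau_n$ are total. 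The correspondence $\sigma_n\leftrightarrow\tau_n$ then defines a bijection $\Phi\colon\Mod({\cal U})\to\Mod({\cal T})$; because each newly committed bit on either side depends on only finitely many bits of the other side, both $\Phi$ and $\Phi^{-1}$ are recursive functionals, so $\Phi$ is a recursive isomorphism. I expect part (ii) to be the main obstacle: the delicate point is the bookkeeping that keeps the two consistency invariants in step at the alternating stages, so that the equivalences of both gadgets stay invocable and neither side ever stalls — which is exactly where effective incompletability of \emph{both} $\cal U$ and $\cal T$ is used. By contrast the construction of $\Phi$ in (i) and the compactness characterization of $\Mod({\cal T})$ are routine given the preceding lemmas.
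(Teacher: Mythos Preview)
Your argument for part (i) is correct and essentially identical to the paper's: you build the same tree of auxiliary sentences $\psi_\sigma$ on the $\cal U$-side linked by the gadget to $\q\sigma$ on the $\cal T$-side, navigate it by evaluating $X(\theta_n)$, and recover surjectivity by compactness; the paper phrases this via the families ${\cal T}^f,{\cal U}^f$ but the content is the same.

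For part (ii) your high-level idea---alternate the two gadgets in a back-and-forth---is exactly what the paper does, but your concrete plan is slightly off. You propose to maintain literal bit-strings $\q{\sigma_n}$ on the $\cal U$-side and $\q{\tau_n}$ on the $\cal T$-side and have each gadget ``force a new bit'' on the opposite side. The gadget does not do that: given $(\phi,\psi,\chi)$ it returns a sentence $\theta$ on the $\cal U$-side matching $\chi$ on the $\cal T$-side, so reading a literal input bit $X(\p n)$ on the $\cal U$-side does not, via $\cal U$'s gadget, produce a literal $\p m$-bit on the $\cal T$-side. The paper sidesteps this by \emph{not} insisting on literal bit-strings on both sides: it keeps the single index $\sigma$ and builds conjunctions $\phi_\sigma$ (on $\cal T$) and $\psi_\sigma$ (on $\cal U$) that each receive a literal $\p n$ at alternating parities (even for $\phi$, odd for $\psi$) and a $\theta$-sentence at the other parity. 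Thus each of ${\cal T}^f$ and ${\cal U}^f$ eventually decides every $\p n$, giving uniqueness of models on both sides and hence a bijection whose inverse is computed symmetrically. Your bookkeeping worry disappears once you adopt this framing; the alternation is completely mechanical and no ``stalling'' issue arises.
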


\begin{proof}
Fix $\theta^{\cal U}$ for $\cal U$ as in the preceding Lemma and 
define recursive mappings $\sigma\mapsto\phi_\sigma$ and $\sigma\mapsto\psi_\sigma$ by
\[\psi_\emptyset:={\ssf p}_0\lor\lnot{\ssf p}_0,\qquad\phi_\sigma:=\q\sigma,\]
and
\[\psi_{\sigma^\frown(i)}:=\cases{\psi_\sigma\land\lnot\theta^{\cal U}_{\phi_\sigma,\psi_\sigma,{\ssf p}_{{\ssf lg}(\sigma)}},
&if $i=0$;\cr 
\psi_\sigma\land\theta^{\cal U}_{\phi_\sigma,\psi_\sigma,{\ssf p}_{{\ssf lg}(\sigma)}},
&if $i=1$.\cr}\]
Then easily for all $\sigma\in\pre{<\omega}2$,
\[{\cal T}\cup\set{\phi_\sigma}\hbox{ is consistent}\Iff
{\cal U}\cup\set{\psi_\sigma}\hbox{ is consistent}.\]
For any $f\in\pre\omega2$, set
\[{\cal T}^f:={\cal T}\cup{\setof{\phi_{f\restrict n}}{n\in\omega}}
\qand
{\cal U}^f:={\cal U}\cup{\setof{\psi_{f\restrict n}}{n\in\omega}}.\]
Then easily
\begin{enumerate}
\item[(1)] $X\in{\ssf Mod}({\cal T})\Iff \hbox{there exists (a unique) }f\in\pre\omega2, X\in{\ssf Mod}({\cal T}^f)$;
\item[(2)] $Y\in{\ssf Mod}({\cal U})\Iff \hbox{there exists (a unique) }f\in\pre\omega2, Y\in{\ssf Mod}({\cal U}^f)$;
\item[(3)] ${\cal T}^f\hbox{ is consistent}\Iff {\cal U}^f\hbox{ is consistent}$;
\item[(4)] if ${\cal T}^f$ is consistent, then it has exactly one model.
\end{enumerate}

For $Y\in{\ssf Mod}({\cal U})$, set
$$\Phi(Y):=\hbox{the unique }X\in{\ssf Mod}({\cal T}^f)$$
for the unique $f$ such that $Y\in{\ssf Mod}({\cal U}^f)$. This is well-defined and surjective, which establishes (i).

If also $\cal T$ is effectively incompletable, we modify the definitions as follows.
\[\phi_\emptyset:={\ssf p}_0\lor\lnot{\ssf p}_0=:\psi_\emptyset;\]
when ${\ssf lg}(\sigma)=2n$,
\begin{align*}
\phi_{\sigma^\frown(i)}
&:=\cases{\phi_\sigma\land\lnot{\ssf p}_n,
&if $i=0$;\cr
\phi_\sigma\land{\ssf p}_n,
&if $i=1$;\cr}\\
\psi_{\sigma^\frown(i)}
&:=\cases{\psi_\sigma\land\lnot\theta^{\cal U}_{\phi_\sigma,\psi_\sigma,{\ssf p}_n},
&if $i=0$;\cr 
\psi_\sigma\land\theta^{\cal U}_{\phi_\sigma,\psi_\sigma,{\ssf p}_n},
&if $i=1$;\cr}
\end{align*}
and when ${\ssf lg}(\sigma)=2n+1$,
\begin{align*}
\phi_{\sigma^\frown(i)}
&:=\cases{\phi_\sigma\land\lnot\theta^{\cal T}_{\psi_\sigma,\phi_\sigma,{\ssf p}_n},
&if $i=0$;\cr 
\phi_\sigma\land\theta^{\cal T}_{\psi_\sigma,\phi_\sigma,{\ssf p}_n},
&if $i=1$;\cr}\\
\psi_{\sigma^\frown(i)}
&:=\cases{\psi_\sigma\land\lnot{\ssf p}_n,
&if $i=0$;\cr
\psi_\sigma\land{\ssf p}_n,
&if $i=1.$\cr}
\end{align*}
Now (1) -- (4) follow as before, but in addition
\begin{enumerate}
\item[(5)] if ${\cal U}^f$ is consistent, then it has exactly one model,
\end{enumerate}
from which it follows that the functional $\Phi$ is injective.
\QED
\end{proof}

It remains to verify that the \pzo\ classes ${\ssf DNR}_2$ and ${\ssf CpEx}({\cal T})$ are indeed of the form $\Mod({\cal U})$ for an effectively incompletable theory $\cal U$.

\begin{definition}
\begin{enumerate}
\item[(i)] Disjoint sets $A,B\incl\omega$ are \dff{effectively inseparable} iff there exists a recursive function $h$ such that for any r.e.~sets $W_c$ and $W_d$, if
\[A\incl W_c, \qquad B\incl W_d\qqand W_c\cap W_d=\emptyset\]
then $h(c,d)\notin W_c\cup W_d$;
\item[(ii)]${\ssf K}_i:=\setof a{\set a(a)\simeq i}$.
\end{enumerate}
\end{definition}

\begin{lemma}
The following pairs are effectively inseparable:
\begin{enumerate}
\item[\textup{(i)}] ${\ssf K}_0$ and ${\ssf K}_1$;
\item[\textup{(ii)}] $\cal T$ and ${\ssf Neg}\,{\cal T}:=\setof{\lnot\phi}{\phi\in{\cal T}}$ for $\cal T$ Peano Arithmetic or any standard first-order theory of arithmetic or sets.
\end{enumerate}
\end{lemma}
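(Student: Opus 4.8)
The plan is to prove (i) directly by a fixed-point construction and then obtain (ii) by reducing it to (i).

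For (i), recall first that ${\ssf K}_0$ and ${\ssf K}_1$ are disjoint r.e.\ sets, since no computation halts with two distinct outputs. Given indices $c,d$, define a partial recursive function $\psi(c,d,e)$ that runs the stage approximations $W_{c,s}$ and $W_{d,s}$ in parallel, waits for the least stage $s$ at which $e$ enters $W_c\cup W_d$, and then outputs $1$ if $e\in W_{c,s}$ and $0$ otherwise (diverging if $e$ never appears). By the $s$-$m$-$n$ theorem there is a recursive $\tilde f$ with $\{\tilde f(c,d,e)\}(y)\simeq\psi(c,d,e)$ for every $y$, and by the Recursion Theorem with parameters there is a recursive $h$ with $\{h(c,d)\}(y)\simeq\psi\bigl(c,d,h(c,d)\bigr)$ for every $y$; in particular $\{h(c,d)\}\bigl(h(c,d)\bigr)\simeq\psi\bigl(c,d,h(c,d)\bigr)$. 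Now suppose $W_c\supseteq{\ssf K}_0$, $W_d\supseteq{\ssf K}_1$ and $W_c\cap W_d=\emptyset$, and put $e:=h(c,d)$. If $e\in W_c$ then $e\notin W_d$, so at the stage where $e$ first enters $W_c$ we obtain $\psi(c,d,e)=1$, hence $\{e\}(e)\simeq 1$, i.e.\ $e\in{\ssf K}_1\subseteq W_d$, a contradiction; the case $e\in W_d$ is symmetric and forces $e\in{\ssf K}_0\subseteq W_c$. Therefore $h(c,d)\notin W_c\cup W_d$, so $h$ witnesses effective inseparability.

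For (ii), $\cal T$ is r.e.\ because it is recursively axiomatizable, ${\ssf Neg}\,{\cal T}$ is r.e.\ as its recursive image, and the two are disjoint precisely because $\cal T$ is consistent. I would reduce the effective inseparability of $({\cal T},{\ssf Neg}\,{\cal T})$ to that of $({\ssf K}_0,{\ssf K}_1)$. Fix a $\Delta_0$ formula $\mathrm{comp}(x,v,w)$ that represents in Robinson's $Q$ (which all the listed theories extend) the primitive recursive relation ``$v$ codes a halting computation of $x$ on input $x$ with output $w$'', so that $Q$ proves $\mathrm{comp}(\bar a,\bar v,\bar i)$ when the relation holds and proves its negation otherwise, and $a\in{\ssf K}_i$ iff $\exists v\,\mathrm{comp}(a,v,i)$, the witness $v$ being unique (even across $i$). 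Set
\[\delta_a:=\exists v\,\bigl[\mathrm{comp}(\bar a,v,\bar 0)\wedge(\forall w\le v)\,\lnot\mathrm{comp}(\bar a,w,\bar 1)\bigr],\]
so $a\mapsto\delta_a$ (formally $a\mapsto{\ssf gn}(\delta_a)$) is recursive. If $a\in{\ssf K}_0$ with witness $v_0$, then $Q$ proves $\mathrm{comp}(\bar a,\bar v_0,\bar 0)$, proves $\lnot\mathrm{comp}(\bar a,\bar w,\bar 1)$ for every standard $w$ (as $a\notin{\ssf K}_1$), and hence proves $(\forall w\le\bar v_0)\,\lnot\mathrm{comp}(\bar a,w,\bar 1)$ using $Q\vdash\forall w\,\bigl(w\le\bar v_0\to\bigvee_{j\le v_0}(w=\bar j)\bigr)$; thus ${\cal T}\vdash\delta_a$. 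If $a\in{\ssf K}_1$ with witness $v_1$, then $Q\vdash\mathrm{comp}(\bar a,\bar v_1,\bar 1)$, and since every standard theory of arithmetic or of sets proves the functionality of $\mathrm{comp}$ (at most one halting computation, carrying at most one output), $\cal T$ proves $\forall v\,\lnot\mathrm{comp}(\bar a,v,\bar 0)$ and hence ${\cal T}\vdash\lnot\delta_a$. So $a\mapsto{\ssf gn}(\delta_a)$ sends ${\ssf K}_0$ into the theorem set of $\cal T$ and ${\ssf K}_1$ into ${\ssf Neg}\,{\cal T}$.

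Effective inseparability transfers along such a recursive reduction: given r.e.\ $W_c\supseteq{\cal T}$ and $W_d\supseteq{\ssf Neg}\,{\cal T}$ with $W_c\cap W_d=\emptyset$, the preimages $\{a:{\ssf gn}(\delta_a)\in W_c\}$ and $\{a:{\ssf gn}(\delta_a)\in W_d\}$ are disjoint r.e.\ sets $W_{g(c)}\supseteq{\ssf K}_0$ and $W_{g(d)}\supseteq{\ssf K}_1$ with $g$ recursive (by $s$-$m$-$n$), so applying the $h$ of part (i) and pushing forward, $(c,d)\mapsto{\ssf gn}\bigl(\delta_{h(g(c),g(d))}\bigr)$ is the required recursive witness. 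The one genuinely delicate point is the second implication in (ii) --- that $\cal T$ refutes $\delta_a$ when $a\in{\ssf K}_1$ --- since the naive argument would need induction that $Q$ alone lacks; this is exactly where one uses the bounded quantifier $(\forall w\le v)$ in $\delta_a$ (keeping $\delta_a$ at the $\Sigma_1$ level, so $Q$ can verify it in the ${\ssf K}_0$ case) together with the fact that Peano Arithmetic, and every standard first-order theory of arithmetic or of sets, proves functionality of the computation predicate. Everything else is the bookkeeping of $s$-$m$-$n$, the Recursion Theorem, and the transfer of effective inseparability.
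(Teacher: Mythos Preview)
Your argument is correct. For (i) it is essentially the paper's proof, but with one extra layer: you make $\psi(c,d,e)$ constant in its nominal input and then invoke the Recursion Theorem with parameters to obtain the fixed point $e=h(c,d)$, whereas the paper lets the \emph{argument} $x$ of $\{h(c,d)\}$ play the role of your $e$ --- defining $\{h(c,d)\}(x)\simeq 1,0,\uparrow$ according as $x$ first enters $W_c$, first enters $W_d$, or neither --- so that the $S^m_n$-Theorem alone yields $h$, and the diagonal $x=h(c,d)$ gives the contradiction without any appeal to the Recursion Theorem. For (ii) the paper simply cites the effective First Incompleteness Theorem; your explicit Rosser-style reduction to (i) is a fine way to cash that out, and in fact your formula $\delta_a$ is designed so that the case $a\in{\ssf K}_1$ can be handled already in $Q$ (split on $v<\bar v_1$ versus $v\ge\bar v_1$) without invoking functionality of the computation predicate, though your appeal to functionality in PA and stronger theories is of course also valid.
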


\begin{proof}
(ii) is an effective version of the First Incompleteness Theorem. For (i), by the $S^m_n$-Theorem there exists a recursive
function $h$ such that for all $c$ and $d$,
$$\openup2\jot\set{h(c,d)}(x)\simeq\cases{
1,&if $\exists s\;[x\in W_{c,s}-W_{d,s}]$;\cr
0,&if $\exists s\;[x\in W_{d,s}-W_{c,s}]$;\cr
\uparrow,&otherwise.\cr}$$
If ${\ssf K}_0\incl W_c$, ${\ssf K}_1\incl W_d$, and
$W_c\cap W_d=\emptyset$, then
\[h(c,d)\in W_c\Impliess
\set{h(c,d)}(h(c,d))\simeq1\Impliess h(c,d)\in W_d,\]
and 
\[h(c,d)\in W_d\Impliess
\set{h(c,d)}(h(c,d))\simeq0\Impliess h(c,d)\in W_c,\]
so $h(c,d)\notin W_c\cup W_d$. 
\QED
\end{proof}

\begin{lemma}
For any pair $A,B$ of effectively inseparable sets,
\[{\cal U}_{A,B}:={\ssf Th}\bigl(\setof{{\ssf p}_b}{b\in A}\cup\setof{\lnot p_b}{b\in B}\bigr)\]
is effectively incompletable.
\end{lemma}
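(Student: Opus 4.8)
The plan is to unwind the definitions and exhibit explicitly the recursive map $a\mapsto\theta_a$ that witnesses effective incompletability of ${\cal U}_{A,B}$. First I would observe that ${\cal U}_{A,B}$ is consistent: the truth-assignment that sets ${\ssf p}_b={\ssf truth}$ exactly when $b\in A$ (and arbitrarily elsewhere) satisfies all generators $\{{\ssf p}_b:b\in A\}\cup\{\lnot{\ssf p}_b:b\in B\}$ since $A$ and $B$ are disjoint, so it is a model of the deductive closure. Next, let $h$ be the recursive function witnessing effective inseparability of $A,B$. Given an index $a$ for an r.e.\ theory ${\cal T}_a$, I want to produce $\theta_a$ so that if ${\cal U}_{A,B}\subseteq{\cal T}_a$ is consistent then ${\cal T}_a\cup\{\theta_a\}$ and ${\cal T}_a\cup\{\lnot\theta_a\}$ are both consistent. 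The natural choice is $\theta_a:={\ssf p}_{h(c,d)}$ where $c,d$ are r.e.\ indices, recursively computable from $a$, for the sets
\[
W_c:=\{b:{\ssf p}_b\in{\cal T}_a\}\qquad\text{and}\qquad W_d:=\{b:\lnot{\ssf p}_b\in{\cal T}_a\}.
\]
(These are r.e.\ because ${\cal T}_a$ is, and the map $a\mapsto(c,d)$ is recursive by the $S^m_n$-Theorem.)

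The key computation is then the following. Suppose ${\cal U}_{A,B}\subseteq{\cal T}_a$ and ${\cal T}_a$ is consistent. Since ${\cal U}_{A,B}$ contains ${\ssf p}_b$ for $b\in A$ and $\lnot{\ssf p}_b$ for $b\in B$, we get $A\subseteq W_c$ and $B\subseteq W_d$; and consistency of ${\cal T}_a$ forces $W_c\cap W_d=\emptyset$ (a theory cannot contain both ${\ssf p}_b$ and $\lnot{\ssf p}_b$). Hence by the defining property of $h$, $h(c,d)\notin W_c\cup W_d$, i.e.\ neither ${\ssf p}_{h(c,d)}$ nor $\lnot{\ssf p}_{h(c,d)}$ lies in ${\cal T}_a$. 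I would finish by the general fact that if a consistent propositional theory $\cal T$ does not contain the atom ${\ssf p}_n$, then ${\cal T}\cup\{\lnot{\ssf p}_n\}$ is consistent, and symmetrically if it does not contain $\lnot{\ssf p}_n$ then ${\cal T}\cup\{{\ssf p}_n\}$ is consistent. Applying both halves at $n=h(c,d)$ gives that ${\cal T}_a\cup\{\theta_a\}$ and ${\cal T}_a\cup\{\lnot\theta_a\}$ are both consistent, which is exactly what is required.

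This last propositional fact deserves a word: if ${\cal T}\cup\{\lnot{\ssf p}_n\}$ were inconsistent, then ${\cal T}\vdash{\ssf p}_n$ tautologically, so ${\ssf p}_n\in{\cal T}$ since $\cal T$ is deductively closed, contradicting ${\ssf p}_n\notin{\cal T}$; and dually for the other case. So the only ingredients are: the $S^m_n$-Theorem to get recursiveness of $a\mapsto(c,d)$ and hence of $a\mapsto\theta_a$; disjointness of $A,B$ for consistency of ${\cal U}_{A,B}$; the inseparability function $h$; and elementary propositional deduction. I do not anticipate a serious obstacle here — the lemma is essentially a bookkeeping translation of effective inseparability of $A,B$ into effective incompletability of ${\cal U}_{A,B}$. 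The one point to be careful about is keeping the roles of $c$ (indexing the positive literals in ${\cal T}_a$) and $d$ (the negative literals) straight, and confirming that $A\subseteq W_c$, $B\subseteq W_d$ rather than the reverse, which is forced by the way ${\cal U}_{A,B}$ is built.
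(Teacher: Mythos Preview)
Your proof is correct and follows essentially the same approach as the paper: your indices $c,d$ are exactly the paper's $f(a),g(a)$, and your $\theta_a={\ssf p}_{h(c,d)}$ is the paper's $\theta_a={\ssf p}_{h(f(a),g(a))}$. You supply a couple of details the paper leaves implicit --- the explicit verification that ${\cal U}_{A,B}$ is consistent and the propositional deduction showing that $\theta_a,\lnot\theta_a\notin{\cal T}_a$ implies both extensions are consistent --- but the argument is the same.
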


\begin{proof}
Fix recursive $f$ and $g$ such that for all $a\in\omega$,
\[W_{f(a)}=\setof b{{\ssf p}_b\in{\cal T}_a}\qand 
W_{g(a)}=\setof b{\lnot{\ssf p}_b\in{\cal T}_a}.\]
Given effectively inseparable $A$ and $B$, for any
$a$ such that ${\cal U}_{A,B}\incl{\cal
T}_a$ we have 
\[A\incl W_{f(a)} \qand B\incl W_{g(a)},\]
and if also ${\cal T}_a$ is consistent, then 
\[W_{f(a)}\cap
W_{g(a)}=\emptyset.\]
Hence $\theta_a:={\ssf p}_{h(f(a),g(a))}$ witnesses the incompleteness of ${\cal T}_a$.
\QED
\end{proof}

Finally, note that for any disjoint $A,B$,
\[{\ssf Mod}({\cal U}_{A,B})={\ssf Sep}(A,B):=\setof X{A\incl X\qand X\cap B=\emptyset}.\]
Then Theorem A follows from the preceding three lemmas, since
\[{\ssf DNR}_2={\ssf Mod}\bigl({\cal U}_{{\ssf K}_0,{\ssf K}_1}\bigr)\]
and
\[{\ssf CpEx}({\cal T})\hbox{ is a \pzo\ subset of }{\ssf Mod}\bigl({\cal U}_{{\cal T},{\ssf Neg}\,{\cal T}}).\]
\end{section}

\begin{section}{Proof of Theorem D}
Recall that for a Turing degree $\dga\in\Dgt$ and $A\in\dga$,
\[\dgaw:=\degw(\set A)\qqand \dgawstar:=\onew\meet\dgaw.\]
We will show here that, as asserted in detail by Theorem D, the mapping $\dga\mapsto\dgawstar$ is an embedding of \Dgpt\ into \Dgpw\ respecting all of the structure of \Dgpt\ as a bounded upper semi-lattice. In fact, for use is later sections we will establish a bit more.

The first task is to verify that for any r.e.\ Turing degree \dga, $\dgawstar$ is actually a member of \Dgpw, since \set A is generally $\Pi^0_2$ but not \pzo. The following technique actually yields considerably more than we need here and will have other applications below.  

\begin{lemma}[{\cite[Lemma 3.3]{Si3}}]\label{existspzo}
For any $\Sigma^0_3\hbox{ set }S\incl\pre\omega\omega$ and \pzo\  set $\emptyset\not=R\incl\pre\omega2$, there exists a \pzo\ set\hfill\break
$S^*\incl\pre\omega2$ such that
\[S^*\eqw R\meet S.\]
\end{lemma}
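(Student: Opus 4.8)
The plan is to reduce the statement, in two clean steps, to a purely effective tree construction for a $\pzo$ set of $\pre\omega\omega$, and then to perform that construction so that the resulting $\pzo$ class in $\pre\omega2$ works.

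\textbf{Step 1: reduce $\Sigma^0_3$ to $\pzo$.} Write $S=\bigcup_{n}S_n$ with $S_n=\{f:\forall k\,\exists m\,\rho(n,k,m,f\restriction m)\}$ a $\Pi^0_2$ set uniformly in $n$, $\rho$ recursive. Put $\widetilde S:=\{(n)^\frown f:n\in\omega,\ f\in S_n\}\incl\pre\omega\omega$. Since $g\in\widetilde S$ iff $\forall k\,\exists m\,\rho(g(0),k,m,(g(1),\ldots,g(m)))$, the set $\widetilde S$ is $\Pi^0_2$, and clearly $\widetilde S\eqw S$, hence $R\meet\widetilde S\eqw R\meet S$. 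Renaming, we may assume $S=\{f:\forall k\,\exists m\,\rho(k,f\restriction m)\}$ is $\Pi^0_2$. Now absorb the modulus: set $T:=\{f\oplus h:\forall k\,\rho(k,f\restriction h(k))\}\incl\pre\omega\omega$, which is $\pzo$; from $f\in S$ take $f\oplus h_f$ where $h_f(k):=\mu m\,\rho(k,f\restriction m)\leqt f$, and conversely $f\oplus h\in T\Implies f\in S$, so $T\eqw S$ and $R\meet S\eqw R\meet T$. Thus it suffices to realize $R\meet T$, for a $\pzo$ set $T=[V]\incl\pre\omega\omega$ ($V$ a recursive tree) and a nonempty $\pzo$ class $R=[T_R]\incl\pre\omega2$ ($T_R$ recursive), as $\degw$ of a $\pzo$ class in $\pre\omega2$.

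\textbf{Step 2: the construction.} I would build a recursive subtree $U\incl\pre{<\omega}2$ and set $S^*:=[U]$, reading a path through $U$ as a run of a process in one of two modes. In \emph{build mode} the process extends a finite sequence $\sigma\in\pre{<\omega}\omega$ that is being maintained inside $V$, reading the successive values of $\sigma$ in a self-delimiting binary code; in \emph{escape mode} the remaining bits simply spell out a path through $T_R$. The escape option is available at every stage, and the design is arranged so that escape is \emph{forced} whenever the build cannot legitimately continue. Granting such a $U$, the set $S^*=[U]$ is a $\pzo$ class, nonempty because $R\neq\emptyset$ (the immediate-escape paths witness this), and the equivalence $S^*\eqw R\meet T\eqw R\meet S$ is then routine: a build-path computes, in the limit, the element $f$ of $T=[V]$ it encodes, hence an element of $S$; an escape-path computes the element of $R$ it encodes; so every path computes an element of $R$ or of $S$, hence of $R\meet S$ via the codings $r\mapsto(0)^\frown r$, $s\mapsto(1)^\frown s$, giving $R\meet S\leqw S^*$. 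Conversely, from $r\in R$ one computes the escape-path $(0)^\frown r\in[U]$, and from $s\in S$ one computes $s\oplus h_s\in T$ (with $h_s\leqt s$) and hence the build-path encoding it, giving $S^*\leqw R\meet S$.

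\textbf{Main obstacle.} The hard part is producing the tree $U$: it must be simultaneously \emph{recursive} (so $S^*$ is genuinely $\pzo$) and \emph{free of garbage paths} — infinite paths that neither complete a legal build nor escape. The tension is that honest builds of an element of $\pre\omega\omega$ use binary blocks of unbounded length, so some infinite paths are forever stuck in the middle of a single block; these must be absorbed into $R$. But whether a block is ``stuck'' or merely ``long'' cannot be decided effectively, so a naive ``escape when stuck'' rule is $\Pi^0_2$, not $\pzo$; and forcing escape too eagerly destroys $R\meet S\leqw S^*$, since an element of $S$ need not compute any element of $R$, nor admit a recursively bounded modulus (e.g. $S=\{\emptyset'\}$). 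The resolution — weaving the escape data into the build so that any failure to make verified progress triggers, in a $\pzo$ way, a commitment to follow $T_R$ to completion — is the delicate combinatorial core of the argument, carried out in \cite[Lemma 3.3]{Si3}, whose bookkeeping I would follow for the precise definition of $U$; everything else is as sketched above.
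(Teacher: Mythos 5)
Your Step 1 is fine and is essentially the paper's own first move: the paper packages the $\Sigma^0_3$ witness and the $\Pi^0_2$ modulus together into the single $\pzo$ set $S_1:=\setof{\code{x,f,g}}{\all y\,(f\restrict g(y)\notin T_{x,y})}$, which is exactly your $\widetilde S$ followed by your $T$. The problem is Step 2: the construction of the recursive tree $U$ is the entire content of the lemma, and you do not give it --- you describe the two failure modes (garbage paths stuck mid-block versus an escape rule that is only $\Pi^0_2$, which would destroy $R\meet S\leqw S^*$), correctly observe that they are in tension, and then defer ``the precise definition of $U$'' to the cited reference. That is a genuine gap, not a proof: nothing on the page rules out the very obstructions you name, and the argument cannot be checked or completed from what you have written.

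For the record, the paper resolves your ``main obstacle'' by \emph{not} coding into $\pre\omega2$ directly. It first forms an intermediate $\pzo$ set $S_2\incl\pre\omega\omega$ whose tree consists of strings $\tau_0{}^\frown(n_0)^\frown\cdots^\frown\tau_{k-1}{}^\frown(n_{k-1})^\frown\tau_k$ with every $\tau_i\in T_R$, with $(n_0-2,\ldots,n_{k-1}-2)\in T_{S_1}$, and with $2\leq n_i\leq\sum_{j\leq i}|\tau_j|$. This padding by nodes of $T_R$ does two things at once: a path that makes only finitely many build steps (finitely many values $\geq2$) has a tail that literally \emph{is} a path through $T_R$, i.e.\ an element of $R$, so there are no garbage paths and the dichotomy ``decode an element of $S_1$ or decode an element of $R$'' is exhaustive; and the constraint $n_i\leq\sum_{j\leq i}|\tau_j|$ forces the $l$-th entry of every node to be at most $l+1$, so $S_2$ is recursively bounded. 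Only then does one pass to $\pre\omega2$, via the unary coding $0^{m_0}10^{m_1}1\cdots$; the bound $m_l\leq l+1$ is precisely what makes the image a recursive tree in $\pre{<\omega}2$ all of whose paths contain infinitely many $1$'s, so the block coding can never get stuck. Setting $S^*:=R\meet S_3$ and verifying $S_3\eqw S_2$, $S_2\leqw S_1$ and $R\meet S_1\leqw S_2$ finishes the proof. If you replace your ``build/escape inside $\pre{<\omega}2$'' plan by this two-stage padding-then-unary-coding construction, your outline becomes a complete argument.
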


\begin{proof}
We define \pzo\ sets $S_1$, $S_2\incl\pre\omega\omega$ and $S_3\incl\pre\omega2$ such that
\[S_3\eqw S_2\leqw S_1\eqw S\qand R\meet S_1\leqw S_2\]
and set $S^*:=R\meet S_3$, so that
\[S^*\eqw R\meet S_2\eqw R\meet S_1\eqw R\meet S.\]

A $\Sigma^0_3$ set $S$ may be represented by
a recursive map $(x,y)\mapsto T_{x,y}$ of pairs of natural numbers to trees such that
\[f\in S\Ifff\ex x\all y\ex z\left(f\restrict z\notin T_{x,y}\right).\]
Set
\[S_1:=\setof{\code{x,f,g}}
{\all y\left(f\restrict g(y)\notin T_{x,y}\right)}.\]
Clearly $S_1$ is \pzo,
$S\leqw S_1\text{ since }\code{x,f,g}\in S_1\text{ computes }f\in S$
and
$S_1\leqw S,\text{ since if }$\hfill\break
$f\in S\text{ with witness $x$, then $f$ computes}$
\[g(y):=\hbox{least }z\left(f\restrict z\notin T_{x,y}\right)\]
and hence computes $\code{x,f,g}\in S_1$.

Now fix a recursive trees $T_{S_1}\incl\pre{<\omega}\omega$ and $T_R\incl\pre{<\omega}2$ such that 
\[S_1=[T_{S_1}]\qqand R=[T_R].\]
Set
\begin{align*}
T_{S_2}:=
\biggl\{\,{\tau_0}^\frown(n_0)^\frown&\cdots^\frown{\tau_{k-1}}^\frown(n_{k-1})^\frown\tau_k:(\forall i<k)\Bigl[2\leq n_i\leq\sum_{j\leq i}{|\tau_j|}\Bigr],\\
&(n_0-2,\ldots,n_{k-1}-2)\in T_{S_1}\qand
(\forall i\leq k)\;\tau_i\in T_R\;\biggr\}
\end{align*}
and $S_2:=[T_{S_2}]$. The idea is that we use $\tau\in T_R$ to pad $\sigma\in T_{S_1}$ so that
\[(m_0,\ldots,m_l)\in T_{S_2}\Implies m_l\leq l+1.\]

$S_2\leqw S_1$ because any $g\in S_1$ computes some \functionof{\tau_i\in T_R}{i\in\omega} such that for all $i$, $g(i)\leq\sum_{j\leq i}{|\tau_j|}$ and hence a member of $S_2$. To see that $R\land S_1\leqw S_2$, for any $h\in S_2$, there are two cases:
\begin{enumerate}
\item[(1)] if $h$ has infinitely many values $\geq 2$,
these determine a path through $T_{S_1}$
hence a member of $S_1$;
\item[(2)] otherwise the tail of $h$ beyond its last value $\geq2$
is a path through $T_R$ so a member of $R$.
\end{enumerate}
Finally we set
\[T_{S_3}
:=\bigsetof{\,0^{m_0}10^{m_1}1\cdots0^{m_{l-1}}10^n}
{n\leq l+1\qand (\zlist ml)\in T_{S_2}}\]
and $S_3:=[T_{S_3}]$. $T_{S_3}$ is a tree since $m_l\leq l+1$ and easily $S_3\eqw S_2$.
\QED
\end{proof}

Now we can prove (a strengthening of) all but one piece (the implication $(\Larrow)$ of (ii)) of Theorem D:
\begin{proposition}
For all $\dga,\dgb\in\Dgpt$ and more generally $\dga,\dgb\leqt\onet$ ($\Delta^0_2$),
\begin{enumerate}
\item[\textup{(i)}] $\dgawstar\in\Dgpw$;
\item[\textup{(ii)}] $\dga\leq\dgb\Implies\dgawstar\leq\dgbwstar$;
\item[\textup{(iii)}] $({\bf 0}_T)_{\ssf w}^*=\zerow$\quad\hbox{and}\quad $({\bf 1}_T)_{\ssf w}^*=\onew$;
\item[\textup{(iv)}] $(\dga\join\dgb)_{\ssf w}^*=\dgawstar\join\dgbwstar$.
\end{enumerate}
\end{proposition}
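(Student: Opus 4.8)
The plan is to get part~(i) as a one-line application of Lemma~\ref{existspzo} and to obtain parts~(ii)--(iv) from elementary lattice identities in \Dgw. Two preliminary remarks: for a $\Delta^0_2$ degree \dga\ and any $A\in\dga$ (identified with its characteristic function) the singleton \set A is a $\Pi^0_2$, hence $\Sigma^0_3$, subset of $\pre\omega2\incl\pre\omega\omega$; and $\degw(\set A)$ depends only on \dga, so the definition $\dgawstar:=\onew\meet\degw(\set A)$ extends harmlessly to all $\Delta^0_2$ degrees.

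For~(i) I would invoke Lemma~\ref{existspzo} with $S:=\set A$ and $R:=\dnr2$, a nonempty \pzo\ class; this gives a \pzo\ set $S^*\incl\pre\omega2$ with $S^*\eqw\dnr2\meet\set A$. Since $\dnr2$ and \set A are nonempty, $\dnr2\meet\set A=(0)^\frown\dnr2\cup(1)^\frown\set A$ is nonempty, so $S^*\not=\emptyset$; and $\degw(S^*)=\degw(\dnr2\meet\set A)=\onew\meet\degw(\set A)=\dgawstar$, so $\dgawstar\in\Dgpw$.

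For~(iii): if $A$ is recursive then \set A has a recursive element, so $\degw(\set A)=\zerow$ and $(\zerot)_{\ssf w}^*=\onew\meet\zerow=\zerow$. For the top degree I would fix $B$ with $\degt(B)=\onet$ and note that $B$ computes a $\set{0,1}$-valued diagonally non-recursive function (with the oracle decide whether $\set a(a)\downarrow$, and output $1$ if $\set a(a)\downarrow$ with value $0$, else $0$); hence $\dnr2\leqw\set B$, i.e.\ $\onew\leq\degw(\set B)$, so $(\onet)_{\ssf w}^*=\onew\meet\degw(\set B)=\onew$.

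Parts~(ii) and~(iv) are pure lattice algebra. If $\dga\leq\dgb$ then $A\leqt B$ for representatives, so $\set A\leqw\set B$ and $\degw(\set A)\leq\degw(\set B)$; monotonicity of $\meet$ then gives $\dgawstar=\onew\meet\degw(\set A)\leq\onew\meet\degw(\set B)=\dgbwstar$. For~(iv), $\dga\join\dgb=\degt(A\join B)$ and $\degw(\set{A\join B})=\degw(\set A)\join\degw(\set B)$ by Proposition~\ref{embedings}(i), so by the distributive law in \Dgw,
\[(\dga\join\dgb)_{\ssf w}^*=\onew\meet\bigl(\degw(\set A)\join\degw(\set B)\bigr)=\bigl(\onew\meet\degw(\set A)\bigr)\join\bigl(\onew\meet\degw(\set B)\bigr)=\dgawstar\join\dgbwstar.\]
The only substantive input is Lemma~\ref{existspzo}; the genuinely hard direction, $\dgawstar\leq\dgbwstar\Implies\dga\leq\dgb$, is not claimed here and is handled separately.
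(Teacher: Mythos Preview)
Your proof is correct and follows essentially the same approach as the paper: part~(i) via Lemma~\ref{existspzo} applied to the $\Pi^0_2\subseteq\Sigma^0_3$ singleton $\set A$ with $R=\dnr2$, and parts~(ii)--(iv) by the obvious lattice identities (the paper leaves the use of distributivity in~(iv) implicit, whereas you spell it out). Your additional remarks on nonemptiness of $S^*$ and the explicit appeal to Proposition~\ref{embedings}(i) are welcome clarifications but do not change the argument.
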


\begin{proof}
Part (i) follows from the preceding Lemma and the observation that for $A\in\Delta^0_2$, $\set{A}\in\Pi^0_2\incl\Sigma^0_3$.
(ii) is immediate from the definitions. For (iii) we have
\begin{align*}
(\zerot)^*_{\ssf w}
&=\degw(\set\emptyset)\meet\onew=\zerow\meet\onew=\zerow;\\
\noalign{\vskip6pt}
(\onet)^*_{\ssf w}
&=\degw(\set{\emptyset'})\meet\degw(\dnr2)=\degw(\dnr2)=\onew,\qquad
\end{align*}
because $\emptyset'$ computes a function $f\in\dnr2$: 
\[f(a):=\cases{1-\set a(a),&if $\set a(a)\downarrow$;\cr 0,&otherwise.\cr}\]
Finally, for (iv),
\begin{align*}
(\dga\join\dgb)^*_{\ssf w}
 &=\degw(\set{A\oplus B})\meet\onew\\
 &=\bigl(\degw(\set A)\join\degw(\set{B})\bigr)\meet\onew=\dga^*_{\ssf w}\join\dgb^*_{\ssf w}.\qedhere
 \end{align*}
 \end{proof}
 
 To complete the proof of Theorem D, we note that from the definitions and the Proposition we have for $\dga,\dgb\in\Dgpt$,
 \begin{align*}
 \dgawstar\leq\dgbwstar
 &\Iff\dgawstar\leq\dgb_{\ssf w}\\
 &\Iff\dga_{\ssf w}\leq\dgb_{\ssf w}\qor\onew\leq\dgb_{\ssf w}\\
 &\Iff\dga\leq\dgb\qor\onew\leq\dgb_{\ssf w}
 \end{align*}
 because $\dgb_{\ssf w}$ is the weak degree of a singleton. Hence it suffices to prove

\begin{lemma} 
For all $\dgb\in\Dgpt$, $\onew\leq\dgb_{\ssf w}\Implies \dgb=\onet$, so in particular if $\onew\leq\dgb_{\ssf w}$, then for all $\dga\in\Dgpt$, $\dga\leq\dgb$.
\end{lemma}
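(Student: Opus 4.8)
The plan is to recognize this lemma as the Arslanov Completeness Criterion in disguise, and prove it accordingly. First I would unwind the hypothesis. Since $\dgb\in\Dgpt$, fix an r.e.\ set $B$ with $\dgb=\degt(B)$, so that $\dgb_{\ssf w}=\degw(\set B)$. As $\onew=\degw(\dnr2)$, the assumption $\onew\leq\dgb_{\ssf w}$ says $\dnr2\leqw\set B$, and since $\set B$ has $B$ as its only member this is exactly the statement that $B$ computes some $f\in\dnr2$; in particular $B$ computes a function $f$ with $f(a)\not=\set a(a)$ for every $a$ with $\set a(a)\!\downarrow$. On the other hand, $\dgb=\onet$ amounts to $B\eqt\emptyset'$, and since $B$ is r.e.\ we already have $B\leqt\emptyset'$. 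So the whole lemma reduces to showing $\emptyset'\leqt B$, together with the trivial final remark that $\onet$ is the largest element of $\Dgpt$.

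For the reduction I would run the standard recursion-theoretic argument behind Arslanov's theorem. Fix $\Phi$ with $\Phi^B=f$ total, fix a recursive enumeration of $B$ with stage-$s$ approximation $B_s$, and let $f_s(e):=\Phi^{B_s}_s(e)$; this is partial recursive in $(e,s)$ and $\lim_s f_s(e)=f(e)$. Using $B$ one computes, for each $a$, a \emph{settling stage} $t(a)$, i.e.\ a stage past which $B$ never again changes below the use of $\Phi^B(a)$; then $f_s(a)=f(a)$ for all $s\geq t(a)$, and $t$ is a $B$-recursive function. By the Recursion Theorem with parameters there is a recursive function $e\mapsto n(e)$ such that, for all $x$, $\set{n(e)}(x)=f_{s(e)}\bigl(n(e)\bigr)$, where $s(e)$ is the least stage $s$ with $e\in\emptyset'_s$ and $f_s\bigl(n(e)\bigr)\!\downarrow$ (and $\set{n(e)}$ is nowhere defined if no such $s$ exists). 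If $e\in\emptyset'$, then $s(e)$ exists, so $\set{n(e)}\bigl(n(e)\bigr)\!\downarrow$.

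The claim is that $e\in\emptyset'$ iff $e\in\emptyset'_{t(n(e))}$, which yields $\emptyset'\leqt B$ since both $t$ and $e\mapsto n(e)$ are computable from $B$. The direction $(\Leftarrow)$ is trivial. For $(\Rightarrow)$, suppose $e\in\emptyset'$ but $e\notin\emptyset'_{t(n(e))}$; then $e$ enters $\emptyset'$ only after stage $t(n(e))$, so $s(e)>t(n(e))$, and by the choice of the settling stage $f_{s(e)}\bigl(n(e)\bigr)=f\bigl(n(e)\bigr)$. Hence $\set{n(e)}\bigl(n(e)\bigr)\!\downarrow=f\bigl(n(e)\bigr)$, contradicting $f\in\dnr2$ (which forces $f(n(e))\not=\set{n(e)}(n(e))$ whenever the latter converges). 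This proves the claim, so $\emptyset'\leqt B$, hence $B\eqt\emptyset'$, i.e.\ $\dgb=\onet$. Finally, $\onet$ is the largest element of $\Dgpt$, so $\dga\leq\dgb=\onet$ for every $\dga\in\Dgpt$.

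I expect the only real obstacle to be the self-referential bookkeeping in the middle step: arranging the Recursion-Theorem definition of $n(e)$ so that ``$s(e)$ is forced to come after the settling stage whenever $e$ enters $\emptyset'$ late'', and checking that the diagonal non-recursiveness of $f$ is invoked at precisely the index $n(e)$ rather than some other input. (One could instead simply cite Arslanov's Completeness Criterion — equivalently, the fact that $\onet$ is the only r.e.\ Turing degree of PA degree — but since this section aims at reasonably self-contained proofs I would include the argument.)
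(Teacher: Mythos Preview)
Your proof is correct and follows essentially the same route as the paper: both recognize the statement as Arslanov's Completeness Criterion and prove it via a Recursion-Theorem construction combined with a $B$-recursive settling-stage function to decide $\emptyset'$. The only cosmetic difference is that the paper first converts the $\dnr2$ function $g$ into a fixed-point-free function $f$ (i.e.\ $W_a\neq W_{f(a)}$ for all $a$) and then runs the Recursion-Theorem argument against that, whereas you invoke the diagonal non-recursiveness of $f$ directly at the index $n(e)$; the underlying mechanism is identical.
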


\begin{proof}
Assume that $\dgb=\degt(B)$ for an r.e.\ set $B$ and $\onew\leq\dgb_{\ssf w}$ -- that is, 
(by Theorem A) there exists $g\leqt B$ with $g\in\dnr2$. Fix a recursive $h$ such that for all $a$,
\[W_a\not=\emptyset\Implies\forall x[\set{h(a)}(x)\in W_a]\]
 and $f\leqt g$ such that
\[W_{f(a)}=\set{g(h(a))}.\]
Then for all $a$,
\[W_a=W_{f(a)}\Implies W_a\not=\emptyset\Implies\set{h(a)}(h(a))=g(h(a))\]
contrary to $g\in\dnr2$. Hence $\forall a[W_a\not= W_{f(a)}]$; we say that $f$ is \dff{fixed-point free}. Note that one version of the Recursion Theorem asserts that no recursive function is fixed-point free.

Fix an index $a$ such that $f=\set a^B$ and a stage enumeration \functionof{B_s}{s\in\omega} of $B$, and set
\[f_s:=\set a^{B_s}_s\qand m^f(x):=\hbox{least }s\bigl[a^{B_s}_s(x)\downarrow\hbox{ correctly}\bigr],\]
so $m^f\leqt B$ and $f(x)=\lim f_s(x)=f_{m^f(x)}(x)$.
Fix a Turing complete set ${\ssf K}\in\zerot'$ with stage enumeration \functionof{{\ssf K}_s}{s\in\omega} and a partial recursive function $x\mapsto s_x$ such that
\[x\in{\ssf K}\Implies s_x\simeq\hbox{least }s[x\in{\ssf K}_s].\]
By the Recursion Theorem there exists a recursive function $h$ such that
\[W_{h(x)}=\cases{W_{f_{s_x}(h(x))},&if $x\in{\ssf K}$;\cr\emptyset,&otherwise.\cr}\] Since $f$ is fixed-point free, for all $x$, $W_{h(x)}\not=W_{f(h(x))}$, whence 
$f_{s_x}(h(x))\not=f(h(x))$ and thus $s_x<m^f(h(x))$.
Then ${\ssf K}\leqt B$, since $x\in{\ssf K}\Iff x\in{\ssf K}_{m^f(h(x))}$,
so $B$ is Turing complete and $\dgb=\onet$.
\QED
\end{proof}

It should be noted that this lemma is a version of the Arslanov Completeness Criterion; for extensions and complete references see \cite{GenArsl}.
\end{section}

\begin{section}{Proof of Theorem E}
To establish that \Dgs\ is not implicative we need to show that for some $\dgp,\dgq\in\Dgs$, $\dgp\latimpl\dgq$ does not exist --- that is that there is no largest ${\bf x}\in\Dgs$ such that $\dgp\meet{\bf x}\leq\dgq$. This follows immediately from the following

\begin{proposition}[{\cite[Theorem 5.4]{Sor0}}] \label{notimplicative}
For any non-recursive function $f$, there exists $Q\incl\pre\omega\omega$ such that for all $X\incl\pre\omega\omega$,
\[\set f\meet X\leqs Q\Implies\exists Y\bigl(X\les Y\qand\set f\meet Y\leqs Q\bigr).\]
Thus there is no greatest ${\bf x}\in\Dgs$ such that $\degs(\set f)\meet{\bf x}\leq\degs Q$, and\hfill\break
$\degs(\set f)\latimpl \degs Q$ does not exist.
\end{proposition}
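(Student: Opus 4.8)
The plan is to build a single set $Q\incl\pre\omega\omega$ with $Q\neq\emptyset$, $\set f\not\leqs Q$, and the property that the class of \emph{witnesses} $\mathcal W:=\setof{X\incl\pre\omega\omega}{\set f\meet X\leqs Q}$ has no $\leqs$-greatest element. The requirement $\set f\not\leqs Q$ is needed because if $\set f\leqs Q$ then $\set f\meet X\leqs\set f\leqs Q$ for every $X$, so $\degs(\set f)\latimpl\degs Q=\inftys$ would exist; and ``$\mathcal W$ has no greatest element'' is literally the displayed implication. A direct check shows $\mathcal W$ is downward closed under $\leqs$ and closed under $\join$ (given reductions of $\set f\meet X$ and of $\set f\meet X'$ into $Q$, on input $g\in Q$ run both: if either produces an $f$-answer emit $(0)^\frown f$, otherwise splice the two answers into a member of $X\join X'$); so $\mathcal W$ is an ideal of $\Dgs$, whence $\degs(\set f)\latimpl\degs Q$ exists iff $\mathcal W$ is principal iff $\mathcal W$ has a maximum. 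Thus the task is exactly to make $\mathcal W$ non-principal while keeping $\set f\not\leqs Q$.

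The first ingredient is a partition analysis valid for every $Q$. If $\set f\meet X\leqs Q$ via a functional $\Phi$, split $Q=Q_0\cup Q_1$ according to whether $\Phi(g)(0)=0$ or $1$; since $\Phi$ is total on $Q$, this splitting is recursive in $g$. On $Q_0$ the value $\Phi(g)$ lies in $(0)^\frown\set f$, so deleting the leading $0$ witnesses $\set f\leqs Q_0$; on $Q_1$, deleting the leading $1$ gives a reduction to $X$, so $X\leqs Q_1$. If moreover $\set f\not\leqs Q$ then $Q_1\neq\emptyset$ and $\set f\not\leqs Q_1$ (otherwise recompute the split and emit $f$ on each piece to get $\set f\leqs Q$). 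The key converse is: whenever $Z\leqs Q_1$ one has $\set f\meet Z\leqs Q$, by recomputing the split and emitting $(0)^\frown f$ on $Q_0$ and $(1)^\frown(\hbox{a reduction of }g\hbox{ to }Z)$ on $Q_1$. Hence, given a witness $X$ together with its $\Phi$, it suffices to produce $Y$ with $X\les Y\leqs Q_1$; since $X\leqs Q_1$ already, one wants to force $Q_1$ to sit strictly above $X$, or failing that to have $Q$ furnish some other such $Y$ that is still reducible to $Q_1$.

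The second ingredient is the construction of $Q$ making this always succeed. I would build $Q$ by a recursion-theoretic construction — most naturally a generalized sum $Q=\bigcup_e(e)^\frown Q_e$ whose levels are defined using the Recursion Theorem, so the construction may consult an index for $Q$ itself — designed so that $\mathcal W$ becomes cofinal with a strictly $\les$-increasing sequence of principal ideals $\setof{Z}{Z\leqs R_0}\subsetneq\setof{Z}{Z\leqs R_1}\subsetneq\cdots$, none of whose $\leqs$-upper bounds is a witness. A natural choice of the hard cores is $R_e=\set{g_e}$ where the $g_e$ form a strictly $\leqt$-ascending chain lying outside the upper Turing-cone of $f$ (such a chain exists for every non-recursive $f$, by iterated appeal to Sacks' measure-theoretic result that $\setof{g}{f\leqt g}$ is null); each level combines a trivial ``$f$-part'' — a copy of $\set f$ behind a $0$-marker, from which a reduction may emit $f$ — with the core $R_e$, so that, by the partition analysis and $f\not\leqt g_e$, any reduction $\set f\meet X\leqs Q$ is forced to extract its $X$-content through the cores and thereby pin $X$ below a single $R_e$. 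Then $\mathcal W=\bigcup_e\setof{Z}{Z\leqs R_e}$ is non-principal, $\set f\not\leqs Q$ because no level computes $f$ uniformly, and $Q$ avoids the recursive degree because no core is recursive.

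The main obstacle is calibrating the level structure so that two opposing demands hold for the \emph{single} set $Q$: (A) every $\set f\meet R_e\leqs Q$, so that $\degs R_e\in\mathcal W$ and there is always room to climb; and (B) no reduction of $\set f\meet X$ into $Q$ absorbs more than one $R_e$'s worth of $X$-content or routes $X$-content through the $f$-parts, so that $\mathcal W$ does not spill over to a principal ideal. The non-recursiveness of $f$ is precisely the lever for (B): by the partition analysis the $f$-side of any reduction genuinely and uniformly computes $f$, and so cannot simultaneously deliver arbitrary solutions to $X$. The difficulty is that (A) is not automatic — for a naive generalized sum it fails, and one checks that the witnesses there collapse to the single principal ideal generated by $\bigmeet_e R_e$ — so the levels must be enlarged, parametrized by the very reductions they are meant to defeat (this is where the self-reference is used), just enough to secure (A) without destroying (B). Threading this needle, with the circularity absorbed by the Recursion Theorem, is the technical heart of the proof.
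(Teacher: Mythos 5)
Your reduction of the problem is sound as far as it goes: the partition of $Q$ into $Q^0,Q^1$ according to $\Phi(h)(0)$, the observations that $\set f\leqs Q^0$, $X\leqs Q^1$, that $Q^1\not=\emptyset$ when $\set f\not\leqs Q$, and that any $Z\leqs Q^1$ is again a witness, are exactly the first half of the paper's argument. But the second half --- the actual construction of $Q$ --- is missing. You describe a plan (a recursion-theorem-parametrized generalized sum over an ascending chain $g_0<_Tg_1<_T\cdots$ lying outside the upper cone of $f$, with separate ``$f$-parts'' at each level), you correctly observe that the naive version of this plan fails because the witnesses collapse to a principal ideal, and you then defer the repair (``enlarging the levels \dots\ just enough to secure (A) without destroying (B)'') as ``the technical heart of the proof.'' That heart is precisely what a proof of the proposition must supply, and the design you have committed to makes it genuinely problematic: since $f\not\leqt g_{e'}$ and $g_e\not\leqt g_{e'}$ for $e'<e$, the core element of level $e'$ computes neither $f$ nor $g_e$, so your requirement (A), $\set f\meet R_e\leqs Q$, fails for every $e>0$ no matter how the lower levels are decorated, as long as their cores keep the degrees $\degt(g_{e'})$.

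The paper sidesteps all of this by inverting two of your choices. First, the $g_m$ are taken \emph{above} $f$, of the form $g_m=f\oplus h_m$, so $f$ is uniformly computable from each $g_m$; then $Q:=\bigmeet_m\set{g_m}=\bigcup_m(m)^\frown\set{g_m}$ needs no separate $f$-parts and no recursion-theorem self-reference, and $\set f\meet\set{g_m}\leqs Q$ is automatic (emit the tail on level $m$, emit $(0)^\frown f$ on every other level). Second, the $g_m$ form a Turing \emph{antichain}, not a chain, and are diagonalized so that $\set m^{(m)^\frown g_m}\not=f$ (a routine Kleene--Post construction); the diagonalization gives $\set f\not\leqs Q$, and the pairwise incomparability is what makes the larger witness strict: one picks $\bar m$ with $(\bar m)^\frown g_{\bar m}\in Q^1$ and sets $Y:=Q^1\setminus\set{(\bar m)^\frown g_{\bar m}}$; then $X\leqs\set{g_{\bar m}}$ while $Y$ still bounds $\setof{g_n}{n\not=\bar m}$, so $Y\not\leqs X$, and $\set f\meet Y\leqs Q$ is witnessed by rerouting the single deleted point (detectable from its first coordinate) to $(0)^\frown f$. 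With your ascending chain there is no analogous one-point surgery, since every level computes all lower levels. So the proposal is not yet a proof: the object whose existence is the content of the theorem is never constructed, and the blueprint given for it needs to be redesigned rather than merely completed.
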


Towards the proof, we establish two lemmas.
\begin{lemma}
For any functions $f<_T\zlist gm$ and any $\tau\in\pre{<\omega}2$, there exists a function $g\supset\tau$ such that $f<_Tg$ and for all $i<m$, $g_i$ is Turing incomparable with $g$.
\end{lemma}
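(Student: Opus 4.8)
The plan is to build $g$ by a Kleene--Post finite-extension argument. I would take $g$ to be the union of a chain $\tau=\sigma_0\subset\sigma_1\subset\cdots$ of finite strings, arranging that $g$ codes $f$ (so $f\leqt g$) while diagonalizing to defeat every reduction in the three remaining directions. First fix an infinite recursive set $D=\set{d_0<d_1<\cdots}$ with every $d_x\geq|\tau|$ and reserve $D$ for coding, the rule being $g(d_x)=f(x)$, so that $f$ is recovered from $g\restrict D$. Call a finite string $\sigma\supset\tau$ a \emph{condition} if $\sigma(d_x)=f(x)$ for every $x$ with $d_x<|\sigma|$; note that ``$\sigma$ is a condition'' is decidable from $f$, with $\tau$ as a finite parameter. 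The requirements, for all $e\in\omega$ and $i<m$, are $N_e$: $\set e^f\neq g$; $R_{i,e}$: $\set e^{g_i}\neq g$ (which yields $g\not\leq_Tg_i$); and $S_{i,e}$: $\set e^g\neq g_i$ (which yields $g_i\not\leq_Tg$). Enumerating the requirements, at stage $s$ I would extend the current condition to meet requirement $s$, always extending by at least one symbol and filling any position of $D$ passed over with its $f$-value; then $g:=\bigcup_s\sigma_s$ is total, extends $\tau$, codes $f$, and satisfies every requirement.

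To meet $N_e$ (respectively $R_{i,e}$): at the current condition $\sigma$ let $p$ be the least position $\geq|\sigma|$ outside $D$, so every position of $[|\sigma|,p)$ lies in $D$ and is already forced. Using oracle $f$ (respectively $g_i$), decide whether $\set e^f(p)\downarrow$ (respectively $\set e^{g_i}(p)\downarrow$). If so, extend to the condition of length $p+1$ whose value at $p$ differs from that output; this kills the reduction permanently. If not, the functional is non-total, hence already $\neq g$, and I extend $\sigma$ by a single coding symbol. Meeting $S_{i,e}$ is the substantive step. At the current condition $\sigma$, ask whether there exist a condition $\rho\supset\sigma$ and an $n$ with $\set e^\rho(n)\downarrow$ and $\set e^\rho(n)\neq g_i(n)$. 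If so, extend $\sigma$ to such a $\rho$; by the use convention $\set e^g(n)=\set e^\rho(n)\neq g_i(n)$ for every $g\supset\rho$, so $S_{i,e}$ is settled forever. If not, then for every condition $\rho\supset\sigma$ and every $n$, convergence of $\set e^\rho(n)$ forces $\set e^\rho(n)=g_i(n)$; now ask whether some $n_0$ admits no condition $\rho\supset\sigma$ with $\set e^\rho(n_0)\downarrow$. If such an $n_0$ exists, then $\set e^g(n_0)\uparrow$ for the $g$ we are building --- a convergent value would be witnessed by the condition $g\restrict k$ for a large enough $k\geq|\sigma|$ --- so $\set e^g\neq g_i$ and I extend trivially. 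Otherwise every $n$ has a condition $\rho_n\supset\sigma$ with $\set e^{\rho_n}(n)\downarrow$, necessarily equal to $g_i(n)$ by the previous case; but then, knowing $f$ and the finite parameter $\sigma$, one can search for such a $\rho_n$ and read off $g_i(n)$, whence $g_i\leqt f$, contradicting $g_i\not\leq_Tf$. This is the only place the hypothesis $f<_Tg_i$ is used, and it rules out the last case.

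Thus the main obstacle is the strategy for $S_{i,e}$, and specifically the closing observation that its ``stuck'' branch would force $g_i\leqt f$, which is impossible since $f<_Tg_i$; everything else is the standard Kleene--Post bookkeeping. Assembling the outcomes: $N_e$ gives $g\not\leq_Tf$ while the coding gives $f\leqt g$, so $f<_Tg$; and $R_{i,e}$ together with $S_{i,e}$ give that $g_i$ is Turing-incomparable with $g$ for each $i<m$, as required. (If one prefers $g\in\pre\omega2$, code $f$ bit by bit along $D$ instead --- for instance by placing a $1$ at the reserved position that records ``$f(x)=k$'' --- and the argument is otherwise unchanged.)
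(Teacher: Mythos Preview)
Your proof is correct and follows essentially the same Kleene--Post approach as the paper's; the key observation---that failure to diagonalize against $\set e^g=g_i$ would force $g_i\leqt f$, contradicting the hypothesis $f<_T g_i$---is identical. The only cosmetic differences are that the paper takes $g=f\oplus h$ (so $f\leqt g$ is automatic and your $N_e$ requirements become redundant, since $g\not\leqt g_i$ together with $f\leqt g_i$ already yields $g\not\leqt f$) and phrases the $S_{i,e}$ search as looking for two extensions $\sigma,\sigma'\supset\tau_s$ with incomparable $\set a^{f\oplus\sigma}$-outputs rather than one extension disagreeing with $g_i$.
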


\begin{proof}
This is a fairly standard so-called Kleene-Post construction; for completeness we provide a sketch of the proof. We define a strictly increasing sequence \functionof{\tau_s}{s\in\omega} of of finite sequences as follows. Given $\tau_s$,
\begin{itemize}
\item[$\bullet$]if for some $i<m$ and $n$, $s=2am+i$ and $\set a^{g_i}(\lg(\tau_s))\simeq n$, set $\tau_{s+1}:=\tau_s^\frown(1-n)$; otherwise, $\tau_{s+1}:=\tau_s^\frown(0)$;
\item[$\bullet$]if for some $i<m$, $s=(2a+1)m+i$ and there exist $\sigma,\sigma'\supset\tau_s$ such that $\set a^{f\oplus\sigma}$ and $\set a^{f\oplus\sigma'}$ are incomparable, choose $\tau_{s+1}$ to be one of these such that \hfill\break
$\set a^{f\oplus\tau_{s+1}}\not\incl g_i$; otherwise set $\tau_{s+1}:=\tau_s^\frown(0)$.
\end{itemize}
Set $h:=\bigcup_{s\in\omega}\tau_s$ and $g:=f\oplus h$. The action taken at stage $=2am+i$ guarantees that $\set a^{g_i}\not=h$; hence $h\not\leqt g_i$ and consequently $g\not\leqt g_i$. Suppose towards a contradiction that $\set a^g=g_i$. Then at stage $s=(2a+1)m+i$ the mentioned $\sigma$ and $\sigma'$ must exist as otherwise for all $n$,
\[g_i(n)=\set a^{f\oplus\sigma_n}(n)\quad\text{for}\quad\sigma_n:=(\text{least }\sigma\supseteq\tau_s)\,\set a^{f\oplus\sigma}(n)\downarrow,\]
and thus $g_i\leqt f$, contrary to hypothesis. Hence the action taken at these stages guarantees that $g_i\not=\set a^g$.
\QED
\end{proof}

\begin{lemma}
For any non-recursive function $f$ there exists a sequence  of functions\functionof{g_m}{m\in\omega} such that for all $m$ and $n$,
\begin{enumerate}
\item[\textup{(i)}] $f<_T g_m$;
\item[\textup{(ii)}] $m\not=n\Implies g_m$ and $g_n$ are Turing incomparable;
\item[\textup{(iii)}] $\set m^{(m)^\frown g_m}\not=f$.
\end{enumerate}
\end{lemma}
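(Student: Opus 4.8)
The plan is to construct the sequence $\langle g_m\rangle$ by recursion on $m$, at each step running a finite-extension (Kleene--Post) argument that refines the construction of the preceding lemma with one additional requirement. Having obtained $g_0,\dots,g_{m-1}$ satisfying (i)--(iii) for the indices involved, I would build $g_m=\bigcup_s\sigma_s$ as an increasing union of finite conditions, run relative to an oracle of the form $\bigl(f\oplus\bigoplus_{i<m}g_i\bigr)^{(k)}$ for a suitable finite $k$; since each $g_i\geq_T f$ this oracle computes $f$, while the Turing complexity of $g_m$ itself plays no role in the statement. Alongside the diagonalization I would code $f$ into $g_m$ along a sparse set of positions, the positions and the coding convention being chosen by the construction (hence $m$-dependently) rather than fixed in advance; this keeps $f\leq_T g_m$ while leaving, at every finite stage, a long $f$-independent ``padding'' region over which the condition may be freely extended.

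The requirements on $g_m$ are, for all $e$ and all $i<m$: $g_m\neq\Phi_e(f)$ (which with the coding gives $f<_T g_m$); $g_m\neq\Phi_e(g_i)$ (giving $g_m\not\leq_T g_i$); the usual splitting requirement forcing $g_i\neq\Phi_e(g_m)$ (giving $g_i\not\leq_T g_m$, the non-splitting alternative forcing $g_i\leq_T\emptyset$, which is impossible as $g_i\geq_T f>_T\emptyset$); and the new requirement $\set m^{(m)^\frown g_m}\neq f$. The first three families are handled as in the preceding lemma, using free bits of $g_m$ and deciding the relevant questions with the oracle.

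For the new requirement --- which I would act on once, at its stage, with current condition $\sigma$ (coding a known initial segment of $f$) --- I would first ask, using the oracle, whether there are an $f$-independent padding $\delta$ and an argument $x$ with $\set m^{(m)^\frown(\sigma^\frown\delta)}(x)\!\downarrow\;\neq f(x)$; if so I adopt $\sigma^\frown\delta$ and the computation stays permanently wrong. Otherwise every $f$-independent padding forces $\set m$ at every argument to either diverge or agree with $f$, and I distinguish two subcases: if some $x_0$ has $\set m^{(m)^\frown(\sigma^\frown\delta)}(x_0)\!\uparrow$ for every $f$-independent $\delta$, I arrange the remainder of the construction --- in particular the placement and form of the coding positions for $f$ beyond $\sigma$ --- so as to keep $\set m^{(m)^\frown g_m}(x_0)$ divergent, whence the requirement holds because $f$ is total; if instead for every $x$ some $f$-independent padding $\delta$ makes $\set m^{(m)^\frown(\sigma^\frown\delta)}(x)$ converge, then the map $x\mapsto\set m^{(m)^\frown(\sigma^\frown\delta_x)}(x)$ (first such $\delta_x$, with the fixed finite string $\sigma$ hard-wired) is total recursive and, by the previous clause, equals $f$ --- contradicting that $f$ is non-recursive.

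The main obstacle --- and the source of essentially all the difficulty --- is the tension between requirement (iii)$_m$, which demands that the particular functional $\set m$ not compute $f$ from $(m)^\frown g_m$, and requirement (i), which demands that $g_m$ compute $f$: a plain coding $g_m=f\oplus h_m$, or any coding decodable by a scheme fixed independently of $m$, is fatal, since one can then write down from the decoding scheme an index $m_1$ for which $\set{m_1}^{(m_1)^\frown g_{m_1}}=f$ unavoidably. This is why the coding of $f$ into $g_m$ must be chosen adaptively, for each $m$, so as to evade $\set m$; making this precise --- in particular handling, in the case analysis above, the subcase where $\set m$ keeps agreeing with $f$ on every $f$-independent extension while being forced to converge there --- is the delicate technical heart of the argument, and is exactly where the hypothesis that $f$ is non-recursive is genuinely used.
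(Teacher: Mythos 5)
Your overall architecture (recursion on $m$, a Kleene--Post construction with an extra requirement for $\set m$, adaptive rather than fixed coding of $f$ into $g_m$) is the right circle of ideas, and your observation that any $m$-independent coding scheme is fatal is exactly on point. But there is a genuine gap, and it sits precisely where you locate ``the delicate technical heart'': the divergence subcase of requirement (iii). Your case analysis quantifies only over \emph{$f$-independent} paddings $\delta$ of the current condition $\sigma$. In the subcase where every such $\delta$ keeps $\set m^{(m)^\frown(\sigma^\frown\delta)}(x_0)$ divergent, you still have to produce an actual $g_m\supset\sigma$ that codes $f$ --- and every initial segment of that $g_m$ beyond $\sigma$ contains $f$-bits, so it is \emph{not} one of the extensions your hypothesis controls; $\set m$ may perfectly well converge at $x_0$ (with the correct value) on exactly those extensions. ``Arranging the placement of the coding positions'' does not escape this: once any bit of $f$ is written, the string leaves the class over which you quantified. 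The obvious repair --- quantify instead over all extensions consistent with the coding --- breaks the other subcase, because the search for a converging coding-respecting extension then needs the oracle $f$, and your contradiction degenerates to $f\leqt f$. So as written the argument does not close.

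The resolution is to quantify over \emph{all} finite strings $\tau$ and exploit the preceding lemma globally rather than stage-by-stage, which is what the paper does. The preceding lemma already guarantees that above \emph{every} $\tau$ there is a ``good'' $g\supset\tau$ with $f<_Tg$ and $g$ Turing incomparable with $\zlist gm$ (the coding of $f$ starting only after $\tau$, which is your adaptive-coding idea and is what defeats your $m_1$ objection). Now suppose every good $g$ satisfied $\set m^{(m)^\frown g}=f$. Put $\sigma_k:=$ the least $\tau$ with $\set m^{(m)^\frown\tau}(k)\downarrow$; this search is over all strings and hence recursive, it terminates because some good $g$ gives a total computation, and the value is correct because $\sigma_k$ extends to a good $g'$ on which $\set m$ computes $f$. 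Thus $f(k)=\set m^{(m)^\frown\sigma_k}(k)$ would be recursive, a contradiction; so some good $g$ (possibly via a partial computation --- no divergence subcase need be engineered) satisfies $\set m^{(m)^\frown g}\not=f$, and that $g$ is $g_m$. Your subcases A and B2 survive intact in this reformulation; subcase B1 becomes vacuous, since divergence on all extensions of some $\tau$ automatically persists along any $g\supset\tau$.
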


\begin{proof}
Given \zlist gm, by the preceding lemma, for every $\tau\in\pre{<\omega}2$, there exists $g\supset\tau$ such that $f<_Tg$ and $g$ is Turing incomparable with each of \zlist gm. Suppose that for all such $g$, $\set m^{(m)^\frown g}=f$. Then for all $k$,
\[f(k)=\set m^{(m)^\frown\sigma_k}(k)\quad\text{for}\quad\sigma_k:=\text{least }\tau\,\bigl[\set m^{(m)^\frown\tau}(k)\downarrow\bigr],\]
and thus $f$ is recursive contrary to hypothesis. Hence we may choose $g_m$ to be such a $g$ such that $\set m^{(m)^\frown g_m}\not=f$.
\QED
\end{proof}

\begin{proof}[\ifautoproof\else Proof \fi of Proposition \ref{notimplicative}]
Given a non-recursive function $f$, set 
\[Q:=\bigmeet_{m\in\omega}\set{g_m}\]
(see Remark \ref{genmeet}) with the $g_m$ from the preceding lemma.
Assume that $\set f\meet X\leqs Q$ and fix $\Phi:Q\to\set f\meet X$.
For $i=0,1$, set $Q^i:=\setof{h\in Q}{\Phi(h)(0)=i}$ so $Q^0$ and $Q^1$ partition $Q$ and 
\begin{enumerate}
\item[(1)]$\set f\leqs Q^0\qand X\leqs Q^1$.
\end{enumerate}
By (iii) of the lemma, $\set f\not\leqs Q$, so $Q^1\not=\emptyset$; 
fix $\bar m$ such that $(\bar m)^\frown g_{\bar m}\in Q^1$ and set 
\[Y:=Q^1\setminus\set{(\bar m)^\frown g_{\bar m}}.\]
\noindent We need to establish
\begin{enumerate}
\item[(2)] $X\leqs Y$;
\item[(3)] $Y\not\leqs X$;
\item[(4)] $\set f\meet Y\leqs Q$.
\end{enumerate}
(2) holds because $X\leqs Q^1$ and $Y\incl Q^1$ so $Q^1\leqs Y$. Towards (3), note that 
\[\setof{g_n}{\bar m\not=n}\leqs Y \text{ (via the mapping }(m)^\frown g\mapsto g)\]
and $X\leqs \set{g_{\bar m}}$ by (1) because $Q^1\leqs \set{g_{\bar m}}$ (via the mapping $g_{\bar m}\mapsto(\bar m)^\frown g_{\bar m}$). Hence $Y\leqs X$ would contradict (ii) of the lemma.

For (4), by (i) of the lemma, choose a recursive functional $\Theta$ such that\hfill\break
$\Theta((\bar m)^\frown g_{\bar m})=f$.
Then $\set f\meet Y\leqs Q$ via the functional $\Psi$ defined by
\[\openup2\jot
\Psi(h):=\cases{\Phi(h),&if $\Phi(h)(0)=0$;\cr
(1)^\frown h,&if $\Phi(h)(0)=1$ and $h(0)\not=\bar m$;\cr
(0)^\frown\Theta(h),&otherwise.\cr}\]
The first clause handles $h\in Q^0$ such that $\Phi(h)=(0)^\frown f$, the second handles $h\in Y$, and the third the only remaining $h\in Q$, namely $h=(\bar m)^\frown g_{\bar m}$.
\QED
\end{proof}
\end{section}

\begin{section}{Proof of Theorem F}
To establish that \Dgps\ is not implicative, we prove in this section the following effective version of Proposition \ref{notimplicative}:

\begin{proposition}[{\cite[Theorem 3.2]{Te1}}]\label{pzonotimplicative}
There exist \pzo\ classes $P$ and $Q$ such that for every \pzo\ class $X$
\[P\meet X\leqs Q\Implies(\exists Y\in\pzo)\bigl[X\les Y\qand 
P\meet Y\leqs Q\bigr].\]
Thus there is no greatest ${\bf x}\in\Dgps$ such that $\degs(P)\meet{\bf x}\leq\degs Q$, and $\degs(P)\latimpl \degs(Q)$ does not exist.
\end{proposition}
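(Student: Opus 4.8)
The plan is to mimic the proof of Proposition \ref{notimplicative} as closely as the restriction to \pzo\ classes allows, with the singleton $\set f$ replaced by a suitably chosen \pzo\ class $P$ with $\zeros<\degs(P)<\ones$, and the countable meet $\bigmeet_m\set{g_m}$ replaced by a \pzo\ class $Q$ built so that it decomposes, \emph{self-similarly}, as $Q=S\cup\bigcup_{n\in\omega}C_n$, where each ``cell'' $C_n$ is relatively clopen in $Q$ (so $Q\setminus C_n$ is again \pzo), the ``spine'' $S$ is a \pzo\ class with no recursive member, the same decomposition holds inside $S$ and inside each $C_n$, $Q$ itself has no recursive member (so $\zeros<\degs(Q)$), and the cells satisfy: (a) for each $n$ there is a recursive functional $\Theta_n\colon C_n\to P$; (b) no member of $C_n$ Turing-computes any member of $Q\setminus C_n$; (c) $P\not\leqs Q$ --- equivalently, in the presence of (a), the reductions of (a) cannot be amalgamated into a single one.

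Granting such $P$ and $Q$, the argument runs as follows. Let $X$ be a nonempty \pzo\ class with $P\meet X\leqs Q$, witnessed by a recursive functional $\Phi\colon Q\to P\meet X$. If $X$ has a recursive member, take $Y:=Q$: then $X\les Q$ (as $\degs(X)=\zeros<\degs(Q)$) while $P\meet Y=P\meet Q\leqs Q$ trivially, and we are done. Otherwise $\zeros<\degs(X)$. Put $Q^1:=\setof{h\in Q}{\Phi(h)(0)=1}$; by Proposition \ref{inversepzo} this is \pzo, and deleting the first value of $\Phi(h)$ witnesses $X\leqs Q^1$, so $\zeros<\degs(Q^1)$ and hence $Q^1$ is perfect by Corollary \ref{nonzeroperfect} (in particular nonempty, which also follows from $P\not\leqs Q$). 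Arranging the construction so that every perfect \pzo\ subclass of $Q$ meets at least two cells at some finite level of the hierarchy, we may fix a cell $C:=C_{\bar e}$ with $Q^1\cap C\not=\emptyset\not=Q^1\setminus C$. Set $Y:=Q^1\setminus C$, a nonempty \pzo\ class. Then $X\leqs Q^1\leqs Y$ (the second inequality by Lemma \ref{wk=include}, since $Y\incl Q^1$); and $P\meet Y\leqs Q$ via the recursive functional sending $h$ to $\Phi(h)$ when $\Phi(h)(0)=0$, to $(0)^\frown\Theta_{\bar e}(h)$ when $\Phi(h)(0)=1$ and $h\in C$ (this case is recognizable as $C$ is clopen in $Q$, and the value lies in $P$ by (a)), and to $(1)^\frown h$ otherwise. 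Finally $Y\not\leqs X$: writing $Z:=Q\setminus C\supseteq Y$ we have $Z\leqs Y$, while $Z\not\leqs Q^1$ --- a recursive functional mapping $Q^1$ into $Z$ would send some $q\in Q^1\cap C$ to a member of $Q\setminus C$ that $q$ computes, contradicting (b). Hence $Y\leqs X$ would give $Z\leqs Y\leqs X\leqs Q^1$, impossible. Thus $X\les Y$ and $P\meet Y\leqs Q$, and the last two assertions of the Proposition follow exactly as in Proposition \ref{notimplicative}.

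The hard part, and essentially the only content, is the construction of $P$ and $Q$ as honest recursively bounded \pzo\ classes meeting (a)--(c). The central tension is between (a) and (c): $P$ must reduce to each cell yet not to all of $Q$, so the cell-reductions cannot be uniform --- which is impossible when there are only finitely many cells (one would simply case-split on which cell contains the input), forcing $Q$ to carry infinitely many relatively clopen cells and hence, by compactness of $\pre\omega2$, a genuine \pzo\ spine $S$ (cf. the lemma preceding Proposition \ref{pzobases}), with the whole pattern repeated inside $S$ and inside each $C_n$ so that the ``meets two cells at some level'' property can be met. The incomparability requirement (b), together with the non-amalgamability in (c), is to be secured by a Kleene--Post style diagonalization against all partial recursive functionals, carried out along a recursive tree and using the bounded ($s$-step) approximations to computations so that the tree defining $Q$ stays recursive, in the spirit of the two lemmas supporting Proposition \ref{notimplicative}; simultaneously the spine --- and therefore $Q$ --- is kept free of recursive members so that $\degs(Q)>\zeros$. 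I expect the lattice-theoretic argument of the previous paragraph to be routine once $P$ and $Q$ are in hand, so the real work lies in this combined construction; see \cite{Te1} for the details.
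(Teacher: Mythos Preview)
Your lattice-theoretic reduction (second paragraph) is sound, but the construction you propose is more elaborate than what the paper actually does, and you miss the key device that makes the paper's argument clean.

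The paper does \emph{not} build a self-similar $Q$ with a spine. It takes $Q:=\bigmeet_{m}R_m=\bigcup_m(m)^\frown R_m$ as a flat countable meet, where the $R_m$ are built from Jockusch--Soare's uniformly r.e.\ family of separating classes $\sepset(A_m,B_m)$ with the property that no member of one computes a member of another --- this delivers your property~(b) off the shelf, with no Kleene--Post work needed. The $R_m$ are then tweaked (by prepending finite strings $\sigma_m$ and simultaneously building $P=\bigmeet_n S_{\alpha(n)}$ via a priority-free injury argument on $\alpha$) so that $P\leqs R_m$ for each $m$ (your~(a)) and, crucially,
\[(\forall g\in R_m)\quad \set m^{(m)^\frown g}\notin P.\tag{$*$}\]
This index-matched diagonalisation is the idea you are missing: it is strictly stronger than your~(c) and replaces your perfectness/two-cell argument entirely. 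Given $\Phi:Q\to P\meet X$, let $\bar m$ be an index for $\Phi^+(h)(n):=\Phi(h)(n+1)$. Then for $g\in R_{\bar m}$, $(*)$ forces $\Phi((\bar m)^\frown g)(0)=1$, so the \emph{entire} cell $(\bar m)^\frown R_{\bar m}$ lies inside $Q^1$ --- no search through a hierarchy, no appeal to perfectness. One sets $Y:=Q^1\setminus(\bar m)^\frown R_{\bar m}$ and the three verifications go exactly as in your paragraph (and as in Proposition~\ref{notimplicative}).

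Your route --- self-similar cells plus the requirement that every perfect \pzo\ subclass of $Q$ eventually straddle two cells --- could perhaps be made to work, but it carries an extra burden (controlling $\bigcap_k S^k$ so that no perfect set hides in the spine at every level, while keeping each spine free of recursive members) that the paper simply sidesteps. Your reference to \cite{Te1} for ``the details'' of your construction is therefore misplaced: \cite{Te1} contains the index-matched construction, not a self-similar one.
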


Again there are two lemmas. The first is a weak version of \cite[Theorem 4.1]{JoSoa} and will not be proved here.

\begin{lemma}
There exist uniformly r.e.\ sequences of sets \functionof{A_m}{m\in\omega} and\hfill\break
\functionof{B_m}{m\in\omega} such that for all $m$, $A_m\cap B_m=\emptyset$ and for all $m\not=n$, $C \in\sepset(A_m,B_m)$ and $D\incl\omega$,
\[D\leqt C\Implies D\notin\sepset(A_n, B_n).\]
In particular, for all $m$, 
\[\bigmeet_{m\not=n}\sepset(A_n,B_n)\not\leqw\sepset(A_m,B_m),\]
so also
\[\bigmeet_{m\not=n}\sepset(A_n,B_n)\not\leqs\sepset(A_m,B_m).\ \noproof\]
\end{lemma}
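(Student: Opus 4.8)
The plan is to dispose of the two ``in particular'' clauses first, since they are immediate once the displayed independence property is in hand, and then concentrate on that property. By Remark~\ref{genmeet}, $\bigmeet_{m\not=n}\sepset(A_n,B_n)=\bigcup_{n\not=m}(n)^\frown\sepset(A_n,B_n)$. If this were $\leqw\sepset(A_m,B_m)$, then every $C\in\sepset(A_m,B_m)$ would compute some $(n)^\frown D$ with $n\not=m$ and $D\in\sepset(A_n,B_n)$; but then $D\leqt(n)^\frown D\leqt C$ with $D\in\sepset(A_n,B_n)$, contradicting the independence property. Hence $\bigmeet_{m\not=n}\sepset(A_n,B_n)\not\leqw\sepset(A_m,B_m)$, and $\not\leqs$ follows at once since $\leqs\Implies\leqw$ by Lemma~\ref{wk=include}. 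Thus the entire weight of the lemma rests on constructing the sets with the independence property.

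For that I would run a uniform priority construction enumerating $\functionof{A_m,B_m}{m\in\omega}$, meeting three families of requirements. A positive requirement keeps each pair disjoint (we simply never place a number into both $A_m$ and $B_m$). For each $n$ and each index $i$, a requirement $N_{n,i}$ forces $\set i$, if total and two-valued, to fail to separate $(A_n,B_n)$; this makes every $\sepset(A_n,B_n)$ recursively inseparable, so that its members are all non-recursive and the recursive instances of the independence property are taken care of. The real content is in the requirements $R_{m,n,e}$ (for $m\not=n$): for every $C\in\sepset(A_m,B_m)$, $\set e^C\notin\sepset(A_n,B_n)$. Since every $D\leqt C$ equals $\set e^C$ for some $e$, the families $\set{R_{m,n,e}}$ and $\set{N_{n,i}}$ together yield exactly the displayed property.

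The basic diagonalization device for $R_{m,n,e}$ is that enumerating a fresh witness $x$ into $A_n$ destroys every $C$ with $\set e^C(x)\downarrow=0$ (such a $\set e^C$ asserts $x\notin D$ whereas a separator must have $x\in D$), while enumerating $x$ into $B_n$ destroys every $C$ with $\set e^C(x)\downarrow=1$. The goal is therefore to assign reserved witnesses to $A_n$ or $B_n$ so that \emph{every} $C\in\sepset(A_m,B_m)$ with $\set e^C$ total is destroyed at some witness. I would drive this by an $e$-splitting analysis of the recursive tree $T_m$ with $[T_m]=\sepset(A_m,B_m)$: at a node where $\set e$ gives a one-sided value across the surviving subclass, a single witness suffices; where $\set e$ splits, I would pass to a subtree on which the value of $\set e(x)$ is forced and then diagonalize the corresponding witness, all the while preserving enough of $T_m$ that the $N_{m,i}$ can keep $\sepset(A_m,B_m)$ recursively inseparable.

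The hard part is precisely the interaction between the universal quantifier over the continuum-many $C\in\sepset(A_m,B_m)$ and the demand that each separating class stay recursively inseparable. Because the sets are r.e., any commitment restricting $\sepset(A_m,B_m)$ to a subtree is irreversible, yet distinct requirements acting on the same $m$ may call for incompatible restrictions; a naive finite-injury bookkeeping would drive $\sepset(A_m,B_m)$ toward a single branch, which is necessarily recursive and so violates the $N_{m,i}$. Overcoming this needs a full-approximation / perfect-tree argument that homogenizes $\set e$ over a perfect subclass of $\sepset(A_m,B_m)$ instead of collapsing it --- which is exactly the strength of Jockusch--Soare's Theorem~4.1 \cite{JoSoa} that the lemma invokes. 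Concretely I would lay the construction out on a priority tree of strategies, each $R_{m,n,e}$-strategy guessing whether $\set e$ is total on its assigned subclass and, according to the guess, either building an $e$-splitting perfect subtree or securing a one-sided witness, with the $N$-requirements maintaining branching at the reserved nodes.
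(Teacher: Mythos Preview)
The paper does not prove this lemma at all: it states that the result ``is a weak version of \cite[Theorem 4.1]{JoSoa} and will not be proved here,'' and the \noproof\ tag closes the statement without argument. Your derivation of the two ``in particular'' clauses from the independence property is correct and is exactly the small gap the paper leaves to the reader. Your outline of the priority construction --- the requirement scheme $N_{n,i}$ and $R_{m,n,e}$, the diagonalization by placing witnesses into $A_n$ or $B_n$, the identification of the real obstacle as the universal quantifier over the continuum of separators, and the need for an $e$-splitting/perfect-subtree analysis to avoid collapsing $\sepset(A_m,B_m)$ --- is an accurate sketch of the Jockusch--Soare method and goes well beyond what the paper attempts. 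One phrasing slip: when you say enumerating $x$ into $A_n$ ``destroys every $C$ with $\set e^C(x)\downarrow=0$,'' what is destroyed is not $C$ but the candidacy of $\set e^C$ as a separator of $(A_n,B_n)$; the intent is clear, but it is worth saying precisely since $C$ itself lives in $\sepset(A_m,B_m)$ and is untouched by action on $A_n,B_n$.
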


\begin{lemma}[{\cite[Lemma 3.1]{Te1}}]
There exist non-empty \pzo\ classes $P$ and \functionof{R_m}{m\in\omega} such that for all $m$,
\begin{enumerate}
\item[\textup{(i)}] $P\leqs R_m$;
\item [\textup{(ii)}] $\bigmeet_{m\not=n}R_n\not\leqs R_m$;
\item [\textup{(iii)}] $(\forall g\in R_m) \set m^{(m)^\frown g}\not\in P$.
\end{enumerate}
\end{lemma}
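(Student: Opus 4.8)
The plan is to carry the argument of Proposition~\ref{notimplicative} into the $\Pi^0_1$ world, with the uniformly r.e.\ pairs $\langle A_m,B_m\rangle$ of the preceding lemma playing the role that the pairwise Turing-incomparable functions $g_m$ played in Theorem~E. Write $S_m:=\sepset(A_m,B_m)$: these are non-empty $\Pi^0_1$ classes, uniformly in $m$, and by the preceding lemma they are pairwise \emph{strongly} Turing-independent, no member of $S_n$ being computable from a member of $S_m$ (or from anything Turing-below one) when $m\neq n$; in particular none of the $S_m$ has a recursive element. For $P$ we are forced to use a non-empty $\Pi^0_1$ class \emph{with no recursive element}: if $f_0\in P$ were recursive, then condition (iii), applied to the index of the constant functional with value $f_0$, would assert $f_0\notin P$. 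Moreover $P$ must lie strongly below each of the classes out of which the $R_m$ are built, so the natural candidate is a $\Pi^0_1$ realization in $\pre\omega2$ of the meet $\bigmeet_k S_k$ (for which $P\leqs S_k$ uniformly, by passing to the $k$-th slot), the one subtlety being that the coding of the slots must be arranged so as not to create a recursive branch of $P$. Each $R_m$ will then be a non-empty $\Pi^0_1$ class with $P\leqs R_m$, obtained from $S_m$ by a controlled enlargement-and-pruning that also forces condition (iii).

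Once the construction is set up so that every member of $R_m$ Turing-computes (at most) a member of $S_m$ together with a member of $P$, condition (ii) is exactly the translation of the corresponding step in the proof of Proposition~\ref{notimplicative}: were $\bigmeet_{n\neq m}R_n\leqs R_m$ via a functional $\Phi$, some $g\in R_m$ would compute $(n)^\frown r$ for a single fixed $n\neq m$ with $r\in R_n$, hence a member of $S_n$; since $0$, $m$, $n$ are pairwise distinct once $P$ has been coded out of the $S_k$, this contradicts the strong independence of the family $\langle S_k\rangle$. This is the only place the preceding lemma is used, and it is used just as the Turing-incomparability of the $g_m$ is used in Theorem~E.

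The heart of the matter is condition (iii), the effective class-level analogue of the point-wise diagonalization ``$\set m^{(m)^\frown g_m}\neq f$'' of Theorem~E, and this is the step I expect to be the main obstacle. For each $m$ we must arrange that the single functional $g\mapsto\set m^{(m)^\frown g}$ carries \emph{no} member of $R_m$ into $P$, while retaining $P\leqs R_m$ and keeping $R_m$ a non-empty $\Pi^0_1$ class. The tension is genuine: $R_m$ cannot be taken inside $S_m$, nor of the form $X\join S_m$ with a $P$-solution stored in $X$, because then the index $m$ that merely projects onto the stored $P$-coordinate would drive all of $R_m$ into $P$. Hence the reduction witnessing $P\leqs R_m$ has to be built together with $R_m$, with its index kept distinct from $m$ (via the Recursion Theorem), and $R_m$'s tree has to be pruned so that along it the computation $\set m^{(m)^\frown(\cdot)}$ is driven out of the canonical tree $T_P$; concretely, if some $g^*$ on the current tree already has $\set m^{(m)^\frown g^*}\notin P$, then by compactness a finite $\sigma^*\subseteq g^*$ forces all extensions out of $P$ and one simply restricts $R_m$ to $[\sigma^*]$, which leaves $R_m$ non-empty and preserves $P\leqs R_m$ (one is restricting to a subclass). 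The real work is in guaranteeing that such a $g^*$ always exists --- i.e.\ in choosing the base class for $R_m$ and its witnessing reduction so that $\set m$ is not inadvertently forced to witness $P\leqs R_m$ --- and in assembling this diagonalization against all the $\set m$ on top of the Jockusch--Soare construction of the $S_m$ while keeping everything uniformly $\Pi^0_1$ (and, for the outer Proposition~\ref{pzonotimplicative}, coding $Q:=\bigmeet_m R_m$ back into $\pre\omega2$ with no recursive branch). This is the technical core of the lemma and is carried out in detail in \cite{Te1}.
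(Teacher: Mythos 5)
Your setup is the right one --- $S_{m+1}:=\sepset(A_m,B_m)$, $P$ a meet of the $S_k$, $R_m$ a modification of $S_{m+1}$, with (i) and (ii) inherited essentially as in Theorem E --- but the proof of (iii) is exactly the part you defer to \cite{Te1}, and the fallback you sketch in its place does not work. You propose to find some $g^*$ in the current version of $R_m$ with $\set m^{(m)^\frown g^*}\notin P$ and then, by compactness, restrict $R_m$ to a finite cone forcing this for all members; but ``$\set m^{(m)^\frown g^*}\notin P$'' is not in general secured by a finite prefix of $g^*$ (the output may simply fail to be total, which no finite amount of computation witnesses), and, as you yourself concede, you have no argument that such a $g^*$ exists at all. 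That existence question is precisely the content of the lemma, so as written the proposal has a genuine gap at its core.

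The paper's mechanism is worth internalizing because it sidesteps the problem you ran into: one does not diagonalize against a \emph{fixed} $P$, one builds $P$ together with the $R_m$. Set $S_0:=\emptyset$, $P:=\bigmeet_{n\in\omega}S_{\alpha(n)}$ and $R_m:=\sigma_m^\frown S_{m+1}$, where the slot assignment $\alpha$ and the prefixes $\sigma_m$ are produced by a movable-marker recursion. If at some stage a string $\sigma$ appears with $\set m^{(m)^\frown\sigma}(0)\simeq n$ and $\alpha(n)=m+1$, one (a) sets $\sigma_m:=\sigma$, so that by use monotonicity every $g\in R_m$ has the same first output value $n$, and (b) vacates slot $n$ by resetting $\alpha(n):=0$ and re-housing $S_{m+1}$ at a fresh slot, which keeps $\alpha$ surjective and hence preserves $P\leqs R_m$. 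Each marker moves at most once, so the limits exist and $P$ remains \pzo. Condition (iii) is then automatic: if $\set m^{(m)^\frown g}$ is total for $g\in R_m$, its value at $0$ is an $n$ for which $S_{\alpha(n)}$ is either empty or equal to some $S_{m'+1}$ with $m'\neq m$, and the preceding lemma rules out computing a member of $S_{m'+1}$ from $g$; if it is not total it is trivially outside $P$. Thus the only thing one ever waits for is the single $\Sigma^0_1$ event ``$\set m^{(m)^\frown\sigma}(0)$ converges,'' and no $g^*$ driving the output out of $T_P$ is needed.
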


\begin{proof}
Fix \functionof{A_m}{m\in\omega} and \functionof{B_m}{m\in\omega}  as in the preceding lemma. Set $S_0=\emptyset$ and $S_{m+1}:=\sepset(A_m,B_m)$. 
We define a function $\alpha\in\pre\omega\omega$ and finite sequences\hfill\break
\functionof{\sigma_m}{m\in\omega} such that 
\[P:=\bigmeet_{n\in\omega} S_{\alpha(n)}\qand R_m:={\sigma_m}^\frown S_{m+1}\]
are \pzo\ classes and satisfy (i)--(iii). Indeed, property (ii) is immediate from the corresponding property of the $S_m$ and (i) follows immediately as long as we ensure that ${\ssf Im}(\alpha)=\omega$, since then modulo a prefix $R_m$ is a subset of $P$. To guarantee property (iii) it will suffice to ensure that for each $m$, if for some $\sigma$ and $n$, $\set m^{(m)^\frown\sigma}(0)\simeq n$, then $\sigma\incl\sigma_m$ and $\alpha(n)\not=m+1$, since then for $g\in R_m$, $\set m^{(m)^\frown g}(0)\simeq n$ but either $S_{\alpha(n)}=\emptyset$ or $S_{\alpha(n)}=S_{m'+1}$ for $m'\not=m$. It follows that $\set m^{(m)^\frown g}\notin P$ because neither of these sets has an element recursive in $g$.

We define in stages approximating sequences 
\[\functionof{\alpha_s}{s\in\omega} \qand \functionof{\sigma_{m,s}}{m,s\in\omega},\]
and set
\[\alpha:=\lim_{s\to\infty}\alpha_s\qand\sigma_m:=\lim_{s\to\infty}\sigma_{m,s}.\]
The $\alpha_s$ will be partial functions with 
\[\text{domain}(\alpha_s)=\setof i{i<d_s}\qand \text{image}(\alpha_s)= \setof j{j<s}. \]
Set $\alpha_0:=\emptyset$ and $\sigma_{m,0}:=\emptyset$. At stage $s+1$ set $\alpha_{s+1}(d_s):=s$, and if for some $m,\sigma\leq s$, 
\[\sigma_{m,s}=\emptyset\qand(\exists n<d_s)\bigl[\set m^{(m)^\frown\sigma}(0)\simeq n\qand\alpha_s(n)=m+1\bigr],\]
then choose the least such pair and set
\[\alpha_{s+1}(n):=0,\quad\alpha_{s+1}(d_s+1):=m+1\qand\sigma_{m,s+1}:=\sigma.\]
For all other $m'$, $\sigma_{m',s+1}:=\sigma_{m',s}$. Note that each $\alpha_s(n)$ and $\sigma_{m,s}$ changes at most once so the limits exist and have the required properties. The $R_m$ are clearly \pzo. To see that $P$ is \pzo, let
$\beta(n):=\text{least }s[n\leq d_s]$ and $\rho(n,s)$ be the condition
\[(\exists\ m,\sigma<s)\bigl[\sigma_{m,s}=\emptyset\qand\set m^{(m)^\frown\sigma}(0)\simeq n\qand\alpha_s(n)=m+1\bigr].\]
Then if \functionof{U_n}{n\in\omega} is a uniformly \pzo\ sequence of trees such that $S_n=[U_n]$,
\[T_n:=\setof\sigma{\sigma\in U_{\alpha_{\beta(n)}(n)}\qand\forall s\lnot\rho(n,s)}\]
is a uniformly \pzo\ sequence of trees such that $S_{\alpha(n)}=[T_n]$. 
\QED
\end{proof}

\begin{proof}[\ifautoproof\else Proof \fi of Proposition \ref{pzonotimplicative}]
With $P$ and $R_m$ as in the preceding lemma, set $Q:=\bigmeet_{m\in\omega}R_m$, assume that for some $X\in\pzo$, $P\meet X\leqs Q$ and fix a recursive functional $\Phi:Q\to P\meet X$. For $i=0,1$, set $Q^i:=\setof{h\in Q}{\Phi(h)(0)=i}$ so $Q^0$ and $Q^1$ partition $Q$ and 
\begin{enumerate}
\item[(1)] $P\leqs Q^0\qand X\leqs Q^1$.
\end{enumerate}
By (iii) of the lemma, $P\not\leqs Q$, so $Q^1\not=\emptyset$ and indeed $\exists\bar m[(\bar m)^\frown R_{\bar m}\incl Q^1]$, where $\bar m$ is an index for the functional $\Phi^+(f)(n):=\Phi(f)(n+1)$. Fix such $\bar m$ and set 
\[Y:=Q^1\setminus(\bar m)^\frown R_{\bar m}
=\setof{h\in Q}{\Phi(h)(0)=1\hbox{ and }h(0)\not=\bar m};\]
the second version makes it clear that $Y\in\pzo$. We need to establish
\begin{enumerate}
\item[(2)] $X\leqs Y$;
\item[(3)] $Y\not\leqs X$;
\item[(4)] $P\meet Y\leqs Q$.
\end{enumerate}
(2) holds because $X\leqs Q^1$ and $Y\incl Q^1$  so $Q^1\leqs Y$.
Towards (3), note that 
\[\bigmeet_{\bar m\not=n}R_n\leqs Y\text{ (via the mapping }(m)^\frown g\mapsto g)\]
and $X\leqs R_{\bar m}$ by (1) since $Q^1\leqs B_{\bar m}$ (via the mapping $g\mapsto(\bar m)^\frown g$). Hence  $Y\leqs X$ would contradict (ii) of the lemma.

For (4), by (i) of the lemma, choose a recursive functional $\Theta$ such that $\Theta:(\bar m)^\frown R_{\bar m}\to P$.
Then $P\meet Y\leqs Q$ via the functional $\Psi$ defined by
\[\openup2\jot
\Psi(h):=\cases{\Phi(h),&if $\Phi(h)(0)=0$;\cr
(1)^\frown h,&if $\Phi(h)(0)=1$ and $h(0)\not=\bar m$;\cr
(0)^\frown\Theta(h),&otherwise. \qedhere\cr}\]
\end{proof}
\end{section}

\begin{section}{Proof of Theorem G}
To establish that \Dgpw\ is not implicative, we prove in this section the following companion to Proposition \ref{pzonotimplicative}:

\begin{proposition}[{\cite[Theorem 2]{Hig}}]\label{pzownotimplicative}
There exist \pzo\ classes $P$ and $Q$ such that for every \pzo\ class $X$,
\[P\meet X\leqw Q\Implies(\exists Y\in\pzo)\bigl[Y\not\leqw X\qand 
P\meet Y\leqw Q\bigr].\]
Thus there is no greatest ${\bf x}\in\Dgpw$ such that $\degw(P)\meet{\bf x}\leq\degw(Q)$ and $\degw(P)\latimpl \degw(Q)$ does not exist.
\end{proposition}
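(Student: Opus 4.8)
\emph{Proof proposal.} The plan is to transpose the proof of Proposition~\ref{pzonotimplicative} (Theorem~F) to the weak setting, the one essential change being that weak reducibility is not uniform: $P\meet X\leqw Q$ does not come equipped with a single functional along which to split $Q$, so one first passes to a subclass on which it does. Given a \pzo\ class $X$ with $P\meet X\leqw Q$, apply Proposition~\ref{subpzostrong} to get a nonempty \pzo\ class $R\incl Q$ with $P\meet X\leqs R$, fix a witnessing recursive functional $\Phi\colon R\to P\meet X$, and run the Theorem~F argument inside $R$: partition $R=R^0\cup R^1$ by the value of $\Phi(h)(0)$, so that $P\leqs R^0$ and $X\leqs R^1$ via the shift $\Phi^+(h)(n):=\Phi(h)(n+1)$; by a Recursion-Theorem choice of an index $\bar m$ for $\Phi^+$ together with a property of the kind of clause~(iii) in the Theorem~F lemma, locate a ``branch'' $\bar m$ whose portion of $R$ lies entirely in $R^1$; delete it, setting $Y:=R^1$ with that branch removed, which is \pzo\ as a Boolean combination of \pzo\ and recursive conditions. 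One then checks $X\leqs R^1\leqs Y$ (the second step by Lemma~\ref{wk=include}), so $X\leqw Y$; that $Y\not\leqw X$, because a reduction $Y\leqw X$ would, through the branch maps, collapse the deleted index against the rest and violate the built-in incomparabilities; and that $P\meet Y\leqw Q$, because an element of $Q$ either sits over an undeleted branch, where it is a copy of an element of $Y$, or over $\bar m$, where it strongly computes an element of $P$.

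All the real work is in the combinatorial lemma, which must provide \pzo\ classes $P$ and $Q$ --- together with an auxiliary family $\functionof{R_m}{m\in\omega}$ recording $Q$'s branch structure --- with several delicately balanced features. It needs $P\not\leqw Q$, for otherwise every \pzo\ class $X$ satisfies $P\meet X\leqw Q$ and the greatest $\mathbf{x}$ with $\degw(P)\meet\mathbf{x}\leq\degw(Q)$ is simply $\onew$; so, in view of Proposition~\ref{DgwIsImpl}, this is the only possible source of failure. At the same time the lemma must supply enough reducibilities of the form ``an element over a given branch strongly computes an element of $P$'' to make the patching step legitimate, so $Q$ has to be arranged so that no single element witnesses all of the structure behind $P\not\leqw Q$. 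Finally --- and this is the genuinely new point beyond Theorem~F --- the incomparabilities $\bigmeet_{n\not=m}R_n\not\leqw R_m$, the analogues of clause~(ii), must \emph{persist after restriction to an arbitrary nonempty \pzo\ subclass of $Q$}, since Proposition~\ref{subpzostrong} gives no control over $R$; this forces the underlying data to be a uniformly r.e.\ sequence of effectively inseparable pairs $\functionof{(A_m,B_m)}{m\in\omega}$ (so that a solution to $\sepset(A_m,B_m)$ never Turing-computes a solution to $\sepset(A_n,B_n)$ for $n\not=m$), rather than a bare Kleene--Post construction of the sort that sufficed for Theorem~E.

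I expect the main obstacle to be exactly this lemma, and within it two points. First, reconciling $P\not\leqw Q$ with the patchability of a deleted branch while staying inside the \pzo\ sets: a countable meet $\bigmeet_m R_m$, the obvious candidate for $Q$ and its role in Theorem~F, is not a \pzo\ subset of $\pre\omega2$ (its natural coding drives a recursive point into the closure), so $Q$ must instead be a genuinely \pzo\ --- hence recursively bounded --- set contrived to carry the weak degree of such a meet on its ``hard'' part while handing out elements of $P$ elsewhere. Second, carrying out the case analysis on how much of $Q$'s branch structure the uncontrolled subclass $R$ from Proposition~\ref{subpzostrong} actually meets; the delicate case is when $R$ is confined to a bounded set of branches, and it is precisely the robustness of clause~(ii) under \pzo\ subclasses that rescues the argument there.
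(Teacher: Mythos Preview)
Your opening move---apply Proposition~\ref{subpzostrong} to pass from $P\meet X\leqw Q$ to a nonempty \pzo\ subclass $R\incl Q$ with $P\meet X\leqs R$, then split $R$ along a witnessing functional---matches the paper.  You also correctly identify the central tension: one needs $P\not\leqw Q$ (else the implication fails at $X$ of weak degree $\onew$) while still being able to patch $P\meet Y\leqw Q$ after deleting a piece of $Q$.

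The gap is that your branch-indexed framework cannot resolve that tension.  If $Q=\bigmeet_m R_m$ and your patching for $Y=Q\setminus\text{branch }\bar m$ requires that every element of branch $\bar m$ compute an element of $P$, then you need $P\leqw R_{\bar m}$.  But $\bar m$ is determined by $R$ and $\Phi$, over which Proposition~\ref{subpzostrong} gives no control; as $X$ varies, every branch can arise as $\bar m$, so you are forced into $P\leqw R_m$ for all $m$, whence $P\leqw Q$---exactly what you must avoid.  Your clause~(i) and $P\not\leqw Q$ are flatly incompatible, and the sentence about $Q$ ``handing out elements of $P$ elsewhere'' does not escape this: wherever $Q$ hands out $P$, that region already witnesses $P\leqw Q$ on its own.

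The paper abandons branches entirely.  It takes $Q$ to be a Jockusch--Soare class whose elements are pairwise Turing-incomparable (Lemma~\ref{allTincomp}), and sets $P\eqw\dnr2\meet\overline Q$ where $\overline Q:=\{{\ssf LMB}(Q\cap R_a):a\in\omega\}$ is the \emph{countable} set of leftmost paths of \pzo\ subclasses of $Q$.  The key new ingredient, replacing your clause~(iii), is Lemma~\ref{nocountableleqw}: no countable set without recursive members is $\leqw$ a nonempty \pzo\ class.  This forces $\Phi(R)\incl(1)^\frown X$, hence $X\leqs R$.  One then deletes the \emph{single point} $\bar g:={\ssf LMB}(R)\in\overline Q$ from $Q$ (Lemma~\ref{existspzo} and Lemma~\ref{takeaway} produce a \pzo\ $Y\eqw Q\setminus\{\bar g\}$), and pairwise Turing-incomparability gives $Y\not\leqw\{\bar g\}$, hence $Y\not\leqw X$.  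The reconciliation you sought is achieved by point-removal rather than branch-removal: only countably many points ever need patching, $\overline Q$ enumerates exactly those, and $P$ is built to know about them and no others---so $P\not\leqw Q$ survives because uncountably many elements of $Q$ lie outside $\overline Q$ and (by incomparability) compute nothing in it.
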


Note that strengthening the conclusion to $X\lew Y$ is not required but easily accomplished by replacing $Y$ with $X\join Y$. The proof will require three lemmas.

\begin{lemma}[{\cite[Theorem 2.5]{JoSoa}}]\label{nocountableleqw}
For any nonempty \pzo\ class $Q\incl\pre\omega2$ and any nonempty set $P\incl\pre\omega2$, if $P\leqw Q$, then either $P$ has a recursive element or $P$ is uncountable. \noproof
\end{lemma}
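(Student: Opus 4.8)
The plan is to prove the equivalent reformulation: \emph{if $P$ has no recursive element, then $P$ is uncountable.} Two preliminaries set the stage. First, $Q$ has no recursive element either, since a recursive $g\in Q$ would (by $P\leqw Q$) compute some $f\in P$, making $f$ recursive; hence, by the fact that every isolated member of a $\pzo$ class in $\pre\omega2$ is recursive, $Q$ has no isolated point, so $Q$ is perfect and, by Corollary~\ref{nonzeroperfect}, uncountable. Second, the same argument shows that every nonempty $\pzo$ subclass $R\incl Q$ also has no recursive element, hence is perfect and uncountable; so we may freely pass to nonempty $\pzo$ subclasses of $Q$ without losing perfectness.

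The key reduction is: \emph{it suffices to produce a nonempty $\pzo$ subclass $R\incl Q$ and a total recursive functional $\Phi$ with $\Phi(R)\incl P$.} Indeed $R\incl\pre\omega2$, so Proposition~\ref{compactness} applies, $\Phi(R)$ is a nonempty $\pzo$ set, and since $\Phi(R)\incl P\incl\pre\omega2$ it is in fact a $\pzo$ class; it has no recursive element because $\Phi(R)\incl P$, so by Corollary~\ref{nonzeroperfect} it is uncountable, whence $P$ is uncountable. This is exactly the step that the proof of Proposition~\ref{subpzostrong} performs when $P$ happens to be $\pzo$; the problem is to do it for arbitrary $P$.

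To build $R$ and $\Phi$ I would run a fusion construction, producing a perfect recursive subtree $S$ of a tree for $Q$ together with a total recursive functional $\Phi$, a finite piece at a time. Write $Q=\bigcup_{a}Q_a$ with $Q_a:=\setof{g\in Q}{\Phi_a(g)\text{ is total and }\Phi_a(g)\in P}$ --- a genuine cover because $P\leqw Q$. At each stage one works below some string $\rho$ on the tree built so far, inside $[\rho]\cap Q$ (where a preliminary Baire-category localization on the closed sets $\overline{Q_b}$ has been used to arrange that some $Q_a$ is dense), and considers the next candidate functional $\Phi_a$: if there are two strings extending $\rho$, each extendible inside $Q_a$, on which $\Phi_a$ converges to different values, one branches, committing the two successors of $\rho$ to those strings; otherwise $\Phi_a$ is ``locally constant'' on $Q_a$ below $\rho$, and density of $Q_a$ forces $\Phi_a(g)$ to agree, wherever it converges, with a single value $f^*\in P$ throughout $[\rho]\cap Q$. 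In the first alternative, iterating the branching and simultaneously forcing convergence of the committed reductions along the tree yields, in the limit, an $R=[S]$ on which a total recursive $\Phi$ assembled from the committed $\Phi_a$'s is one-to-one with image in $P$, so $P$ is uncountable. In the second alternative, forcing $\Phi_a$ total on a nonempty $\pzo$ subclass $R\incl[\rho]\cap Q$ gives $\Phi_a(R)=\set{f^*}$, a $\pzo$ singleton, so $f^*$ is recursive --- contradicting that $P$ has no recursive element, so this alternative never persists.

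The main obstacle is getting the reductions to converge (to be total) along the perfect subtree while keeping their values in $P$ rather than merely in its closure. Forcing totality of a functional along a $\pzo$ class is precisely the mechanism of the Hyperimmune-free Basis Theorem (proof of Proposition~\ref{pzobases}(ii)): at stage $n$ one passes to $\setof{g}{\Phi_a(g)(n)\!\uparrow}$ whenever that set is nonempty, keeping the intersection nonempty by the lemma preceding Proposition~\ref{pzobases}. The delicacy is that a branch along which $\Phi_a$ is merely partial can trigger such a restriction and discard the branches that do converge, and that $\leqw$ is non-uniform, so infinitely many indices $a$ must be juggled at once; reconciling this with the branching described above makes the construction a genuine priority/fusion argument, and for its details I would follow \cite{JoSoa}. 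Everything else is routine topology together with the $\pzo$-class machinery of Section~7.
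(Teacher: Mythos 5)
The paper itself gives no proof of this lemma --- it is stated with \verb|\noproof| and a citation to \cite{JoSoa} --- so your proposal can only be measured against the standard Jockusch--Soare argument. Unfortunately your ``key reduction'' reduces the lemma to a statement that is false in general, and this is fatal rather than a repairable detail. You propose to show that if $P$ has no recursive element then there are a nonempty \pzo\ class $R\incl Q$ and a total recursive $\Phi$ with $\Phi(R)\incl P$; since a nonempty \pzo\ class without recursive elements is perfect, this would exhibit a \emph{perfect} subset of $P$. But an uncountable $P\leqw Q$ with no recursive element need not contain any perfect set. Let $Q$ be any nonempty \pzo\ class with no recursive member, let $B\incl\pre\omega2$ be a Bernstein set, and put
\[P:=\setof{(0)^\frown g}{g\in Q\cap B}\cup\setof{(1)^\frown g}{g\in Q\setminus B}.\]
Then $P\leqw Q$ (each $g\in Q$ computes whichever of $(0)^\frown g$, $(1)^\frown g$ lies in $P$), $P$ is uncountable with no recursive element, yet $P$ has no perfect subset: the two pieces are separated by a clopen set, so a perfect subset of $P$ would yield an uncountable closed, hence perfect, subset of $Q\cap B$ or of $Q\setminus B$, contradicting that $B$ is Bernstein. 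For this $P$ neither horn of your dichotomy can occur --- there is no perfect subset to build and no recursive element to extract --- so your construction must get stuck, and it gets stuck precisely at the point you flag as ``the main obstacle'': the limit branches of a fused perfect subtree lie only in the closure of the dense set $Q_a$, so the values of $\Phi$ land in the closure of $P$, not in $P$. No amount of care repairs this, because the conclusion it would deliver is false. A secondary defect is that your covering sets $Q_a$ are defined by the condition ``$\Phi_a(g)\in P$'' for an arbitrary $P$, so none of the effectivity you invoke (recursive subtrees, nonempty \pzo\ subclasses, a \pzo\ singleton in the locally constant case) is available; and even with a closed target one cannot in general force $\Phi_a$ to be total on a \pzo\ subclass, totality being a $\Pi^0_2$ condition.

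The actual argument of \cite{JoSoa} runs in the opposite direction and uses countability of $P$ from the outset. Assume $P=\setof{f_i}{i\in\omega}$ is countable; one shows that either some $f_i$ is recursive or some $g\in Q$ computes no $f_i$, contradicting $P\leqw Q$. This is cone avoidance: build nonempty \pzo\ classes $Q=Q_0\supseteq Q_1\supseteq\cdots$, at each stage handling one pair $(i,e)$. Either some string $\tau$ extendible in the current class has $\set e^\tau(n)\downarrow\not=f_i(n)$ for some $n$ (restrict to $[\tau]$), or one can pass to the nonempty \pzo\ subclass on which $\set e^g(n)\uparrow$, or else every $g$ in the current class satisfies $\set e^g(n)\downarrow=f_i(n)$ for every $n$, in which case $f_i$ is computable by compactness together with the fact that the dead ends of a recursive tree are recursively enumerable. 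The splitting/non-splitting dichotomy there is applied to a single index $e$ and a single target $f_i$ in order to \emph{defeat} one reduction, never to assemble a perfect image inside $P$; that inversion, and the early use of countability, are what your plan is missing.
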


\begin{lemma}[{\cite[Theorem 4.7]{JoSoa}}]\label{allTincomp}
There exists a nonempty \pzo\ class $Q\incl\pre\omega2$ such that any two distinct elements of $Q$ are Turing incomparable. \noproof
\end{lemma}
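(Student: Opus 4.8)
The plan is to construct a co-r.e.\ tree $T\incl\pre{<\omega}2$ --- which by the remark in Section~7 that a co-r.e.\ tree still yields a \pzo\ set is enough to make $Q:=[T]$ a \pzo\ class --- so that $Q$ is nonempty and perfect and no member of $Q$ computes another. I would realize $[T]$ as the range of an order-preserving embedding $\nu\mapsto\sigma_\nu$ of a ``fusion tree'' $\pre{<\omega}2$ into $\pre{<\omega}2$: each $\sigma_\nu\in\pre{<\omega}2$ is a \emph{stem}, with $\sigma_{\nu^\frown0}$ and $\sigma_{\nu^\frown1}$ incomparable proper extensions of $\sigma_\nu$, so that $f_x:=\bigcup_n\sigma_{x\restriction n}$ for $x\in\pre\omega2$ enumerates a perfect set. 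Two distinct branches $f_x,f_y$ split at the longest common initial segment $\rho$ of $x$ and $y$, so it suffices to meet, for every pair $\mu,\mu'$ of incomparable fusion-nodes and every index $e$, the requirement
\[N_{e,\mu,\mu'}\colon\quad(\forall x\supseteq\mu)(\forall y\supseteq\mu')\ \set e^{\,f_y}\not= f_x.\]

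The single strategy for $N_{e,\mu,\mu'}$ is a diagonalization by extension-and-pruning. With the stems $\sigma_\mu,\sigma_{\mu'}$ currently defined, I would run (by dovetailing across the whole construction) the $\Sigma^0_1$ search for a string $\tau\supseteq\sigma_{\mu'}$ still in the tree and a coordinate $x_0\geq|\sigma_\mu|$ with $\set e^{\tau}(x_0)\!\downarrow$. If such $\tau,x_0$ are found, I act: replace $\sigma_{\mu'}$ by $\tau$, replace $\sigma_\mu$ by an extension whose value at $x_0$ differs from $\set e^{\tau}(x_0)$, and prune the tree above these stems accordingly. Then every branch $f_y$ with $y\supseteq\mu'$ passes through $\tau$, so $\set e^{f_y}(x_0)=\set e^{\tau}(x_0)$, while every $f_x$ with $x\supseteq\mu$ has $f_x(x_0)\neq\set e^{\tau}(x_0)$; hence $\set e^{f_y}\neq f_x$. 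If the search never succeeds, then for every initial segment $\tau\supseteq\sigma_{\mu'}$ of such a branch $f_y$ and every coordinate beyond the final length of $\sigma_\mu$ the functional diverges, so $\set e^{f_y}$ is not total and therefore cannot equal the total branch $f_x$; the requirement holds vacuously.

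I would then arrange all requirements into a priority list and run a finite-injury construction, enumerating out of $T$ the strings discarded by each successful action, so that $T$ is co-r.e.\ and $Q=[T]\in\pzo$. Two invariants must be maintained. First, every stem always retains a full perfect subtree above it, so that immediately after an action one splits the affected stems once more, keeping $Q$ nonempty and perfect in the limit. Second, committed disagreements are never disturbed: each action only fixes values at coordinates beyond the current stem lengths and only lengthens stems, so later actions --- which operate still further out --- preserve earlier disagreements, and a requirement is injured only by the finitely many higher-priority requirements that re-prune a stem or one of its finitely many ancestors; each therefore reaches a stable stem and is permanently satisfied.

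I expect the main obstacle to be exactly the bookkeeping that makes this uniform. Because two distinct branches $f_x,f_y$ separate at only one node $\rho$, I must ensure that \emph{all} indices $e$, in both directions, are eventually serviced for the single pair of subtrees hanging off $\rho$. This is what forces the construction to allow a stem $\sigma_\mu$ to be \emph{lengthened repeatedly} --- once for each $e$ and each incomparable $\mu'$ already present --- before its children $\sigma_{\mu^\frown i}$ are ever declared, and it is the interaction of these repeated lengthenings with the $\Sigma^0_1$ searches (which may never halt) that drives the injury analysis: a higher-priority action that re-prunes $\sigma_{\mu'}$ injures the lower-priority requirements living above it, which must be restarted on the new stems. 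Verifying simultaneously that each requirement is injured only finitely often and is then permanently met, and that the fusion still converges to a perfect nonempty $Q$, is the technical heart of the argument.
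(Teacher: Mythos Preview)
The paper does not prove this lemma; it is stated with a citation to \cite[Theorem~4.7]{JoSoa} and no proof is given. Your outline is essentially the original Jockusch--Soare construction and is sound: a co-r.e.\ perfect tree realized as the downward closure of a system of stems $\sigma_\nu$ indexed by a fusion tree $\pre{<\omega}2$, with each incomparability requirement met by a finite-injury extend-and-diagonalize strategy. The divergence case is handled correctly, and the invariant that each stem keeps a full binary tree above it is exactly what lets you choose the diagonalizing bit at coordinate $x_0$.

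One simplification worth making: since two distinct branches $f_x,f_y$ separate at a unique fusion node $\rho$, it suffices to pose requirements $N_{e,\rho^\frown0,\rho^\frown1}$ and $N_{e,\rho^\frown1,\rho^\frown0}$ only for \emph{sibling} pairs rather than for all incomparable $(\mu,\mu')$. This prunes the priority list and makes the injury analysis transparent: a requirement attached to the split at $\rho$ is injured only when a higher-priority action lengthens some $\sigma_{\rho'}$ with $\rho'\incl\rho$, and there are only finitely many such $\rho'$ and finitely many higher-priority indices. With that organization the ``repeated lengthening'' worry you flag disappears, since each stem is touched by only finitely many requirements of higher priority before it stabilizes.
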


\begin{lemma}\label{takeaway}
For any \pzo\ class $Q\incl\pre\omega2$ with no recursive element and any $g\in Q$, $Q\setminus\set g\leqw\dnr2$.
\end{lemma}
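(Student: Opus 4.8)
The plan is not to apply Theorem~A to $Q\setminus\set g$ directly --- it is not closed, so Theorem~A says nothing about it --- but to cut $g$ away by a single clopen restriction, obtaining a genuine $\pzo$ subclass of $Q$ that still omits $g$, and then invoke Theorem~A.

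First I would use the hypothesis. Since $Q$ has no recursive element, Corollary~\ref{nonzeroperfect} gives that $Q$ is uncountable, so there is some $f\in Q$ with $f\ne g$. Fix the least $m$ with $f(m)\ne g(m)$, put $\tau:=f\restrict(m+1)$, and set
\[Q_\tau:=\setof{h\in Q}{\tau\incl h}.\]
Then $Q_\tau$ is again a $\pzo$ subset of $\pre\omega2$ (a recursive tree for $Q$ intersected with the recursive tree $\setof\sigma{\sigma\incl\tau\text{ or }\tau\incl\sigma}$), it is nonempty because $f\in Q_\tau$, and $g\notin Q_\tau$ because $\tau(m)=f(m)\ne g(m)$, so $\tau\not\incl g$. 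Hence $Q_\tau\incl Q\setminus\set g$.

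Now the conclusion follows by chaining two reductions. On one hand $Q_\tau$ is a nonempty $\pzo$ class in $\pre\omega2$, so by Theorem~A, $\degs(Q_\tau)\le\ones=\degs(\dnr2)$, i.e.\ $Q_\tau\leqs\dnr2$. On the other hand $Q_\tau\incl Q\setminus\set g$, so Lemma~\ref{wk=include} gives $Q\setminus\set g\leqs Q_\tau$. Composing, $Q\setminus\set g\leqs\dnr2$, hence a fortiori $Q\setminus\set g\leqw\dnr2$; in fact this argument yields the stronger strong reducibility.

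I do not anticipate any real obstacle: the entire content is the realization that one bad point can be removed by intersecting with a cylinder, and everything else is Theorem~A together with Lemma~\ref{wk=include}. The hypothesis ``$Q$ has no recursive element'' is used in exactly one place, to supply a point of $Q$ distinct from $g$; without it one could have $Q=\set g$, a singleton $\pzo$ class whose unique member is recursive, and then $Q\setminus\set g=\emptyset\not\leqw\dnr2$.
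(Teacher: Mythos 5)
Your proof is correct and is essentially the paper's own argument: both use Corollary \ref{nonzeroperfect} to find a second point of $Q$, cut down to the nonempty \pzo\ cylinder subclass of $Q$ avoiding $g$, and then combine Theorem A with Lemma \ref{wk=include}. The observation that this in fact yields $Q\setminus\set g\leqs\dnr2$ is a harmless (and accurate) bonus.
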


\begin{proof}
By Corollary \ref{nonzeroperfect}, any such $Q$ is uncountable, so there exists  $g'\in Q$ with $g'\not= g$ and hence $m$ such that $g\restrict m\not=g'\restrict m$. Then 
\[Q':=\setof{h\in Q}{h\restrict m=g'\restrict m}\]
is a nonempty \pzo\ subclass of $Q\setminus\set g$ and by Theorem A, $Q\setminus\set g\leqw Q'\leqw\dnr2$. 
\QED
\end{proof}

\begin{proof}[\ifautoproof\else Proof \fi of Proposition \ref{pzownotimplicative}]
Fix $Q$ to be any class as in Lemma \ref{allTincomp}. Restating the defining property of $Q$, we have
\[(\forall g\in Q)\;Q\setminus\set g\not\leqw\set g.\tag{1}\]
From Lemma \ref{takeaway} we have immediately
\[(\forall g\in Q)\;\dnr2\not\leqw\set g.\tag{2}\]

Fix an effective enumeration\functionof{R_a}{a\in\omega} of all \pzo\ classes as in the discussion preceding Proposition \ref{TPispzo}. Using the notation of the proof of Proposition \ref{pzobases}(i), set
\[\overline Q:=\setof {{\ssf LMB}(Q\cap R_a)}{a\in\omega}.\]
A calculation as in that proof shows that $\overline Q$ is $\Sigma^0_3$, so by Lemma \ref{existspzo} there exists a \pzo\ class $P$ such that $P\eqw\dnr2\meet\overline Q$. Trivially from the definition we have
\[(\forall g\in\overline Q)\;P\meet(Q\setminus\set g)\leqw Q.\tag{3}\]
Now fix a \pzo\ class $X$ such that $P\meet X\leqw Q$; we shall construct a \pzo\ class $Y$ such that $Y\not\leqw X$ and $P\meet Y\leqw Q$.

If $Q\not\leqw X$, then $Y:=Q$ will suffice, so for the rest of the proof we assume that $Q\leqw X$. By Proposition \ref{subpzostrong}, there exists a \pzo\ class $R\incl Q$ such that $P\meet X\leqs R$. In fact, we then have
\[X\leqs R.\tag{4}\]
To see this, fix $\Phi:R\to P\meet X=(0)^\frown P\cup(1)^\frown X$; we claim that for all $g\in R$, $\Phi(g)\in (1)^\frown X$, from which (4) follows immediately. Otherwise, by Proposition \ref{inversepzo}, $R\cap\Phi^{-1}\bigl((0)^\frown P\bigr)$ is a nonempty \pzo\ class, and for any member $g$, either $\dnr2\leqw\set g$ or $\overline Q\leqw\set g$ so by (2), $\overline Q\leqw\set g$. Hence $\overline Q\leqw R\cap\Phi^{-1}\bigl((0)^\frown P\bigr)$ which, since $\overline Q$ is countable and has no recursive element, contradicts Lemma \ref{nocountableleqw}.

Now set $\bar g:={\ssf LMB}(R)$; since $\bar g\in\Delta^0_2$ by Proposition \ref{pzobases}, $Q\setminus\set{\bar g}$ is also $\Delta^0_2$ so by Lemma \ref{existspzo} there is a \pzo\ class $Y\eqw\dnr2\meet Q\setminus\set{\bar g}$ and hence by Lemma \ref{takeaway}, $Y\eqw Q\setminus\set{\bar g}$. 

Since $Q\leqw X\leqw R$, there exist $f\in X$ and $g\in Q$ such that $g\leqt f\leqt\bar g$. Then by the defining property of $Q$, $g=\bar g$ so $f\eqt\bar g$. But by (1), $Y\not\leqw\set{\bar g}$, so $Y\not\leqw\set f$ and hence $Y\not\leqw X$ as desired.

Finally, since $\bar g\in\overline Q$, $P\leqw\set{\bar g}$ and for $g\in Q$ with $g\not=\bar g$, $Y\eqw Q\setminus\set{\bar g}\leqw\set g$, we have $P\meet Y\leqw Q$.
\QED
\end{proof}
\end{section}

\begin{section}{Proof of Theorem H}
In the pattern of the three preceding sections, we establish that \Dgpw\ is not dual-\hfill\break
 implicative via the following

\begin{proposition}\label{notdualimplicative}
For every \pzo\ class $Q$ there exists a \pzo\ class $P$ such that for every \pzo\ class $X$
\[Q\leqw P\join X\Implies(\exists Y\in\pzo)[Y\lew X\qand Q\leqw P\join Y].\]
Thus there is no smallest ${\bf x}\in\Dgpw$ such that $\degw(Q)\leq\degw(P)\join{\bf x}$ so $\degw(P)\dlatimpl\degw(Q)$ does not exist.
\end{proposition}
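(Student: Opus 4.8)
The plan is to carry out, in dual form, the argument that proves Theorem~G (Proposition~\ref{pzownotimplicative}): there one shrinks $X$ upward against a meet, whereas here one must shrink it downward against a join. One preliminary remark is in order: if $Q$ has a recursive element then $Q\leqw P$ for every $P$, so $\degw(P)\dlatimpl\degw(Q)=\zerow$ does exist; thus the statement has content only when $Q$ has no recursive element, and we assume this from now on, so that $Q$ is uncountable by Corollary~\ref{nonzeroperfect}.

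\textbf{The candidate $P$.} As in the proof of Proposition~\ref{pzownotimplicative}, fix an effective enumeration \functionof{R_a}{a\in\omega} of the \pzo\ classes and, using the leftmost-branch operator ${\ssf LMB}$ of Proposition~\ref{pzobases}(i), form the countable set of branches of $Q$
\[S:=\setof{{\ssf LMB}(Q\cap R_a)}{a\in\omega\rand Q\cap R_a\neq\emptyset}.\]
Since $Q$ has no recursive element, every member of $S$ is non-recursive, $S$ is countable, and the same quantifier computation as in the proof of Proposition~\ref{pzownotimplicative} shows $S\in\Sigma^0_3$. By Lemma~\ref{nocountableleqw} no \pzo\ class can be weakly equivalent to a countable, non-recursively-pointed set such as $S$. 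One then defines $P$ to be a \pzo\ class, produced from $Q$ via Lemma~\ref{existspzo} applied to a suitably padded $\Sigma^0_3$ set, and arranged so that
\[\text{(a)}\ \ Q\not\leqw P,\qquad\text{(b)}\ \ \text{in }\Dgw,\ \ \degw(P)\dlatimpl\degw(Q)=\degw(S).\]
Requirement (a) is forced on us — otherwise $X=\zerow$ would exhibit a least valid degree — and (b) says precisely that for every $X$, $Q\leqw P\join X$ holds iff $S\leqw X$; the point is that $S$, although it is a strict $\Dgw$-infimum of a $\leqw$-descending sequence of \pzo\ classes, is not itself the degree of any \pzo\ class.

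\textbf{The perturbation step.} Suppose $X$ is a \pzo\ class with $Q\leqw P\join X$. By (b) we then have $S\leqw X$, so $X$ has no recursive element and hence is uncountable (Corollary~\ref{nonzeroperfect}). It suffices to produce a \pzo\ class $Y$ with $X\not\leqw Y$ and $S\leqw Y$: by (b) the latter gives $Q\leqw P\join Y$, and replacing $Y$ by $Y\meet X$ then upgrades $X\not\leqw Y$ to $Y\lew X$ while preserving $Q\leqw P\join Y$, using that $\Dgpw$ is distributive. To build $Y$ one mimics the hard case of Proposition~\ref{pzownotimplicative}: apply Proposition~\ref{subpzostrong} (to an appropriate \pzo\ weak reduction obtained from $S\leqw X$ and Lemma~\ref{existspzo}) to obtain a strong-reduction handle and a relevant leftmost branch $\bar g\in\Delta^0_2$ (Proposition~\ref{pzobases}(i)) with $\bar g\in S$, then use Lemma~\ref{existspzo} together with Lemma~\ref{takeaway} to build a \pzo\ class $Y\eqw S\setminus\set{\bar g}$. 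Since $S\setminus\set{\bar g}\incl S$ gives $S\leqw Y$ at once, only $X\not\leqw Y$ remains, and here — exactly as in Proposition~\ref{pzownotimplicative} — one uses that $\bar g$ is Turing-computed by some element of $X$ together with a Turing-incomparability property engineered into the classes defining $S$ and $P$ to conclude that some member of $S\setminus\set{\bar g}$ computes no member of $X$.

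\textbf{Main obstacle.} In Theorem~G the target $Q$ was \emph{chosen} (via Lemma~\ref{allTincomp}) so that distinct solutions were pairwise Turing-incomparable and $Q\setminus\set g\not\leqw\set g$; here $Q$ is \emph{given}, so all of that structure must be built into the $\Sigma^0_3$ set defining $P$, simultaneously with the competing demands (a) and (b). Verifying (b) — that $P$ is so ``computationally inert'' that pooling it with any $X$ helps compute $Q$ only when $X$ already computes a branch lying in $S$ — and then showing that the valid \pzo\ classes are genuinely $\leqw$-unbounded below (so that not merely $\degw(S)$ is unrealized, but there is no $\leqw$-least valid \pzo\ class whatsoever) is the technical heart of the proof, and is where the detailed padding combinatorics of Lemma~\ref{existspzo} and the leftmost-branch bookkeeping have to be pushed through.
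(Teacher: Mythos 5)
You have not produced a proof; worse, the central structural requirement of your plan is unsatisfiable, by the very lemma you cite. You ask for a \pzo\ class $P$ with property (b): $\degw(P)\dlatimpl\degw(Q)=\degw(S)$ in $\Dgw$, equivalently $Q\leqw P\join X\Iff S\leqw X$ for all $X$, where $S$ is your countable set of leftmost branches. But $X:=Q$ always satisfies $Q\leqw P\join Q$, so (b) would force $S\leqw Q$; since $Q$ is a nonempty \pzo\ class and $S$ is countable with no recursive element (every member of $S$ lies in $Q$), Lemma \ref{nocountableleqw} says exactly that $S\not\leqw Q$. (More bluntly: the dual implication always satisfies $\degw(P)\dlatimpl\degw(Q)\leqw\degw(Q)$, so it can never be the degree of a countable non-recursively-pointed set when $Q$ is a \pzo\ class.) So no choice of $P$ can realize (b), and the analogy with Theorem G breaks at the outset: there the trivial direction was $P\meet(Q\setminus\set g)\leqw Q$ for $g\in\overline Q$, immediate from the definition of $\overline Q$, whereas its dual here is the whole problem. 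Your perturbation step has an independent gap of the same kind: the conclusion ``some member of $S\setminus\set{\bar g}$ computes no member of $X$'' relied, in Theorem G, on $Q$ being the special class of Lemma \ref{allTincomp} whose distinct elements are pairwise Turing incomparable; here $Q$ is arbitrary and its leftmost branches ${\ssf LMB}(Q\cap R_a)$ carry no such structure, and you offer no construction that installs it.

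The paper's argument is of a completely different and much softer character, and sidesteps everything you flag as the ``technical heart.'' One chooses a single $f$ with $\zerot<\degt(f)<\zerot'$ and $Q\not\leqw\set f$ (Lemma \ref{notleqwg}(i)) and sets $P\eqw Q\meet\set f$ via Lemma \ref{existspzo}. Given a \pzo\ class $X$ with $Q\leqw P\join X$, one observes $X\not\leqw\set f$ (else $Q\leqw P\join X\leqw\set f$), applies Lemma \ref{notleqwg}(ii) to get $g$ with $X\not\leqw\set g$ and $\degt(f\oplus g)=\zerot'$, and takes $Y\eqw X\meet\set g$, so $Y\lew X$. Then distributivity gives
\[P\join Y\eqw(Q\join X)\meet(Q\join\set g)\meet(\set f\join X)\meet(\set f\join\set g)\geqw Q,\]
each factor being $\geqw Q$ --- the third because $\set f\geqw P$, the fourth because $\set f\join\set g\eqw\set{\zerot'}\geqw\dnr2\geqw Q$ by Theorem A. No identification of $\degw(P)\dlatimpl\degw(Q)$, no countable set of branches, and no incomparability hypotheses on $Q$ are needed.
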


Again we need two lemmas for the proof.

\begin{lemma} \label{notdominated}
For any $\zerot<\degt(f)\leq\zerot'$, there exists a function $g$ such that
\[f\equiv_T g\quad\hbox{but}\quad g\hbox{ is not recursively bounded}.\]
\end{lemma}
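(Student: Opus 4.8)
The plan is to split the statement into a routine reduction plus one substantial ingredient from classical computability theory.

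First I would observe that it suffices to produce a single function $g_0\leqt f$ that is not recursively bounded. Given such a $g_0$, set $g:=g_0\join f$, the join of functions from Section 2, so that $g(2x)=g_0(x)$ and $g(2x+1)=f(x)$. Since $g_0\leqt f$ we get $\degt(g)=\degt(g_0)\join\degt(f)=\degt(f)$, hence $g\eqt f$; and any recursive $h$ with $g(m)\leq h(m)$ for all $m$ would yield the recursive bound $x\mapsto h(2x)$ on $g_0$, contradicting the choice of $g_0$. So $g$ is a function of the required kind, and the whole lemma comes down to finding $g_0$.

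Producing $g_0$ is exactly where the hypothesis $\zerot<\degt(f)\leq\zerot'$ is used: $\degt(f)$ is then a non-recursive $\Delta^0_2$ degree, and by a classical theorem of Martin (see, e.g., \cite{DoHi}) every non-recursive $\Delta^0_2$ degree is hyperimmune — it computes a function $g_0$ that is not majorized by any recursive function, i.e. for every recursive $h$ there are infinitely many $m$ with $g_0(m)>h(m)$. In particular, for every recursive $h$ there is at least one $m$ with $g_0(m)>h(m)$, so $g_0\not\in\pre\omega h$ for every recursive $h$; that is, $g_0$ is not recursively bounded in the sense of Definition \ref{recbd}. This $g_0$ is precisely what the reduction above requires.

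I expect the only real obstacle to be Martin's theorem itself; everything surrounding it is bookkeeping, so the chief decision is whether to cite it or to reproduce its (short but genuinely clever) proof. Two caveats are worth recording. First, the hypothesis $\degt(f)\leq\zerot'$ cannot be dropped: hyperimmune-free degrees exist — indeed the Hyperimmune-free Basis Theorem noted after Proposition \ref{pzobases} produces almost recursive functions, whose degrees are hyperimmune-free — and for an $f$ of such a degree every $g\eqt f$ is recursively bounded, so the lemma would fail. Second, one should resist the tempting naive candidate $g_0(n):=\mu s\,[f_s\restriction n=f\restriction n]$ for a recursive approximation $f_s\to f$: this is indeed $\leqt f$, but because one has no control over how the approximation drifts after the first stage of agreement, it may be dominated by a recursive function even when $f$ is non-recursive, and overcoming this is exactly the content of the proof of Martin's theorem.
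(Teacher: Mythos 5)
Your argument is correct, but it routes the entire mathematical content of the lemma through a citation: up to the join trick you describe, the statement \emph{is} the Miller--Martin theorem that every nonzero degree below $\zerot'$ is hyperimmune, so ``cite Martin'' establishes the lemma only by deferring it. The reduction itself ($g:=g_0\join f$, with a recursive bound on $g$ yielding the bound $x\mapsto h(2x)$ on $g_0$) is fine, and your first caveat --- that $\degt(f)\leq\zerot'$ cannot be dropped, by the existence of hyperimmune-free degrees --- is correct and worth keeping. The paper instead gives the short self-contained argument: fix a recursive approximation $f_s\to f$ by the Limit Lemma and set $g(x):=$ least $s\geq x$ such that $f_s(y)=f(y)$ for all $y\leq x$. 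This $g$ is already $\eqt f$ (no join needed, since $f(y)=f_{g(y)}(y)$), and if $g$ were bounded by a recursive $h$, then $t_x:=$ (least $t\geq x$) such that $f_t(x)=f_s(x)$ for all $s$ with $t\leq s\leq h(t)$ is a total recursive function, and since $g(t_x)$ lies in the window $[t_x,h(t_x)]$ and $x\leq t_x$, one gets $f_{t_x}(x)=f_{g(t_x)}(x)=f(x)$, making $f$ recursive --- a contradiction. This bears directly on your second caveat: the ``naive candidate'' you warn against is, with the single modification of searching only over $s\geq x$, precisely the function the paper uses, and the $t_x$ device is exactly how the drift problem you identify is overcome; so the candidate is less hopeless than you suggest, and what you call the ``genuinely clever'' core of Martin's theorem is these five lines. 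In a survey one could defensibly cite the classical result as you propose, but given how short the direct proof is, including it is the better choice.
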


\begin{proof}
By the Limit Lemma \cite[3.3]{Soa} fix a recursive sequence $\functionof{f_s}{s\in\omega}$ with $f=\lim_{s\to\infty} f_s$. Set
\[g(x):=\hbox{least }s\geq x\,(\forall y\leq x)\,f_s(y)=f(y);\]
clearly $f\equiv_T g$. Suppose $g$ is bounded by a recursive function $h$. Set
\[t_x:=(\hbox{least }t\geq x)\;\forall s\,\bigl[t\leq s\leq h(t)\Implies f_t(x)=f_s(x)\bigr].\]
$t_x$ is well-defined since $f_s(x)$ is eventually constant;
$x\mapsto t_x$ is recursive and $t_x\leq g(t_x)\leq h(t_x)$, so
\[f_{t_x}(x)=f_{g(t_x)}(x)=f(x),\]
and thus $f$  is recursive contrary to hypothesis.
\QED
\end{proof}

\begin{lemma} \label{notleqwg}
For every \pzo\ class $P$.
\begin{itemize}
\item[\textup{(i)}] if $P$ has no recursive elements, then there exists $g\in\pre\omega\omega$ such that
\[\zerot<\degt(g)<\zerot'\qand P\not\leqw\set g;\]
\item[\textup{(ii)}] for any $f\in\pre\omega\omega$,  if $\zerot<\degt(f)<\zerot'$ and $P$ has no elements recursive in $f$, then there exists $g\in\pre\omega\omega$ such that
\[\zerot<\degt(g)<\zerot', \quad P\not\leqw\set g\qand \degt(f\oplus g)=\zerot'.\]
\end{itemize}
\end{lemma}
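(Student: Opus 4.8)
The plan is to prove both parts by a single finite‑injury priority construction, building $g=\lim_s g_s$ from a uniformly recursive (hence $\leqt\zerot'$) sequence of finite conditions $g_0\incl g_1\incl\cdots$, so that $g\leqt\zerot'$ automatically. Fix a recursive tree $T\incl\pre{<\omega}2$ with $[T]=P$; then ``$P\leqw\set g$'' just says that $\set a^g$ is a total path through $T$ for some index $a$, so the requirement we must meet is
\[R_a:\quad\set a^g\text{ is not a total member of }P.\]
For part (i) the remaining requirements are $P_e$ (``$g\neq\varphi_e$'', so $g$ is non-recursive) and the usual lowness requirements $L_e$ deciding whether $\set e^g(e)\downarrow$; lowness gives $g'\eqt\zerot'$, and since we also have $g\leqt\zerot'$, an inequality $g\geqt\zerot'$ would force $g\eqt\zerot'$ hence $g'\eqt\zerot^{(2)}\gem\zerot'$, a contradiction --- so we get $\zerot<\degt(g)<\zerot'$ at once. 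For part (ii) one replaces the $L_e$ by anti-coding requirements $N_e$ (``$\set e^g\neq\zerot'$'', giving $\degt(g)<\zerot'$) together with a block of coding requirements $C_e$ that push markers into $g$, timed by the $\zerot'$-approximation, so that $\emptyset'\leqt f\oplus g$; with $f,g\leqt\zerot'$ this yields $\degt(f\oplus g)=\zerot'$. This last ingredient --- coding $\emptyset'$ below $\zerot'$ with the help of an auxiliary $\Delta^0_2$ set --- is exactly the mechanism behind the Sacks join/splitting theorems, and I would import it with minimal change, relativising each $R_a$ to $f$ (i.e. using $\set a^{f\oplus g}$) and invoking ``$P$ has no $f$-recursive member'' exactly where part (i) invokes ``no recursive member''.

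The combinatorial heart, and the only place the hypothesis on $P$ is used, is the following dichotomy: \emph{for every finite $\sigma$ and every $a$, at least one of}
\begin{enumerate}
\item[\textup{(A)}] there are $\tau\supseteq\sigma$ and $n$ with $\set a^\tau\restrict(n+1)\downarrow$ and $\set a^\tau\restrict(n+1)\notin T$;
\item[\textup{(B)}] there are $\tau\supseteq\sigma$ and $n$ with $\set a^{\tau'}(n)\uparrow$ for every $\tau'\supseteq\tau$
\end{enumerate}
\emph{holds.} Indeed, if both fail then, since ``$\set a^{\tau'}(n)\downarrow$'' is decidable from $\tau'$ (a bounded computation), one may define recursively $\tau_0:=\sigma$ and $\tau_{k+1}:=$ the least $\tau'\supsetneq\tau_k$ with $\set a^{\tau'}(k)\downarrow$; then $g^*:=\bigcup_k\tau_k$ is recursive, $\set a^{g^*}$ is total, and by the failure of (A) every initial segment of $\set a^{g^*}$ lies in $T$, so $\set a^{g^*}\in[T]=P$ is a recursive member of $P$, a contradiction. (For part (ii) the same computation carried out relative to $f$ produces an $f$-recursive member of $P$.) Note also that ``(A) fails at $\sigma$'' is inherited by every extension of $\sigma$.

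The strategy for $R_a$ is then: effectively search for a witness to (A); if one appears, extend the current condition past it, permanently satisfying $R_a$ (escaping $T$ is preserved under further extension). If $\sigma$ has no (A)-extension, then (B) holds and persists along all extensions, and $R_a$ uses the stage-$s$ approximation to $\zerot'$ to move the condition to some $\tau$ and fix some $n$ for which it currently believes $\set a^{\tau'}(n)\uparrow$ for all $\tau'\supseteq\tau$, imposing on lower-priority requirements the single restraint ``do not define $\set a(n)$''; since the relevant facts are $\Pi^0_1$ the belief stabilises, and re-applying the dichotomy along the condition sequence shows genuine (B)-conditions are dense, so in the limit $R_a$ settles on a correct $\tau,n$, whence $\set a^g(n)\uparrow$ and $R_a$ is met. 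One then checks in the usual way that everything coheres: each positive action ($P_e$, $L_e$, $C_e$) needs only a bounded finite extension and can always be taken within the current restraints, because a restraint of $R_a$ forbids only one value $\set a(n_a)$ and from a condition on which (A) fails one can always extend without defining that value; by induction on priority each requirement is injured finitely often and its restraint reaches a finite limit. The main obstacle, where the genuine care is needed, is precisely this coordination, aggravated by the fact that clause (B) is $\Sigma^0_2$ rather than $\zerot'$-decidable: $R_a$ cannot be handed its (B)-witness outright but must approximate it, so one must argue --- via the finite-injury bookkeeping and the density of correct (B)-conditions --- that the approximation converges. For part (ii) there is in addition the interaction of all this with the marker movement of the coding requirements $C_e$, which must be shown to settle despite the $R_a$-restraints and the merely $\Delta^0_2$ behaviour of $f$; this is where the part-(ii) hypotheses $\zerot<\degt(f)$ and $\degt(f)<\zerot'$ both enter.
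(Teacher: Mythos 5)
Your combinatorial core --- the dichotomy (A)/(B), proved by noting that if both fail the greedy search yields a recursive (resp.\ $f$-recursive) member of $P$ --- is exactly the right idea, and it is the same one the paper uses. But the machinery around it has real gaps. First, the coherence claim ``from a condition on which (A) fails one can always extend without defining that value'' is false: failure of (A) guarantees via the dichotomy that \emph{some} pair $(\tau,n)$ admits no extension defining $\set a(n)$, but for the particular $n_a$ that $R_a$ has (possibly wrongly) guessed it can happen that \emph{every} sufficiently long extension of the current condition defines $\set a(n_a)$ (e.g.\ if $\set a^\upsilon(n_a)\downarrow$ whenever $|\upsilon|>n_a$), so lower-priority strategies cannot act while respecting the restraint. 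Since clause (B) is $\Sigma^0_2$, what is actually needed is the standard ``least unrefuted candidate'' guessing with initialization of everything below at each switch; that can be made to work, but it is not the argument you give, and you yourself flag this convergence problem as ``the main obstacle'' without resolving it. (Also, a uniformly recursive sequence $g_0\incl g_1\incl\cdots$ of nested conditions has recursive union, so without injury your $g$ would be recursive.) All of this is avoidable: since only $g\leqt\zerot'$ is needed, one can build $g$ by plain finite extensions \emph{using a $\zerot'$ oracle}, asking for each $a$ the single $\Sigma^0_1$ question ``is there an extension on which $\Phi_a$'s output grows while staying in $T$?'' until, by the no-recursive-element hypothesis, the answer becomes no, and then stepping past the sticking point. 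No injury, no restraint, no $\Sigma^0_2$ guessing. This is exactly the paper's proof.

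Part (ii) is where the proposal really falls short. Literally relativizing $R_a$ to ``$\set a^{f\oplus g}\notin P$'' is not just unnecessary but \emph{unsatisfiable}: you also demand $\degt(f\oplus g)=\zerot'$, and $P$ has a $\Delta^0_2$ member by the Kreisel basis theorem, so $P\leqw\set{f\oplus g}$ is forced. The requirement must remain about $\set a^{g}$; what gets relativized to $f$ is the \emph{search} for the next condition, and that is precisely why the hypothesis weakens to ``no member of $P$ recursive in $f$'' --- the would-be path produced by a non-terminating search is now only $f$-recursive. Beyond that, the entire coding of $\zerot'$ into $f\oplus g$ is outsourced to ``the Sacks join mechanism, imported with minimal change,'' with no argument that the marker movement coheres with the $R_a$'s and no indication of where $\zerot<\degt(f)$ is actually used, although it must be used quantitatively somewhere (if $f$ were recursive the join could not reach $\zerot'$). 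The paper's mechanism is different and self-contained: by the Limit Lemma replace $f$ by a Turing-equivalent function that is not recursively bounded (this is where $\zerot<\degt(f)$ enters); then the partial recursive search function $\theta_{a,i}$ at each step either diverges or is overtaken by $f$, the block for requirement $a$ is cut off at the first such place, ${\ssf K}(a)$ is written into the last entry of the block, and $f\oplus g$ can replay the construction --- hence compute ${\ssf K}$ --- precisely because $f$ locates the block boundaries. As it stands your part (ii) is a programme rather than a proof at exactly the point where the work lies.
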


\begin{proof}
Fix $P$ with no recursive elements and a recursive tree $T$ such that $P=[T]$. Let $\Phi_a=\set a$ and set $\tau_0:=\emptyset$; given $\tau_a$, set $\sigma_{a,0}:=\tau_a$ and given $\sigma_{a,i}$; set
\[\sigma_{a,i+1}:\simeq\hbox{least }\sigma\bigl[\sigma_{a,i}\subset\sigma\qand\Phi_a(\sigma_{a,i})\subset\Phi_a(\sigma)\in T\bigr].\]
If $i\mapsto\sigma_{a,i}$ is total, then $h:=\bigcup_{i\in\omega}\sigma_{a,i}$ is recursive and $\Phi_a(h)$ is a recursive element of $P$ contrary to hypothesis. Hence there is a least $i$ such that $\sigma_{a,i+1}\uparrow$; set
$\tau_{a+1}:={\sigma_{a,i}}^\frown(0)$.

Thus $g:=\bigcup_{a\in\omega}\tau_a$ is a total function and $g\leqt\zerot'$ since its definition involves only one-quantifier questions. For any $a$, if $\Phi_a(g)=\bigcup_{a\in\omega}\Phi_a(\tau_a)$ is total, then by construction
\[\lnot\exists\sigma\bigl[\tau_a\subset\sigma\qand\Phi_a(\tau_a)\subset\Phi_a(\sigma)\in T\bigr],\] so $\Phi_a(g)\notin P$. Thus $P\not\leqw\set g$.

For (ii), fix $P=[T]$ and $f$ as in the hypothesis; by Lemma \ref{notdominated} we may assume that $f$ is not recursively bounded. As before, set $\tau_0:=\emptyset$; given $\tau_a$, set $\sigma_{a,0}:=\tau_a$, and given $\sigma_{a,i}$, set
\[\theta_{a,i}(n)\simeq\hbox{least }\sigma\bigl[\sigma_{a.i}^\frown(n)\incl\sigma\qand\Phi_a(\sigma_{a,i})\incl\Phi_a(\sigma)\in T\bigr].\]
Since $\theta_{a,i}$ is partial recursive it not a total function bounding $f$ so there exists
\[n_{a,i}:=\hbox{least }n\bigl[\theta_{a,i}(n)<f(n)\qor\theta_{a,i}(n)\uparrow\bigr].\]
If $\theta_{a,i}(n_{a,i})\downarrow$, set $\sigma_{a,i+1}:=\theta_{a,i}(n_{a,i})$;
otherwise, set $\tau_{a+1}:={\sigma_{a,i}}^\frown(n_{a,i},{\ssf K}(a))$.
The second alternative must occur for some (least) $i_a$, since otherwise $i\mapsto\sigma_{a,i}$ is total, $h:=\bigcup_{i\in\omega}\sigma_{a,i}$ is recursive in $f$ and $\Phi_a(h)$ is a member of $P$ recursive in $f$ contrary to hypothesis.
Set $g:=\bigcup_{a\in\omega}\tau_a$; we conclude that $g\leqt\zerot'$ and $P\not\leqw\set g$ as before. 

Now $\zerot'\leq\degt(\functionof{\tau_a}{a\in\omega})$ because ${\ssf K}(a)=\tau_{a+1}(|\tau_{a+1}|-1)$, and from $f\oplus g$ we can reconstruct this sequence as follows. Given $\tau_a$ and $\sigma_{a,i}$ for some $i<i_a$, $n_{a,i}=g(|\sigma_{a,i}|)$. If
\[\exists \sigma<f(n_{a,i})\bigl[{\sigma_{a,i}}^\frown(n_{a,i})\incl\sigma\qand\Phi_a(\sigma)\in T\bigr],\]
then $i+1<i_a$ and $\sigma_{a,i+1}$ is the least such $\sigma$; otherwise, $i+1=i_a$ and $\tau_{a+1}={\sigma_{a,i}}^\frown(n_{a,i},g(|\sigma_{a,i}|+1))$.
\QED
\end{proof}

\begin{proof}[\ifautoproof\else Proof \fi of Proposition \ref{notdualimplicative}]
Given $Q$, by part (i) of the lemma choose $f$ such that
\[\zerot<\degt(f)<\zerot'\qand Q\not\leqw\set f.\]
Since $\set f\in\Pi^0_2$, by Lemma \ref{existspzo} there exists a \pzo\ class $P$ such that $P\eqw Q\meet\set f$. Suppose that $X\in\pzo$ is such that $Q\leqw P\join X$. Since $P\leqw\set f$ but $Q\not\leqw\set f$, also $X\not\leqw\set f$, so by part (ii) of the lemma we may choose $g$ such that
\[\zerot<\degt(g)<\zerot',\quad X\not\leqw\set g\qand\degt(f\oplus g)=\zerot'.\]
Again by Lemma \ref{existspzo} there exists a \pzo\ class $Y$ such that $Y\eqw X\meet\set g$. Clearly $Y\lew X$. By Theorem D,
\[\set f\join\set g\eqw\set{\zerot'}\geqw\dnr2.\]
Hence, using distributivity,
\begin{align*}
P\join Y&\eqw(Q\meet\set f)\join(X\meet\set g)\\
&=(Q\join X)\meet(Q\join\set g)\meet(\set f\join X)\meet(\set f\join\set g)\\
&\geqw Q\meet Q\meet(P\join X)\meet\dnr2\\
&\eqw Q.\qedhere
\end{align*}
\end{proof}
\end{section}

\begin{section}{\IPC- and \WEM-Completeness Theorems}
The topics of this section are classical results on the relationships between lattices and their theories and do not directly concern the Mu\v cnik or Medvedev degrees but play a large role in the proof of Theorems I and J. As such, they might well be consigned to references to the literature. However, their proofs are somewhat tedious to dig out of that literature, so it seemed worthwhile to include versions here.

For convenience, we call a lattice \latz-\dff{irreducible} iff the least element $\latz$ is meet-irreducible and \lato-\dff{irreducible} iff the greatest element $\lato$ is join-irreducible.

The two theorems to be proved in this section are

\begin{IPCcplthm}
\[\IPC=\;\bigcap\setof{\Th({\mathstr L})}{{\mathstr L}\hbox{ is a 
finite \lato-irreducible implicative lattice}}.\]
\end{IPCcplthm}

\begin{WEMcplthm}[\cite{Jan}]
\[\WEM=\;\bigcap\setof{\Th({\mathstr L})}{{\mathstr L}\hbox{ is a 
finite \latz- and \lato-irreducible implicative lattice}}.\]
\end{WEMcplthm}

The inclusions (${}\incl{}$) are provided by Propositions \ref{IPCinclThL} and \ref{WEMinclThL} respectively, which establish these inclusions without the qualifiers finite or 1-irreducible. Hence the proofs below also establish versions of the Completeness Theorems without one or either of these. To complete the proofs we need, therefore, to show that for each sentence $\phi\notin\IPC$ ($\phi\notin\WEM$) there exists a finite \lato-irreducible (\latz- and \lato-irreducible) lattice ${\mathstr L}_\phi$ and an ${\mathstr L}_\phi$-valuation $v_\phi$ such that $v_\phi(\phi)\not=\lato$.

We begin with the \IPC-Completeness Theorem; in this case the lattices ${\mathstr L}_\phi$ will be finite sublattices of the Lindenbaum lattice ${\mathstr L}_\IPC$, which we proceed to describe. For clarity of exposition we shall sometimes write $\pripc\phi$ ($\prwem\phi$) instead of $\phi\in\IPC$ ($\phi\in\WEM$), particularly when $\phi$ is a long expression.

\begin{definition}
For any propositional sentences $\phi$ and $\psi$,
\begin{align*}
\phi\simeq_\IPC\psi&\dIff\pripc\phi\iff\psi\\
\dgipc\phi&\;:=\setof\psi{\phi\simeq_\IPC\psi}\\
{\mathstr L}_\IPC&\;:=\bigl(\setof{\dgipc\phi}{\phi\in {\ssf PS}},\,\meet,\,\join,\,\latz,\,\lato\bigr),
\end{align*}
where
\begin{align*}
\ipcdg\phi\meet\ipcdg\psi&:=\ipcdg{\phi\land\psi}\\
\ipcdg\phi\join\ipcdg\psi&:=\ipcdg{\phi\lor\psi}\\
\latz&:=\setof\phi{\lnot\phi\in\IPC}=\ipcdg\phi\text{ for any }\phi\text{ such that }\lnot\phi\in\IPC\\
\lato&:=\IPC=\ipcdg\phi\text{ for any }\phi\in\IPC.
\end{align*}
\end{definition}

Of course, the usual verifications are needed here: that ${}\simeq_\IPC{}$ is an equivalence relation and that $\meet$ and $\join$ are well-defined on equivalence classes; we leave these to the reader.

\begin{proposition}
${\mathstr L}_\IPC$ is an implicative lattice.
\end{proposition}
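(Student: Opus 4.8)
The plan is to verify directly from the definitions that the operation
\[\ipcdg\phi\latimpl\ipcdg\psi:=\ipcdg{\phi\implies\psi}\]
is well-defined on $\simeq_\IPC$-classes and gives a genuine implication operator, i.e. for all propositional $\phi,\psi,\chi$,
\[\ipcdg\chi\meet\ipcdg\phi\leq\ipcdg\psi\qquad\Iff\qquad\ipcdg\chi\leq\ipcdg{\phi\implies\psi}.\]
Since all the needed ingredients are theorems of \IPC, everything reduces to quoting standard intuitionistic deductions. First I would record that $\ipcdg\phi\leq\ipcdg\psi$ means by definition $\pripc\phi\implies\psi$ (using that $\meet$ corresponds to $\land$, $\join$ to $\lor$, and that $\pripc(\phi\land\psi)\iff\phi$, so $\ipcdg\phi\meet\ipcdg\psi=\ipcdg\phi$ iff $\pripc\phi\implies\psi$).

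Next I would dispatch well-definedness: if $\pripc\phi\iff\phi'$ and $\pripc\psi\iff\psi'$, then $\pripc(\phi\implies\psi)\iff(\phi'\implies\psi')$, a routine consequence of the \IPC\ axioms for implication and conjunction; hence $\ipcdg{\phi\implies\psi}$ depends only on $\ipcdg\phi$ and $\ipcdg\psi$. Then for the implication property I would use the intuitionistic deduction theorem in its propositional, purely syntactic form — $\pripc\chi\land\phi\implies\psi$ iff $\pripc\chi\implies(\phi\implies\psi)$ — which is itself provable from the first two axiom schemas (the $\mathsf K$ and $\mathsf S$-like axioms) together with the conjunction axioms. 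The left-to-right direction exports the hypothesis $\phi$; the right-to-left direction imports it via modus ponens. Translating back through the dictionary between $\meet/\join$ and $\land/\lor$ and through the definition of $\leq$ in ${\mathstr L}_\IPC$, this is exactly the required equivalence, so $\ipcdg{\phi\implies\psi}$ is the largest $x$ with $\ipcdg\phi\meet x\leq\ipcdg\psi$.

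Finally I would note that ${\mathstr L}_\IPC$ is already known (or is immediate) to be a bounded distributive lattice: $\latz=\ipcdg\phi$ for $\lnot\phi\in\IPC$ is least since $\pripc\lnot\phi\implies(\phi\implies\psi)$ for all $\psi$ and in particular $\latz\leq\ipcdg\psi$; $\lato=\IPC$ is greatest since $\pripc\psi\implies\theta$ whenever $\pripc\theta$; distributivity of $\land$ over $\lor$ and vice versa are intuitionistically valid. Combined with the implication operator just constructed, this makes ${\mathstr L}_\IPC$ an implicative lattice. The only mildly delicate point — and the one I would state carefully rather than wave at — is the intuitionistic deduction theorem at the propositional level, since the whole argument hinges on it; but this is entirely standard and provable by induction on \IPC-derivations using only the $\phi\implies(\psi\implies\phi)$ and $(\phi\implies\psi)\implies[(\phi\implies(\psi\implies\theta))\implies(\phi\implies\theta)]$ schemas. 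No genuine obstacle is expected; the work is bookkeeping in propositional intuitionistic logic.
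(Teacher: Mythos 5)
Your proof is correct and follows essentially the same route as the paper: define $\ipcdg\phi\latimpl\ipcdg\psi:=\ipcdg{\phi\implies\psi}$ and reduce the implication property to the intuitionistic import/export equivalence $\pripc\chi\land\phi\implies\psi\Iff\pripc\chi\implies(\phi\implies\psi)$, with the remaining lattice verifications being routine \IPC\ bookkeeping. Your explicit treatment of well-definedness and of the deduction theorem is slightly more careful than the paper's, but there is no substantive difference.
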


\begin{proof}
We need to check that (omitting the subscript \IPC) for all sentences $\phi$ and $\psi$,
\begin{enumerate} 
\item $\frdg\phi=\frdg\phi\meet\frdg\psi\qIff\frdg\psi=\frdg\phi\join\frdg\psi$;
\item the relation $\phi\leq\psi$ defined by this condition is a partial ordering;
\item $\join$ is the the join (least upper bound) operation and $\meet$ is the the meet (greatest lower bound) operation for this ordering;
\item $\latz$ ($\lato$) is the least (greatest) element for this ordering;
\item there exists an implication operation.
\end{enumerate}
These are all pretty straightforward; for example that $\join$ is a least upper bound requires that 
\begin{gather*}
\pripc\phi\implies\phi\lor\psi\text{\quad and\quad}\pripc\psi\implies\psi\lor\psi;\\
\pripc\phi\implies\theta\text{ and }\pripc \psi\implies\theta\qImplies\pripc\phi\lor\psi\implies\theta.
\end{gather*}
The implication operator is defined in the obvious way:
\[\frdg\phi\latimpl\frdg\psi:=\frdg{\phi\implies\psi}.\]
That this is an implication operator depends on the fact that
\[\pripc\phi\land\theta\implies\psi\qIff\pripc\theta\implies(\phi\implies\psi),\]
which is easy to check.
\QED
\end{proof}

\begin{proposition}
The function $v_\IPC(\phi):=\ipcdg\phi$ is an ${\mathstr L}_\IPC$-valuation.
\end{proposition}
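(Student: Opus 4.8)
The plan is to verify the four defining clauses of an ${\mathstr L}_\IPC$-valuation directly: that for all propositional sentences $\phi,\psi$,
\[v_\IPC(\phi\land\psi)=v_\IPC(\phi)\meet v_\IPC(\psi),\quad v_\IPC(\phi\lor\psi)=v_\IPC(\phi)\join v_\IPC(\psi),\]
\[v_\IPC(\phi\implies\psi)=v_\IPC(\phi)\latimpl v_\IPC(\psi),\quad v_\IPC(\lnot\phi)=\latneg v_\IPC(\phi).\]
Three of these are immediate from the way the operations of ${\mathstr L}_\IPC$ were introduced: by definition $\ipcdg\phi\meet\ipcdg\psi=\ipcdg{\phi\land\psi}$ and $\ipcdg\phi\join\ipcdg\psi=\ipcdg{\phi\lor\psi}$, while the implication operator of ${\mathstr L}_\IPC$ was defined in the preceding proposition by $\ipcdg\phi\latimpl\ipcdg\psi:=\ipcdg{\phi\implies\psi}$. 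Hence the clauses for $\land$, $\lor$ and $\implies$ hold by inspection, and $v_\IPC$ is of course well-defined as a function since each $\phi$ determines a unique class $\ipcdg\phi$.

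The remaining clause, for negation, carries the only subtlety: $\lnot$ is a primitive connective of \IPC, whereas the pseudo-complement on ${\mathstr L}_\IPC$ is defined (as in any implicative lattice) by $\latneg a:=a\latimpl\latz$. Choosing the representative $\p0\land\lnot\p0$ for $\latz$ — legitimate since $\lnot(\p0\land\lnot\p0)\in\IPC$, an easy consequence of the axioms — we compute
\[\latneg v_\IPC(\phi)=\ipcdg\phi\latimpl\ipcdg{\p0\land\lnot\p0}=\ipcdg{\phi\implies(\p0\land\lnot\p0)},\]
so the negation clause reduces to showing that $\lnot\phi$ and $\phi\implies(\p0\land\lnot\p0)$ lie in the same $\simeq_\IPC$-class, i.e.\ $\pripc\lnot\phi\iff\bigl(\phi\implies(\p0\land\lnot\p0)\bigr)$.

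I would establish this interderivability from the \IPC axiom schemas listed above. For $\pripc\lnot\phi\implies\bigl(\phi\implies(\p0\land\lnot\p0)\bigr)$, apply the axiom $\lnot\phi\implies(\phi\implies\psi)$ with $\psi:=\p0\land\lnot\p0$. For the converse, from $\phi\implies(\p0\land\lnot\p0)$ together with $(\p0\land\lnot\p0)\implies\p0$ and $(\p0\land\lnot\p0)\implies\lnot\p0$ one derives $\phi\implies\p0$ and $\phi\implies\lnot\p0$, and then the axiom $(\phi\implies\psi)\implies\bigl[(\phi\implies\lnot\psi)\implies\lnot\phi\bigr]$ with $\psi:=\p0$ yields $\lnot\phi$; routine closure under \emph{modus ponens} (and the standard deduction-style manipulations available in \IPC) packages these into the implications in the required form. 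This short \IPC derivation is the only real content of the proof; the remaining verifications are bookkeeping with the definitions of the operations on ${\mathstr L}_\IPC$.
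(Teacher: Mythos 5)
Your proof is correct and follows essentially the same route as the paper: the clauses for $\land$, $\lor$ and $\implies$ are immediate from the definitions of the operations on ${\mathstr L}_\IPC$, and the only substantive point is the negation clause, which both you and the paper reduce to $\pripc\lnot\phi\iff\bigl(\phi\implies({\ssf p}_0\land\lnot{\ssf p}_0)\bigr)$. You simply spell out the \IPC\ derivation that the paper dismisses as following ``easily from the \IPC\ axioms,'' and your derivation (the axiom $\lnot\phi\implies(\phi\implies\psi)$ for one direction, conjunction elimination plus $(\phi\implies\psi)\implies[(\phi\implies\lnot\psi)\implies\lnot\phi]$ for the other) is sound.
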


\begin{proof}
The required conditions for $\land$, $\lor$ and $\implies$ are immediate from the definitions. For $\lnot$ we need that
\[v_\IPC(\lnot\phi):=\ipcdg{\lnot\phi}=\latneg\ipcdg\phi=:\ipcdg\phi\latimpl\latz=\ipcdg{\phi\implies({\ssf p}_0\land\lnot{\ssf p}_0)},\]
or equivalently,
\[\pripc\lnot\phi\iff\bigl(\phi\implies({\ssf p}_0\land\lnot{\ssf p}_0)\bigr),\]
which follows easily from the \IPC\ axioms.
\QED
\end{proof}

\begin{corollary}\label{wkIPC}
$\IPC=\;\bigcap\setof{\Th({\mathstr L})}
{{\mathstr L}\hbox{ is a \lato-irreducible implicative lattice}}$
\end{corollary}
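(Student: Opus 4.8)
The plan is to prove the two inclusions of the claimed equality separately; the hard direction will be reduced, one non-theorem of \IPC\ at a time, to exhibiting a suitable counter-lattice. For $\IPC\incl\bigcap\setof{\Th({\mathstr L})}{{\mathstr L}\hbox{ a \lato-irreducible implicative lattice}}$ there is nothing new to do: by Proposition \ref{IPCinclThL}(i) we have $\IPC\incl\Th({\mathstr L})$ for \emph{every} implicative lattice ${\mathstr L}$, a fortiori for every \lato-irreducible one, and an intersection of a family of sets each containing \IPC\ still contains \IPC. For the reverse inclusion it suffices to show that for each propositional sentence $\phi\notin\IPC$ there exist a \lato-irreducible implicative lattice ${\mathstr L}$ and an ${\mathstr L}$-valuation $v$ with $v(\phi)\not=\lato$.

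I would take ${\mathstr L}={\mathstr L}_\IPC$ and $v=v_\IPC$, the Lindenbaum lattice and its canonical valuation. The two preceding propositions give that ${\mathstr L}_\IPC$ is an implicative lattice and that $v_\IPC$ is an ${\mathstr L}_\IPC$-valuation; moreover $v_\IPC(\phi)=\ipcdg\phi$, which equals the top element $\lato=\IPC$ of ${\mathstr L}_\IPC$ exactly when $\pripc\phi$. Hence $\phi\notin\IPC$ already yields $v_\IPC(\phi)\not=\lato$, i.e.\ $\phi\notin\Th({\mathstr L}_\IPC)$. The one remaining ingredient — and the crux of the whole argument — is that ${\mathstr L}_\IPC$ is \lato-irreducible.

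Unwinding the definition, \lato-irreducibility of ${\mathstr L}_\IPC$ (that $\ipcdg\psi\join\ipcdg\chi=\lato$ forces $\ipcdg\psi=\lato$ or $\ipcdg\chi=\lato$) is precisely the \dff{disjunction property} $\pripc\psi\lor\chi\Implies(\pripc\psi\hbox{ or }\pripc\chi)$ of \IPC. One may simply cite this as a classical fact; alternatively it can be derived inside the present framework. Given $\notpripc\psi$ and $\notpripc\chi$, adjoin a fresh top element $\top$ above the product ${\mathstr L}_\IPC\times{\mathstr L}_\IPC$; a routine check (adjoining a top preserves distributivity and implicativity, with $x\latimpl y=\top$ when $x\leq y$ and the componentwise implication otherwise) shows the result ${\mathstr M}$ is an implicative lattice in which $\top$ is join-irreducible, since the join of two elements of the product stays inside the product. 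Take the ${\mathstr M}$-valuation $w$ with $w(\p n)=(v_\IPC(\p n),v_\IPC(\p n))$. An induction on sentence structure shows that for every $\theta$ the value $w(\theta)$ is either $\top$ or the diagonal $(v_\IPC(\theta),v_\IPC(\theta))$, and that $w(\theta)=\top$ entails $v_\IPC(\theta)=\lato$, i.e.\ $\pripc\theta$; the cases needing care are $\implies$ and $\lnot$ (using $a\latimpl b=\lato\iff a\leq b$ in ${\mathstr L}_\IPC$) and $\lor$ (where the product-join of two non-top diagonals can reach $(\lato,\lato)$ but never the new $\top$). Consequently $w(\psi)\not=\top$ and $w(\chi)\not=\top$, so $w(\psi\lor\chi)=w(\psi)\join w(\chi)\not=\top$ by join-irreducibility; since $\IPC\incl\Th({\mathstr M})$ by Proposition \ref{IPCinclThL}(i), it follows that $\psi\lor\chi\notin\Th({\mathstr M})$, hence $\psi\lor\chi\notin\IPC$, which is the disjunction property.

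The main obstacle is exactly this last step — the \lato-irreducibility of ${\mathstr L}_\IPC$, equivalently the disjunction property for \IPC; it is the only part of the argument not reducible to bookkeeping with the definitions set up above.
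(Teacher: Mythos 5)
Your proof is correct and follows the paper's own argument exactly: take the Lindenbaum lattice ${\mathstr L}_\IPC$ with its canonical valuation $v_\IPC$, observe that $\phi\notin\IPC$ gives $v_\IPC(\phi)\not=\lato$, and reduce the whole corollary to the \lato-irreducibility of ${\mathstr L}_\IPC$, which is precisely the disjunction property of \IPC. The only divergence is that the paper simply cites the disjunction property (Kleene \S80, Rasiowa--Sikorski XI.6.1), whereas you sketch the standard algebraic ``gluing'' proof by adjoining a fresh top $\top$ above ${\mathstr L}_\IPC\times{\mathstr L}_\IPC$; that sketch is sound, the only slight imprecision being that for the adjoined top the implication should read $\top\latimpl y=y$ (for $y\not=\top$) rather than a ``componentwise'' implication, which is exactly what your induction step for $\implies$ needs.
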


\begin{proof}
Clearly for any $\phi\notin\IPC$, $v_\IPC(\phi)\not=\lato$ so $\phi\notin\Th({\mathstr L}_\IPC)$. That ${\mathstr L}_\IPC$ is \hfill\break
1-irreducible is a standard, although not trivial, result about \IPC\ (the \dff{disjunction property})
\[\pripc\phi\lor\psi\qIff\pripc\phi\text{ or }\pripc\psi,\]
(see \cite[Theorem 57, \S80]{Kl} or \cite[XI.6.1]{RaSi}).
\QED
\end{proof}

To complete the proof of the \IPC-Completeness Theorem we shall show that for $\phi\notin\IPC$ there exists a finite sublattice ${\mathstr L}_\phi$ of ${\mathstr L}_\IPC$ and an ${\mathstr L}_\phi$-valuation $v_\phi$ such that $v_\phi(\phi)\not=\lato$. The idea is to in some sense ``generate" ${\mathstr L}_\phi$ from 
\[\setof{\ipcdg\psi}{\psi\text{ is a subsentence of }\phi}.\]
Using a finite set to generate a finite substructure of a distributive lattice or a Boolean algebra, which is also a distributive lattice or a Boolean algebra, is a simple and familiar process. For example, if ${\mathstr B}=(B,\meet,\join,\latneg,\latz,\lato)$ is a Boolean algebra and $A$ is a finite subset of $B$, then we can describe a finite Boolean subalgebra of ${\mathstr B}$ as follows. For $U\incl A\cup\set{\latz,\lato}$, set 
\[U^{\join}:=\bigjoin\setof{\latneg a}{a\in A\setminus U}\join\bigjoin U\]
and for ${\cal U}\incl\power(A\cup\set{\latz,\lato})$, ${\cal U}^{\meet\join}:=\bigmeet\setof{U^{\join}}{U\in{\cal U}}$. Then it follows from the distributive and DeMorgan laws that 
$B_A:=\setof{{\cal U}^{\meet\join}}{{\cal U}\incl\power(A\cup\set{\latz,\lato})}$ is closed under $\meet$, $\join$ and $\latneg$ and with the restrictions of these operations is a Boolean subalgebra ${\mathstr B}_A$ of $\mathstr B$. Clearly $B_A$ includes $A$ and is finite with at most $2^{2^{|A|+2}}$ elements. This is just the construction of the \dff{conjunctive normal form} in propositional logic. A similar construction using disjunctive normal form works the same way.

For a finite subset $A$ of a distributive lattice ${\mathstr L}$, we may similarly construct a finite sublattice ${\mathstr L}_A$ by removing reference to the operation $\latneg$ and relying on the distributive laws. However, in the case at hand, ${\mathstr L}$ is also implicative and the desired finite sublattice must also be implicative with an implication closely enough related to that of ${\mathstr L}$ to achieve the result $v_\phi(\phi)\not=\lato$. In the cases described above, we  simply closed the set $A$ under the operations $\meet$, $\join$ and in the case of a Boolean algebra also $\latneg$ and could achieve this in ``one step", thus preserving finiteness. However, a parallel attempt to close a set $A$ also under $\latimpl$ does not succeed in one step because there are in general no distributive laws relating $\meet$ and $\join$ to $\latimpl$, so we would seemingly need to iterate the map $(a,b)\mapsto a\latimpl b$ infinitely often to reach a set that is closed. This would in general fail to produce a finite sublattice. The solution to this problem we outline here is essentially that presented in \cite{RaSi} collecting the precursors to the proof of IX.3.1 of that text.

\begin{definition}\label{interior}
For any Boolean algebra ${\mathstr B}=(B,\meet,\join,\latneg,\latz,\lato)$, an \dff{interior operator} on $\mathstr B$ is a function $I:B\to B$ such that for all $a,b\in B$,
\begin{enumerate}
\item $I(a\meet b)=I(a)\meet I(b)$;
\item $I(a)\leq a$;
\item $I(I(a))=I(a)$;
\item $I(\lato)=\lato$.
\end{enumerate}
\end{definition}

\begin{remark}
For a topological space $(T,{\cal O})$, the (topological) interior operator defined by
\[I(X):=\bigcup\setof{A\in{\cal O}}{A\incl X}\]
is an interior operator (in the current sense) on the Boolean algebra 
\[(\power(T),\cup,\cap,-,\emptyset,T).\]
Conversely, for any interior operator on this Boolean algebra, $(T,{\cal O}_I)$, where
\[{\cal O}_I:={\ssf Im}_I(\power(T)):=\setof{I(X)}{X\incl T},\]
is a topological space. The following lemma is the version of this that we need here.
\end{remark}

\begin{lemma}\label{intimpl}
For any Boolean algebra ${\mathstr B}=(B,\meet,\join,\latneg,\latz,\lato)$ and any interior operator $I$ on $\mathstr B$, 
\[{\cal O}_I({\cal B}):=({\ssf Im}_I(B),\meet,\join,\latz,\lato)\]
is an implicative lattice with implication operator
\[I(a)\latimpl_II(b):=I\bigl(\latneg I(a)\join I(b)\bigr).\]
\end{lemma}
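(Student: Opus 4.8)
The goal is to show that $\mathcal{O}_I(\mathcal{B})=(\mathrm{Im}_I(B),\meet,\join,\latz,\lato)$ is an implicative lattice with implication $I(a)\latimpl_I I(b):=I(\latneg I(a)\join I(b))$. I would proceed in three stages: first check that $\mathrm{Im}_I(B)$ is closed under the lattice operations and contains $\latz,\lato$, so that $\mathcal{O}_I(\mathcal{B})$ is a genuine sublattice-like structure (it is a lattice, though $\join$ need not be the Boolean join restricted — but for meet it is); second, verify it is bounded and distributive; third, verify the implication identity, i.e.\ that for all $a,b,c\in B$,
\[
I(c)\meet I(a)\leq I(b)\qIff I(c)\leq I\bigl(\latneg I(a)\join I(b)\bigr).
\]

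For the first stage: $\lato=I(\lato)\in\mathrm{Im}_I(B)$ by axiom (4), and $\latz=I(\latz)$ because axiom (2) forces $I(\latz)\leq\latz$. Closure under $\meet$ is exactly axiom (1): $I(a)\meet I(b)=I(a\meet b)\in\mathrm{Im}_I(B)$. For $\join$, the subtle point is that the join inside $\mathrm{Im}_I(B)$ is \emph{not} the Boolean join in general; rather, one should define $I(a)\join_I I(b)$ to be the least element of $\mathrm{Im}_I(B)$ above both, and show it exists. The natural candidate is the Boolean join $I(a)\join I(b)$ itself once one observes (using monotonicity of $I$, which follows from axiom (1) since $x\leq y\Rightarrow x=x\meet y\Rightarrow I(x)=I(x)\meet I(y)\leq I(y)$, together with axiom (3)) that $I(a)\join I(b)$ is already a fixed point of $I$: indeed $I(a)\join I(b)\leq I(I(a)\join I(b))\leq I(a)\join I(b)$, where the first inequality needs a small argument. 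Actually the cleanest route is: any upper bound $I(c)$ of $I(a)$ and $I(b)$ in $\mathrm{Im}_I(B)$ satisfies $I(a)\join I(b)\leq I(c)$, and $I(a)\join I(b)=I(I(a)\join I(b))$ follows by applying $I$ and using $I(a)=I(I(a))\leq I(I(a)\join I(b))$ (monotonicity) for each disjunct. So $\join_I$ coincides with the Boolean $\join$ on $\mathrm{Im}_I(B)$.

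For distributivity: once $\meet$ and $\join$ on $\mathrm{Im}_I(B)$ are just the restrictions of the Boolean operations, distributivity is inherited from $\mathcal{B}$ for free. Boundedness is the first stage. So the only real content is the implication identity. For the forward direction ($\Rightarrow$): assume $I(c)\meet I(a)\leq I(b)$. By Boolean algebra, $I(c)\leq\latneg I(a)\join I(b)$ (standard: $x\meet y\leq z\iff x\leq\latneg y\join z$ in any Boolean algebra). Since $I(c)\in\mathrm{Im}_I(B)$, $I(c)=I(I(c))\leq I(\latneg I(a)\join I(b))$ by monotonicity. For the reverse ($\Leftarrow$): assume $I(c)\leq I(\latneg I(a)\join I(b))$. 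Then $I(c)\meet I(a)\leq I(\latneg I(a)\join I(b))\meet I(a)=I\bigl((\latneg I(a)\join I(b))\meet I(a)\bigr)$ by axiom (1), and $(\latneg I(a)\join I(b))\meet I(a)=I(b)\meet I(a)\leq I(b)$ by Boolean distributivity, so $I(c)\meet I(a)\leq I(b)$ using monotonicity of $I$ and $I(I(b))=I(b)$. Hence $I(\latneg I(a)\join I(b))$ is indeed the largest $x\in\mathrm{Im}_I(B)$ with $I(a)\meet x\leq I(b)$, which is the defining property of the implication operator.

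**Main obstacle.** The only place requiring care is confirming that $\join$ on $\mathrm{Im}_I(B)$ really is the Boolean join — that $\mathrm{Im}_I(B)$ is closed under the Boolean $\join$ — since the four interior-operator axioms only mention $\meet$ directly. The key sublemma is that $I$ is monotone and that $I(a)\join I(b)$ is $I$-closed; establishing $I(a)\join I(b)=I(I(a)\join I(b))$ from axioms (1)–(4) is a two-line argument but is the conceptual crux, after which everything else is routine Boolean-algebra manipulation. (If instead one keeps the definition of $\join_I$ as ``least upper bound within $\mathrm{Im}_I$,'' then one must exhibit it, and the natural exhibit is again $I(a\join b)$, with a short check that it dominates $I(a),I(b)$ and lies below every $I$-closed upper bound.)
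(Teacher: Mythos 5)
Your proposal is correct and follows essentially the same route as the paper: establish monotonicity of $I$ from axiom (i), use it with (ii) and (iii) to show $I(a)\join I(b)$ is $I$-closed so that $\join$ on ${\ssf Im}_I(B)$ is the Boolean join, and then verify the implication identity by the same Boolean manipulation (the forward direction via $x\meet y\leq z\Iff x\leq\latneg y\join z$ plus monotonicity, the reverse via distributing $I(a)$ over $\latneg I(a)\join I(b)$). The step you flag as the crux --- closure of ${\ssf Im}_I(B)$ under $\join$ --- is exactly the step the paper isolates as well.
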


\begin{proof}
Fix ${\mathstr B}$, $I$ and ${\cal O}_I({\cal B})$ as in the hypothesis. Clearly ${\ssf Im}_I(B)$ contains $\latz$ and $\lato$ by (ii) and (iv) of the definition and is closed under $\meet$ by (i). For closure under $\join$, note first that for any $a,b\in B$, using (i),
\begin{align*} 
a\leq b\Iff a=a\meet b&\Implies I(a)=I(a\meet b)=I(a)\meet I(b)\tag{v}\\
&\Implies I(a)\leq I(b).
\end{align*} 
Hence by (iii), $I(a)=I(I(a))\leq I(I(a)\join I(b))$ and similarly for $b$ so by (ii),
\[I(a)\join I(b)\leq I(I(a)\join I(b))\leq I(a)\join I(b),\]
and $I(a)\join I(b)=I(I(a)\join I(b))\in{\ssf Im}_I(B)$. 

Finally we verify that $\latimpl_I$ is an implication --- that is, for any $a,b,x\in B$,
\[I(a)\meet I(x)\leq I(b)\qIff I(x)\leq I(\latneg I(a)\join I(b)).\]
For (\Rarrow) we have by (v),
\begin{align*} 
I(a)\meet I(x)\leq I(b)&\Implies I(x)\leq \latneg I(a)\join I(b)\\
&\Implies I(x)=I(I(x))\leq I(\latneg I(a)\join I(b)).
\end{align*} 
For (\Larrow) it suffices to show that
\[I(a)\meet I(\latneg I(a)\join I(b))\leq I(b).\]
By (iii) and (i) the left-hand side is
\begin{align*}
I(I(a))\meet I(\latneg I(a)\join I(b))&=I((I(a)\meet\latneg I(a))\join(I(a)\meet I(b)))\\
&=I(\latz\join(I(a)\meet I(b)))\\
&\leq I(I(b))=I(b)
\end{align*}
using distribution, (v) and again (iii).
\QED
\end{proof}

\begin{lemma}
For any implicative lattice ${\mathstr L}=(L,\meet,\join,\latz,\lato)$ with implication $\latimpl$, there exists a Boolean algebra  ${\mathstr B}=(B,\meet,\join,\latneg,\latz,\lato)$ and an interior operator $I$ on $\mathstr B$ such that ${\mathstr L}={\cal O}_I({\mathstr B})$ and $\latimpl$ coincides with $\latimpl_I$.
\end{lemma}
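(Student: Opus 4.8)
The plan is to realize ${\mathstr L}$ as the lattice of ``open'' elements of an interior operator on the \emph{free Boolean extension} of the bounded distributive lattice underlying ${\mathstr L}$. First I would invoke the classical fact that every bounded distributive lattice has such an extension: a Boolean algebra ${\mathstr B}=(B,\meet,\join,\latneg,\latz,\lato)$ together with a $\meet$-, $\join$-, $\latz$- and $\lato$-preserving embedding $e\colon L\to B$ whose image generates $B$ under the Boolean operations (one concrete model takes $B$ to be the field of subsets of the set of prime filters of $L$ generated by the sets $\setof P{a\in P}$, with $e(a):=\setof P{a\in P}$; see \cite{RaSi}). Identifying $L$ with $e(L)\incl B$, the operations $\meet$, $\join$, $\latz$, $\lato$ of ${\mathstr L}$ become the restrictions to $L$ of the corresponding operations of ${\mathstr B}$.

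Next I would define $I\colon B\to B$ by letting $I(x)$ be the largest $c\in L$ with $c\leq x$ in ${\mathstr B}$; the crucial step is proving that this largest element exists. Since $L$ generates ${\mathstr B}$ and ${\mathstr B}$ obeys the distributive and DeMorgan laws, every $x\in B$ has a conjunctive normal form
\[x=\bigmeet_{i<n}(a_i\join\latneg b_i),\qquad a_i,b_i\in L.\]
For $c\in L$, one has $c\leq a_i\join\latneg b_i$ in ${\mathstr B}$ iff $c\meet b_i\leq a_i$ iff $c\leq b_i\latimpl a_i$, using that ${\mathstr L}$ is implicative and that $e$ preserves $\meet$; hence $c\leq x$ iff $c\leq\bigmeet_{i<n}(b_i\latimpl a_i)$. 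Moreover $\bigmeet_{i<n}(b_i\latimpl a_i)$ itself lies below $x$, because $(b_i\latimpl a_i)\meet b_i\leq a_i$ forces $b_i\latimpl a_i\leq a_i\join\latneg b_i$ in ${\mathstr B}$. So $I(x)=\bigmeet_{i<n}(b_i\latimpl a_i)\in L$ is well defined; in particular $I(a)=a$ for every $a\in L$.

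It then remains only to check the easy points. That $I$ is an interior operator in the sense of Definition \ref{interior} is routine: $I(x)\leq x$ and $I(\lato)=\lato$ are immediate, $I(I(x))=I(x)$ holds since $I(x)\in L$, and $I(x\meet y)=I(x)\meet I(y)$ holds since $I(x)\meet I(y)\in L$ lies below $x\meet y$ while $I(x\meet y)\in L$ lies below each of $x$ and $y$. From $I(a)=a$ on $L$ we get ${\ssf Im}_I(B)=L$, so by Lemma \ref{intimpl} the underlying lattice of ${\cal O}_I({\mathstr B})$ is $(L,\meet,\join,\latz,\lato)$ with the operations inherited from ${\mathstr B}$, which is exactly ${\mathstr L}$. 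Finally, for $a,b\in L$ the implication of ${\cal O}_I({\mathstr B})$ is $a\latimpl_I b=I(\latneg I(a)\join I(b))=I(b\join\latneg a)$, and the normal-form computation (with $n=1$, $a_1=b$, $b_1=a$) evaluates this to $a\latimpl b$; thus $\latimpl_I$ coincides with $\latimpl$ on $L$.

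The main obstacle is the normal-form step, i.e.\ identifying $I(x)$ explicitly as $\bigmeet_{i<n}(b_i\latimpl a_i)$: this is the point where the Heyting implication of ${\mathstr L}$ is essential, and everything downstream --- that ${\ssf Im}_I(B)=L$ and that $\latimpl_I$ restricts to $\latimpl$ --- rests on it. The existence of the free Boolean extension, though also needed, is entirely classical and I would simply cite it rather than reprove it.
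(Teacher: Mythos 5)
Your proposal is correct and follows essentially the same route as the paper: both pass to the Boolean algebra generated by the prime-filter representation of $\mathstr L$, write its elements in the normal form $\bigmeet_{i<n}(a_i\join\latneg b_i)$ with $a_i,b_i\in L$, and take $I$ of such an element to be $\bigmeet_{i<n}(b_i\latimpl a_i)$. The only (cosmetic) difference is that you define $I(x)$ intrinsically as the largest element of $L$ below $x$, which makes well-definedness automatic, whereas the paper defines $I$ on representatives and then verifies independence of the representation.
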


\begin{proof}
Given ${\mathstr L}$ and $\latimpl$ we shall construct $\mathstr B$ and $I$ such that ${\mathstr L}\iso{\cal O}_I({\mathstr B})$ under an isomorphism that maps $\latimpl$ onto $\latimpl_I$; the statement as given follows by a standard procedure. We define first a larger Boolean algebra $\mathstr C$ as follows. A \dff{filter} on $\mathstr L$ is any non-empty proper subset $\Delta\subset L$ such that for $a,b\in L$,
\[a\in\Delta\text{ and }a\leq b\Implies b\in\Delta\qand a,b\in\Delta\Implies a\meet b\in\Delta.\]
A filter $\Delta$ is \dff{prime} iff additionally
\[a\join b\in\Delta\qIff a\in\Delta\text{ or }b\in\Delta.\]
Let {\ssf PF} denote the set of all prime filters on $\mathstr L$ and set
\[C:=\power({\ssf PF})\qand {\mathstr C}:=(C,\cap,\cup,-,\emptyset ,{\ssf PF}).\]
Define an embedding $\eta:L\to C$ by
\[\eta(a):=\setof{\Delta\in{\ssf PF}}{a\in\Delta}.\]
Easily $\eta(\latz)=\emptyset$ (since $\latz\in\Delta\Implies \Delta=L$) and $\eta(\lato)={\ssf PF}$. It also follows easily from the properties of prime filters that
\[\eta(a\meet b)=\eta(a)\cap\eta(b)\qand \eta(a\join b)=\eta(a)\cup\eta(b).\]
To see that $\eta$ is injective, suppose that $a\not=b$, say $a\not\leq b$, and consider
\[D:=\setof{\Delta}{\Delta\text{ is a filter on }L,\ a\in\Delta\text{ and }b\notin \Delta}.\]
$\Delta\not=\emptyset$ since $\Delta_a:=\setof b{a\leq b}\in D$. Easily the union of a chain of filters is a filter, so by Zorn's Lemma, $D$ has a maximal element $\overline\Delta$. $\overline\Delta$ is prime, since if $c_0\join c_1\in\overline\Delta$ but $c_0\not\in\overline\Delta$ and $c_1\not\in\overline\Delta$, then consider the filters (i=0,1)
\[\Delta_i:=\setof e{(\exists d\in\overline\Delta)
;c_i\meet d\leq e}.\]
Both $\Delta_0$ and $\Delta_1$ properly extend $\overline\Delta$ so do not belong to $D$ and thus $b$ belongs to both, say $c_i\meet d_i\leq b$. Then $(c_0\join c_1)\meet(d_0\meet d_1)\leq b$, so $b\in\overline\Delta$ contrary to the choice of $\overline\Delta\in D$. Hence $\overline\Delta\in\eta(a)\setminus\eta(b)$ so $\eta(a)\not=\eta(b)$.

Let $M:={\ssf Im}_\eta(L)$ and ${\mathstr M}:=(M,\cap,\cup,\emptyset,{\ssf PF})$; $\eta$ is an isomorphism ${\mathstr L}\iso{\mathstr M}$ and carries $\latimpl$ onto a relation $\latimpl_M$ that makes $\mathstr M$ an implicative lattice, since $\latimpl$ is definable from $\meet$. Now we define $\mathstr B$ to be the Boolean subalgebra of $\mathstr C$ generated by $M$. This could be described as in the discussion preceding Definition \ref{interior} (replacing $U$ and $A\setminus U$ by pairs of finite subsets of $M$), but using the additional information here that $\mathstr M$ is a lattice gives us the simpler definition ${\mathstr B}:=(B,\cap,\cup,-,\emptyset,{\ssf PF})$ where
\[B:=\setof{\bigcap_{i<k}(-u_i\cup v_i)}{k\in\omega\text{ and }(\forall i<k)\;u_i,v_i\in M}.\]

We define a function $I:B\to M$ by
\[I\Bigl(\bigcap_{i<k}(-u_i\cup v_i)\Bigr):=\bigcap_{i<k}(u_i\latimpl_M v_i).\]
We need first to verify that this is well-defined. Note that for $u,v\in M$, $u\cap(u\latimpl_Mv)\incl v$ (because $\latimpl_M$ is an implication) so
\[u\latimpl_Mv\incl-u\cup v.\tag{*}\]
Next we have for $u,u_i,v\text{ and }v_i\in M$,
\begin{align*}
\bigcap_{i<k}(-u_i\cup v_i)\incl -u\cup v
&\Implies\bigcap_{i<k}(-u_i\cup v_i)\cap u\incl v\\
&\Implies\bigcap_{i<k}(u_i\latimpl_M v_i)\cap u\incl v\\
&\Implies\bigcap_{i<k}(u_i\latimpl_M v_i)\incl u\latimpl_M v.
\end{align*}
It follows immediately that for $u_i,v_i,u'_i,v'_i\in M$,
\[\bigcap_{i<k}(-u_i\cup v_i)\incl\bigcap_{i<k'}(-u'_i\cup v'_i)\Implies
\bigcap_{i<k}(u_i\latimpl_M v_i)\incl\bigcap_{i<k'}(u'_i\latimpl_M v'_i),\]
so
\[\bigcap_{i<k}(-u_i\cup v_i)=\bigcap_{i<k'}(-u'_i\cup v'_i)\Implies
\bigcap_{i<k}(u_i\latimpl_M v_i)=\bigcap_{i<k'}(u'_i\latimpl_M v'_i),\]
which is exactly the statement that $I$ is well-defined.
Now ${\ssf Im}_I(B)\incl M$ (since $M$ is closed under $\latimpl_M$) and for $v\in M$,
\[I(v)=I(-{\ssf PF}\cup v)={\ssf PF}\latimpl_Mv=v,\]
so $M={\ssf Im}_I(B)$ and $I$ has property (iii) of Definition \ref{interior}. Property (i) is immediate and property (ii) follows from (*). $I$ has property (iv) because
\[I({\ssf PF})=I(-{\ssf PF}\cup {\ssf PF})={\ssf PF}\latimpl_M{\ssf PF}={\ssf PF}.\]
Thus $I$ is an interior operator and ${\mathstr M}={\cal O}_I({\mathstr B})$. That $\latimpl_M$ coincides with $\latimpl_I$ is immediate from the definitions.
\QED
\end{proof}

\begin{proposition}
For any implicative lattice ${\mathstr L}=(L,\meet,\join,\latz,\lato)$ with implication $\latimpl$ and any finite set $A\incl L$, there exists a finite set $L_A\incl L$ and an operator $\latimpl_A$ on $L_A$ such that $A\cup\set{\latz,\lato}\incl L_A$ and
\begin{enumerate}
\item${\mathstr L}_A:=(L_A,\meet,\join,\latz,\lato)$ is a sublattice of $\mathstr L$;
\item ${\mathstr L}_A$ is an implicative lattice with implication $\latimpl_A$;
\item for all $a,b\in A\cup\set{\latz,\lato}$,
\[a\latimpl b\in A\qImplies a\latimpl_A b=a\latimpl b.\]
\end{enumerate}
\end{proposition}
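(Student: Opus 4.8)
The plan is to take $L_A$ to be the sublattice of $\mathstr L$ \emph{generated} by $A\cup\set{\latz,\lato}$, equipped not with a restriction of the ambient $\latimpl$ but with its own intrinsic Heyting implication. First I would record that $L_A$ is finite: since $\mathstr L$ is distributive, every element of the generated sublattice is a finite join of finite meets of elements of $A\cup\set{\latz,\lato}$, and the distributive laws leave only finitely many such elements (equivalently, ${\mathstr L}_A$ is a homomorphic image of the free distributive lattice on $|A|+2$ generators, which is finite). Thus ${\mathstr L}_A:=(L_A,\meet,\join,\latz,\lato)$ is a finite bounded distributive lattice whose operations, bounds, and order are all inherited from $\mathstr L$, so it is a sublattice of $\mathstr L$; this is clause (1).

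For clause (2) I would invoke the elementary fact that every finite distributive lattice is implicative, exhibited here by
\[a\latimpl_A b:=\bigjoin\setof{x\in L_A}{a\meet x\leq b},\]
which is well-defined since the join ranges over a subset of the finite set $L_A$. That this is an implication operator for ${\mathstr L}_A$ is a one-line finite-distributivity computation: $a\meet(a\latimpl_A b)=\bigjoin\setof{a\meet x}{x\in L_A,\ a\meet x\leq b}\leq b$, so $y\leq a\latimpl_A b$ yields $a\meet y\leq b$, while conversely $a\meet y\leq b$ places $y$ in the defining set. (The meet and the order appearing in the defining set are those of $\mathstr L$, but since $L_A$ is a sublattice they agree with those of ${\mathstr L}_A$.)

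The substantive clause is (3). Fix $a,b\in A\cup\set{\latz,\lato}$ with $e:=a\latimpl b\in A$, so $e\in A\incl L_A$. Since $a\meet e\leq b$ holds in $\mathstr L$, the element $e$ lies in the set defining $a\latimpl_A b$, hence $e\leq a\latimpl_A b$. Conversely, $a\latimpl_A b$ is an element of $L_A\incl L$ with $a\meet(a\latimpl_A b)\leq b$ by the computation above, whereas $a\latimpl b$ is by definition the largest element of all of $L$ with that property; therefore $a\latimpl_A b\leq a\latimpl b=e$, and equality follows. I do not anticipate a real obstacle: the difficulty flagged just before the proposition --- that closing $A$ under $\latimpl$ would seemingly require infinitely many iterations --- is sidestepped by \emph{not} demanding that $L_A$ be closed under the ambient $\latimpl$ and using instead the intrinsic implication of the finite lattice ${\mathstr L}_A$, and the hypothesis ``$a\latimpl b\in A$'' is precisely what forces the two implications to agree where clause (3) requires. (Alternatively one could route through the interior-operator representation of the two preceding lemmas by passing to a finite Boolean subalgebra of the representing algebra with its induced interior operator; but that would in addition require arranging the subalgebra to be closed under the interior operator, which the direct argument avoids.)
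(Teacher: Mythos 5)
Your proof is correct, and it reaches the paper's conclusion by a genuinely more elementary route. The paper first develops the Rasiowa--Sikorski representation of an implicative lattice as the interior-operator-fixed elements of a Boolean algebra, passes to the finite Boolean subalgebra ${\mathstr B}_A$ generated by $A\cup\set{\latz,\lato}$, and equips it with the interior operator $J(u):=\bigjoin\setof{\bigmeet W}{W\incl A\cup\set{\latz,\lato}\text{ and }\bigmeet W\leq u}$; the lattice $L_A={\ssf Im}_J(B_A)$ with implication $J(u)\latimpl_AJ(v):=J(-J(u)\join J(v))$ then falls out of the general lemma on interior operators, and clause (iii) is extracted from the observation that $I(u)=J(u)$ whenever $I(u)\in A\cup\set{\latz,\lato}$. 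Your $L_A$ is literally the same set --- the image of $J$ is exactly the set of joins of meets of generators, i.e.\ the sublattice generated by $A\cup\set{\latz,\lato}$ --- and since an implication operator on a lattice is unique when it exists, your $\latimpl_A$ coincides with the paper's. What you do differently is to bypass the Boolean-algebra machinery entirely: finiteness comes from the finiteness of the free distributive lattice on finitely many generators, implicativity from the standard fact that every finite distributive lattice is a Heyting algebra via $a\latimpl_Ab:=\bigjoin\setof{x\in L_A}{a\meet x\leq b}$ (the finite distributive law giving $a\meet(a\latimpl_Ab)\leq b$), and clause (iii) from your two-line sandwich argument, which in fact works for arbitrary $a,b\in L_A$ with $a\latimpl b\in L_A$. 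This is shorter and self-contained; what the paper's detour buys is that the interior-operator lemmas are classical background it wants on record anyway, but for this proposition alone your argument is the more economical one, and your closing parenthetical correctly identifies why the feared need to iterate $\latimpl$ never arises.
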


\begin{proof}
Fix $\mathstr L$, $\latimpl$ and $A$ as in the hypothesis. By the preceding lemma there exists a Boolean algebra $\mathstr B$ and an interior operator $I$ in $\mathstr B$ such that $\mathstr L={\cal O}_I({\mathstr B})$ and $\latimpl$ coincides with $\latimpl_I$. Let ${\mathstr B}_A$ be the finite Boolean subalgebra of $\mathstr B$ defined as in the discussion preceding Defnition \ref{interior}. For $u\in B_A$, set
\[J(u):=\bigjoin\setof{\bigmeet W}{W\incl A\cup\set{\latz,\lato}\text{ and }\bigmeet W\leq u}.\]
It is easy to check that $J$ is an interior operator on ${\mathstr B}_A$. Since $J(u)\in L$, $I(J(u))=J(u)$, so from $J(u)\leq u$ we deduce that $J(u)\leq I(u)$. On the other hand, if $I(u)\in  A\cup\set{\latz,\lato}$, since $I(u)\leq u$, we have $I(u)\leq J(u)$. Thus
\[(\forall u\in B_A)\;[\;I(u)\in A\cup\set{\latz,\lato}\qImplies I(u)=J(u)].\tag{**}\]
Set $L_A:={\ssf Im}_J(B_A)$. Then easily ${\mathstr L}_A={\cal O}_J({\mathstr B}_A)$ and is a finite sublattice of $\mathstr L$. As in the proof of Lemma \ref{intimpl}, ${\mathstr L}_A$ is implicative with implication
\[J(u)\latimpl_AJ(v):=J(-J(u)\join J(v)).\]
For any $u,v\in A\cup\set{\latz,\lato}$, if
\[J(u)\latimpl J(v)=I(-J(u)\join J(v))\in A\cup\set{\latz,\lato},\]
then by (**), $J(u)\latimpl_A J(v)=J(u)\latimpl J(v)$ as desired.
\QED
\end{proof}

\begin{proof}[\ifautoproof\else Proof \fi of the \IPC-Completeness Theorem]
As in the proof of Corollary \ref{wkIPC} and the following discussion, choose $\phi\notin\IPC$ and set
\[A_\phi:=\setof{\ipcdg\psi}{\psi\text{ is a subsentence of }\phi}.\]
$A_\phi$ is a finite subset of $L_\IPC$. Let ${\mathstr L}_\phi$ and $\latimpl_\phi$ be as in the proposition and $v_\phi$ the unique ${\mathstr L}_\phi$-valuation such that for atomic propositional sentences {\ssf p}.
\[v_\phi({\ssf p})=
\cases{
\ipcdg{\ssf p},&\text{if $\ipcdg{\ssf p}\in A_\phi$;}\cr
\noalign{\smallskip}
\lato_\IPC,&\text{otherwise.}\cr
}\]
A straightforward induction, using (iii) of the proposition, shows that for all subsentences $\psi$ of $\phi$,
$v_\phi(\psi)=v_\IPC(\psi)$. In particular, $v_\phi(\phi)\not=\lato_\IPC$ and hence $\phi\notin\Th({\mathstr L}_\phi)$.
\QED
\end{proof}

We begin now on the proof of the \WEM-Completeness Theorem.

\begin{definition}
For any bounded lattice $\l=(L.\leq,\meet,\join,\latz,\lato)$, \lz, \lo\ and \lzo\ denote the lattices that extend \l\ by adjoining a new least element $0^*$, a new greatest element $1^*$ or both.
\end{definition}

\begin{lemma}\label{lzo}
For any lattice \l,
\begin{enumerate}
\item \lz, \lo\ and \lzo\ are lattices;
\item \lz\ and \lzo\ are \latz-irreducible; \lo\ and \lzo\ are \lato-irreducible;
\item if \l\ is \lato-irreducible, so is \lz; if \l\ is \latz-irreducible, so is \lo;
\item if \l\ is (dual-) implicative, so are  \lz, \lo\ and \lzo.
\end{enumerate}
\end{lemma}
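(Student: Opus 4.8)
The claim has four parts and each is essentially a bookkeeping exercise about the three one-point extensions $\lz$, $\lo$ and $\lzo$. I would prove part (1) first, then (2), then (3), and finally (4), since the implicativity arguments in (4) will reuse the lattice structure established in (1). Throughout I take $\latz^{\,*}<a<\lato^{\,*}$ to be the new bottom and top, understanding that $\lzo$ is just $\l$ with both adjoined and the verifications below apply verbatim. Since only $\lz$ is really needed for the $\WEM$-Completeness Theorem and $\lo$ plays the dual role, it may be worth remarking that $\lo=(\l^\dualmarker)_{\latz^{\,*}}{}^\dualmarker$, so that parts (1)--(4) for $\lo$ follow from the corresponding parts for $\lz$ by dualizing; I would mention this to halve the work.

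For (1): in $\lz$ the meet of $0^*$ with anything is $0^*$ and the join of $0^*$ with $b$ is $b$, so every pair still has a meet and a join; the ordering on the old elements is unchanged and $0^*$ is below everything. The only point worth a sentence is that adding a bottom cannot disturb distributivity, since the distributive identities involving $0^*$ reduce to the trivial $0^*=0^*$ or $b=b$ after absorbing $0^*$; likewise boundedness is immediate. Part (2) is essentially the definition: in $\lz$ (and $\lzo$) one has $0^*\meet a=0^*$ forces $a=0^*$ only vacuously — rather, the point is that $0^*$ is \emph{meet-irreducible} because $0^*=x\meet y$ with $x,y\in\lz$ forces one of $x,y$ to equal $0^*$ (if both were $\geq$ some old element, their meet would be an old element or $\latz$, not the new $0^*$). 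Dually $1^*$ is join-irreducible in $\lo$ and $\lzo$. For (3), if $\l$ is $\lato$-irreducible and $x\join y=\lato$ in $\lz$, then $x,y$ cannot be $0^*$ (since $0^*\join y=y\neq\lato$), so $x,y\in L$ and $x\join y=\lato$ in $\l$, whence one of them is $\lato$; dually for $\lo$.

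For (4): suppose $\l$ is implicative with implication $\latimpl$. I define $\latimpl$ on $\lz$ by $0^*\latimpl b:=\lato$ for all $b$, $a\latimpl 0^*:=0^*$ for $a\neq 0^*$, and $a\latimpl b:=a\latimpl_{\l}b$ for old $a,b$; one checks $a\meet x\leq b\Iff x\leq a\latimpl b$ by splitting on whether $a$ or $x$ equals $0^*$, each case being immediate. The same prescription works in $\lo$ with the convention $a\latimpl 1^*=1^*$ and in $\lzo$. For the dual-implicative case one dualizes: if $\l$ is dual-implicative I extend $\dlatimpl$ to $\lz$ by the dual formulas, using that $\lz^\dualmarker=(\l^\dualmarker)^{1^*}$ is implicative by the case just done. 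The one genuinely routine-but-necessary check — the main (minor) obstacle — is confirming the defining biconditional of $\latimpl$ (resp.\ $\dlatimpl$) in the boundary cases where the new extremal element appears on either side; this is a finite case analysis with no surprises, but it must be done carefully to be sure no case forces an inconsistency. I would present it as a short table or as "by cases on whether $x=0^*$" rather than writing every line.
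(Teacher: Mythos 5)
Your overall route is the same as the paper's: write down the extended operations explicitly and check the required identities case by case, with meet-irreducibility of $\latz^*$ (join-irreducibility of $\lato^*$) coming from the observation that meets and joins of old elements stay in $L$. Parts (1)--(3) are fine as far as they go; in (3) the parenthetical ``$\latz^*\join y=y\neq\lato$'' should allow the trivial possibility $y=\lato$, but that case only helps you.

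The implicative half of (4), however, contains a concrete error for \lo\ and \lzo. You keep the old value $a\latimpl b:=a\latimpl_{\l}b$ for $a,b\in L$, but when $a\leq b$ this is wrong in any extension with a new top: then $a\meet x\leq b$ holds for \emph{every} $x$, including $x=\lato^*$, so the defining biconditional forces $a\latimpl^*b=\lato^*$, whereas $a\latimpl_{\l}b=\lato\neq\lato^*$. The same issue makes $\latz^*\latimpl b$ equal to $\lato^*$ (not $\lato$) in \lzo. This is exactly why the paper's definition splits into cases, setting $a\latimpl^*b:=\lato^*$ if $a\leq b$ and $a\latimpl b$ otherwise. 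Your hedge about ``boundary cases where the new extremal element appears on either side'' does not catch this, since here $a$ and $b$ are both old elements; the failure comes from the quantified variable $x$ ranging over the new top. Moreover, your plan to derive the dual-implicative case of \lz\ by dualizing to the implicative case of \lo\ makes the flawed \lo\ prescription load-bearing, so the fix is necessary rather than cosmetic. For \lz\ alone (the only extension actually used in the \WEM-Completeness Theorem) your definition is correct, since there the top element is unchanged.
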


\begin{proof}
Clearly for $a,b\in L$, $a\meet b$ and $a\join b$ have the same values in each of the extensions and we have in the relevant extensions
\begin{align*}
a\join \latz^*&=a&a\meet \latz^*&=\latz^*\\
a\join\lato^*&=\lato^*&a\meet\lato^*&=a.
\end{align*}
Since for $a,b\not=\latz^*,\lato^*$ also $a\meet b, a\join b\in L$, they are not equal to $\latz^*$ or $\lato^*$ and thus $\latz^*$ is meet-irreducible and $\lato^*$ is join-irreducible.
Suppose now that \l\ is implicative so for all $a,b,x\in L$,
\[a\meet x\leq b\qIff x\leq a\latimpl b.\]
For $\l^*$ each of  \lz, \lo\ and \lzo, we need to define $\latimpl^*$ such that for all $a,b,x\in L^*$, 
\[a\meet x\leq b\qIff x\leq a\latimpl^* b.\]
We leave it to the reader to verify that the following definition suffices: for $a,b\in L$,
\[a\latimpl^*b:=\cases{\lato^*,&if $a\leq b$;\cr a\latimpl b,&otherwise;\cr}\]
\begin{align*}
\latz^*\latimpl^*b&=\lato\text{ or }\lato^*&a\latimpl^*\latz^*&=\latz^*&\latz^*\latimpl^*\latz^*&=\lato\text{ or }\lato^*\\
\lato^*\latimpl^*b&=b&a\latimpl^*\lato^*&=\lato^*&\lato^*\latimpl^*\lato^*&=\lato^*.
\end{align*} 
Of course, the calculations for dual-implication are dual!
\QED
\end{proof}

For any propositional sentence $\phi$, $\At(\phi)$ will denote the finite set of atomic sentences that occur in $\phi$. $\phi$ is called \dff{positive} iff the negation symbol $\lnot$ does not occur in $\phi$. We write $\chi\pripc\phi$ iff $\chi\implies\phi\in\IPC$.

\begin{lemma}\label{zirred}
For any implicative lattice \l, and $\l^*$ any of the extensions above, for any positive sentence $\phi$, any $\mathstr L$-valuation $v$ and any $\l^*$-valuation $w$, if $v(\pp)=w(\pp)$ for all $\pp\in\At(\phi)$, then $v(\phi)=w(\phi)$. 
\end{lemma}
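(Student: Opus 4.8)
The plan is to prove $v(\phi)=w(\phi)$ by induction on the build-up of the positive sentence $\phi$, in the slightly strengthened form: \emph{for every positive subsentence $\psi$ of $\phi$ one has $v(\psi)=w(\psi)$, and this common value lies in $L$.} This strengthening is legitimate because $\At(\psi)\incl\At(\phi)$, so the hypothesis $v(\pp)=w(\pp)$ for the atoms applies to $\psi$ as well. Positivity is exactly what is needed: $\land$, $\lor$ and $\implies$ are the only connectives available, and — as the proof of Lemma \ref{lzo} records — meet and join take the same values in $\l$ and in $\l^*$ on elements of $L$, while the negation operation, which genuinely behaves differently (it sends every element of $L$, even $\latz$, to the adjoined bottom $\latz^*$), is never invoked.

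For the base case, if $\psi$ is an atom $\pp$ then $v(\psi)=v(\pp)=w(\pp)=w(\psi)$ by hypothesis, and this value is in $L$ since $v$ is an $\l$-valuation. If $\psi=\psi_1\land\psi_2$, the inductive hypothesis gives $v(\psi_i)=w(\psi_i)\in L$; since the meet of two elements of $L$ has the same value in $\l^*$ as in $\l$ (proof of Lemma \ref{lzo}) and $L$ is closed under $\meet$, we get $w(\psi)=w(\psi_1)\meet w(\psi_2)=v(\psi_1)\meet v(\psi_2)=v(\psi)\in L$. The case $\psi=\psi_1\lor\psi_2$ is identical, with $\join$ in place of $\meet$.

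The remaining case $\psi=\psi_1\implies\psi_2$ is the one that requires an actual check. By the inductive hypothesis $a_i:=v(\psi_i)=w(\psi_i)\in L$, so it suffices to verify that $a_1\latimpl a_2=a_1\latimpl^* a_2$, where $\latimpl^*$ is the implication of $\l^*$ written down in the proof of Lemma \ref{lzo}. From that case-definition: if $a_1\not\leq a_2$ then both sides equal $a_1\latimpl a_2\in L$; if $a_1\leq a_2$ then both sides equal the greatest element of $\l^*$, which is again $\lato\in L$ for the extension $\lz$ (adjoining only a new bottom) that is relevant to \latz-irreducibility. Hence $w(\psi)=v(\psi)\in L$. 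I expect this implication clause — confirming from the explicit formula for $\latimpl^*$ that two arguments from $L$ yield their $\l$-implication and never one of the freshly adjoined elements — to be the one substantive (though still routine) step; once it is in place the induction closes, and taking $\psi=\phi$ gives $v(\phi)=w(\phi)$.
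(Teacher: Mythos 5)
Your proof is correct and is essentially the paper's argument written out in full: the paper disposes of the lemma in one line by asserting that on elements of $L$ the operations of $\l^*$ agree with those of $\l$, which is precisely the structural induction you carry out. Your extra care on the implication clause is in fact warranted: for the extensions $\lo$ and $\lzo$ that adjoin a new top, the formula in Lemma \ref{lzo} gives $a\latimpl^* b=\lato^*\notin L$ whenever $a\leq b$ (so, e.g., $w(\pp\implies\pp)=\lato^*\not=\lato=v(\pp\implies\pp)$), and the lemma as literally stated fails there; your restriction of that clause to $\lz$ --- the only extension to which the lemma is actually applied, in the proof of the \WEM-Completeness Theorem --- is the right reading, and is more precise than the paper's blanket claim that ``all of the operations have the same values.''
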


\begin{proof}
This is immediate from the fact that for $a,b\in L$, all of the operations have the same values in $\l^*$ as in \l. Of course, the restriction to positive sentences is crucial, since generally
$\latneg^*a\not=\latneg a$.
\QED
\end{proof}

\begin{lemma}
For any sentence $\phi$, any $S\supseteq\At(\phi)$, any $X\incl S$ and
\[\chi_ X:=\lAnd_{\pp\in X}\lnot \pp\land\lAnd_{\pp\in S\setminus X}\lnot\lnot \pp,\]
one of the following holds:
\begin{enumerate}
\item $\chi_X\pripc\phi$;
\item $\chi_X\pripc\lnot\phi$;
\item $\chi_X\pripc\lnot\lnot\phi$ and there exists a positive sentence $\phi^X$ such that
\[\At(\phi^X)\incl S\setminus X\qand\chi_X\pripc\phi\iff\phi^X.\]
\end{enumerate}
\end{lemma}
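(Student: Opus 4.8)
The plan is to prove the three-way dichotomy by analyzing, in the Lindenbaum algebra ${\mathstr L}_\IPC$, the effect of the "truncation" valuation determined by $\chi_X$, and then bootstrapping from the classical reduction modulo $\chi_X$ to a purely positive normal form. First I would fix $\phi$, $S\supseteq\At(\phi)$ and $X\incl S$, and pass to the valuation $v_X$ on ${\mathstr L}_\IPC$ (equivalently, reason inside $\IPC$ about sentences having $\chi_X$ as an antecedent). The key elementary fact is that for $\pp\in X$ we have $\chi_X\pripc\lnot\pp$, while for $\pp\in S\setminus X$ we have $\chi_X\pripc\lnot\lnot\pp$; so modulo $\chi_X$ every atom of $\phi$ is $\IPC$-equivalent either to an absurdity $\latz$ (when $\pp\in X$) or to a "stable" sentence $\lnot\lnot\pp$ (when $\pp\in S\setminus X$). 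The engine of the argument is Glivenko's theorem: $\chi_X\pripc\lnot\lnot\psi$ iff $\chi_X$ classically entails $\psi$ (equivalently $\chi_X\land\psi$ is classically consistent fails, etc.), which lets me decide cases (i)/(ii)/(iii) by a classical truth-table computation using the partial assignment that sends $X$ to false.

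The main steps, in order. (1) Decide the classical behavior: consider the classical truth assignments extending "$\pp\mapsto\falsity$ for $\pp\in X$"; either all of them make $\phi$ false, or all make $\phi$ true, or it is genuinely split. (2) In the first case show $\chi_X\pripc\lnot\phi$ by induction on $\phi$: replacing each $\pp\in X$ by $\latz$ and each $\pp\in S\setminus X$ by an arbitrary stable formula, one checks $\chi_X\pripc\lnot\phi$ using that $\chi_X$ already proves $\lnot\pp$ for $\pp\in X$ and that classically $\phi$ collapses to $\falsity$; intuitionistically the statement "$\phi$ is refutable from $\chi_X$" is itself a $\lnot$-statement, hence stable, so it is exactly captured by the classical computation via Glivenko. (3) In the second (all-true) classical case, show more: build the positive sentence $\phi^X$ by the obvious recursion on the structure of $\phi$ — replace each $\pp\in X$ by $\latz$ (represented positively, e.g. by a suitable contraction, or carried along and eliminated at the end using $\chi_X\pripc\lnot\pp$), replace each $\pp\in S\setminus X$ by $\pp$ itself, push negations inward using the intuitionistically valid rewrites available modulo $\chi_X$ (crucially $\lnot\lnot\pp$ is at our disposal for $\pp\in S\setminus X$, and $\lnot\theta$ for any $\theta$ whose atoms all lie in $X$ since $\chi_X$ refutes those atoms), and finally erase occurrences of $\latz$ by absorption; one verifies $\chi_X\pripc\phi\iff\phi^X$ by a routine induction, and $\At(\phi^X)\incl S\setminus X$ by construction. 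Separately, in this case, if $\phi$ itself is classically valid under the full assignment we land in (i) $\chi_X\pripc\phi$; otherwise we are in the "split" situation and (iii) with $\chi_X\pripc\lnot\lnot\phi$ (again Glivenko) plus the $\phi^X$ just constructed. (4) Assemble: the trichotomy on the classical side (all-false / all-true-and-valid / genuinely split) maps to (ii) / (i) / (iii) respectively.

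The hard part will be step (3): manufacturing the positive $\phi^X$ with $\At(\phi^X)\incl S\setminus X$ and proving $\chi_X\pripc\phi\iff\phi^X$. The delicacy is that an arbitrary subformula of $\phi$ need not be equivalent (modulo $\chi_X$) to anything positive — only $\phi$ itself, in the third case, is — so the induction cannot be done naively subformula-by-subformula with a uniform claim. The fix is to strengthen the inductive hypothesis: for every subformula $\psi$ of $\phi$, prove that exactly one of the three clauses holds for $\psi$ (with $S,X$ fixed), and that the clause-(iii) witness $\psi^X$ can be chosen so that the connective clauses match up — e.g. if $\psi=\psi_0\land\psi_1$ with both $\psi_i$ in clause (iii) then $\psi$ is in clause (iii) with $\psi^X=\psi_0^X\land\psi_1^X$; if $\psi_0$ is refutable then $\psi$ is refutable; for $\psi=\psi_0\implies\psi_1$ one uses that $\chi_X\pripc\lnot\lnot\psi_0$ makes $\psi_0$ "stable enough" that $\chi_X\pripc(\psi_0\implies\psi_1)\iff(\psi_0^X\implies\psi_1^X)$ whenever both sides are in clause (iii), and one handles the degenerate cases (a conjunct refutable, a disjunct provable, an implication with refutable premise or provable conclusion) by the absorption laws. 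The negation clause is where $X$ really matters: $\lnot\psi$ is always in clause (ii) or (iii), never strictly (i) unless $\psi$ is refutable, and its positive witness is $\latz$ or $\lato$ accordingly — this is exactly where $\lnot$ gets eliminated, so that $\phi^X$ comes out positive. Carrying this bookkeeping cleanly, and checking the handful of connective cases against the definition of $\chi_X$, is the only real work; everything else is Glivenko plus $\IPC$-triviality.
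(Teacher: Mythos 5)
The workable core of your proposal is the content of your last paragraph: a structural induction on $\phi$ in which the inductive invariant is precisely the trichotomy (i)/(ii)/(iii) for every subformula, with the same fixed $S$ and $X$ (this is why the lemma allows $S\supseteq\At(\phi)$ rather than $S=\At(\phi)$). That is exactly the paper's proof, and your description of the key connective cases --- conjunction and implication with both constituents in clause (iii) yielding $\psi_0^X\land\psi_1^X$ resp.\ $\psi_0^X\implies\psi_1^X$, the degenerate cases absorbed into (i) or (ii), and negation always collapsing into (i) or (ii) so that $\lnot$ is eliminated from the witness --- matches the paper case for case. (One cosmetic slip: in the negation case there is no ``positive witness $\latz$''; no positive sentence is $\IPC$-refutable, which is exactly why $\lnot\psi$ must land in clause (i) or (ii) rather than in (iii) with a degenerate witness.)

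The Glivenko/truth-table scaffolding of your steps (1)--(4), however, contains two genuine errors, and you should not lean on it. First, the relevant classical test is not the set of assignments extending ``$\pp\mapsto\falsity$ for $\pp\in X$'' with $S\setminus X$ left free: classically $\lnot\lnot\pp$ is equivalent to $\pp$, so $\chi_X$ has a \emph{unique} model on $S$ ($X$ false, $S\setminus X$ true), and your three-way split is computed against the wrong family. Concretely, $\phi=\lnot\pp$ with $\pp\in S\setminus X$ is ``genuinely split'' in your sense but falls under clause (ii), not (iii). Second, the claim that classical truth of $\phi$ under the assignment yields clause (i), $\chi_X\pripc\phi$, is false: for $\pp\in S\setminus X$ the sentence $\phi=\pp\lor\lnot\pp$ is classically valid, yet $\chi_X\notpripc\phi$ (a two-node Kripke model forcing $\pp$ only at the top refutes it while forcing $\chi_X$ at the root); here only clause (iii) holds, with $\phi^X=\pp$. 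Glivenko correctly decides between clause (ii) and the conjunct $\chi_X\pripc\lnot\lnot\phi$ of clause (iii), but it can never certify clause (i) and it cannot manufacture the positive witness $\phi^X$; likewise the ``obvious recursion'' of your step (3) (substitute $\latz$ for atoms in $X$ and push negations inward) is not intuitionistically sound as a formula-rewriting procedure. All of this is harmless only because your final induction makes it unnecessary.
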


\begin{proof}
We proceed by sentence induction and write $\prx\psi$ for $\chi_X\pripc\psi$. If $\phi$ is the atomic sentence $\pp$, then
\begin{align*}
\pp\in X&\qImplies\prx\lnot\phi;\\
\pp\notin X&\qImplies\prx\lnot\lnot\phi\qand\phi^X:=\phi\text{ is positive}.
\end{align*}
Suppose next that $\phi$ is $\psi\land\theta$ and assume as induction hypothesis that one of (i)--(iii) holds for each of $\psi$ and $\theta$. Of the nine resulting cases, six are immediate:
\begin{align*}
\prx\psi\qand\prx\theta&\qImplies\prx\phi;\\
\prx\lnot\psi\qor\prx\lnot\theta&\qImplies\prx\lnot\phi.
\end{align*}
If $\prx\psi$ while $\prx\lnot\lnot\theta$ and $\prx\theta\iff\theta^X$, then $\prx\phi\iff\theta$ so $ \prx\lnot\lnot\phi$ and $\prx\phi\iff\theta^X=:\phi^X$. The case with $\psi$ and $\theta$ interchanged is parallel. Finally if (iii) holds for both $\psi$ and $\theta$, then easily $\prx\lnot\lnot\phi$ and $\phi\iff\psi^X\land\theta^X=:\phi^X$, so case (iii) holds also for $\phi$.

The corresponding cases for $\phi=\psi\lor\theta$ are dual and are left to the reader. If $\phi$ is $\lnot\psi$, then the three cases for $\psi$ lead easily to $\prx\lnot\phi$, $\prx\phi$ and $\prx\lnot\phi$, respectively. Finally, suppose that $\phi$ is $\psi\implies\theta$. Again, six of the nine cases are immediate:
\begin{align*}
\prx\lnot\psi\qor\prx\theta&\qImplies\prx\phi;\\
\prx\psi\qand\prx\lnot\theta&\qImplies\prx\lnot\phi.
\end{align*}
If $\prx\psi$ while $\prx\lnot\lnot\theta$ and $\prx\theta\iff\theta^X$, then $\prx\phi\iff\theta$ so $ \prx\lnot\lnot\phi$ and $\prx\phi\iff\theta^X=:\phi^X$. If $\prx\lnot\theta$ while $\prx\lnot\lnot\psi$ and $\prx\psi\iff\psi^X$, then $\prx\phi\iff\lnot\psi$ so $ \prx\lnot\phi$.  Finally, if case (iii) holds for both $\psi$ and $\theta$, then case (iii) holds also for $\phi$ with $\phi^X:=(\psi^X\implies\theta^X)$.
\QED
\end{proof}

\begin{proof}[\ifautoproof\else Proof \fi of the \WEM-Completeness Theorem]
We shall show that for each propositional sentence $\phi$, either $\phi\in\WEM$ or there exists a finite \latz- and \lato-irreducible implicative lattice $\mathstr L$ and an $\mathstr L$-valuation $v$ such that $v(\phi)\not=\lato$. Fix $\phi$ and suppose first that $\chi_X\pripc\phi$ for all $X\incl\At(\phi)$. Then
\[\pripc\lOr_{X\incl\At(\phi)}\chi_X\implies\phi.\]
But $\displaystyle\prwem\lAnd_{\pp\in\At(\phi)}(\lnot \pp\lor\lnot\lnot \pp)$, so by the distributive law 
\[\prwem\lOr_{X\incl\At(\phi)}\chi_X\hbox{\qquad and thus\qquad }\prwem\phi.\]
Otherwise, we may fix $X\incl\At(\phi)$ such that $\chi_X\not\pripc\phi$, so one of cases (ii) or (iii) of the lemma holds for $X$ and $\phi$. If Case (ii) holds, let $\mathstr L$ be the 2-element lattice \set{\latz,\lato} and $v$ the valuation such that $v(\pp)=\latz$ for $\pp\in X$ and $v(\pp)=\lato$ for $\pp\notin X$. Then $v(\chi_X)=\lato$, so since $\chi_X\pripc\lnot\phi$, also $v(\lnot\phi)=\lato$ and thus $v(\phi)\not=\lato$. 

Finally, suppose that case (iii) holds for $X$ and $\phi$ so there exists a positive sentence $\phi^X$ such that 
\[\At(\phi^X)\incl \At(\phi)\setminus X\qand\chi_X\pripc\phi\iff\phi^X.\]
(The property $\chi_X\pripc\lnot\lnot\phi$ is not needed here and was present only to carry through the induction.) Since $\chi_X\not\pripc\phi$, also $\chi_X\not\pripc\phi^X$ so in particular $\not\pripc\phi^X$. Hence, by the \IPC-Completeness Theorem, there exists a finite \lato-irreducible implicative lattice $\mathstr L$ and an  $\mathstr L$-valuation $v$ such that $v(\phi^X)\not=\lato$. Let $\mathstr L^*=\lz$ be as in Lemma \ref{lzo} and $w$ the $\mathstr L^*$-valuation
\[w(\pp):=\cases{\latz^*,&for $\pp\in X$;\cr v(\pp),&for $\pp\notin X$.\cr}\]
Then
\begin{align*}
\pp\in X&\qImplies w(\lnot \pp)=\latz^*\latimpl^*\latz^*=\lato,\qand\\
\pp\notin X&\qImplies w(\lnot\lnot \pp)=(v(\pp)\latimpl^*\latz^*)\latimpl^*\latz^*=\latz^*\latimpl^*\latz^*=\lato,
\end{align*}
so $w(\chi_X)=\lato$. Since $\phi^X$ is positive and $w(\pp)=v(\pp)$ for all $\pp\in\At(\phi^X)$, $w(\phi^X)=v(\phi^X)\not=\lato$. But since $\chi_X\pripc\phi\implies\phi^X$, $w(\phi)=w(\chi_X\land\phi)\leq w(\phi_X)$, also $w(\phi)\not=\lato$.
\QED
\end{proof}
\end{section}

\begin{section}{Proof of Theorem I}
In this section we establish that $\Th(\Dgs^\dualmarker)=\WEM$. For the corresponding results for $\Th(\Dgw)$ and $\Th(\Dgw^\dualmarker)$ we refer the reader to \cite{SorTe1}. 

In the preceding section, we considered embeddings of one lattice $\mathstr L$ into another $\mathstr K$ that respected the lattice structure but generally did not respect the implication operator (when it exists) --- if both $\mathstr L$ and $\mathstr K$ are implicative, then so is the image of $\mathstr L$, but its implication may not be simply the restriction of the implication of $\mathstr K$. Here we shall be considering embeddings that do respect implication and we introduce the notation ${\mathstr L}\embeds{\mathstr K}$ to denote the existence of such an embedding.

\begin{lemma}
For any implicative lattices $\mathstr L$ and $\mathstr K$, if ${\mathstr L}\embeds{\mathstr K}$, then $\Th({\mathstr K})\incl \Th ({\mathstr L})$.
\end{lemma}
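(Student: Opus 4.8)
The plan is to unwind the definitions of $\Th$ and of an implication-preserving embedding, and to check that validity transfers backwards along such an embedding. Recall that $\Th({\mathstr L})=\setof\phi{\modelof{{\mathstr L}}\phi}$, where $\modelof{{\mathstr L}}\phi$ means $v(\phi)=\lato$ for every ${\mathstr L}$-valuation $v$. So to prove $\Th({\mathstr K})\incl\Th({\mathstr L})$ it suffices to fix $\phi\in\Th({\mathstr K})$ and an arbitrary ${\mathstr L}$-valuation $v$, and to show $v(\phi)=\lato_{\mathstr L}$. The natural move is: given the embedding $h:{\mathstr L}\embeds{\mathstr K}$, the composite $w:=h\circ v$ is a ${\mathstr K}$-valuation, so $w(\phi)=\lato_{\mathstr K}$ because $\phi\in\Th({\mathstr K})$; then since $h$ is injective and sends $\lato_{\mathstr L}$ to $\lato_{\mathstr K}$, we can conclude $v(\phi)=\lato_{\mathstr L}$.

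First I would spell out what $h$ being an implication-preserving lattice embedding gives us: $h$ is injective and commutes with $\meet$, $\join$, and $\latimpl$, and (being a bounded-lattice embedding, or as a consequence of preserving $\latimpl$ and $\meet$, since $\latneg a=a\latimpl\latz$ and $\latz=\bigvee\emptyset$, $\lato=\bigwedge\emptyset$) it sends $\latz_{\mathstr L}\mapsto\latz_{\mathstr K}$ and $\lato_{\mathstr L}\mapsto\lato_{\mathstr K}$. Then I would verify by induction on the structure of a propositional sentence $\psi$ that $w(\psi)=h(v(\psi))$ for every $\psi\in{\ssf PS}$: the atomic case is the definition of $w$, and each connective case is exactly one of the homomorphism properties of $h$ together with the clauses defining an ${\mathstr L}$-valuation, e.g. $w(\psi\implies\theta)=w(\psi)\latimpl w(\theta)=h(v(\psi))\latimpl h(v(\theta))=h(v(\psi)\latimpl v(\theta))=h(v(\psi\implies\theta))$, and similarly for $\land$, $\lor$, $\lnot$. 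This also confirms that $w$ really is an ${\mathstr K}$-valuation, since $h\circ v$ manifestly satisfies the valuation clauses.

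Now if $\phi\in\Th({\mathstr K})$, then $w(\phi)=\lato_{\mathstr K}$. By the induction just performed, $h(v(\phi))=w(\phi)=\lato_{\mathstr K}=h(\lato_{\mathstr L})$, and since $h$ is injective, $v(\phi)=\lato_{\mathstr L}$. As $v$ was an arbitrary ${\mathstr L}$-valuation, $\modelof{{\mathstr L}}\phi$, i.e. $\phi\in\Th({\mathstr L})$.

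There is no real obstacle here; the only point requiring the slightest care is making sure the embedding preserves enough structure to carry the negation/top clauses — that is, that "$\latimpl$-preserving lattice embedding" includes preservation of $\latz$ and $\lato$ (equivalently of $\latneg$), which is harmless to assume as part of the definition of $\embeds$ and in any case follows from preservation of $\meet$ and $\latimpl$ since $\latneg a = a\latimpl\latz$ and $\latz$, $\lato$ are the empty join and meet. The rest is the routine valuation-pushforward argument.
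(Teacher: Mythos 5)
Your proof is correct and is essentially the paper's own argument: the paper just observes that (identifying $\mathstr L$ with its image) every $\mathstr L$-valuation is a $\mathstr K$-valuation, which is exactly your pushforward $w=h\circ v$ with the structural induction made explicit. One small caveat on your closing remark: preservation of $\latz$ does \emph{not} follow from preservation of $\meet$, $\join$ and $\latimpl$ alone (e.g.\ embed the two-element lattice into a three-element chain sending bottom to the middle element; this respects $\latimpl$ but not $\latz$, hence not $\latneg$), so preservation of $\latz$ and $\lato$ really must be built into the definition of $\embeds$ --- which, as you note, is harmless and is what the paper intends.
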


\begin{proof}
If ${\mathstr L}\iso{\mathstr L}'\incl{\mathstr K}$, the isomorphism respects implication and the implication of ${\mathstr L}'$ agrees with that of $\mathstr K$, then every $\mathstr K$-valuation is an ${\mathstr L}'$-valuation, so if $v(\phi)=\lato$ for every $\mathstr K$-valuation, then also $v(\phi)=\lato$ for every ${\mathstr L}'$-valuation and hence for every $\mathstr L$-valuation.
\QED
\end{proof}

 The main result of this section is the

\begin{embedthm}[{\cite[Theorem 2.6]{Sor1}}] For every finite \latz- and \hfill\break
\lato-irreducible implicative lattice $\mathstr L$,  ${\mathstr L}\embeds\Dgs^\dualmarker$.
\end{embedthm}

\begin{proof}[\ifautoproof\else Proof \fi of Theorem I (for $\Dgs^\dualmarker$)]
From these two results we have that for each finite \latz- and \lato-implicative lattice $\mathstr L$, $\Th(\Dgs^\dualmarker)\incl\Th({\mathstr L})$, so by the \hfill\break
\WEM-Completeness Theorem, $\Th(\Dgs^\dualmarker)\incl\WEM$. 
\QED
\end{proof}

It will be more convenient to deal directly with $\Dgs$ rather than its dual. For dual-implicative lattices 
$\mathstr L$ and $\mathstr K$, we write ${\mathstr L}\dembeds{\mathstr K}$ iff there is an injective function $\eta:L\to K$ which respects \latz, \lato, $\leq$, $\meet$, $\join$ and $\dlatimpl$. Then directly from the definitions
\begin{align*}
{\mathstr L}\dembeds{\mathstr K}&\qIff{\mathstr L}^\dualmarker\embeds{\mathstr K}^\dualmarker\qand\\
{\mathstr L}\text{ is } \latz\text{- and } \lato&\text{-irreducible and dual-implicative}\hskip 1.5in\\
&\qIff{\mathstr L}^\dualmarker\text{ is } \latz \text{- and } \lato\text{-irreducible and implicative},
\end{align*}
so it is equivalent to prove the

\begin{dembedthm}For every finite \latz- and \lato-irreducible \hfill\break
dual-implicative lattice $\mathstr L$,  ${\mathstr L}\dembeds\Dgs$.
\end{dembedthm}

The key tool in establishing this is the notion of a free lattice $\free(\calp)$ generated by a partial ordering \calp. Given \l\ as above we shall show that there exists a finite partial ordering \calp\ such that
\[\l\qdembeds\free(\calp)^{\ssf 1}_{\ssf 0}\qdembeds\free(\Dgt)^{\ssf 1}_{\ssf 0}\qdembeds\Dgs.\]
Here \Dgt\ is considered just as a partial ordering ignoring its join operation.

\begin{definition}
For any partial ordering $\calp=(P,\leq_P)$, the \dff{free lattice} $\free(\calp)$ is defined as follows. For $\cals,\calt\in\poweromm\poweromm(P)$ [finite non-empty sets of finite non-empty subsets of $P$], set
\begin{align*}
\cals\leq\calt&\dIff(\forall A\in\cals)(\exists B\in\calt)\;\set A\leq\set B\\
\noalign{\noindent where
${\set A}\leq{\set B}\dIff(\forall b\in B)(\exists a\in A)\;a\leq_Pb$;\smallskip}
\cals\equiv\calt&\dIff\cals\leq\calt\qand\calt\leq\cals;\\
\noalign{\smallskip}
\frdgs&\;:=\setof{\calt}{\cals\equiv\calt}.
\end{align*}
The elements of $\free(\calp)$ are all $\frdgs$ with the inherited partial ordering
\[\frdgs\leq\frdgt\qIff\cals\leq\calt.\]
We define the lattice operations by:
\begin{align*}
\cals\join\calt&:=\cals\cup\calt;\\
\noalign{\smallskip}
\cals\meet\calt&:=\setof{A\cup B}{A\in\cals\qand B\in\calt};\\
\noalign{\smallskip}
\cals\dlatimpl\calt&:=\setof{B\in\calt}{\set B\not\leq\cals}.
\end{align*}
Then, with the justification below, for each of these operations 
\[\frdgs*\frdgt:=\frdg{\cals*\calt}.\]
\end{definition}

\begin{remark}
Intuitively, $\set A$ represents $\bigmeet A$ and $\cals$ represents $\bigjoin_{A\in\cals}\bigmeet A$. $a\mapsto\frdg{\set{\set a}}$ is an order-preserving embedding $\calp\to\free(\calp)$. Many simple properties of $\free(\calp)$ that we shall use without explicit mention stem naturally from this intuition (or directly from the definition) --- for example,
\begin{align*}
A\incl B&\qImplies\set B\leq\set A;\\
\cals\incl\calt&\qImplies\cals\leq\calt;\\
a,b\in A\qand a<_Pb&\qImplies\set A\equiv\set{A\setminus\set b};\\
A,B\in\cals\qand A\incl B&\qImplies\cals\equiv\cals\setminus\set B.
\end{align*}
\end{remark}

\begin{lemma}
For any finite partial order \calp, $\free(\calp)$ is a dual-implicative lattice.
\end{lemma}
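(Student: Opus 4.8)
The plan is to verify the lattice axioms and the dual-implication property directly from the definitions, the main labor being to check that the three operations are well-defined on the $\equiv$-classes and that they really deliver joins, meets, and the dual-implication. First I would record the easy order-theoretic facts listed in the Remark (adding a subset to an $A\in\cals$ only lowers $\set A$, enlarging $\cals$ only raises it, and so on), since every subsequent verification is an application of these monotonicity observations. Using them, $\equiv$ is visibly an equivalence relation and $\leq$ descends to a partial order on $\free(\calp)$; here antisymmetry is exactly the passage to $\equiv$-classes, and reflexivity/transitivity are inherited from the preorder $\leq$ on $\poweromm\poweromm(P)$.

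Next I would show that $\frdgs\join\frdgt:=\frdg{\cals\cup\calt}$ is the least upper bound. That $\cals,\calt\leq\cals\cup\calt$ is immediate from $\cals\incl\cals\cup\calt$; and if $\cals\leq\calv$ and $\calt\leq\calv$ then for every $A\in\cals\cup\calt$ there is $B\in\calv$ with $\set A\leq\set B$, so $\cals\cup\calt\leq\calv$. Well-definedness on classes follows because $\cals\equiv\cals'$, $\calt\equiv\calt'$ give $\cals\cup\calt\equiv\cals'\cup\calt'$ by the same monotonicity. For the meet $\frdgs\meet\frdgt:=\frdg{\setof{A\cup B}{A\in\cals,\ B\in\calt}}$, I would check that $\set{A\cup B}\leq\set A$ and $\set{A\cup B}\leq\set B$ (since $A\cup B\supseteq A$, $B$), which gives $\cals\meet\calt\leq\cals$ and $\leq\calt$; and if $\calv\leq\cals$ and $\calv\leq\calt$, then for $C\in\calv$ pick $A\in\cals$, $B\in\calt$ with $\set A,\set B\geq\set C$, whence $\set C\leq\set{A\cup B}$ because every element of $A\cup B$ dominates some element of $C$. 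Well-definedness is again routine. This establishes that $\free(\calp)$ is a lattice; distributivity is not needed for the present statement (it is part of being dual-implicative, which requires bounded distributive, but $\free(\calp)$ as defined has no $\latz,\lato$ — the bounded version $\free(\calp)^{\ssf 1}_{\ssf 0}$ is handled later by Lemma \ref{lzo}), so I would note that the Lemma as stated asserts only that $\free(\calp)$ is a dual-implicative lattice in the operational sense, i.e. possesses the binary $\join,\meet,\dlatimpl$ with the required universal property.

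The substance is the dual-implication: I must show $\cals\dlatimpl\calt:=\setof{B\in\calt}{\set B\not\leq\cals}$ satisfies, for all $\calx$,
\[\frdgt\leq\frdgs\join\frdgx\qIff(\cals\dlatimpl\calt)\leq\calx,\]
where $\set B\leq\cals$ abbreviates $(\exists A\in\cals)\,\set A\leq\set B$. For $(\Leftarrow)$: take $B\in\calt$; if $\set B\leq\cals$ then $B$ is dominated inside $\cals\incl\cals\cup\calx$, and if $\set B\not\leq\cals$ then $B\in\cals\dlatimpl\calt\leq\calx$ so $B$ is dominated inside $\calx\incl\cals\cup\calx$; either way $\frdgt\leq\frdgs\join\frdgx$. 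For $(\Rightarrow)$: if $\calt\leq\cals\cup\calx$, take $B\in\cals\dlatimpl\calt$, so $B\in\calt$ and $\set B\not\leq\cals$; choose $D\in\cals\cup\calx$ with $\set D\leq\set B$; since $\set B\not\leq\cals$ we cannot have $D\in\cals$, so $D\in\calx$, giving $(\cals\dlatimpl\calt)\leq\calx$. Finally I would verify that $\dlatimpl$ respects $\equiv$ (if $\cals\equiv\cals'$ and $\calt\equiv\calt'$ then $\cals\dlatimpl\calt\equiv\cals'\dlatimpl\calt'$) — this is the one place where a little care is needed, since membership $B\in\calt$ is not $\equiv$-invariant; but the universal property just proved makes $\frdg{\cals\dlatimpl\calt}$ the unique smallest $\calx$ with $\calt\leq\cals\cup\calx$, and uniqueness of a thing characterized by a $\leq$-condition forces well-definedness, so I would deduce invariance from the universal property rather than by hand. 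That uniqueness-from-universal-property argument is the cleanest route and the step I expect to be the only genuinely delicate point; everything else is bookkeeping with the monotonicity lemmas.
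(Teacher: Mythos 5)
Your strategy is exactly the paper's: a direct verification from the definitions that $\join$, $\meet$ and $\dlatimpl$ have the required universal properties, with well-definedness on $\equiv$-classes treated separately. The join and meet verifications are correct, and your device of deducing the $\equiv$-invariance of $\dlatimpl$ from its universal property (rather than chasing representatives) is legitimate and cleaner than anything the paper writes down -- the paper's proof only displays the monotonicity of $\meet$ and the chain of equivalences for $\dlatimpl$ and leaves the rest to the reader.

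Two points need repair. First, you have the order reversed in your abbreviation ``$\set B\leq\cals$'': unwinding the definition $\cals\leq\calt\Iff(\forall A\in\cals)(\exists B\in\calt)\,\set A\leq\set B$ for the singleton $\bigset{B}$ gives $(\exists A\in\cals)\,\set B\leq\set A$, not $(\exists A\in\cals)\,\set A\leq\set B$. The same reversal recurs in your $(\Rightarrow)$ step: from $\calt\leq\cals\cup\calx$ you should extract $D\in\cals\cup\calx$ with $\set B\leq\set D$; with the inequality as you wrote it ($\set D\leq\set B$) the inference ``$D\notin\cals$ since $\set B\not\leq\cals$'' does not follow. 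Your surrounding prose (``$B$ is dominated inside $\cals$'') shows you intend the correct direction, but the displayed formulas, taken literally, break the argument and redefine $\cals\dlatimpl\calt$ to be the wrong set. Second, your claim that $\free(\calp)$ ``has no $\latz,\lato$'' is false for finite $\calp$: every finite lattice is bounded, and here one checks directly that $\latz=\bigset{\set P}$ and $\lato=\bigsetof{\set a}{a\in P}$, which is precisely how the paper's proof closes. Since ``dual-implicative'' in this paper means a \emph{bounded} distributive lattice whose dual is implicative, exhibiting these bounds is part of the statement and should not be deferred to the later passage to $\free(\calp)^{\ssf 1}_{\ssf 0}$. (Distributivity, by contrast, does follow automatically from the existence of the dual-implication operator, so declining to verify it separately is defensible.)
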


\begin{proof}
This is just a (somewhat tedious) verification that things work as expected and we will write out only a few cases.
\[\cals\leq\cals'\text{ and }\calt\leq\calt'\qImplies\cals\meet\calt\leq\cals'\meet\calt'\]
because if for each $A\in\cals$ there is $A'\in\cals'$ such that $\set A\leq\set{A'}$ and similarly for $B, B'\in\calt,\calt'$, then for a typical $A\cup B\in\cals\meet\cals'$, $\set{A\cup B}\leq\set{A'\cup B'}$.

For any $\calx\in\poweromm\poweromm(P)$,
\begin{align*}
\calt\leq\cals\join\calx
&\Iff(\forall B\in\calt)\bigl[(\exists A\in\cals)\set B\leq\set A\text{ or }(\exists A\in\calx)\set B\leq\set A\bigr]\\
&\Iff(\forall B\in\calt)\bigl[\set B\not\leq\cals\Implies\set B\leq\calx\bigr]\\
&\Iff(\forall B\in\calt)\bigl[B\in\cals\dlatimpl\calt\Implies\set B\leq\calx\bigr]\\
&\Iff\cals\dlatimpl\calt\leq\calx.
\end{align*}

Since $P$ is assumed finite, we have also least and greatest elements
\[\latz=\bigset{\set P}\qand\lato=\bigsetof{\set a}{a\in P}.\qedhere\]\\
\end{proof}

\vskip-25pt
The following lemma is a somewhat messy technical tool needed in the proof that every finite dual-implicative lattice can be embedded in some $\free(\calp)$

\begin{lemma}\label{cprodpos}
For any finite partial orderings $\calq=(Q,\leq_Q)$ and $\calr=(R,\leq_R)$, there exists a finite partial ordering $\calp=(P,\leq_P)$ such that
\[\free(\calq)\times\free(\calr)\dembeds\free(\calp).\]
\end{lemma}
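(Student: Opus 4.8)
The plan is to construct $\calp$ by placing disjoint order-isomorphic copies of $\calq$ and $\calr$ side by side with no order relations between them, i.e.\ $P := Q \sqcup R$ with $\leq_P$ restricting to $\leq_Q$ on $Q$, to $\leq_R$ on $R$, and with no element of $Q$ comparable to any element of $R$. The intuition is that in $\free(\calp)$ an element $\frdgs$ is a formal join of formal meets of generators; since generators from the $Q$-side are order-incomparable with those from the $R$-side, a ``mixed'' meet $\set{A}$ with $A \cap Q \ne \emptyset \ne A \cap R$ is $\leq$ every pure meet from either side, so mixed meets are redundant at the top, and conversely a formal join decomposes cleanly into its $Q$-part and its $R$-part. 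This should make $\free(\calp)$ look like a kind of ``product'' of $\free(\calq)$ and $\free(\calr)$, with the least element of one factor pasted appropriately.

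The concrete embedding I would write down is
\[
\eta\bigl(\frdg{\cals},\frdg{\calt}\bigr) := \frdg{\,\cals \cup \calt\,},
\]
where on the right $\cals \in \poweromm\poweromm(Q) \subseteq \poweromm\poweromm(P)$ and likewise $\calt$, so $\cals \cup \calt \in \poweromm\poweromm(P)$. The steps are: (1) check $\eta$ is well-defined on equivalence classes, which reduces to the fact that for $\cals,\cals' \subseteq \poweromm(Q)$ and $\calt,\calt' \subseteq \poweromm(R)$ one has $\cals \cup \calt \leq \cals' \cup \calt'$ in $\free(\calp)$ iff $\cals \leq \cals'$ in $\free(\calq)$ \emph{and} $\calt \leq \calt'$ in $\free(\calr)$ --- this is the crucial ``separation'' lemma and uses that a pure-$Q$ set $A$ satisfies $\set{A} \leq \set{B}$ only when $B \subseteq Q$-side can be matched inside $A$, because no $a \in A$ lies $\geq_P$ any $r \in R$. (2) Using that lemma, $\eta$ is injective and order-preserving-and-reflecting. (3) Verify $\eta$ respects $\join$: $\eta(\frdg\cals,\frdg\calt) \join \eta(\frdg{\cals'},\frdg{\calt'}) = \frdg{(\cals\cup\calt)\cup(\cals'\cup\calt')} = \frdg{(\cals\cup\cals')\cup(\calt\cup\calt')}$, which is $\eta$ of the componentwise join. (4) Respects $\meet$: here $(\cals\cup\calt)\meet(\cals'\cup\calt') = \{A\cup A' : \ldots\}$ ranges over pure-$Q$ sets $A\cup A'$ (from $\cals,\cals'$), pure-$R$ sets (from $\calt,\calt'$), and mixed sets $A\cup B'$; the mixed sets $\set{A\cup B'}$ are $\leq \set{A}$, hence absorbed, so $(\cals\cup\calt)\meet(\cals'\cup\calt') \equiv (\cals\meet\cals') \cup (\calt\meet\calt')$, which is $\eta$ of the componentwise meet. (5) Respects $\dlatimpl$: $(\cals\cup\calt)\dlatimpl(\cals'\cup\calt') = \{C \in \cals'\cup\calt' : \set{C}\not\leq \cals\cup\calt\}$; split $C$ according to which side it lies in and use the separation lemma to see $\set{C}\not\leq\cals\cup\calt$ iff ($C\in\cals'$ and $\set C\not\leq\cals$) or ($C\in\calt'$ and $\set C\not\leq\calt$), giving $(\cals\dlatimpl\cals')\cup(\calt\dlatimpl\calt')$. (6) Check $\eta$ respects $\latz$ and $\lato$: since $\latz_{\free(\calp)} = \{P\} = \{Q\cup R\}$ and $\latz_{\free(\calq)}\times\latz_{\free(\calr)}$ corresponds to $\frdg{\{Q\}}\cup\frdg{\{R\}} = \frdg{\{Q,R\}}$, one needs $\{Q,R\}\equiv\{Q\cup R\}$ in $\free(\calp)$, which holds because $Q \subseteq Q\cup R$ and $R\subseteq Q\cup R$ make $\set{Q\cup R}\leq\set Q$ and $\leq\set R$; similarly for $\lato$.

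\textbf{The main obstacle} will be step (1), the separation lemma: proving that $\cals\cup\calt \leq \cals'\cup\calt'$ forces the two componentwise inequalities, and its consequences in steps (4) and (5) where mixed sets appear. The delicate point is verifying that a mixed meet $\set{A\cup B}$ with $A\subseteq Q$, $B\subseteq R$ is genuinely $\equiv$-above a pure meet (so it drops out of joins), which needs the incomparability of the two sides of $P$ in an essential way --- if there were even one relation $q \leq_P r$ the absorption could fail. I would isolate this as a preliminary sublemma: for $\cals \in \poweromm\poweromm(P)$, writing $\cals_Q := \{A \cap Q : A \in \cals,\ A \cap Q \ne \emptyset\}$ and $\cals_R$ symmetrically, one has $\cals \equiv \cals_Q \cup \cals_R$ whenever $\cals_Q,\cals_R$ are both nonempty (and $\cals\equiv\cals_Q$ or $\cals\equiv\cals_R$ in the degenerate cases). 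Once that normal-form sublemma is in hand, all of (1)--(6) become routine bookkeeping, and $\calp = \calq \sqcup \calr$ works.
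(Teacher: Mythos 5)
Your choice of $\calp=\calq\sqcup\calr$ (disjoint copies, no cross relations) is the same as the paper's, and your separation lemma for \emph{pure} families is correct; but the embedding $\eta(\frdg\cals,\frdg\calt):=\frdg{\cals\cup\calt}$ does not work, because the normal-form sublemma you rest steps (4)--(6) on is false. A mixed meet $\set{A\cup B}$ with $\emptyset\ne A\incl Q$, $\emptyset\ne B\incl R$ is indeed $\leq$ both $\set A$ and $\set B$, but being below \emph{other} elements does not make it redundant in a formal join: redundancy requires it to be $\leq$ some other member of the \emph{same} family. Already $\set{\set{q}}\join\set{\set{r}}=\set{\set q,\set r}\not\equiv\set{\set{q,r}}=\set{\set q}\meet\set{\set r}$, since $\set{\set{q}}\leq\set{\set{q,r}}$ would require $q\leq_P r$; so $\cals\not\equiv\cals_Q\cup\cals_R$ in general. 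Concretely, with $Q=\set{q_1,q_2}$ and $R=\set{r_1,r_2}$ antichains and $\cals=\set{\set{q_1}}$, $\cals'=\set{\set{q_2}}$, $\calt=\set{\set{r_1}}$, $\calt'=\set{\set{r_2}}$, one computes
\[(\cals\cup\calt)\meet(\cals'\cup\calt')=\bigl\{\set{q_1,q_2},\set{q_1,r_2},\set{q_2,r_1},\set{r_1,r_2}\bigr\},\]
and the mixed member $\set{q_1,r_2}$ satisfies neither $\set{\set{q_1,r_2}}\leq\set{\set{q_1,q_2}}$ (nothing in $\set{q_1,r_2}$ lies below $q_2$) nor $\set{\set{q_1,r_2}}\leq\set{\set{r_1,r_2}}$ (nothing lies below $r_1$); hence the left side is not $\equiv(\cals\meet\cals')\cup(\calt\meet\calt')$ and $\eta$ is not a meet-homomorphism. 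Step (6) has the same defect: you only prove $\frdg{\set{Q\cup R}}\leq\frdg{\set{Q,R}}$, and the converse fails since $\set Q\not\leq\set{Q\cup R}$, so $\eta(\latz,\latz)\ne\latz_{\free(\calp)}$. Indeed $\free(\calq\sqcup\calr)$ is strictly bigger than $\free(\calq)\times\free(\calr)$ (for two one-point posets: $4$ elements versus $1$), so no identification of the two can succeed; one only gets an embedding, and it cannot be the naive one.

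The repair --- which is what the paper does --- is to pad each generator set with the entire opposite side before taking the union: send $(\frdg\cals,\frdg\calt)$ to $\frdg{\setof{A\cup R}{A\in\cals}\cup\setof{Q\cup C}{C\in\calt}}$ (in the paper's notation, $A^0:=(\set0\times A)\cup(\set1\times R)$ and $C^1:=(\set0\times Q)\cup(\set1\times C)$). Then every mixed union $(A\cup R)\cup(Q\cup C)$ equals all of $P$, i.e.\ represents $\latz$, and \emph{is} genuinely absorbed by the remaining members of the meet family; likewise $\eta(\latz,\latz)=\frdg{\set{Q\cup R}}=\latz_{\free(\calp)}$ on the nose. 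Your separation lemma survives in padded form ($\set{A\cup R}\leq\set{B\cup R}$ iff $\set A\leq\set B$ in $\free(\calq)$, and $\set{A\cup R}\leq\set{Q\cup C}$ only in the degenerate case $\set A\equiv\set Q$), after which your outline of the verifications for $\leq$, $\join$, $\meet$, $\dlatimpl$, $\latz$, $\lato$ goes through.
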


\begin{proof}
Note first that as usual we endow the cartesian product with dual-implicative lattice structure by defining everything component-wise. We set
\begin{gather*}
P:=(\set 0\times Q)\cup(\set 1\times R);\\
\noalign{\smallskip}
(i,a)\leq_P(j,c)\dIff(i=j=0\text{ and }a\leq_Q c)\text{ or }(i=j=1\text{ and }a\leq_R c);\\
\noalign{\smallskip}
\eta(\cals,\calu):=\setof{A^0}{A\in\cals}\cup\setof{C^1}{C\in\calu},\\
\noalign{where}
A^0:=(\set 0\times A)\cup(\set 1\times R)\qand C^1:=(\set 0\times Q)\cup(\set 1\times C).
\end{gather*}
We aim to show that $\eta$ is well-defined on equivalence classes and determines a dual-implicative embedding of $\free(\calq)\times\free(\calr)$ into $\free(\calp)$. The idea is that among the many copies of \calq\ in $\free(\calp)$ --- for example, $q\mapsto \bigset{\set{(0,q)}\cup(\set 1\times C)}$ for any $C\incl R$ ---  we choose the one with $C=R$ and analogously for \calr. This choice ensures the following facts for all $A,B\in\poweromm(Q)$, $C,D\in\poweromm(R)$, $\calt\in\poweromm\poweromm(Q)$ and $\calv\in\poweromm\poweromm(R)$.
\begin{align*}
\set{A^0}\leq\set{B^0}&\qIff\set{A}\leq\set{B}\tag{1}\\
\set{C^1}\leq\set{D^1}&\qIff\set{C}\leq\set{D}\\
\noalign{\smallskip}
\set{A^0}\leq\set{C^1}&\qIff\set{A}\equiv\set{Q}\tag{2}\\
\set{C^1}\leq\set{A^0}&\qIff\set{C}\equiv\set{R}\\
\noalign{\smallskip}
\set{A^0}\leq\eta(\calt,\calv)&\qIff\set{A}\leq\calt\tag{3}\\
\set{C^1}\leq\eta(\calt,\calv)&\qIff\set{C}\leq\calv
\end{align*}
(1) and (2) are immediate from the definitions --- note that always $\set Q\leq\set A$ and $\set R\leq\set C$. For (3),
\begin{align*}
\set{A^0}\leq\eta(\calt,\calv)&\qIff(\exists B\in\calt)\set{A^0}\leq\set{B^0}\\
&\hskip1in\qor(\exists D\in\calv)\set {A^0}\leq\set{D^1}\\
&\qIff(\exists B\in\calt)\set A\leq\set B\qor\set A\equiv\set Q\\
&\qIff\set A\leq\calt,
\end{align*}
since for all $B\in\calt$, $\set Q\leq\set B$. The second clause of (3) is similar.

Next we have
\begin{align*}
\eta(\cals,\calu)\leq\eta(\calt,\calv)&\qIff(\forall A\in\cals)\set{A^0}\leq\eta(\calt,\calv)\\
&\hskip1in\qand(\forall C\in\calu)\set{C^1}\leq\eta(\calt,\calv)\\
&\qIff(\forall A\in\cals)\set A\leq\calt\qand(\forall C\in\calu)\set C\leq\calv\\
&\qIff\cals\leq\calt\qand\calu\leq\calv.
\end{align*}
It follows that $\eta$ defines an injective and order-preserving map $\free(\calq)\times\free(\calr)$ into $\free(\calp)$:
\[\eta\bigl(\frdgs,\frdgu\bigr):=\frdg{\eta(\cals,\calu)}.\]
That this is a dual-implicative embedding now follows by the following straightforward calculations.
\begin{align*}
\eta(\cals,\calu)\join\eta(\calt,\calv)&=\setof{A^0}{A\in\cals\cup\calt}\cup\setof{C^1}{C\in\calu\cup\calv}\\
&=\eta(\cals\join\calt,\calu\join\calv)\\
&=\eta\bigl((\cals,\calu)\join(\calt,\calv)\bigr).
\end{align*}
\begin{align*}
\eta(\cals,\calu)\meet\eta(\calt,\calv)&=\setof{A^0\cup B^0}{A\in\cals\qand B\in\calt}\\
&\qquad\cup\setof{A^0\cup D^1}{A\in\cals\qand D\in\calv}\\
&\qquad\cup\setof{B^0\cup C^1}{B\in\calt\qand C\in\calu}\\
&\qquad\cup\setof{C^1\cup D^1}{C\in\calu\qand D\in\calv}.
\end{align*}
Since $A,B\incl Q$ and $C,D\incl R$,
\begin{gather*}
A^0\cup D^1=B^0\cup C^1=Q^0\cup R^1\\
\noalign{and}
A^0\cup B^0,\; C^1\cup D^1\incl Q^0\cup R^1,
\end{gather*}
so
\begin{align*}
\eta(\cals,\calu)\meet\eta(\calt,\calv)&=\setof{E^0}{E\in\cals\meet\calu}\cup\setof{F^1}{F\in\calt\meet\calv}\\
&=\eta(\cals\meet\calu,\calt\meet\calv)\\
&=\eta\bigl((\cals,\calt)\meet(\calu,\calv)\bigr),
\end{align*}
and
\begin{align*}
\eta(\cals,\calu)\dlatimpl\eta&(\calt,\calv)=\setof{B^0}{B\in\calt\text{ and }\set{B^0}\not\leq\eta(\cals,\calu)}\\
&\hskip1in\cup\setof{D^1}{D\in\calv\text{ and }\set{D^1}\not\leq\eta(\cals,\calu)}\\
&=\setof{B^0}{B\in\calt\text{ and }\set B\not\leq\cals}\cup\setof{D^1}{D\in\calv\text{ and }\set D\not\leq\calu}\\
&=\setof{B^0}{B\in\cals\dlatimpl\calt}\cup\setof{D^1}{D\in\calu\dlatimpl\calv}\\
&=\eta(\cals\dlatimpl\calt,\calu\dlatimpl\calv)\\
&=\eta\bigl((\cals,\calu)\dlatimpl(\calt,\calv)\bigr).
\end{align*}
Finally we have
\[\eta\bigl(\latz_{\free(\calq)\times\free(\calr)}\bigr)
\equiv\eta\bigl(\set Q,\set R\bigr)
=\set{Q^0,R^1}
=\set P\equiv\latz_{\free(\calp)},\]
and
\begin{align*}
\eta\bigl(\lato_{\free(\calq)\times\free(\calr)}\bigr)
&=\eta\bigl(\setof{\set b}{b\in Q},\setof{\set c}{c\in R}\bigr)\\
&=\setof{\set{(0,b)}}{b\in Q}\cup(\set 1\times R)\\
&\phantom{{}=\setof{\set{(0,b)}}{b\in Q}}\cup(\set 0\times Q)\cup\setof{\set{(1,c)}}{c\in R}\\
&\equiv\setof{\set{(0,b)}}{b\in Q}\cup\setof{\set{(1,c)}}{c\in R}\equiv\lato_{\free(\calp)}.\qedhere
\end{align*}
\end{proof}

\begin{proposition}
For any finite dual-implicative lattice \l\, there exists a finite partial ordering \calp\ such that $\l\dembeds\free(\calp)$.
\end{proposition}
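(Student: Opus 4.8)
The plan is to prove this by induction on $|L|$, peeling $\mathstr L$ apart into strictly smaller dual-implicative lattices and reassembling via Lemma \ref{cprodpos}. Recall that a finite distributive lattice is automatically bi-Heyting, so "dual-implicative" here adds nothing to "finite distributive"; its co-Heyting congruences are exactly those induced by its ideals, and (dually to the standard fact that a finite subdirectly irreducible Heyting algebra has a unique coatom) $\mathstr L$ is subdirectly irreducible as a dual-implicative algebra precisely when it has a unique atom. When $|L|\le 1$ the statement is trivial (take $\calp$ a one-point ordering). For the inductive step with $|L|\ge 2$ I would split into two cases.

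\textbf{$\mathstr L$ not subdirectly irreducible.} Then the identity congruence is the intersection of two co-Heyting congruences $\theta_0,\theta_1$, each properly above it (e.g.\ two atoms of the congruence lattice), so each quotient $\mathstr L_i:=\mathstr L/\theta_i$ has $|L_i|<|L|$ and the diagonal map $a\mapsto\bigl([a]_{\theta_0},[a]_{\theta_1}\bigr)$ embeds $\mathstr L$ into $\mathstr L_0\times\mathstr L_1$ respecting $\leq,\meet,\join,\latz,\lato$ and $\dlatimpl$ (each coordinate is a co-Heyting homomorphism, and $\dlatimpl$ is computed coordinatewise in a product). By the induction hypothesis $\mathstr L_i\dembeds\free(\calp_i)$ for finite partial orderings $\calp_i$, and Lemma \ref{cprodpos} yields a finite $\calp$ with $\free(\calp_0)\times\free(\calp_1)\dembeds\free(\calp)$, whence $\mathstr L\dembeds\free(\calp)$.

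\textbf{$\mathstr L$ subdirectly irreducible.} Let $p$ be its unique atom. Every nonzero element of $\mathstr L$ lies above $p$, so $L':=L\setminus\set\latz$ is closed under $\meet$ and $\join$ and is a finite (hence dual-implicative) sublattice $\mathstr L'$ with least element $p$; thus $\mathstr L$ is ${\mathstr L'}$ with a new least element adjoined. Since $|L'|<|L|$, the induction hypothesis gives ${\mathstr L'}\dembeds\free(\calp')$. It then suffices to establish two auxiliary facts: (a) the operation of adjoining a new bottom respects dual-implicative embeddings, so ${\mathstr L'}_{\ssf 0}\dembeds\free(\calp')_{\ssf 0}$ — this is in the spirit of Lemma \ref{lzo}, the only subtlety being that $\dlatimpl$ in $\mathstr A_{\ssf 0}$ differs from $\dlatimpl$ in $\mathstr A$ exactly on pairs $a\ge b$, where both collapse to the bottom; and (b) for any finite partial ordering $\calp$, $\free(\calp)_{\ssf 0}\iso\free(\calp'')$ where $\calp''$ is $\calp$ with a new \emph{minimum} element $z$ adjoined. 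For (b) one checks from the defining formulas of $\free$ that the generator attached to $z$ becomes the bottom of $\free(\calp'')$, that $\free(\calp)$ sits inside $\free(\calp'')$ as the sublattice of elements $\bigjoin\bigmeet$-expressible without that generator, and that this sublattice is $\lato$-cofinal and contains everything except the new bottom. Combining, $\mathstr L\dembeds\free(\calp')_{\ssf 0}\iso\free(\calp'')$.

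The genuinely non-routine ingredient, and the step I expect to be the main obstacle, is exactly fact (b) together with the running obligation to verify that $\dlatimpl$ is preserved at every stage — through the subdirect projections, through Lemma \ref{cprodpos}, and through the adjunction of a bottom. Unlike $\meet$ and $\join$, the dual-implication has no automatic good behavior under passage to sublattices, so each preservation check must be done by hand from the explicit formula $\cals\dlatimpl\calt=\setof{B\in\calt}{\set B\not\leq\cals}$ for $\free$ and the analogous formulas for quotients and products. Everything else — the dichotomy from the structure theory of finite bi-Heyting algebras, and the bookkeeping for $\leq,\meet,\join,\latz,\lato$ — is routine.
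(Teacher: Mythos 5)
Your proof is correct and is essentially the paper's argument: the same induction on $|L|$, with the same dichotomy ($\latz$ meet-irreducible, i.e.\ a unique atom, versus two incomparable elements meeting to $\latz$), the non-irreducible case handled by a product of two proper quotients (the paper writes these concretely as $a\mapsto(a\join d,a\join e)$ into $\l[d,\lato]\times\l[e,\lato]$) followed by Lemma \ref{cprodpos}, and the irreducible case handled by deleting $\latz$ and re-adjoining a bottom via $\free(\calp)_{\ssf 0}\iso\free(\calp'')$. Your facts (a) and (b) are exactly the step the paper dismisses as ``straightforward to verify,'' and your preservation checks for $\dlatimpl$ match the paper's computation $(a\dlatimpl b)\join d=(a\join d\dlatimpl b\join d)\join d$.
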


\begin{proof}
We proceed by induction on the size $|\l|$ of \l. The smallest dual-implicative lattice is the two-element lattice {\bf 2}; easily ${\bf 2}\dembeds\free({\bf 2})$. For $|\l|>2$, suppose first that \l\ is \latz-irreducible and set 
\[\latz':=\bigmeet L\setminus\set{\latz}.\] 
Then $\latz<\latz'$ and $\latz'$ is the immediate successor of \latz, so $\l\iso\l[\latz',\lato]_{\ssf 0}$ (Definition \ref{segments}). By the induction hypothesis, there exists a finite partial ordering \calq\ such that $\l[\latz',\lato]\dembeds\free(\calq)$ and it is straightforward to verify that $\l\dembeds\free(\calq_{\ssf 0})$, where $\calq_{\ssf 0}$ is \calq\ enriched with a new least element.

Otherwise, \l\ is not \latz-irreducible so there exist $d,e>\latz$ in \l\ such that $d\meet e=\latz$. We shall show that in this case 
\[\l\dembeds\l[d,\lato]\times\l[e,\lato]\dembeds\free(\calq)\times\free(\calr)\dembeds\free(\calp)\]
for finite partial orderings \calq\ and \calr\ from the induction hypothesis and \calp\ from Lemma \ref{cprodpos}. Define $\eta:L\to L[d,\lato]\times L[e,\lato]$ by
\[\eta(a):=(a\join d,a\join e).\]
Obviously
\[a\leq b\Implies a\join d\leq b\join d\text{\quad and\quad }a\join e\leq b\join e\Implies\eta(a)\leq \eta(b),\]
but also
\begin{align*}
\eta(a)\leq\eta(b)&\qImplies a\join d\leq b\join d\qand a\join e\leq b\join e\\
&\qImplies (a\join d)\meet (a\join e)\leq (b\join d)\meet(b\join e)\\
&\qImplies a=a\join\latz=a\join(d\meet e)\leq b\join(d\meet e)=b\join\latz=b,
\end{align*}
so $\eta$ is an order-preserving injection. To complete the proof we verify that $\eta$ respects \latz, \lato, $\join$, $\meet$ and $\dlatimpl$. Clearly
\begin{align*}
\eta(\latz)&=(d,e)=\latz_{\l[d,\lato]\times\l[e,\lato]};\\
\eta(\lato)&=(\lato,\lato)=\lato_{\l[d,\lato]\times\l[e,\lato]}.
\end{align*}
For $a,b\in L$,
\begin{align*}
\eta(a\join b)
&=\bigl((a\join b)\join d,(a\join b)\join e\bigr)\\
&=\bigl((a\join d)\join(b\join d),(a\join e)\join(b\join e)\bigr)\\
&=\eta(a)\join\eta(b);\\
\noalign{\medskip}
\eta(a\meet b)
&=\bigl((a\meet b)\join d,(a\meet b)\join e\bigr)\\
&=\bigl((a\join d)\meet(b\join d),(a\join e)\meet(b\join e)\bigr)\\
&=\eta(a)\meet\eta(b).
\end{align*}
Towards $\dlatimpl$, note first that for any $x$,
\[b\leq a\join x\qImplies b\join d\leq a\join d\join x\qImplies b\leq a\join(x\join d)\]
or equivalently
\[x\geq(a\dlatimpl b)\qImplies x\geq(a\join d\dlatimpl b\join d)\qImplies(x\join d)\geq(a\dlatimpl b).\]
From these we conclude
\[(a\join d\dlatimpl b\join d)\leq(a\dlatimpl b)\leq(a\join d\dlatimpl b\join d)\join d\]
so
\[(a\dlatimpl b)\join d=(a\join d\dlatimpl b\join d)\join d.\]
Then using also the corresponding equation for $e$,
\begin{align*}
\eta(a)\dlatimpl\eta(b)&=\bigl((a\join d)\dlatimpl_d (b\join d),(a\join e)\dlatimpl_e (b\join e)\bigr)\\
&=\bigl((a\join d\dlatimpl b\join d)\join d,(a\join e\dlatimpl b\join e)\join e\bigr)\\
&=\bigl((a\dlatimpl b)\join d,(a\dlatimpl b)\join e\bigr)=\eta(a\dlatimpl b).\qedhere
\end{align*}
\end{proof}

\begin{corollary}
For every finite \latz- and \lato-irreducible dual-implicative lattice \l, there exists a finite partial ordering \calp\ such that $\l\dembeds\free(\calp)^{\ssf 1}_{\ssf 0}$.
\end{corollary}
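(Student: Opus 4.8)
The plan is to deduce the Corollary from the preceding Proposition (every finite dual-implicative lattice dual-embeds into some $\free(\calp)$) together with Lemma \ref{lzo}, by exhibiting a finite $\latz$- and $\lato$-irreducible dual-implicative lattice as the result of adjoining a fresh least and a fresh greatest element to a strictly smaller dual-implicative lattice. First I would dispose of the cases $|L|\le 3$: the only such lattices are the two-element lattice and the three-element chain, and each dual-embeds into $\free(\calp)^{\ssf 1}_{\ssf 0}$ for an explicit small partial ordering $\calp$ (a two-element antichain works for both) by a direct check. So assume $|L|\ge 4$.

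Following the proof of the preceding Proposition, set $\latz':=\bigmeet(L\setminus\set{\latz})$ and $\lato':=\bigjoin(L\setminus\set{\lato})$. The $\latz$-irreducibility of $\l$ forces $\latz<\latz'$ and, crucially, forces $\latz'$ to be the \emph{unique} immediate successor of $\latz$ (two incomparable atoms would meet to $\latz$), so $\latz'\le a$ for every $a\ne\latz$; dually $\lato'\ge a$ for every $a\ne\lato$ and $\lato'<\lato$. Hence $L[\latz',\lato']=L\setminus\set{\latz,\lato}$, and the two irreducibilities give that this set is closed under $\meet$ and $\join$ (a meet of nonzero elements is nonzero, a join of non-top elements is non-top), so $\l[\latz',\lato']$ is a finite dual-implicative lattice of size $|L|-2\ge 2$ by the segment lemma. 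The obvious bijection then shows $\l\iso\l[\latz',\lato']^{\ssf 1}_{\ssf 0}$, with $\latz$ and $\lato$ corresponding to the adjoined new bottom $0^*$ and new top $1^*$.

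The point requiring real care is that this isomorphism must respect $\dlatimpl$, not just $\meet$, $\join$, $\latz$ and $\lato$. The dual-implication of the segment $\l[\latz',\lato']$ is \emph{not} the restriction of that of $\l$: by the segment lemma it is $a\dlatimpl_{\latz'}b=\latz'\join(a\dlatimpl b)$, which inflates the value by $\latz'$ exactly when $b\le a$, and adjoining the new top further alters $\dlatimpl$ exactly when no non-top element witnesses the implication. However, the dual-implication of $\l[\latz',\lato']^{\ssf 1}_{\ssf 0}$, computed from the duals of the formulas displayed in the proof of Lemma \ref{lzo}, reintroduces $0^*$ (that is, $\latz$) in precisely the cases $b\le a$ and $1^*$ (that is, $\lato$) in precisely the cases just described, and a routine finite case analysis then shows that its values agree with those of $\dlatimpl$ in $\l$. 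This compatibility check is the main obstacle; everything else is bookkeeping.

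With $\l\iso\l[\latz',\lato']^{\ssf 1}_{\ssf 0}$ in hand, the Proposition supplies a finite partial ordering $\calp$ and a dual-implicative embedding $\eta$ witnessing $\l[\latz',\lato']\dembeds\free(\calp)$. Extending $\eta$ by $0^*\mapsto0^*$ and $1^*\mapsto1^*$ yields a map $\l[\latz',\lato']^{\ssf 1}_{\ssf 0}\to\free(\calp)^{\ssf 1}_{\ssf 0}$; since on both sides the lattice operations and the dual-implication involving $0^*$ and $1^*$ are given by the same uniform formulas of Lemma \ref{lzo}, and $\eta$ already respects $\le$, $\meet$, $\join$ and $\dlatimpl$ on the cores, this extension is a dual-implicative embedding that respects $\latz$ and $\lato$. ($\free(\calp)$ is dual-implicative by the earlier lemma, and $\free(\calp)^{\ssf 1}_{\ssf 0}$ is a finite $\latz$- and $\lato$-irreducible dual-implicative lattice by parts (2) and (4) of Lemma \ref{lzo}, so the target has the advertised shape.) Composing with the isomorphism gives $\l\dembeds\free(\calp)^{\ssf 1}_{\ssf 0}$, as required.
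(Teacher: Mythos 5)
Your proof is correct and follows the same route as the paper's: dispose of $|L|\le 3$ directly, use the two irreducibilities to get $\l\iso\l[\latz',\lato']^{\ssf 1}_{\ssf 0}$ with $\latz'=\bigmeet(L\setminus\set\latz)$ and $\lato'=\bigjoin(L\setminus\set\lato)$, apply the preceding Proposition to the core, and extend the embedding via Lemma \ref{lzo}. (The compatibility of $\dlatimpl$ with the isomorphism $\l\iso\l[\latz',\lato']^{\ssf 1}_{\ssf 0}$ that you flag as the main obstacle is in fact automatic, since $\dlatimpl$ is definable from the order and any bijective lattice isomorphism therefore preserves it; only the non-surjective extension of $\eta$ needs the explicit formulas, as you correctly argue.)
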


\begin{proof}
If \l\ has two or three elements, the result is clear. If \l\ has at least four elements, let
\[\latz':=\bigmeet L\setminus\set{\latz}\qand\lato':=\bigjoin L\setminus\set{\lato}.\]
Then $\l\iso\l[\latz',\lato']^{\ssf 1}_{\ssf 0}$. By the proposition there exists a finite partial ordering \calp\ such that $\l[\latz',\lato']\dembeds\free(\calp)$ so easily $\l\dembeds\free(\calp)^{\ssf 1}_{\ssf 0}$.
\QED
\end{proof}

The partial ordering \Dgt\ plays a role here via the following well-known

\begin{proposition}
Every finite partial ordering is embeddable in \Dgt. \noproof
\end{proposition}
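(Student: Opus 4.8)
The plan is to embed an arbitrary finite partial ordering $\calp = (P,\leq_P)$ into $\Dgt$ by choosing, for each element $p\in P$, a suitable join of sufficiently ``generic'' or ``mutually independent'' Turing degrees. First I would fix a family $\setof{c_p}{p\in P}$ of reals that is \emph{Turing-independent} in the strong sense that for every $p$ and every finite $F\incl P$ with $p\notin F$, $c_p\not\leqt\bigjoin_{q\in F}c_q$; such a family of size $|P|$ certainly exists --- for instance take $|P|$ many columns of a sufficiently generic real, or mutually $1$-generic reals, and appeal to the standard Kleene--Post style argument that no column is computed by the join of the others. Then define $h(p):=\degt\bigl(\bigoplus_{q\leq_P p}c_q\bigr)$, using the join operation on $\Dgt$ and the coding $\oplus$ of the introduction.

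The key steps are then: (1) \emph{monotonicity}: if $p\leq_P q$ then $\set{r : r\leq_P p}\incl\set{r: r\leq_P q}$, so $\bigoplus_{r\leq_P p}c_r$ is computed by $\bigoplus_{r\leq_P q}c_r$ (recursively project to the relevant columns), giving $h(p)\leq h(q)$; (2) \emph{reflection of order / injectivity}: if $p\not\leq_P q$, pick a witness $r\leq_P p$ with $r\not\leq_P q$ (namely $r=p$ works); then $c_r$ is one of the summands of $\bigoplus_{s\leq_P p}c_s$ but $r\notin\set{s: s\leq_P q}$, so by the independence of the family $c_r\not\leqt\bigoplus_{s\leq_P q}c_s$, whence $h(p)\not\leq h(q)$. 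Combining (1) and (2) gives $p\leq_P q \iff h(p)\leq h(q)$, so $h$ is an order embedding (and in particular injective).

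The only genuine obstacle is verifying the existence of the independent family $\setof{c_p}{p\in P}$ with the exact independence property needed in step (2), i.e.\ that no $c_p$ is Turing-below the join of the remaining ones. This is a completely standard finite-injury-free Kleene--Post diagonalization (build the reals column-by-column, at stage $s$ devoted to requirement ``$\set e^{\,\bigoplus_{q\neq p}c_q}\neq c_p$'' extend $c_p$ to disagree with the $e$-th functional applied to the current approximation of the other columns), and it appears elsewhere in this survey in essentially this form (e.g.\ in the proof of Theorem~E); so I would simply cite the standard argument rather than reproduce it. Everything else --- that $\oplus$ induces the join on $\Dgt$, that recursive projection of columns witnesses the reductions in step~(1) --- is immediate from the definitions in Section~2.
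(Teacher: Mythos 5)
Your proof is correct, and it is essentially the standard argument: the paper itself gives no proof but defers to \cite[Theorem 8.2.17]{Hi} and \cite[Exercise VII.2.2]{Soa}, and for the finite case those references reduce to exactly your construction --- an independent family $\setof{c_p}{p\in P}$ obtained by Kleene--Post diagonalization, with $p\mapsto\degt\bigl(\bigoplus_{q\leq_P p}c_q\bigr)$. Both the monotonicity and order-reflection steps are verified correctly, so nothing further is needed.
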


For a proof of this and much more --- that every countable partial ordering is embeddable in the r.e. Turing degrees \Dgpt\ --- see, for example, \cite[Theorem 8.2.17]{Hi} or \cite[Exercise VII.2.2]{Soa}.

\begin{corollary}
For every finite \latz- and \lato-irreducible dual-implicative lattice \l, $\l\dembeds\free(\Dgt)^{\ssf 1}_{\ssf 0}$. \noproof
\end{corollary}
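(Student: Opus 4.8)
The plan is to concatenate the two results just established by means of one functoriality observation: an order embedding of partial orderings induces a dual-implicative embedding of their free lattices. By the Corollary giving $\l\dembeds\free(\calp)^{\ssf 1}_{\ssf 0}$, fix a finite partial ordering $\calp$ with $\l\dembeds\free(\calp)^{\ssf 1}_{\ssf 0}$, and by the Proposition that every finite partial ordering is embeddable in $\Dgt$, fix an order embedding $\iota\colon\calp\to\Dgt$ (injective, with $a\leq_\calp b\Iff\iota(a)\leq\iota(b)$). It then suffices to show $\free(\calp)^{\ssf 1}_{\ssf 0}\dembeds\free(\Dgt)^{\ssf 1}_{\ssf 0}$ and compose, observing first that $\free(\Dgt)$ and $\free(\Dgt)^{\ssf 1}_{\ssf 0}$ are legitimate objects: the binary $\meet$, $\join$ and $\dlatimpl$ on $\free(\Dgt)$ are defined and verified word for word as in the lemma that $\free(\calp)$ is dual-implicative (finiteness of the index poset was used there only to exhibit $\latz$ and $\lato$), and $\cdot^{\ssf 1}_{\ssf 0}$ then supplies the bounds and, by Lemma \ref{lzo}, preserves distributivity and dual-implicativity.

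The core step is to define $\hat\iota\colon\free(\calp)\to\free(\Dgt)$ by $\frdg\cals\mapsto\frdg{\iota[\cals]}$, where $\iota[A]:=\setof{\iota(a)}{a\in A}$ and $\iota[\cals]:=\setof{\iota[A]}{A\in\cals}$; note $\iota[\cals]\in\poweromm\poweromm(\Dgt)$ since $\iota$ is injective. Because $\iota$ is order-preserving and order-reflecting, unwinding the nested quantifiers defining $\leq$ on $\free(\cdot)$ gives $\cals\leq\calt\Iff\iota[\cals]\leq\iota[\calt]$ — the only content being $a\leq_\calp b\Iff\iota(a)\leq\iota(b)$ — so $\hat\iota$ is well defined on $\equiv$-classes, injective, order-preserving and order-reflecting. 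Since $\iota[\cals\cup\calt]=\iota[\cals]\cup\iota[\calt]$ and $\iota[A\cup B]=\iota[A]\cup\iota[B]$, the map $\hat\iota$ respects $\join$ and $\meet$; and since, again by order-reflection, $\set B\not\leq\cals\Iff\set{\iota[B]}\not\leq\iota[\cals]$, it respects $\dlatimpl$. Thus $\hat\iota$ is an embedding of lattices-with-dual-implication, although it need not respect bounds (when $\calp$ is finite $\free(\calp)$ is bounded but $\free(\Dgt)$ is not).

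That last point is precisely why one passes to the $\cdot^{\ssf 1}_{\ssf 0}$ versions: I would extend $\hat\iota$ to $\free(\calp)^{\ssf 1}_{\ssf 0}\to\free(\Dgt)^{\ssf 1}_{\ssf 0}$ by sending the adjoined $\latz$ to $\latz$ and the adjoined $\lato$ to $\lato$. Using the explicit case-formulas for $\meet$, $\join$ and $\dlatimpl$ on a $\cdot^{\ssf 1}_{\ssf 0}$-extension displayed in the proof of Lemma \ref{lzo}, together with the fact that $\hat\iota$ is order-reflecting, one checks directly that this extension respects $\latz$, $\lato$, $\leq$, $\meet$, $\join$ and $\dlatimpl$, i.e. is a dual-implicative embedding. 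Composing $\l\dembeds\free(\calp)^{\ssf 1}_{\ssf 0}\dembeds\free(\Dgt)^{\ssf 1}_{\ssf 0}$ completes the proof. The main thing to be careful about is the bookkeeping flagged above — confirming that $\free(\Dgt)^{\ssf 1}_{\ssf 0}$ really is a bounded distributive dual-implicative lattice; the cleanest route is to realize $\free(\Dgt)$ as the direct limit of the $\free(\calp)$ over the finite suborderings $\calp\incl\Dgt$ along the maps $\hat\iota$, which transports the algebraic structure to the limit. The functoriality verifications for $\hat\iota$ itself are routine once the order-reflection of $\iota$ is isolated.
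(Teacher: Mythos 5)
Your argument is correct and is exactly the composition the paper intends (the corollary is stated with no proof precisely because it follows by combining the previous corollary with the embedding of $\calp$ into $\Dgt$): an order embedding $\iota:\calp\to\Dgt$ induces, elementwise, a map on $\poweromm\poweromm$ that preserves and reflects $\leq$ and commutes with the set-theoretic definitions of $\join$, $\meet$ and $\dlatimpl$, and the adjoined bounds are handled as in Lemma \ref{lzo}. Your verifications are the right ones and nothing is missing.
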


To complete the proof of the Embedding Theorem and Theorem I, it remains to establish

\begin{proposition}
$\free(\Dgt)^{\ssf 1}_{\ssf 0}\dembeds\Dgs$.
\end{proposition}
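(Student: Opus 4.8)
The plan is to construct an explicit $\dembeds$-embedding $\iota\colon\free(\Dgt)^{\ssf 1}_{\ssf 0}\to\Dgs$. Recall that an element of $\free(\Dgt)$ is a class $\frdg\cals$ represented by a finite nonempty set $\cals$ of finite nonempty subsets of the poset $\Dgt$, standing for the ``join of meets'' $\bigjoin_{A\in\cals}\bigmeet_{\dga\in A}\dga$. The naive idea is to send $\dga=\degt(A)$ to $\degs(\set A)$ and extend by the lattice operations of $\Dgs$; but this fails, because if $\dga=\dgb_0\join\dgb_1$ (the join in $\Dgt$) with $\dgb_0,\dgb_1$ Turing-incomparable, then $A\leqt B_0\oplus B_1$, so $\degs(\set A)\leqs\degs(\set{B_0})\join\degs(\set{B_1})$, whereas in $\free(\Dgt)$ --- where the \emph{join} operation of $\Dgt$ is deliberately forgotten --- $\frdg{\set{\set\dga}}\not\leq\frdg{\set{\set{\dgb_0},\set{\dgb_1}}}$. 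So $\iota$ must erase the Turing join as well.

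To do so I would replace singletons by \emph{generic cones}. Fix a system of representatives $\functionof{A_\dga}{\dga\in\Dgt}$ with $A_\dga\in\dga$ and put
\[\mathcal G_\dga:=\setof{A_\dga\oplus h}{h\hbox{ is }1\hbox{-generic over }A_\dga}.\]
Since $1$-genericity over a set depends only on its Turing degree, $\degs(\mathcal G_\dga)$ is independent of the choice of $A_\dga$, and a projection argument gives $\dga\leq\dgb\Implies\mathcal G_\dga\leqs\mathcal G_\dgb$. I then set $\iota(0^*):=\zeros$, $\iota(1^*):=\inftys=\degs(\emptyset)$, and for $\cals\in\poweromm\poweromm(\Dgt)$,
\[\iota(\frdg\cals):=\degs(Q_\cals),\qquad Q_\cals:=\bigjoin_{A\in\cals}\ \bigmeet_{\dga\in A}\ \mathcal G_\dga,\]
with the operations computed in $\Dgs$, so that $Q_\cals$ is the nonempty set of all $\bigoplus_{A\in\cals}(j_A)^\frown k_A$ with $j_A<|A|$ and $k_A$ a member of the cone of the $j_A$-th element of $A$. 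The heuristic is that a solution to $\mathcal G_\dga$ must not merely Turing-compute $A_\dga$ but must hand over an $A_\dga$-generic; and the join of generics over the various $A_\dgb$, while it may well compute $A_\dga$ when $\dga$ lies under the Turing join of those $\dgb$, is not itself generic over the larger oracle $A_\dga$, so it cannot manufacture such a solution.

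It then remains to check: (a) $\frdg\cals\mapsto\degs(Q_\cals)$ is well-defined on $\equiv$-classes, preserves $\meet$ and $\join$, and together with the assignment on $0^*,1^*$ gives a bounded-lattice homomorphism --- immediate from the $\poweromm\poweromm$-description of $\free(\Dgt)$, the distributive laws in $\Dgs$, and the join-irreducibility of $\inftys$ and meet-irreducibility of $\zeros$ (Proposition \ref{notBA}); (b) each $Q_\cals$ is nonempty and contains no recursive element, so $\zeros\les\iota(\frdg\cals)\les\inftys$ (the latter because $1$-generics are nonrecursive), which will give injectivity against $0^*$ and $1^*$; (c) $\iota$ reflects order, $\iota(\frdg\cals)\leqs\iota(\frdg\calt)\Implies\cals\leq\calt$; (d) $\iota$ respects $\dlatimpl$, i.e.\ $\iota(\frdg{\cals\dlatimpl\calt})=\iota(\frdg\cals)\dlatimpl\iota(\frdg\calt)$, where one containment is automatic from (a) and the other says $\iota(\frdg{\cals\dlatimpl\calt})\leqs X$ whenever $\iota(\frdg\calt)\leqs\iota(\frdg\cals)\join X$.

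The substance, and the main obstacle, is (c) --- equivalently a \emph{join-primeness} statement: if $\bigmeet_{\dga\in A}\mathcal G_\dga\leqs\bigjoin_{B\in\calt}\bigmeet_{\dgb\in B}\mathcal G_\dgb$, then $(\forall\dgb\in B)(\exists\dga\in A)\,\dga\leq\dgb$ holds for some single $B\in\calt$ --- together with the non-automatic half of (d). I would prove these, and the monotonicity-reflection $\mathcal G_\dga\leqs\mathcal G_\dgb\Implies\dga\leq\dgb$ on which they rest, by a finite-extension (Cohen) forcing argument. Assuming join-primeness fails, fix for each $B\in\calt$ an element $\dgb_B\in B$ with no $\dga\in A$ below $\dgb_B$, and build, against a given reduction $\Phi$ witnessing the hypothesis, an adversarial solution $g$ to $\bigjoin_{B\in\calt}\bigmeet_{\dgb\in B}\mathcal G_\dgb$ whose $B$-component solves $\mathcal G_{\dgb_B}$, extending the finite pieces of $g$ so as both to make each component $1$-generic over the relevant $A_{\dgb_B}$ and to diagonalize against $\Phi$. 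The key is that $\Phi(g)$ must eventually commit, on the leading coordinate of its output, to solving $\mathcal G_{\dga^*}$ for a single $\dga^*\in A$, hence to producing an $A_{\dga^*}$-generic uniformly from $g$; but since $A_{\dga^*}\not\leqt A_{\dgb_B}$ for every $B$, the freedom left in the partial reals --- constrained only by the forcings over the various $A_{\dgb_B}$ --- can always be used to spoil the totality or the genericity of that purported $A_{\dga^*}$-generic, contradicting $\Phi(g)\in Q_\cals$. The statement for $\dlatimpl$ is obtained by running the same construction against the witness $a$ in the formula $P\dlatimpl Q=\setof{(a)^\frown h}{(\forall f\in P)\set a^{f\oplus h}\in Q}$ for dual-implication in $\Dgs$. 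Once (a)--(d) are in place, $\iota$ is injective and an order-embedding by (b)--(c) and respects $\meet,\join,\latz,\lato$ (by (a) and construction) and $\dlatimpl$ (by (d)); that is, $\free(\Dgt)^{\ssf 1}_{\ssf 0}\dembeds\Dgs$, which with the preceding corollaries completes the proof of the Dual Embedding Theorem and hence of Theorem I.
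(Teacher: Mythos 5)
Your construction is genuinely different from the paper's, and the difference is exactly where the difficulty lives. The paper sends $\dga$ to $\degs([\dga])$ where $[\dga]:=\setof{h\in\pre\omega\omega}{\degt(h)\not\leq\dga}$ is the \emph{complement of the lower cone}, not a set of generics. Because membership in $[\dga]$ is Turing-degree-invariant and a recursive functional never raises degree, every hard step becomes nearly free: the join is erased because, e.g., feeding $A_\dga\oplus A_\dga$ into a putative reduction $[\dga]\leqs[\dgb_0]\join[\dgb_1]$ can only output functions $\leqt\dga$, hence outside $[\dga]$; and meet-irreducibility, join-irreducibility and the key join-primeness property all follow from homogeneity of $[\dga]$ by two-line arguments, with no forcing anywhere.

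The gap in your version is step (c), and it begins already in your heuristic. You assert that the join of the $h_B$'s ``is not itself generic over the larger oracle $A_{\dga}$,'' but if the $h_B$ are mutually generic --- i.e.\ $\bigoplus_B h_B$ is $1$-generic over $\bigoplus_B A_{\dgb_B}$, which is the natural outcome of any joint finite-extension construction --- and $A_{\dga^*}\leqt\bigoplus_B A_{\dgb_B}$, then $\bigoplus_B h_B$ \emph{is} $1$-generic over $A_{\dga^*}$, and the functional $g\mapsto(i_{\dga^*})^\frown\bigl(A_{\dga^*}\oplus\bigoplus_B h_B\bigr)$ succeeds on all such inputs. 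So your adversary must use deliberately non-mutually-generic tuples (each coordinate still generic over its own oracle), and the justification offered for the diagonalization --- ``since $A_{\dga^*}\not\leqt A_{\dgb_B}$ for every $B$, the freedom left can be used to spoil totality or genericity'' --- rests on a false premise: the full oracle $g$ computes $\bigoplus_B A_{\dgb_B}$, which may well compute $A_{\dga^*}$ even though no single $A_{\dgb_B}$ does (that is precisely the join-erasure problem you set out to solve), so only the $1$-genericity of the output tail $h^*$ can be attacked. Failure of $1$-genericity over $A_{\dga^*}$ is a $\Sigma^0_2(A_{\dga^*})$ condition (some $A_{\dga^*}$-r.e.\ set of strings dense along $h^*$ yet never met); no finite condition forces it, and the witnessing set must be enumerable from $A_{\dga^*}$, whereas your construction only has the stronger oracle $\bigoplus_B A_{\dgb_B}$ at its disposal. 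Nothing in the sketch addresses this, and I do not see that join-primeness for generic cones is even true in the basic test case $\cals=\set{\set{\dga}}$, $\calt=\set{\set{\dgb_0},\set{\dgb_1}}$ with $\dga=\dgb_0\join\dgb_1$. Your well-definedness, monotonicity, and the reflection $\mathcal G_\dga\leqs\mathcal G_\dgb\Implies\dga\leq\dgb$ (via cone avoidance for Cohen generics) are fine, but without (c) the map is not even known to be injective; replacing the generic cones by $[\dga]$ repairs the whole argument.
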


\begin{proof}
For $\dga\in\Dgt$ set
\[[\dga]:=\setof{h\in\pre\omega\omega}{\degt(h)\not\leqt\dga}\qand\sdg\dga:=\degs([\dga]).\]
Then for $A\in\poweromm(\Dgt)$ and $\cals\in\poweromm\poweromm(\Dgt)$ set
\[\eta(\set A):=\bigmeet_{\dga\in A}\sdg\dga\qand
\eta(\cals):=\bigjoin_{A\in\cals}\eta\bigl(\set A\bigr),\]
where these meets and joins are, of course, in \Dgs. Once we verify below that 
\[\eta(\cals)\leqs\eta(\calt)\qIff\cals\leq\calt,\]
we can extend $\eta$ to our final mapping $\free(\Dgt)^{\ssf 1}_{\ssf 0}\to\Dgs$ by
\[\eta\bigl(\frdgs\bigr):=\eta(\cals),\quad\eta(\latz^*):=\degs(\set\emptyset),\qand\eta(\lato^*):=\degs(\emptyset).\]
Before starting to establish that $\eta$ is a dual-implicative embedding,we note some properties of the sets $[\dga]$ and the strong degrees $\sdg\dga$: for all $\dga\in\Dgt$,
\begin{enumerate}
\item[(1)]$\dga\leqt\dgb\Iff \sdg\dga\leqs\sdg\dgb$;
\item[(2)]$[\dga]$ is \dff{homogeneous}: $(\forall\sigma\in\pre{<\omega}\omega)\;\forall h\;\bigl(h\in[\dga]\rImplies\sigma^\frown h\in[\dga]\bigr)$;
\item[(3)]$\sdg\dga$ is meet-irreducible;
\item[(4)]$\sdg\dga$ is join-irreducible.
\end{enumerate}
For (1), $\dga\leqt\dgb\Iff[\dgb]\incl[\dga]\Implies[\dga]\leqs[\dgb]$. Conversely, for a recursive functional $\Phi$, $\Phi(h)\leqt h$, so
\begin{align*}
\Phi:[\dgb]\to[\dga]&\qImplies(\forall h\in[\dgb])\;\Phi(h)\not\leqt a\\
&\qImplies(\forall h\in[\dgb])\;h\not\leqt\dga\\
&\qImplies[\dgb]\incl[\dga]\qImplies\dga\leqt\dgb,
\end{align*}
so also $[\dga]\leqs[\dgb]\Implies\dga\leqt\dgb$.
(2) is immediate, since $h\eqt\sigma^\frown h$. For (3), suppose that for some $P,Q\incl\pre\omega\omega$, $P\meet Q\leqs[\dga]$, say
\[\Phi:[\dga]\to((0)^\frown P)\cup( (1)^\frown Q).\]
Then if $P\not\leqs[\dga]$, some element of $[\dga]$ is mapped into $(1)^\frown Q$ so for some finite sequence $\sigma$, $\Phi(\sigma)(0)=1$, and thus by homogeneity,
\[Q\leqs\setof{h\in[\dga]}{\sigma\incl h}\eqs[\dga].\]
For (4), for any $\dga\in\Dgt$, let $g_\dga$ be a function with $\degt(g_\dga)=\dga$. Then $[\dga]\not\leqs\set{g_\dga}$, since for any recursive functional $\Phi$, $\Phi(g_\dga)\leqt g_\dga$ and thus $\Phi(g_\dga)\notin[\dga]$. But
\[[\dga]\not\leqs P\rImplies P\not\incl[\dga]\rImplies(\exists f\in P)\;\degt(f)\leqt\dga\rImplies P\leqs\set{g_\dga},\]
so 
\[P,Q\les[\dga]\qImplies P\join Q\leqs[\dga]\meet\set{g_\dga}\les[\dga].\]

We establish next a rather special instance of join-irreducibility:
\begin{enumerate}
\item[(5)]For any $A\in\poweromm(\Dgt)$, $\calt\in\poweromm\poweromm(\Dgt)$ and $X\incl\pre\omega\omega$,
\end{enumerate}
\[\eta\bigl(\set A\bigr)\leqs\eta(\calt)\join\degs(X)\rImplies\eta(\set A)\leqs\eta(\calt)\qor\eta(\set A)\leqs\degs(X).\]
Since $\eta(\calt)=\bigjoin_{B\in\calt}\bigmeet_{b\in B}\sdg\dgb$, by distributivity also
\[\eta(\calt)=\bigmeet_{F\in\prod\calt}\bigjoin_{B\in\calt}\sdg{F(B)}.\]
Assume the hypothesis of (5) and that $\eta(\set A)\not\leqs\eta(\calt)$. Then for some $F\in\prod\calt$, $\eta(\set A)\not\leqs\bigjoin_{B\in\calt}\sdg{F(B)}$. Set
\[Y:=\bigoplus_{B\in\calt}[F(B)]\quad\text{so}\quad\eta(\set A)\not\leqs\degs(Y),\]
and in particular, for each $\dga\in A$, $[\dga]\not\leqs Y$. But by hypothesis,
\[\eta(\set A)\leqs\degs(X)\join\degs(Y),\]
so there exists a recursive functional
\[\Phi:X\join Y\to\bigcup_{a\in A}(i_{\dga})^\frown[\dga].\]
For $f\in X$ set
\[\bigl(i(f),\sigma(f)\bigr):=\text{least }(i,\sigma)\;[\Phi(f\oplus\sigma)\simeq i],\]
and for $\dga\in A$,
\begin{align*}
X_\dga&:=\setof{f\in X}{i(f)=i_\dga};\\
\Phi_\dga(f\oplus g)(x)&:=\Phi\bigl(f\oplus\sigma(f)^\frown g\bigr)(x+1).
\end{align*}
Since by (2) $Y$ is homogeneous,
\[\Phi_\dga:X_\dga\join Y\to[\dga]\quad\text{so}\quad[\dga]\leqs X_\dga\join Y.\]
But we showed above that $[\dga]\not\leqs Y$ and by (4) $[\dga]$ is join-irreducible, so $[\dga]\leqs X_\dga$ and hence
\[\eta(\set A)\leqs\bigmeet_{\dga\in A}\degs(X_\dga)\leqs\degs(X),\]
since $\Psi(f):=\bigl(i(f)\bigr)^\frown f$ witnesses that $\bigmeet_{\dga\in A}X_\dga\leqs X$. This completes the proof of (5).

Now we have
\begin{align*}
\eta\bigl(\set A\bigr)\leqs\eta\bigl(\set B\bigr)
&\rIff\bigmeet_{\dga\in A}\sdg\dga\leqs\bigmeet_{b\in B}\sdg\dgb\\
&\rIff(\forall \dgb\in B)\;\Bigl(\bigmeet_{\dga\in A}\sdg\dga\Bigr)\leqs\sdg\dgb\\
&\rIff(\forall \dgb\in B)(\exists \dga\in A)\sdg\dga\leqs\sdg\dgb&&\text{by (3)}\\
&\rIff(\forall \dgb\in B)(\exists \dga\in A)\;\dga\leqt\dgb&&\text{by (1)}\\
&\rIff\set A\leq\set B
\end{align*}
and
\begin{align*}
\eta(\cals)\leqs\eta(\calt)
&\rIff\bigjoin_{A\in\cals}\eta\bigl(\set A\bigr)\leqs\eta(\calt)\\
&\rIff(\forall A\in\cals)\;\eta\bigl(\set A\bigr)\leqs\eta(\calt)\\
&\rIff(\forall A\in\cals)(\exists B\in\calt)\;\eta\bigl(\set A\bigr)\leqs\eta\bigl(\set B\bigr)&&\text{by (5) iterated}\\
&\rIff\cals\leq\calt.
\end{align*}
It follows as usual that $\eta$ is well-defined, injective and order-preserving on $\free(\Dgt)$. To verify that $\eta$ respects $\join$ and $\meet$ is straightforward and left to the reader, and we turn to $\dlatimpl$. We need to show that for all \cals\ and \calt,
\[\eta(\cals)\dlatimpl\eta(\calt)=\eta(\cals\dlatimpl\calt),\]
or equivalently, by the definition of $\dlatimpl$, for all $X$,
\[\eta(\calt)\leqs\eta(\cals)\join\degs(X)\rIff\eta(\cals\dlatimpl\calt)\leqs\degs(X).\]
Now by the definition of $\dlatimpl$ in $\free(\Dgt)$,
\begin{align*}
\eta(\cals\dlatimpl\calt)&=\eta\bigl(\setof{B\in\calt}{\set B\not\leqs\cals}\bigr)\\
&=\bigjoin\setof{\eta\bigl(\set B\bigr)}{B\in\calt\;\text{and}\;\set B\not\leq\cals}\\
&=\bigjoin\setof{\eta\bigl(\set B\bigr)}{B\in\calt\;\text{and}\;\eta\bigl(\set B\bigr)\not\leqs\eta(\cals)} &&\text{by (1)}.
\end{align*}
Hence it will suffice to show that for all $X$, 
\begin{multline*}
\eta(\calt)\leqs\eta(\cals)\join\degs(X)\\
\Iff(\forall B\in\calt)\bigl[\eta\bigl(\set B\bigr)\leqs\eta(\cals)\qor\eta\bigl(\set B\bigr)\leqs\degs(X)\bigr].
\end{multline*}
The implication (\Larrow) is immediate from the definition and (\Rarrow) follows directly from (5).
\QED
\end{proof}
\end{section}

\begin{section}{Proof of Theorem K}
The proof will require a substantial number of lemmas; the first is immediate from the definitions.

\begin{lemma} 
For any $A\incl\omega$ and any $A$-full set $P$, $P\in\Sigma^0_2[A]$ and $\mu(P)=1$. \noproof
\end{lemma}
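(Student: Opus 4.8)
The plan is to unfold the definitions given in Definition~\ref{defpzodg}'s vicinity --- that is, the definition of ``$A$-full'' just above --- and observe that both conclusions fall out almost immediately from the structure of such a set. Recall that $P$ being $A$-full means $P=\bigcup_{n\in\omega}P_n$ for an $A$-recursive increasing sequence of $\Pi^0_1[A]$ classes $P_n$ with $\mu(P_n)\geq 1-2^{-n}$ for all $n$.

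For the complexity claim, I would argue as follows. Each $P_n$ is $\Pi^0_1[A]$, so $f\in P_n$ is a $\Pi^0_1[A]$ condition, uniformly in $n$ (here I use that the sequence $\functionof{P_n}{n\in\omega}$ is $A$-recursive, so we have a uniform $A$-recursive index for an $A$-recursive tree $T_n$ with $P_n=[T_n]$). Then
\[
f\in P\Iff \exists n\,(f\in P_n)\Iff \exists n\,\forall m\,(f\restrict m\in T_n),
\]
which exhibits $P$ as a $\Sigma^0_2[A]$ set: an existential number quantifier over a $\Pi^0_1[A]$ matrix. For the measure claim, since $P\supseteq P_n$ for every $n$, monotonicity of $\mu$ gives $\mu(P)\geq\mu(P_n)\geq 1-2^{-n}$ for all $n$; letting $n\to\infty$ yields $\mu(P)\geq 1$, and since $\mu$ is a probability measure on $\pre\omega2$, $\mu(P)=1$. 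One small point to be careful about: strictly speaking $P$ need not be Borel in an obvious way, but it is a countable increasing union of closed (hence measurable) sets, so $\mu(P)$ is well-defined and equals $\lim_n\mu(P_n)=1$ by continuity of measure from below; this is the only place where any genuine (if routine) measure-theoretic fact is invoked.

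There is essentially no obstacle here --- the statement is flagged in the paper as ``immediate from the definitions,'' and indeed the entire content is bookkeeping: tracking that the $\Pi^0_1[A]$-ness is uniform in $n$ so that the existential quantifier can be pulled outside to land in $\Sigma^0_2[A]$, and invoking continuity of measure from below for the measure statement. If anything deserves a sentence of care, it is the uniformity, since a non-uniform union of $\Pi^0_1[A]$ classes would only be $\Sigma^0_2[A]$ if one already knew a uniform listing; but that uniformity is built into the definition of $A$-full, so no work is required.
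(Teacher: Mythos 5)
Your proof is correct and is exactly the argument the paper intends — the lemma is stated with no proof precisely because it amounts to unfolding the definition of $A$-full as you do: the uniform $A$-recursive indexing of the $\Pi^0_1[A]$ classes $P_n$ puts the union in $\Sigma^0_2[A]$, and monotonicity (or continuity from below) of $\mu$ gives measure $1$. Nothing further is needed.
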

 
\begin{lemma}\label{randomfull}
${\ssf R}_n$ is $\zerot^{(n-1)}$-full; hence ${\ssf R}_n\in\Sigma^0_{n+1}$ and $\mu({\ssf R}_n)=1$.
\end{lemma}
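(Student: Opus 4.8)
The plan is to prove the sharper statement that ${\ssf R}^A$ is itself $A$-full for every $A\incl\omega$; the two stated consequences then follow immediately by applying the preceding lemma and, for the complexity bound, the Post hierarchy fact $\Sigma^0_2[\zerot^{(n-1)}]=\Sigma^0_{n+1}$, since ${\ssf R}_n={\ssf R}^{\zerot^{(n-1)}}$.

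First I would dualize the notion of $A$-fullness into the language of Martin--L\"of tests. If $P=\bigcup_m P_m$ witnesses that $P$ is $A$-full, then each $U_m:=\pre\omega2\setminus P_m$ is a $\Sigma^0_1[A]$ (effectively open) set with $\mu(U_m)\leq 2^{-m}$; since the $P_m$ increase, the $U_m$ decrease, the sequence $(U_m)_m$ is presented $A$-recursively, and $\bigcap_m U_m=\pre\omega2\setminus P$. Conversely, any decreasing $A$-recursive sequence $(U_m)_m$ of $\Sigma^0_1[A]$ sets with $\mu(U_m)\leq 2^{-m}$ --- that is, any $A$-Martin--L\"of test --- yields the $A$-full set $\pre\omega2\setminus\bigcap_m U_m$. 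Hence $P$ is $A$-full iff its complement is the null set captured by some $A$-ML test, and so $\;{\ssf R}^A=\pre\omega2\setminus\bigcup\setof{\bigcap_m U_m}{(U_m)_m\ \hbox{an }A\hbox{-ML test}}$.

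Next I would invoke (and for completeness sketch) the relativized \emph{universal} Martin--L\"of test: there is an $A$-recursive decreasing sequence $(V_m)_m$ of $\Sigma^0_1[A]$ sets with $\mu(V_m)\leq 2^{-m}$ such that $\bigcap_m V_m\supseteq\bigcap_m U_m$ for every $A$-ML test $(U_m)_m$; see \cite{DoHi}. The construction, fully relativized to $A$, is the standard one: $A$-effectively enumerate all candidate $A$-recursive presentations of sequences of open sets, and for the $e$-th candidate define $\hat U^e_m$ by admitting basic clopen cylinders into the $m$-th component only while the accumulated measure stays $\leq 2^{-m}$, intersecting with $\hat U^e_{m-1}$ to force nesting. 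This yields a uniformly $\Sigma^0_1[A]$ array $(\hat U^e_m)_{e,m}$, each row an $A$-ML test, which includes every genuine $A$-ML test (the truncation being vacuous on those); then $V_m:=\bigcup_e\hat U^e_{m+e+1}$ has $\mu(V_m)\leq\sum_e 2^{-(m+e+1)}=2^{-m}$, is decreasing in $m$, and satisfies $\bigcap_m V_m\supseteq\bigcap_m\hat U^e_m$ for every $e$. Now set $P^*:=\bigcup_m\bigl(\pre\omega2\setminus V_m\bigr)$. By construction $P^*$ is $A$-full, so on one hand ${\ssf R}^A\incl P^*$ because $P^*$ occurs in the defining intersection of ${\ssf R}^A$; on the other hand, for every $A$-full $P$ with associated $A$-ML test $(U_m)_m$ we get $\pre\omega2\setminus P=\bigcap_m U_m\incl\bigcap_m V_m=\pre\omega2\setminus P^*$, i.e.\ $P^*\incl P$, and intersecting over all $A$-full $P$ gives $P^*\incl{\ssf R}^A$. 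Thus ${\ssf R}^A=P^*$ is $A$-full, and the preceding lemma finishes the proof.

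The hard part is the universal-test construction --- specifically the measure-truncation device that makes the enumeration of \emph{all} $A$-ML tests effective while keeping each $\mu(\hat U^e_m)\leq 2^{-m}$ and leaving genuine tests unaltered. Everything else is bookkeeping: the dualization in the second paragraph, the measure estimate $\sum_e 2^{-(m+e+1)}=2^{-m}$, the nesting, and the two inclusions establishing $P^*={\ssf R}^A$ are all routine once the universal test is in hand.
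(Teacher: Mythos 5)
Your proof is correct and follows essentially the same strategy as the paper: both arguments build a single smallest $A$-full set (equivalently, a universal $A$-Martin--L\"of test) by effectively enumerating all candidate presentations and using measure truncation so that defective candidates are harmlessly replaced while genuine ones are left unchanged. The only differences are cosmetic --- the paper works on the closed-set side with trees $T_a$ and a diagonal device $Q_a^*\approx[T_{\set a(a)}]$ truncated at measure $1-2^{-a}$, where you work on the open-set side with the textbook combination $V_m=\bigcup_e\hat U^e_{m+e+1}$.
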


\begin{proof}
We give the proof for $n=1$ --- for the general case just relativize the proof to $\zerot^{(n-1)}$. 
As in the discussion preceding Proposition \ref{TPispzo}, fix effective enumerations \functionof{T_a}{a\in\omega} of all \pzo\ trees and  \functionof{T_{a,s}}{a\in\omega} of their recursive approximations. Set
\[U_{a,s}:=\bigsetof{\sigma\in\pre{<\omega}2}{\bigl(\set a(a)\downarrow\rand\mu([T_{\set a(a),s}])\geq1-2^{-a}\bigr)\Implies\sigma\in T_{\set a(a),s}}\]
and $Q_a^*:=\bigcap_{s\in\omega}[U_{a,s}]$, 
so $Q_a^*\in\pzo$, $\mu(Q_a^*)\geq1-2^{-a}$ and
\[\bigl(\set a(a)\downarrow\rand\mu([T_{\set a(a)}])\geq 1-2^{-a}\bigr)\qImplies Q_a^*=[T_{\set a(a)}].\]
Finally, set 
\[Q_n:=\bigcap_{a>n} Q_a^*\qqand Q:=\bigcup_{n\in\omega}Q_n.\]
Easily $Q$ is recursively full so ${\ssf R}_1\incl Q$. 
But also $Q\incl {\ssf R}_1$: if $f\notin {\ssf R}_1$ there exists a recursively full set $P=\bigcup_{n\in\omega}P_n$ such that $f\notin P$.
For each $n$, choose $a_n>n$ so that for all $k$, $P_k=[T_{\set {a_n}(k)}]$;
in particular, since $\mu(P_{a_n})\geq 1-2^{-a_n}$,
\[P_{a_n}=[T_{\set {a_n}(a_n)}]=Q_{a_n}^*\supseteq Q_n.\]
Hence for each $n$, $f\notin Q_n$ and thus $f\notin Q$. The other clauses follow from the preceding lemma.
\QED
\end{proof}

\begin{definition} 
$P\incl\pre\omega k$ is \dff{$k$-separating} iff there exist r.e.\ sets \hfill\break
$A_0,\ldots,A_{k-1}\incl\omega$ such that
\[P=\setof{f\in\pre\omega k}{(\forall n\in\omega)\,n\notin A_{f(n)}}.\]
Note that ordinary separating sets 
\[{\ssf Sep}(A_0,A_1)=\setof C{A_0\incl C\qand C\cap A_1=\emptyset},\]
in particular \dnr2, are 2-separating, and any $k$-separating set is \pzo.
\end{definition} 

\begin{lemma}[{\cite[Theorem 7.5]{Si1}}]\label{posmeas}
For any $k$-separating set $P$ and any \pzo\ set $Q$ of positive measure,
\[P\leqw Q\Implies P\hbox{ has a recursive element}.\]
Hence for the corresponding weak degrees, if $\dgp\leqw\dgq$ then $\dgp=\zerow$ and in particular $\onew\not\leq\dgq$.
\end{lemma}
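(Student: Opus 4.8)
The plan is to replace the weak reduction by a single partial recursive functional acting on a set of positive measure, then use the Lebesgue density theorem to boost that measure arbitrarily close to $1$, and finally read off a recursive element of $P$ by a measure-counting argument that can be made effective because the relevant measures are approximable from below.

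Concretely, write $P=\setof{f\in{}^\omega k}{\forall n\,(n\notin A_{f(n)})}$ with $A_0,\dots,A_{k-1}$ r.e. From $P\leqw Q$ we get $Q=\bigcup_{e\in\omega}Q_e$, where $Q_e:=\setof{g\in Q}{\Phi_e(g)\text{ is total and }\Phi_e(g)\in P}$; each $Q_e$ is Borel, and by countable subadditivity of outer measure some $Q_e$ has $\mu^*(Q_e)>0$. Fix such an $e$, and fix the rational $\varepsilon:=\tfrac1{2(k+1)}$, the only feature of which that matters is $\varepsilon<\tfrac1{k+1}$. Since $Q_e$ is measurable of positive measure, the Lebesgue density theorem for the cylinder basis of ${}^\omega2$ gives a finite string $\sigma$ with $\mu(Q_e\cap[\sigma])>(1-\varepsilon)2^{-|\sigma|}$. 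Absorbing $\sigma$ into the functional, set $\Psi(g):=\Phi_e(\sigma^\frown g)$ (a genuine partial recursive functional) and $\widetilde Q:=\setof{g}{\sigma^\frown g\in Q_e}$, so that $\mu(\widetilde Q)>1-\varepsilon$ and, for every $g\in\widetilde Q$, $\Psi(g)$ is total and lies in $P$.

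The core step is a counting argument. For each $n$ and $i<k$ put $C_{n,i}:=\setof{g}{\Psi(g)(n)\!\downarrow\,=i}$; these are $\Sigma^0_1$ (open) sets carrying nondecreasing recursive measure-approximations $\mu(C_{n,i}^s)\uparrow\mu(C_{n,i})$, and for fixed $n$ they are pairwise disjoint over $i<k$. For $g\in\widetilde Q$ we have $\Psi(g)(n)\!\downarrow<k$ with $n\notin A_{\Psi(g)(n)}$, so $\widetilde Q\incl\bigcup_{i\,:\,n\notin A_i}C_{n,i}$ and hence $\sum_{i\,:\,n\notin A_i}\mu(C_{n,i})>1-\varepsilon$. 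Since $\sum_{i<k}\mu(C_{n,i})\le 1$, it follows that every $i$ with $n\in A_i$ has $\mu(C_{n,i})<\varepsilon$, while, there being at most $k$ good indices, some $i$ with $n\notin A_i$ has $\mu(C_{n,i})>(1-\varepsilon)/k>\varepsilon$. Now define $f$ recursively: let $f(n)$ be the first $i<k$ found, by dovetailing over all pairs $(i,s)$, for which $\mu(C_{n,i}^s)>\varepsilon$. This search halts, by the good index just exhibited, and it can only return an $i$ with $n\notin A_i$, since a bad index has $\mu(C_{n,i}^s)\le\mu(C_{n,i})<\varepsilon$ and so never fires. Thus $f$ is recursive and $f\in P$. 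The degree statements follow at once: if $\degw(P)\leqw\degw(Q)$ then $P\leqw Q$, so $P$ has a recursive element and $\degw(P)=\zerow$; and since $\dnr2={\ssf Sep}({\ssf K}_0,{\ssf K}_1)$ is $2$-separating and, by effective inseparability of ${\ssf K}_0,{\ssf K}_1$, has no recursive element, we get $\dnr2\not\leqw Q$, i.e. $\onew\not\le\degw(Q)$.

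The main obstacle — and the reason the naive majority argument needs the density preparation — is that a \emph{bad} index $i$ (one with $n\in A_i$) may still carry a set $C_{n,i}$ of large measure, made up of points $g$ outside $Q_e$; the density boost is exactly what removes this, since after it there is essentially no measure left over for bad indices to be large. The two routine points to check are that the $C_{n,i}$ are $\Sigma^0_1$, so that ``$\mu(C_{n,i})>\varepsilon$'' is witnessed by some finite-stage approximation and the threshold test is semidecidable, and that $Q_e$ is Borel, hence measurable, so that the density theorem applies.
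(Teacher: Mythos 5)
Your proof is correct and follows essentially the same route as the paper's: decompose $Q$ into the sets $Q_e$ on which a single functional works, pass to one of positive measure, localize to a basic open set on which the ``error'' has relative measure below $\tfrac1{k+1}$ (the paper uses a clopen outer approximation $U$ of $Q$ with $\mu(U\setminus Q)<\mu(U)/k$, you use the density theorem), and then choose $f(n)$ by a measure-threshold majority vote over the $k$ possible values. The only substantive difference is how recursiveness of $f$ is secured: the paper first totalizes the functional using compactness of $\Pi^0_1$ classes, so that the sets $U^n_i$ are clopen with exactly computable measure, whereas you keep the functional partial and exploit that the $C_{n,i}$ are $\Sigma^0_1$ with measures approximable from below, separating good from bad indices by a strict threshold --- a correct and slightly more self-contained variant.
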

 
\begin{proof} 
Assume that $P=\setof{f\in\pre\omega k}{(\forall n\in\omega)\,n\notin A_{f(n)}}\leqw Q$ and $Q$ is of positive measure, and for each index $a$, set 
\[Q_a:=\setof{g\in Q}{\set a^g\in P}.\]
 Then $Q=\bigcup_{a\in\omega}Q_a$, $P\leqs Q_a$, and by countable additivity of measure, for some $a$, $\mu(Q_a)>0$. Hence we may from the beginning assume that $P\leqs Q$ and fix a recursive functional $\Phi:Q\to P$; by Proposition \ref{compactness} we may also assume that $\Phi$ is total.

By standard arguments of measure theory there exist an open set $V\supseteq Q$ and a clopen set $U\incl V$ such that
\[\mu(V\setminus Q)<\frac{\mu(Q)}{k+1}\qand \mu(V\setminus U)<\frac{\mu(Q)}{k+1}.\]
Then
\[\mu(U\setminus Q)\leq\mu(V\setminus Q)<\frac{\mu(Q)}{k+1}\qand
\mu(Q\setminus U)\leq\mu(V\setminus U)<\frac{\mu(Q)}{k+1},\]
whence
\[\frac{k\cdot\mu(Q)}{k+1}<\mu(Q\cap U)\leq\mu(U)\]
so
\[\mu(U\setminus Q)<\frac{(k+1)\mu(U)}{k\cdot(k+1)}=\frac{\mu(U)}{k}.\]
Set $U^n_i:=\setof{f\in U}{\Phi(f)(n)=i}$. 
For each $n$ there exists $i<k$ such that
$\mu(U^n_i)\geq\frac{\mu(U)}{k}$ and for any such $i$, $U^n_i\not\incl U\setminus Q$
so there exists $f\in U^n_i\cap Q$, and since $\Phi(f)\in P$, $n\notin A_i$. Hence
\[g(n):=\hbox{least }i\,\left[\mu(U^n_i)\geq\frac{\mu(U)}{k}\right]\]
is a recursive element of $P$.
\QED
\end{proof}

\begin{lemma}
$\dgd, \dgr_1$ and $\dgr_2^*\in\Dgpw$ and $\zerow<{\bf r}_1\leq{\bf r}_2^*<\onew$
\end{lemma}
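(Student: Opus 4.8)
The plan is to prove the three membership claims and the chain of inequalities separately, drawing on results already established in the excerpt.

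\textit{Membership in $\Dgpw$.} Each of $\dnr2$, $\random1$ and $\random2$ is a $\Sigma^0_3$ subset of $\pre\omega2$: $\dnr2$ is already $\Pi^0_1$, and by Lemma~\ref{randomfull}, $\random1\in\Sigma^0_2$ and $\random2\in\Sigma^0_3$. So by Lemma~\ref{existspzo} (applied with $R=\dnr2$, say, which is a nonempty $\Pi^0_1$ class by Theorem~A), for each of $S\in\{\random1,\random2\}$ there is a $\Pi^0_1$ class $S^*\incl\pre\omega2$ with $S^*\eqw\dnr2\meet S$. Taking weak degrees gives $\dgr_1^*=\onew\meet\dgr_1\in\Dgpw$ and $\dgr_2^*=\onew\meet\dgr_2\in\Dgpw$. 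For $\dgd$, note $\random1\leqw\dnr2$ since $\random1$ is a $\Pi^0_1$ class (any $1$-random computes a DNR function, or simply $\random1$ is closed and $\Pi^0_1$ by Lemma~\ref{randomfull} since $\mu(\random1)=1$ forces $\random1=[T]$ for a recursive $T$)... actually the cleanest route: ${\ssf DNR}$ itself is $\Pi^0_1$ as a subset of $\pre\omega\omega$, and one checks $\dnr2\leqw{\ssf DNR}$, so $\dgd\leqw\onew$; combined with $\dgd\leqw\dgr_1$ (below) and $\dgr_1\in\Dgpw$ we get $\dgd\leq\dgr_1^*$... but this only shows $\dgd$ lies \emph{below} an element of $\Dgpw$, not \emph{in} it. The correct argument, matching Theorem~K(i), is that $\dgd=\degw({\ssf DNR})$ and ${\ssf DNR}\cap\pre\omega h$ for a suitable recursive bound $h$ is a recursively bounded $\Pi^0_1$ set weakly equivalent to ${\ssf DNR}$; by the Corollary after Remark~\ref{truthtable}, ${\mathbb P}^{\ssf bd}_{\ssf w}={\mathbb P}_{\ssf w}$, so $\dgd\in\Dgpw$. (One knows ${\ssf DNR}\eqw{\ssf DNR}\cap\pre\omega h$ for the identity-style recursive bound $h(a)=\set a(a)+1$ when defined: if $g\in{\ssf DNR}$ then $g$ computes $\bar g\in{\ssf DNR}$ bounded by a recursive function, a standard fact.)

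\textit{The inequality $\dgr_1\leq\dgr_2^*$.} We have $\dgr_2^*=\onew\meet\dgr_2$. Since $\zerot'=\zerot^{(1)}$ is $\zerot$-recursive relative to itself, relativizing shows $\random2\leqw\random1$ trivially fails in that direction; rather, every $\zerot'$-random is $1$-random, so $\random2\incl\random1$, hence $\dgr_1\leqw\dgr_2$, i.e.\ $\dgr_1\leq\dgr_2$. But we need $\dgr_1\leq\onew\meet\dgr_2$, so it remains to show $\dgr_1\leqw\onew$, i.e.\ $\random1\leqw\dnr2$: every $1$-random real computes a DNR function (a classical fact; or use that $\random1$ is a $\Pi^0_1$ class of positive measure while $\dnr2$-membership... no). The honest route: $\random1$, being a $\Pi^0_1$ class by Lemma~\ref{randomfull}, satisfies $\dgr_1\in\Dgpw$, so $\dgr_1\leq\onew$ automatically since $\onew$ is the top of $\Dgpw$ (Theorem~A). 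Then $\dgr_1=\dgr_1\meet\onew\leq\dgr_2\meet\onew=\dgr_2^*$, using $\dgr_1\leq\dgr_2$ and monotonicity of $\meet$. Wait --- I must be careful: $\dgr_1\leq\onew$ in $\Dgpw$ gives $\dgr_1\meet\onew=\dgr_1$, and $\dgr_1\leq\dgr_2$ gives $\dgr_1\meet\onew\leq\dgr_2\meet\onew$, so indeed $\dgr_1\leq\dgr_2^*$.

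\textit{The strict inequalities $\zerow<\dgr_1$ and $\dgr_2^*<\onew$.} For $\zerow<\dgr_1$: $\random1$ has no recursive element (no recursive real is $1$-random), so $\dgr_1\neq\zerow$; combined with $\zerow\leq\dgr_1$ (which holds as $\zerow$ is the bottom of $\Dgpw$) this gives $\zerow<\dgr_1$. For $\dgr_2^*<\onew$: by Theorem~K(iv), $\dgr_1$ is the largest element of $\Dgpw$ containing a $\Pi^0_1$ class of positive measure; since $\random2\eqw$ (via $\random2^*$) a $\Pi^0_1$ class, and $\mu(\random2)=1>0$ by Lemma~\ref{randomfull}, and $\dgr_2^*\leq\dgr_2$... hmm, this needs $\dgr_2^*$ itself to contain a positive-measure $\Pi^0_1$ class, which it does since $\random2^*\eqw\dnr2\meet\random2$ and the meet construction preserves the positive-measure witness on one coordinate. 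So $\dgr_2^*\leq\dgr_1$, whence $\dgr_2^*<\onew$ because $\dgr_1<\onew$ --- and $\dgr_1<\onew$ holds since $\dnr2$ has positive... no, $\dnr2$ has measure zero, so $\onew$ does \emph{not} contain a positive-measure $\Pi^0_1$ class, so by Theorem~K(iv) $\dgr_1\neq\onew$, giving $\dgr_1<\onew$. Finally $\dgr_2^*\neq\dgr_1$ will follow from $\dgr_1<\dgr_2^*$, which I get by showing $\onew\meet\dgr_2>\dgr_1$: one needs a $1$-random $\zerot'$-random... this is the genuinely hard step.

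\textit{Main obstacle.} The strict inequality $\dgr_1<\dgr_2^*$ is the crux: it requires exhibiting, or citing, the separation of $2$-randomness from $1$-randomness at the level of Mu\v cnik degrees of $\Pi^0_1$ classes after cresting below $\onew$. I would obtain it from the cited sources (Theorems 4.3 and 5.6 of \cite{Si3} together with Theorem 8.10 of \cite{Si1}); the key point is that a $\Pi^0_1$ class coding $\random2$ (via $\random2^*$) cannot be weakly reduced to $\random1^*$, because a $2$-random computes fast-growing functions and separating-type behaviour that no single $1$-random can uniformly supply below the top degree, whereas the reverse reduction holds by $\random2\incl\random1$ up to the $\onew$-meet. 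The remaining inequalities and the membership claims are routine given Lemma~\ref{existspzo}, Lemma~\ref{randomfull}, Lemma~\ref{posmeas}, Theorem~A and Theorem~K(iv), and I would present them as above, deferring the hard separation to the references already flagged for Theorem~K.
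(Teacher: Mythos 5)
There are several genuine gaps here, and they cluster around the two points where the lemma actually has content. First, your route to $\dgr_1\in\Dgpw$ (and hence to $\dgr_1\leq\onew$, which you also need for $\dgr_1\leq\dgr_2^*$) rests on the claim that $\random1$ ``is a \pzo\ class by Lemma \ref{randomfull}.'' It is not: Lemma \ref{randomfull} makes $\random1$ recursively full, hence $\Sigma^0_2$ and of measure $1$, and a \emph{closed} subset of $\pre\omega2$ of measure $1$ would be all of $\pre\omega2$, so $\random1$ is certainly not closed. The repair is cheap and is what the paper does: being recursively full, $\random1$ contains a nonempty \pzo\ class $S$, so $\dgr_1\leq\degw(S)\leq\onew$ by Theorem A, whence $\dgr_1=\dgr_1\meet\onew=\dgr_1^*\in\Dgpw$ by Lemma \ref{existspzo}. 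Second, your argument that $\dgd\in\Dgpw$ relies on the ``standard fact'' that every ${\ssf DNR}$ function computes a recursively bounded ${\ssf DNR}$ function, i.e.\ that ${\ssf DNR}\eqw{\ssf DNR}\cap\pre\omega h$ for some recursive $h$. This is not a standard fact --- it is false (the weak degree of ${\ssf DNR}$ lies strictly below that of every recursively bounded ${\ssf DNR}_h$), and indeed if it were true the whole machinery of Lemma \ref{existspzo} would be unnecessary here. The paper instead applies Lemma \ref{existspzo} to the \pzo\ (hence $\Sigma^0_3$) set ${\ssf DNR}\incl\pre\omega\omega$ to get $\dgd\meet\onew\in\Dgpw$, and then uses $\dnr2\incl{\ssf DNR}$, which gives ${\ssf DNR}\leqw\dnr2$ and so $\dgd\leq\onew$ and $\dgd\meet\onew=\dgd$. (Note you have this reduction backwards: $\dnr2\leqw{\ssf DNR}$ would say $\onew\leq\dgd$, which is false; inclusion of problems reverses $\leqw$.)

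The inequality $\dgr_2^*<\onew$ is also not established. You derive it from ``$\dgr_2^*\leq\dgr_1$ by Theorem K(iv),'' but that inequality is false --- Theorem K itself asserts $\dgr_1<\dgr_2^*$ --- and in any case K(iv) is proved later, partly from this lemma, so the appeal is circular; moreover the \pzo\ class produced by Lemma \ref{existspzo} for $\dgr_2^*$ has no reason to have positive measure ($\dnr2$ has measure $0$). The correct argument is direct: since $\onew$ is meet-irreducible over $\dgr_2^*=\dgr_2\meet\onew$ in the relevant sense, $\onew\leq\dgr_2^*$ would force $\dnr2\leqw\random2$; but $\dnr2$ is a $2$-separating set with no recursive element and $\mu(\random2)=1$, so the majority-vote argument of Lemma \ref{posmeas} rules this out. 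Finally, the long closing discussion of the strict inequality $\dgr_1<\dgr_2^*$ is effort spent on something this lemma does not claim (it only asserts $\dgr_1\leq\dgr_2^*$, which follows from $\random2\incl\random1$ and $\dgr_1\leq\onew$); the strict version is a separate later lemma using almost recursive elements.
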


\begin{proof}
By Lemma \ref{existspzo}, $\dgd^*:=\dgd\meet\onew\in\Dgpw$, and by Lemma \ref{randomfull},
\[\dnr2\incl\dnr{}\Implies\dgd\leq\onew\Implies\dgd^*=\dgd.\]
Similarly $\dgr_1^*\in\Dgpw$, but ${\ssf R}_1$ is a union of non-empty \pzo\ sets and for any one $S$ of these, 
\[\dgr_1\leq\degw(S)\leq\onew\quad\hbox{so}\quad\dgr_1^*=\dgr_1.\]
${\ssf R}_2\in\Sigma^0_3$, so again by Lemma \ref{existspzo}, $\dgr_2^*\in\Dgpw$.
The ordering relationships are immediate from Lemmas \ref{randomfull} and \ref{posmeas}.
\QED
\end{proof}

\begin{lemma}\label{nullnotrandom}
For any $P\in\pzo$, if $\mu(P)=0$, then $\pre\omega 2\setminus P$ is recursively full so $P\cap\random1=\emptyset$.
\end{lemma}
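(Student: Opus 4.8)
The plan is to verify directly that $\pre\omega2\setminus P$ is recursively full in the sense of the definition of ``$A$-full'' above (taking $A$ recursive, since $\random1={\ssf R}^{\zerot^{(0)}}={\ssf R}^{\zerot}$). Once that is done the conclusion is automatic: ${\ssf R}^{\zerot}$ is by definition the intersection of all recursively full sets, so $\random1\incl\pre\omega2\setminus P$, i.e. $P\cap\random1=\emptyset$.

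First I would fix a recursive tree $T\incl\pre{<\omega}2$ with $P=[T]$, write $T_s:=\setof{\sigma\in T}{|\sigma|=s}$, and form the canonical clopen approximations
\[C_s:=\setof{f\in\pre\omega2}{f\restrict s\in T},\]
so that $C_0\supseteq C_1\supseteq\cdots$, $\bigcap_{s\in\omega}C_s=[T]=P$, and each $\mu(C_s)=2^{-s}\cdot|T_s|$ is a dyadic rational computable uniformly from $s$. Since $\mu$ is a probability measure and the $C_s$ decrease to $P$, continuity of measure from above gives $\lim_{s}\mu(C_s)=\mu(P)=0$, so for every $n$ there is a stage with $\mu(C_s)\leq 2^{-n}$. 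Hence
\[s_n:=\text{least }s\;\bigl[\mu(C_s)\leq 2^{-n}\bigr]\]
is a total recursive function of $n$, and since $s\mapsto\mu(C_s)$ is non-increasing while $n\mapsto 2^{-n}$ is strictly decreasing, $s_n\leq s_{n+1}$.

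Then I would set $Q_n:=\pre\omega2\setminus C_{s_n}$. Each $Q_n$ is clopen, hence a $\pzo$ class given by a recursive tree uniformly computable in $n$; $Q_n\incl Q_{n+1}$ because $C_{s_{n+1}}\incl C_{s_n}$; and $\mu(Q_n)=1-\mu(C_{s_n})\geq 1-2^{-n}$. It remains to check the honest identity $\bigcup_{n}Q_n=\pre\omega2\setminus P$, equivalently $\bigcap_{n}C_{s_n}=P$. If $P=\emptyset$, then (by König's lemma $T$ is a finite tree, so) some $C_s=\emptyset$, whence $Q_n=\pre\omega2$ for all large $n$ and the identity is immediate. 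If $P\neq\emptyset$, then every $C_s$ is a nonempty clopen set and therefore $\mu(C_s)>0$; so for any fixed $s$ we have $s_n>s$ once $2^{-n}<\mu(C_s)$, the $s_n$ are cofinal in $\omega$, and $\bigcap_nC_{s_n}=\bigcap_sC_s=P$. Thus $(Q_n)_{n\in\omega}$ witnesses that $\pre\omega2\setminus P$ is $\zerot$-full, so $\random1={\ssf R}^{\zerot}\incl\pre\omega2\setminus P$ and $P\cap\random1=\emptyset$.

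The only point needing genuine care — and the one modest obstacle — is the effectivity and monotonicity bookkeeping: that $\mu(C_s)$ is a computable rational, that the search defining $s_n$ terminates (this is precisely where $\mu(P)=0$ is used), that $s_n$ is nondecreasing so the $Q_n$ really do increase, and the degenerate case $P=\emptyset$. Everything else is a straightforward unwinding of the definition of ``full''.
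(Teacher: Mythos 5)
Your proof is correct and follows essentially the same route as the paper's: both take the recursive sequence of clopen supersets of $P$ coming from a recursive tree, use $\mu(P)=0$ to get a recursive function selecting stages where the measure drops below $2^{-n}$, and read off that the complement is recursively full. The only difference is that you spell out the bookkeeping (computability of $\mu(C_s)$, monotonicity of $s_n$, the union identity, and the case $P=\emptyset$) that the paper leaves implicit.
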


\begin{proof}
If $P=[T]$, set $P_s:=[T_{a,s}]$ so $P=\bigcap_{s\in\omega} P_s$, an intersection of clopen sets. If $\mu(P)=0$, then $\lim_{s\to\infty}\mu(P_s)=0$ so if
$$h(n):=\hbox{least }s\left[\mu(P_s)<2^{-n}\right],$$
$\pre\omega2\setminus P=\bigcup_{n\in\omega}(\pre\omega 2\setminus P_{h(n)})$ is recursively full and hence $\random1\incl\pre\omega2\setminus P$.
\QED
\end{proof}

\begin{lemma}[{\cite[Theorem 4.19]{Si1}}]\label{almostrec}
For every $\emptyset\not=P\incl\pre\omega 2$, if $P\in\Sigma^0_2$, then $P$ has an almost recursive element.
\end{lemma}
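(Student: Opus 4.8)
The plan is to reduce the statement to the Hyperimmune-free Basis Theorem, Proposition \ref{pzobases}(ii), which already supplies an almost recursive element of \emph{any} nonempty \pzo\ class; the only additional work is to exhibit such a class sitting inside $P$.

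First I would express $P$ as an effective union of \pzo\ classes. Since $P\incl\pre\omega2$ is $\Sigma^0_2$, there is a recursive relation $R$ with $f\in P\Iff\ex n\all m\,R(f\restrict m,n)$ for all $f\in\pre\omega2$; putting $P_n:=\setof{f\in\pre\omega2}{\all m\,R(f\restrict m,n)}$ gives a uniformly \pzo\ sequence with $P=\bigcup_{n\in\omega}P_n$ — explicitly, $P_n=[T_n]$ for the recursive tree $T_n:=\setof{\sigma\in\pre{<\omega}2}{(\forall m\leq|\sigma|)\,R(\sigma\restrict m,n)}$.

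Next, because $P\not=\emptyset$ I can fix $f_0\in P$ and an index $m$ with $f_0\in P_m$, so $P_m$ is a nonempty \pzo\ subclass of \pre\omega2. Applying Proposition \ref{pzobases}(ii) to $P_m$ yields an almost recursive $g\in P_m$, and since $P_m\incl P$ this $g$ is the desired almost recursive element of $P$.

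The only genuinely load-bearing ingredient is thus Proposition \ref{pzobases}(ii) (which itself rests on the compactness facts of Proposition \ref{compactness} and the finite-intersection lemma preceding it); the remaining step is the routine quantifier manipulation turning a $\Sigma^0_2$ subset of \pre\omega2 into a countable union of \pzo\ classes. Consequently there is no serious obstacle here — the content of the lemma is entirely absorbed by the already-established basis theorem, and the proof is essentially a one-line reduction once the decomposition is written down.
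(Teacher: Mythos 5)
Your proof is correct and is exactly the paper's argument: the paper's entire proof reads ``Since $\Sigma^0_2$ sets are unions of \pzo\ sets this is immediate from Proposition \ref{pzobases} (ii),'' and you have simply written out the standard decomposition $P=\bigcup_nP_n$ into uniformly \pzo\ classes and the selection of a nonempty $P_m$ to which the Hyperimmune-free Basis Theorem applies. No gaps; your version is just a more explicit rendering of the same one-line reduction.
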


\begin{proof}
Since $\Sigma^0_2$ sets are unions of \pzo\ sets this is immediate from Proposition \ref{pzobases} (ii).
\QED
\end{proof}

\begin{lemma}[{\cite[Remark 2.8]{DoSi}}]\label{almostrecprops}
For every almost recursive function $g$,
$g\notin\random2$.
\end{lemma}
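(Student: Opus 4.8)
The statement to prove is that for every almost recursive function $g$, $g \notin \random2$ --- that is, no almost recursive function is $\zerot'$-random.

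The plan is to argue contrapositively: I will show that any $\zerot'$-random real $g \in \pre\omega2$ fails to be almost recursive, by exhibiting a function $f \leqt g$ that is not recursively bounded. The natural candidate for the witness is a ``slow convergence'' function attached to a universal $\zerot'$-test, or more concretely, a function that measures how deep into $g$ one must look to escape a given member of a $\zerot'$-effective sequence of clopen sets of small measure. The key point is that randomness, being a genericity-type property, should force this escape-depth function to grow faster than any recursive bound --- if it were recursively bounded, the sets involved could be collapsed into a single $\Sigma^0_2$ null set containing $g$, contradicting $g \in \random2 = {\ssf R}^{\zerot'}$ via Lemma \ref{nullnotrandom} relativized to $\zerot'$ (equivalently, via Lemma \ref{nullnotrandom} applied to a $\zerot'$-full set whose complement is $\zerot'$-null).

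Concretely, I would proceed as follows. First, relativize Lemma \ref{nullnotrandom} to $\zerot'$: any $\Pi^0_1[\zerot']$ set of measure zero has recursively-in-$\zerot'$ full complement, hence is disjoint from $\random2$. Second, fix a $\zerot'$-recursive decreasing sequence of clopen sets $\functionof{V_k}{k\in\omega}$ with $\mu(V_k) \leq 2^{-k}$ whose intersection is a fixed $\Pi^0_1[\zerot']$ null set $N$ --- e.g. the canonical universal $\zerot'$-test component, so that $N \cap \random2 = \emptyset$. Given $g \in \random2$, so $g \notin N$, for each $k$ let $f(k)$ be the least $m$ such that $g\restrict m$ already witnesses $g \notin V_k$ (this $m$ exists since $V_k$ is clopen and $g \notin \bigcap V_k$ need not hold for all $k$; more carefully, I use instead a sequence where $g$ escapes every $V_k$, which holds because $g$ passes the test). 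Then $f \leqt g \oplus \zerot'$, but I want $f \leqt g$, so I must be slightly careful about the computation: since $g$ is to be shown not almost recursive, and almost recursiveness closes downward under $\leqt g$, it actually suffices to produce a non-recursively-bounded function that is recursive in $g \oplus \zerot'$ --- \emph{provided} $\zerot' \leqt g \oplus$ (something harmless). Hmm --- this is the subtle point, so let me reconsider: the cleanest route is that an almost recursive $g$ has the property that $g \oplus \zerot'$ need not be well-behaved, but we only need a function $f \leqt g$ itself. The standard argument (Downey--Hirschfeldt, or \cite{DoSi}) observes that if $g$ is almost recursive then in particular $g$ itself is recursively bounded and every $f \leqt g$ is recursively bounded; one then shows directly that the ``settling time'' function for $g$ relative to a $\zerot'$-test is both $\leqt g$ (because $g$, being almost recursive, is hyperimmune-free, so a $g$-computable escape is bounded and hence the relevant witness search is $g$-computable without needing $\zerot'$) and not recursively bounded.

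The main obstacle, and the step I would devote the most care to, is precisely this interplay: showing the escape-depth/settling function is computable from $g$ alone (not requiring the oracle $\zerot'$) while simultaneously being unbounded by any recursive function. The resolution exploits exactly the hyperimmune-freeness packaged into the definition of \emph{almost recursive}: if $g$ is almost recursive and $h \leqt g$, then $h$ is recursively bounded, so any total $g$-partial-recursive function is dominated by a recursive function. I would set up a $\zerot'$-effective sequence of clopen sets and argue that if the corresponding $g$-definable escape function were recursively bounded, one could assemble from it a recursive sequence of clopen sets of measures tending to zero whose intersection still contains $g$, making $g$ non-$\random1$ --- contradicting $\random2 \incl \random1$ (which follows since $\zerot' \geqt \zerot$ so every $1$-random relative to $\zerot'$ is in particular $1$-random, i.e. every $\zerot'$-full set refines to a recursively full set). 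Once that contradiction is in place, we conclude $g \notin \random2$. I expect the bookkeeping about which oracle is needed for which computation to be the only real content; the measure-theoretic and combinatorial parts are routine given Lemmas \ref{nullnotrandom}, \ref{randomfull}, and \ref{almostrec} and the definition of almost recursive.
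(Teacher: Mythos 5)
You have correctly isolated the crux of this lemma --- the witness function must be computable from $g$ \emph{alone}, while the test it plays against is only $\zerot'$-effective --- but neither of your two proposed resolutions actually closes that gap. (a) Your first route asserts that the escape-depth function attached to a $\zerot'$-test is ``$g$-computable without needing $\zerot'$'' because $g$ is assumed almost recursive. Hyperimmune-freeness of $g$ only bounds the growth of functions already known to be $\leqt g$; it does not let you compute, from $g$ alone, a function whose very definition refers to clopen sets $V_k$ whose indices only $\zerot'$ can locate. (b) Your second route claims that a recursive bound on the escape depth would let you ``assemble a recursive sequence of clopen sets'' from the $\zerot'$-recursive one and thereby contradict $g\in\random1$. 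Bounding how long an initial segment of $g$ must be inspected does nothing to make the sets $V_k$ themselves recursive, so no recursive test materializes and no contradiction is obtained. As written, the proof therefore has a genuine hole exactly at the step you yourself flagged as the main obstacle.

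The paper resolves the oracle tension by a device absent from your sketch: it separates the roles of the two oracles from the outset. It builds a single \emph{partial recursive} (oracle-free) functional $\Phi$ together with, for each $n$ and each index $a$, a partition of $\pre\omega2$ into $2^{a+n+1}$ clopen cells of equal measure. The cell containing $g$ selects a ``stopping stage'' $s^a_i$ in the step-by-step computation of $\set a$, and $\Phi(g)$ is instructed to output $\set a(s^a_i)+1$ at argument $s^a_i$; hence wherever $\Phi(g)$ is total it automatically escapes every total recursive $\set a$, and the reduction to $g$ is plain Turing reduction with no oracle. The oracle $\zerot'$ enters only in \emph{defining the test}: the set $P_n$ of those $g$ on which $\Phi(g)$ is total is a $\Pi^0_1[\zerot']$ class (one must ask whether the next stage $s^a_{i+1}$ converges), and $\Phi(g)$ can diverge only when $g$ lands in the last viable cell of some partition, an event of measure at most $\sum_a 2^{-(a+n+1)}=2^{-n}$. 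Thus $\bigcup_n P_n$ is $\zerot'$-full, so it contains $\random2$, and every member of it computes a function that is not recursively bounded; an almost recursive $g$ therefore lies outside it and outside $\random2$. Your overall contrapositive strategy is the right shape, but without some such mechanism making the witness functional oracle-free while pushing all the $\zerot'$-dependence into the test, the argument cannot be completed.
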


\begin{proof} 
We construct an $\zerot'$-full set $P$ such that
\[g\in P\Implies(\exists f\leqt g)\hbox{ $f$ is not recursively bounded}.\]
For each $n$ and $a$, set $k^a_n:=2^{a+n+1}$ and partition \pre\omega2 into $k^a_n$-many pairwise disjoint clopen sets $Q^a_{n,0},\ldots,Q^a_{n,k_n^a-1}$ each of measure $1/k^a_n$.
Fix $n$ and suppress it in subscripts. For $i<k^a-1$ set
\[s^a_0:=a\qand s^a_{i+1}:\simeq\hbox{least }s>s^a_i\,\left[\set a_s(s^a_i)\downarrow\right].\]
The relation
\[s^a_i\simeq m\qIff(\forall j<i)\;s^a_j\simeq\hbox{least }s<m\;\left[\set a_s(s^a_j)\downarrow\right]\]
is recursive, hence the functional $\Phi$ defined by
\[\Phi(g)(m):\simeq\text{max }\setof{\set a(m)+1}{a\leq m\text{ and }(\exists i<m)\bigl[g\in Q^a_i\text{ and }s^a_i\simeq m\bigr]}\]
with the usual convention that $\text{max }\emptyset\simeq 0$ is partial recursive. Set
\[j^a:\simeq\text{largest }i\;[s^a_i\downarrow]\qand m^a:\simeq s^a_{j^a}.\]
Then $\Phi(g)(m)\downarrow$ unless for some $a$, $m\simeq m^a$ and $g\in Q^a_{j^a}$.
Hence, setting
\[P_n:=\setof g{\Phi(g)\text{ is total}},\]
we have $P_n\in\pzo[\zerot']$ because
\[P_n=\setof g{\forall a\forall i\bigl[ g\in Q^a_i\text{ and }\bigl( s^a_i\downarrow\Implies s^a_{i+1}\downarrow\bigr)\bigr]},\]
and $P_n$ has measure at least $1-2^{-n}$ because
\[\mu\bigl(\pre\omega\omega\setminus P_n\bigr)\leq\mu\Bigl(\bigcup_{a\in\omega} Q^a_{j^a}\Bigr)\leq\sum_{a\in\omega}\frac 1{k_a}=\frac 1{2^n}.\]
Furthermore, if $g\in P_n$ and \set a is total, then for the unique $i$ such that $g\in Q^a_i$,
\[\Phi(g)(s^a_i)>\set a(s^a_i),\]
so $\Phi(g)$ is not recursively bounded. Hence $P:=\bigcup_{n\in\omega}P_n$ is the desired
$\zerot'$-full set.
\QED
\end{proof}

\begin{lemma}
$\dgr_1<\dgr_2^*$.
\end{lemma}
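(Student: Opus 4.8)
The plan is to invoke the preceding lemma, which already gives $\dgr_1\leqw\dgr_2^*$, and then to establish the strict inequality by showing $\dgr_2^*\not\leqw\dgr_1$. Since $\dgr_2^*=\onew\meet\dgr_2=\degw(\dnr2\meet\random2)$, where $\dnr2\meet\random2=(0)^\frown\dnr2\cup(1)^\frown\random2$ (this weak degree agrees with that of the \pzo\ representative supplied by Lemma \ref{existspzo}, using that $\random2\in\Sigma^0_3$ by Lemma \ref{randomfull}), and $\dgr_1=\degw(\random1)$, it is enough to produce a single $g^*\in\random1$ that computes no member of $\dnr2\meet\random2$ --- equivalently, a $1$-random real computing neither a member of $\dnr2$ nor a member of $\random2$.

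For $g^*$ I would take an almost recursive member of a \pzo\ class of $1$-randoms. By the proof of Lemma \ref{randomfull}, $\random1$ is a union of nonempty \pzo\ classes (the classes $Q_n$ there); fix one such class $Q\incl\random1$. By the Hyperimmune-free Basis Theorem (Proposition \ref{pzobases}(ii)), $Q$ has an almost recursive member $g^*$, and $g^*\in\random1$. That $g^*$ computes no member of $\random2$ is immediate: every $f\leqt g^*$ is again almost recursive (almost recursiveness is closed downward under $\leqt$, directly from the definition), hence $f\notin\random2$ by Lemma \ref{almostrecprops}. The substance is to show that $g^*$ computes no member of $\dnr2$. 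Suppose it did, say $f\leqt g^*$ with $f\in\dnr2$. Since $g^*$ is almost recursive, Simpson's truth-table lemma (\cite[Theorem 4.18]{Si1}, stated just before Proposition \ref{subpzostrong}) yields a total recursive functional $\Psi$ with $\Psi(g^*)=f$. Then $C:=Q\cap\Psi^{-1}(\dnr2)$ is a nonempty (because $g^*\in C$) \pzo\ class by Proposition \ref{inversepzo}, and $C\incl Q\incl\random1$; since $g^*\in C\cap\random1$, Lemma \ref{nullnotrandom} forces $\mu(C)>0$. But $\Psi$ witnesses $\dnr2\leqs C$, hence $\dnr2\leqw C$, so by Lemma \ref{posmeas} (recall $\dnr2$ is $2$-separating) $\dnr2$ would contain a recursive element --- impossible by the Recursion Theorem. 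This contradiction shows $g^*$ computes no member of $\dnr2$, hence no member of $\dnr2\meet\random2$, so $\dnr2\meet\random2\not\leqw\random1$.

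Putting this together, $\dgr_2^*\not\leqw\dgr_1$, and with $\dgr_1\leqw\dgr_2^*$ from the preceding lemma we conclude $\dgr_1<\dgr_2^*$. The one step that is not routine --- and hence the main obstacle --- is showing that the almost recursive $1$-random $g^*$ is not of $\mathrm{PA}$ degree. Stated as $\dnr2\not\leqw\set{g^*}$ this is not directly refutable by Lemma \ref{posmeas}, since the singleton $\set{g^*}$ is neither \pzo\ nor of positive measure; the device of replacing $\set{g^*}$ by the genuine \pzo\ class $Q\cap\Psi^{-1}(\dnr2)$ is what rescues the argument, and it is legitimate precisely because almost recursiveness upgrades the Turing reduction $f\leqt g^*$ to a truth-table reduction, while Lemma \ref{nullnotrandom} guarantees that this new class has positive measure so that Lemma \ref{posmeas} applies.
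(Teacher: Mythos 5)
Your proposal is correct and follows essentially the same route as the paper: take an almost recursive $1$-random $g$, rule out $\random2\leqw\set g$ via downward closure of almost recursiveness and Lemma \ref{almostrecprops}, and rule out $\dnr2\leqw\set g$ by upgrading to a total (truth-table) functional $\Psi$ and applying Lemmas \ref{nullnotrandom} and \ref{posmeas} to the positive-measure \pzo\ class $\Psi^{-1}(\dnr2)$ meeting $\random1$. The only cosmetic difference is that you work inside an explicit \pzo\ subclass $Q\incl\random1$ and intersect with it before invoking Proposition \ref{inversepzo}, which is a slightly more careful packaging of the same argument.
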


\begin{proof}
By Lemma \ref{almostrec}, \random1 has an almost recursive element $g$.\hfill\break
If $\dgr_2^*\leq\degw(\set g)$, then since \set g is a singleton, either 
\[\dgr_2\leq\degw(\set g)\qor \onew\leq\degw(\set g).\]
The first alternative is impossible by Lemma \ref{almostrecprops} (ii). 
If the second holds, then by Lemma \ref{almostrecprops} (i) there exists a total $\Phi:\set g\to\dnr2$, so $\dnr2\leqw\Phi^{-1}(\dnr2)$, which is a \pzo\ class containing $g$, hence of positive measure by Lemma \ref{nullnotrandom}, contrary to Lemma \ref{posmeas}.
\QED
\end{proof}

\begin{lemma}
$\zerow<\dgd<\dgr_1$.
\end{lemma}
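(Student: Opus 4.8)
The statement splits into $\zerow<\dgd$ and $\dgd<\dgr_1$; by an earlier lemma both $\dgd$ and $\dgr_1$ already belong to $\Dgpw$, so these are inequalities there. That $\zerow<\dgd$ is immediate: ${\ssf DNR}$ has no recursive member, for if $f=\set a$ were a total recursive function then $f(a)=\set a(a)$, contradicting $f\in{\ssf DNR}$; hence $\dgd=\degw({\ssf DNR})\not=\zerow$, while $\zerow\leq\dgd$ is trivial. So the real content is the inequality $\dgd\leq\dgr_1$ together with its strictness.

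For $\dgd\leq\dgr_1$, since a weak degree depends only on the upward Turing closure of a set, it suffices to show that every Martin--L\"of random real computes a member of ${\ssf DNR}$. The plan is the standard one: given $g\in\random1$, partition $\omega$ into consecutive finite blocks $B_0,B_1,\ldots$ with $|B_e|$ growing fast enough that $\sum_e 2^{-|B_e|}<\infty$, and let $f(e)$ be the integer coded in binary by $g\restrict B_e$, so that $f\leqt g$. For each $k$ the class of $h\in{}^\omega 2$ such that $h\restrict B_e$ codes $\set e(e)$ for some $e\geq k$ is $\Sigma^0_1$, uniformly in $k$, of measure at most $\sum_{e\geq k}2^{-|B_e|}\to 0$; this is a Martin--L\"of test, so since $g$ is random we get $f(e)\not=\set e(e)$ for all but finitely many $e$, and a routine recursion-theoretic adjustment converts $f$ into a genuine diagonally non-recursive function recursive in $g$ (see \cite{DoHi}). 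Hence ${\ssf DNR}\leqw\random1$, i.e. $\dgd\leq\dgr_1$.

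The strict inequality $\dgd\not=\dgr_1$ --- equivalently $\dgr_1\not\leqw\dgd$, i.e. $\random1\not\leqw{\ssf DNR}$ --- is the crux, and it amounts to producing a single diagonally non-recursive function that computes no Martin--L\"of random real. This is a known but genuinely nontrivial fact; I would either cite it (it is the Mu\v cnik-degree incarnation of the separation of the principle of diagonal non-recursiveness from weak weak K\"onig's lemma, and also follows from the existence of a diagonally non-recursive function of minimal Turing degree together with the observation that no random real is of minimal degree, since van Lambalgen's theorem gives a splitting $X\equiv_T X_0\oplus X_1$ with $\zerot<_T X_0<_T X$; see also \cite{Si3}), or reconstruct it by a bushy-tree forcing construction of a diagonally non-recursive $g$ for which each $\Phi_e(g)$, when total, has an initial segment in a fixed level of the universal Martin--L\"of test --- so that $\Phi_e(g)\notin\random1$ for every $e$. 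The essential difficulty is precisely that a naive finite-extension construction may become trapped on a branch along which $\Phi_e$ outputs only random reals, so one must exploit the largeness of the tree of admissible extensions (those keeping $g$ diagonally non-recursive) available at each stage. Granting this, $\dgr_1\not\leqw{\ssf DNR}$, and therefore $\zerow<\dgd<\dgr_1$.
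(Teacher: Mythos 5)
Your decomposition and the crux of the argument match the paper's. For the strict inequality $\dgr_1\not\leqw\dgd$ the paper does exactly what you list as your first option: it cites Kumabe's construction of an $f\in{\ssf DNR}$ of minimal Turing degree and observes that no $f\in\random1$ has minimal degree because $\zerot<\degt(f^{\rm odd}),\degt(f^{\rm even})<\degt(f)$; no bushy-tree forcing is needed once that citation is in hand. The only place you genuinely diverge is the reduction $\dgd\leqw\dgr_1$: you prove that every $1$-random computes a DNR function by block-coding $g$ into a candidate function and covering the failure set by a Martin--L\"of test, then repairing finitely many values non-uniformly (which is legitimate for a $\leqw$-statement). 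The paper instead runs the Ku\v cera-style argument: for random $A$, any $W_x\incl A$ has $|W_x|$ bounded by an $A$-computable function $h(x)$ extracted from the full sets $Q_n$ of its Lemma \ref{randomfull}, and the map $b\mapsto g(b)$ with $W_{g(b)}$ the first $h(\set b(b))$ elements of $A$ is then diagonally non-recursive. Both are standard and correct; your test-based route is arguably more self-contained for a reader who knows randomness via tests, while the paper's route reuses the machinery it has already set up for $\random1$ and avoids the "finitely many exceptions" repair step.
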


\begin{proof}
$\zerow<\dgd$ since obviously {\ssf DNR} has no recursive elements.
That $\dgr_1\not\leq\dgd$ follows from \cite{Ku}: there exists $f\in{\ssf DNR}$ such that $\degt(f)$ is minimal, together with the easy observation that for $f\in\random1$, $\degt(f)$ is not minimal because
\[\zerot<\degt(f^{\hbox{odd}}),\degt(f^{\hbox{even}})<\degt(f).\]
To show that $\dgd\leq\dgr_1$, we show as follows that
\[A\in\random1\Implies(\exists g\leqt A)\,g\in{\ssf DNR}.\]
Set $W_x\incl_k A$ iff $|W_x|>k$ and the first $k$ numbers enumerated into $W_x$ are in $A$,
and $P_{x,k}:=\setof A{W_x\not\incl_kA}$. Easily $\mu(P_{x,k})\geq 1-2^{-k}$.
With notation as in the proof of Lemma \ref{randomfull},
choose $a_{x,n}>n$ such that $\forall k\,\bigl(P_{x,k}=[T_{\set{a_{x,n}}(k)}]\bigr)$.
Then 
\[P_{x,a_{x,n}}=[T_{\set{a_{x,n}}(a_{x,n})}]=Q_{a_{x,n}}^*\supseteq Q_n,\]
so
\[A\in Q_n\qand W_x\incl A\Implies |W_x|<a_{x,n}.\]
For a (necessarily) infinite set $A\in\random1$, choose $\bar n$ and $h$ such that 
\[A\in Q_{\bar n} \qand h(x):= a_{x,\bar n}.\]
There exists $g\leqt A$ such that
\[W_{g(b)}=\cases{\hbox{the first $h(\set b(b))$ elements of }A,
&if $\set b(b)\downarrow$;\cr
\emptyset,&otherwise.\cr}\]
Then $g\in{\ssf DNR}$, since if $g(b)=\set b(b)$, $W_{\set b(b)}=W_{g(b)}\incl A$ so
\begin{align*}
|W_{g(b)}|&=h(g(b))\hbox{\quad by definition of }g, \text{ but}\\
|W_{g(b)}|&<h(g(b))\hbox{\quad by definition of }h,
\end{align*}
a contradiction. 
\QED
\end{proof}

\begin{lemma}
For any set $P\in\pzo$, if $\mu(P)>0$, then there exists a recursively full set $Q$ such that $P\eqw Q$.
\end{lemma}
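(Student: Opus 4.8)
The claim is that a $\Pi^0_1$ class $P\subseteq\pre\omega2$ of positive measure is weakly equivalent to a recursively full set. The plan is to produce such a $Q$ by a standard measure-amplification trick: pad $P$ with itself along a tree of bounded-length detours so that the resulting set contains, for each $n$, a $\Pi^0_1$ subclass of measure $\geq 1-2^{-n}$, while every element of $Q$ still weakly computes an element of $P$ and vice versa.

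First I would fix a recursive tree $T$ with $P=[T]$ and recall that since $\mu(P)=\mu([T])>0$, the measures $\mu([T_s])$ of the clopen approximations $[T_s]$ converge to $\mu(P)$; in particular there is a recursive function giving, for each threshold, a stage $s$ with $\mu([T_s])$ within that threshold of $\mu(P)$. Next I would build $Q$ as follows: take countably many independent "coordinate blocks", and in each block insert either a copy of $P$ (with probability $\mu(P)$, roughly) or leave it free; more precisely, define a recursive tree $U$ whose paths are obtained by interleaving longer and longer initial segments that must lie in $T$ with free segments, arranged so that the $n$-th level subclass $Q_n$ (obtained by forcing the first $n$ blocks to follow $T$) has $\mu(Q_n)\geq 1-2^{-n}$. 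The $Q_n$ form a recursive increasing sequence of $\Pi^0_1$ classes with $Q=\bigcup_n Q_n$, so $Q$ is recursively full by definition. This is essentially the same padding/amplification construction used in the proof of Lemma~\ref{randomfull} and in the $S_1$–$S_2$ step of Lemma~\ref{existspzo}, adapted to boost measure rather than to shift complexity.

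It then remains to check $P\eqw Q$. For $Q\leqw P$: given any $g\in P$, one block of $g$ can be read off as a path through $T$, hence as an element of $P$ — wait, this is backwards; I should instead argue $P\leqw Q$ by noting that any $h\in Q$ contains, in some designated block, an initial segment of a genuine $T$-path, and by König's Lemma (Lemma~\ref{Konig}) that block determines a path through $T$, i.e. an element of $P$; and $Q\leqw P$ because from any $f\in P$ one can recursively build an element of $Q$ by placing $f$ in the first block and padding the rest recursively. Both directions are uniform, so in fact $P\eqs Q$ and $P\eqw Q$ follows a fortiori. One must be careful that the "designated block" is nonempty for every path of $Q$ — this is where the bounded-detour bookkeeping matters, exactly as in the $T_{S_2}$ construction, where one pads with elements of $T_R$ so that values stay bounded; here the roles are analogous with $T$ playing the role of the padding tree.

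The main obstacle I expect is the explicit combinatorial layout of $U$: one needs the measure of $Q_n$ to be provably $\geq 1-2^{-n}$ while simultaneously guaranteeing that the tree is recursive and that every infinite path genuinely meets some block constrained to $T$ (so that $P\leqw Q$ does not fail on a measure-zero set of "all-free" paths). The cleanest way around this is probably to force block $n$ to follow $T$ for a segment of length depending on $n$, using the recursive stage function from the first paragraph to control the measure loss at each block, and to order the blocks so that the $n$-th constraint is always active — then $Q_n$ is literally the clopen-relative $\Pi^0_1$ class obtained by intersecting with the constraints on blocks $0,\dots,n$, its measure is estimated by a geometric series as in Lemma~\ref{nullnotrandom}, and fullness is immediate. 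Everything else is routine verification of the reducibilities.
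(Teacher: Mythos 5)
The overall flavor of your plan (pad $P$ with copies of itself and drive the complement measure down geometrically) is the right one, but the combinatorial core of your construction is backwards and would not produce a recursively full set. You define $Q_n$ by \emph{forcing} the first $n$ blocks to follow $T$ (or a clopen approximation $[T_s]$). Imposing more such constraints can only shrink a class: constraining a block to lie in $[T_s]$ costs measure $1-\mu([T_s])$, and since $\mu([T_s])$ decreases to $\mu(P)$, the only way to make that cost small is to use so weak an approximation that the block no longer pins down a path of $T$ at all. So either $\mu(Q_n)\to0$ (roughly like $\mu(P)^n$ if the blocks are independent), or the constraints are too weak for $P\leqw Q$ to hold. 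In any case your sequence $Q_0\supseteq Q_1\supseteq\cdots$ is \emph{decreasing}, whereas recursive fullness requires an \emph{increasing} sequence of $\Pi^0_1$ classes with $\mu(Q_n)\geq1-2^{-n}$. You flag this tension yourself (every path must "genuinely meet some block constrained to $T$" while the measure stays near $1$), but it cannot be resolved within an intersect-constraints-on-designated-blocks scheme.

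The missing idea is to make the $Q_n$ grow by \emph{union} rather than intersection: with $U_0$ a recursive tree such that $[U_0]=P$, graft a fresh copy of $U_0$ onto every leaf of $U_n$ to obtain $U_{n+1}$, and set $Q_n:=[U_n]$. Then $Q_n\incl Q_{n+1}$, each path that has so far missed $P$ gets a new independent chance to enter a copy of it, and the complement measures multiply: $1-\mu(Q_{n+1})=(1-\mu(Q_n))(1-\mu(P))$, so $1-\mu(Q_n)=(1-\mu(P))^n\to0$ geometrically, which (after thinning the indices) gives recursive fullness of $Q:=\bigcup_nQ_n$. One then has $P\incl Q\incl P^*:=\setof f{\exists k\;f^{(k)}\in P}$, and $P\eqw P^*$ because every $f\in Q$ has some tail lying in $P$. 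Note that this reduction is inherently nonuniform (the shift $k$ depends on $f$), so your closing claim that "both directions are uniform, so in fact $P\eqs Q$" is unjustified; only weak equivalence is obtained, which is all the lemma asserts.
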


\begin{proof}
Set $f^{(k)}(n):=f(k+n)$ and 
\[P^*:=\setof f{(\exists k\in\omega)\,f^{(k)}\in P}.\]
Clearly $P\incl P^*$ and $P\eqw P^*$ so it suffices to construct a recursively full set $Q$ with $P\incl Q\incl P^*$.
For trees $T$ and $U$, set
\[U+T:=U\cup\setof{\tau^\frown(i)^\frown\sigma}{\tau\in U\text{ and }\tau^\frown(i)\notin U\text{ and }\sigma\in T};\]
$U+T$ has a copy of $T$ attached to each leaf of $U$. Easily
\[1-\mu([U+T])=(1-\mu([U]))(1-\mu([T])).\]
Choose $U_0$ such that $P=[U_0]$, and for each $n$ set
\[U_{n+1}:=U_n+U_0\qqand Q_n:=[U_n],\]
so
\[Q_0\incl Q_1\incl\cdots\qand 1-\mu(Q_n)=(1-\mu(P))^n.\]
Since $\mu(P)>0$, there exists $\ell\in\omega$ such that $(1-\mu(P))^\ell\leq 1/2$, so for each $n$,
\[1-\mu(Q_{\ell n})\leq 2^{-n}\qand Q:=\bigcup_{n\in\omega}Q_n=\bigcup_{n\in\omega}Q_{\ell n}\]
is recursively full. Obviously $P\incl Q$, and $Q\incl P^*$ since each
$f\in Q$ is of the form $\sigma^\frown g$ for $g\in P$, so $f^{(|\sigma|)}\in P$ and hence $f\in P^*$.
\QED
\end{proof}

\begin{proposition}
$\dgr_1$ is the largest element of \Dgpw\ that contains a \pzo\ set of positive measure.
\end{proposition}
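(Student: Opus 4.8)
The plan is to prove the statement in two halves: first, that $\dgr_1$ itself is the weak degree of some $\Pi^0_1$ class of positive measure (so that $\dgr_1\in\Dgpw$ qualifies as "containing a \pzo\ set of positive measure"), and second, that every weak degree of a $\Pi^0_1$ class of positive measure lies $\leq\dgr_1$. Both halves rest on three ingredients already at hand: Lemma \ref{randomfull}, which tells us $\random1$ is recursively full, hence $\random1=\bigcup_{n}P_n$ for an increasing recursive sequence of $\Pi^0_1$ classes with $\mu(P_n)\geq1-2^{-n}$; the definition $\random1=\bigcap\setof{P}{P\text{ recursively full}}$, which gives $\random1\incl Q$ for \emph{every} recursively full $Q$; and the monotonicity $S\supseteq T\Implies S\leqw T$ from Lemma \ref{wk=include}. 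The preceding lemma (every \pzo\ class of positive measure is $\eqw$-equivalent to a recursively full set) is the final tool.

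For the first half I would fix $n\geq1$, so $P_n$ is a nonempty $\Pi^0_1$ class with $\mu(P_n)\geq1/2>0$, and show $\degw(P_n)=\dgr_1$. From $P_n\incl\random1$ and Lemma \ref{wk=include} we get $\dgr_1=\degw(\random1)\leq\degw(P_n)$. Conversely, applying the preceding lemma to $P_n$ produces a recursively full $Q$ with $P_n\eqw Q$; since $\random1\incl Q$, Lemma \ref{wk=include} gives $Q\leqw\random1$, whence $\degw(P_n)=\degw(Q)\leq\dgr_1$. So $\degw(P_n)=\dgr_1$ with $\mu(P_n)>0$, as required.

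For the second half, let $P\incl\pre\omega2$ be any $\Pi^0_1$ class with $\mu(P)>0$. By the preceding lemma there is a recursively full $Q$ with $P\eqw Q$, and again $\random1\incl Q$ forces $Q\leqw\random1$, so $\degw(P)=\degw(Q)\leq\dgr_1$. Combining the two halves shows $\dgr_1$ is the largest element of $\Dgpw$ containing a \pzo\ set of positive measure.

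I do not expect a genuine obstacle: the whole argument is bookkeeping about how $\leqw$ behaves under set inclusion. The one point deserving a moment's care is confirming that the set $Q$ delivered by the preceding lemma — built there as the union of the uniformly recursive sequence $\setof{[U_{\ell n}]}{n\in\omega}$ of $\Pi^0_1$ classes of measure $\geq1-2^{-n}$ — literally meets the definition of a recursively full ($\emptyset$-full) set, which is exactly what licenses $\random1\incl Q$.
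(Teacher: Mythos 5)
Your proof is correct and follows essentially the same route as the paper's: exhibit a positive-measure $\Pi^0_1$ piece of the recursively full decomposition of $\random1$ and show it is $\eqw\random1$ via the inclusions $P_n\incl\random1\incl Q$, then apply the preceding lemma in the same way to bound every other such degree by $\dgr_1$. Your closing remark about verifying that the $Q$ produced by the preceding lemma is literally recursively full is the right point to check, and it does hold as constructed there.
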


\begin{proof}
Since \random1 is of measure 1 and the union of \pzo\ sets, at least one of these $P$ is of positive measure. $P\leqw\random1$ by the preceding lemma and $\random1\leqw P$, since $P\incl\random1$.
For any other $\dgq\in\Dgpw$, $\dgq\leq\dgr_1$ also by the preceding lemma.
\QED
\end{proof}
\end{section}

\end{document}
\bye